\documentclass[11pt]{amsart}
\usepackage[pdftex,margin=1.1in, marginpar=.8in]{geometry}


\usepackage{amsmath}
\usepackage{amsfonts}
\usepackage{amssymb}
\usepackage{amsthm}

\usepackage[colorlinks=true,urlcolor=blue,linkcolor=blue,citecolor=blue,pagebackref,breaklinks]{hyperref}
\usepackage{cite}

\usepackage{mathrsfs}
\usepackage{bm}
\usepackage{dsfont}
\usepackage{upgreek}
\usepackage{xcolor}
\usepackage{bbold}


\usepackage{graphicx}
\usepackage[all,cmtip]{xy}
\usepackage{blkarray}
\usepackage{array}
\usepackage{todonotes}
\usepackage{enumitem}  

\setlist[enumerate,1]{label = (\arabic*),itemsep=.5em, topsep=.5em}

\usepackage{leftidx}
\usepackage{mathdots}
\usepackage{arydshln}
\usepackage{stmaryrd}  
\SetSymbolFont{stmry}{bold}{U}{stmry}{m}{n}

\newenvironment{psm}
{\left(\begin{smallmatrix}}
{\end{smallmatrix}\right)}

\newenvironment{bsm}
{\left[\begin{smallmatrix}}
{\end{smallmatrix}\right]}

\numberwithin{equation}{subsection}

\newtheorem{thm}{Theorem}[subsection]
\newtheorem{prop}[thm]{Proposition}

\theoremstyle{remark}
\newtheorem{rmk}[thm]{Remark}




\newcommand{\mr}[1]{\mathrm{#1}}

\newcommand{\bA}{\mathbb{A}}

\newcommand{\bC}{\mathbb{C}}

\newcommand{\bH}{\mathbb{H}}

\newcommand{\bQ}{\mathbb{Q}}
\newcommand{\bR}{\mathbb{R}}
\newcommand{\bS}{\mathbb{S}}

\newcommand{\bU}{\mathbb{U}}

\newcommand{\bZ}{\mathbb{Z}}

\newcommand{\bba}{\mathbb{a}}
\newcommand{\bbb}{\mathbb{b}}
\newcommand{\bbc}{\mathbb{c}}
\newcommand{\bbd}{\mathbb{d}}

\newcommand{\cA}{\mathcal{A}}
\newcommand{\cB}{\mathcal{B}}

\newcommand{\cD}{\mathcal{D}}
\newcommand{\cE}{\mathcal{E}}
\newcommand{\cF}{\mathcal{F}}

\newcommand{\cH}{\mathcal{H}}
\newcommand{\cI}{\mathcal{I}}

\newcommand{\cK}{\mathcal{K}}
\newcommand{\cL}{\mathcal{L}}
\newcommand{\cM}{\mathcal{M}}
\newcommand{\cN}{\mathcal{N}}
\newcommand{\cO}{\mathcal{O}}

\newcommand{\cS}{\mathcal{S}}

\newcommand{\cU}{\mathcal{U}}

\newcommand{\cW}{\mathcal{W}}

\newcommand{\fa}{\mathfrak{a}}
\newcommand{\fb}{\mathfrak{b}}
\newcommand{\fc}{\mathfrak{c}}
\newcommand{\fd}{\mathfrak{d}}



\newcommand{\fp}{\mathfrak{p}}

\newcommand{\fv}{\mathfrak{v}}
\newcommand{\fw}{\mathfrak{w}}
\newcommand{\fx}{\mathfrak{x}}
\newcommand{\fy}{\mathfrak{y}}
\newcommand{\fz}{\mathfrak{z}}

\newcommand{\fA}{\mathfrak{A}}
\newcommand{\fB}{\mathfrak{B}}
\newcommand{\fC}{\mathfrak{C}}
\newcommand{\fD}{\mathfrak{D}}

\newcommand{\fX}{\mathfrak{X}}


\newcommand{\sL}{\mathscr{L}}



\DeclareMathAlphabet{\mathpzc}{OT1}{pzc}{m}{it}

\newcommand{\pzM}{\mathpzc{M}}


\newcommand{\bfP}{\mathbf{P}}

\newcommand{\bbeta}{\bm{\beta}}





\DeclareMathOperator{\GL}{GL}
\DeclareMathOperator{\GSp}{GSp}

\DeclareMathOperator{\GU}{GU}

\DeclareMathOperator{\U}{U}

\DeclareMathOperator{\SL}{SL}
\DeclareMathOperator{\Sp}{Sp}
\DeclareMathOperator{\SO}{SO}
\DeclareMathOperator{\SU}{SU}


\newcommand{\cont}{\mathrm{cont}}

\newcommand{\im}{\mathrm{Im}}

\newcommand{\Nm}{\mathrm{Nm}}
\newcommand{\ord}{\mathrm{ord}}

\newcommand{\sgn}{\mathrm{sgn}}

\newcommand{\Tr}{\mathrm{Tr}}
\newcommand{\triv}{\mathrm{triv}}

\newcommand{\val}{\mathrm{val}}
\newcommand{\vol}{\mathrm{vol}}

\newcommand{\Sieg}{\mr{Sieg}}
\newcommand{\Kling}{\mr{Kling}}

\DeclareMathOperator{\Her}{Her}
\DeclareMathOperator{\Hom}{Hom}
\DeclareMathOperator{\Ind}{Ind}

\DeclareMathOperator{\Lie}{Lie}

\DeclareMathOperator{\Sym}{Sym}

\newcommand{\adic}{\text{-}\mathrm{adic}}
\newcommand{\qexp}{q\text{-}\mathrm{exp}}


\newcommand{\lhra}{\ensuremath{\lhook\joinrel\longrightarrow}}
\newcommand{\lra}{\longrightarrow}
\newcommand{\ra}{\rightarrow}

\newcommand{\ol}{\overline}

\newcommand{\llb}{\llbracket}
\newcommand{\rrb}{\rrbracket}

\newcommand{\bid}{\mathbf{1}}

\newdir{d}{{\subset}}   


\newcommand{\genlegendre}[4]{%
  \genfrac{(}{)}{}{#1}{#3}{#4}%
  \if\relax\detokenize{#2}\relax\else_{\!#2}\fi
}


\makeatletter
\def\l@section{\@tocline{1}{0pt}{1pc}{}{}}
\def\l@subsection{\@tocline{2}{0pt}{1pc}{4.6em}{}}
\def\l@subsubsection{\@tocline{3}{0pt}{1pc}{7.6em}{}}
\renewcommand{\tocsection}[3]{%
  \indentlabel{\@ifnotempty{#2}{\makebox[2.3em][l]{%
    \ignorespaces#1 #2.\hfill}}}#3}
\renewcommand{\tocsubsection}[3]{%
  \indentlabel{\@ifnotempty{#2}{\hspace*{2.3em}\makebox[2.3em][l]{%
    \ignorespaces#1 #2.\hfill}}}#3}
\renewcommand{\tocsubsubsection}[3]{%
  \indentlabel{\@ifnotempty{#2}{\hspace*{4.6em}\makebox[3em][l]{%
    \ignorespaces#1 #2.\hfill}}}#3}
\makeatother


\newcommand{\bc}{\textnormal{big-cell}}
\newcommand{\sym}{\rm{sym}}
\newcommand{\alt}{\rm{alt}}

\newcommand{\mf}{f}
\newcommand{\dsec}{{\tt f}}
\newcommand{\sph}{\mr{sph}}
\newcommand{\Bes}{\cB}
\newcommand{\bes}{B}
\newcommand{\Whi}{\cW}
\newcommand{\whi}{W}
\newcommand{\wal}{I}
\newcommand{\alphaS}{\alpha_{\mathbb{S}}}
\newcommand{\alphabS}{\bar{\alpha}_{\mathbb{S}}}
\newcommand{\Schw}{\Phi}
\newcommand{\sPi}{\scriptscriptstyle{\Pi}}
\newcommand{\sLambda}{\scriptscriptstyle{\Lambda}}
\newcommand{\disc}{\mathrm{disc}}

\newcommand{\bdot}{\boldsymbol{\cdot}}

\DeclareMathOperator*{\motimes}{\text{\raisebox{0.25ex}{\scalebox{0.8}{$\bigotimes$}}}}

\newcommand{\dR}{\mathrm{dR}}

\title{$p$-adic $L$-functions for $\GSp(4)\times\GL(2)$}
\author{Zheng Liu}
\address{University of California, Santa Barbara, , CA, United States}
\email{\href{mailto:zliu@math.ucsb.edu}{zliu@math.ucsb.edu}}
\date{}

\begin{document}

\begin{abstract}
We construct $p$-adic $L$-functions interpolating the critical values of the degree eight $L$-functions of $\GSp(4)\times\GL(2)$ for cuspidal automorphic representations generated by $p$-ordinary Siegel modular forms of genus two and $p$-ordinary modular forms.

\end{abstract}

\maketitle
\tableofcontents 
\numberwithin{equation}{subsection}

\section{Introduction}
Let $\Pi$ be a cuspidal automorphic representation of $\GSp(4,\bA_\bQ)$ and $\pi$ be a cuspidal automorphic representation of $\GL(2,\bA_\bQ)$.   Fix an odd prime $p$. The goal of this article is to construct a $p$-adic interpolation of the critical values of the degree eight $L$-function for $\Pi\times\pi$ twisted by finite order Dirichlet characters $\chi$. By using the integral representation for $L(s,\Pi\times\pi\times\chi)$ discovered by Furusawa \cite{Furusawa}, we prove the following theorem:

\begin{thm}\label{thm:intro}
Assume that $\Pi$ is tempered and $\Pi$ (resp. $\pi$) contains a $p$-ordinary holomorphic cuspidal Siegel modular forms $\varphi_\ord$ of weight $(l_1,l_2)$ (resp. $p$-ordinary cuspidal modular forms $f_\ord$ of weight $l$) with algebraic Fourier coefficients, where $l_1,l_2,l$ satisfy 
\begin{align*}
   &\min\{-l_1+l_2+l,\,l_1+l_2-l\}\geq 3.
\end{align*} 
(See \S\ref{sec:ord-at-p} for the definition of being $p$-ordinary.) Also assume that $\varphi_\ord$ (resp. $f_\ord$) is fixed by the right translation action of $\begin{bmatrix}\bid_2&\Sym_2(\bZ_v)\\0&\bid_2\end{bmatrix}$ (resp. $\begin{bmatrix}1&\bZ_v\\0&1\end{bmatrix}$) for all finite places $v$ of $\bQ$. 

Fix a choice of the following auxiliary data:
\begin{enumerate}
\item[--] $\bS=\begin{bmatrix}\bba&\frac{\bbb}{2}\\\frac{\bbb}{2}&\bbc\end{bmatrix}\in\Sym^*_2(\bZ)_{>0}$,
\item[--] a unitary Hecke character $\Lambda$ of $\cK=\bQ(\sqrt{-\det\bS})$,
\item[--] a finite set $S$ of places of $\bQ$ containing $p,\infty$ such that  for all $v\notin S$, $\varphi_\ord,f_\ord, \Lambda$ are spherical at $v$, $\bbc\in\bZ^\times_v$, $4\det\bS=\disc(\cK_v/\bQ_v)$,
\item[--] an open subgroup $U^p$ of $\hat{\bZ}^{p,\times}$ containing $\prod_{v\notin S} \bZ^\times_v$.
\end{enumerate}
\begin{enumerate}[leftmargin=1.5em,label=(\roman*)]
\item There exists a $p$-adic measure $\cL^S_{\Pi,\pi}\in\pzM eas\left(\bQ^\times\backslash \bA^\times_{\bQ,f} /U^p,\bar{\bQ}_p\right)$ satisfying the interpolation property: For all finite order characters $\chi:\bQ^\times\backslash \bA^\times_{\bQ}/U^p\ra\bC^\times$ and integers $k$ such that $s=k$ for $l_1+l_2+l$ even (resp. $s=k+\frac{1}{2}$ for $l_1+l_2+l$ odd) is a critical point for $L(s,\tilde{\Pi}\times\tilde{\pi}\times\chi)$, {\it i.e.}
\begin{equation}\label{eq:k-crt}
  \left\{\begin{array}{ll} -\frac{\min\{-l_1+l_2+l,\,l_1+l_2-l\}}{2}+2 \leq k\leq \frac{\min\{-l_1+l_2+l,\,l_1+l_2-l\}}{2}-1,&\text{$l_1+l_2+l$ even},\\[2ex]
  -\frac{\min\{-l_1+l_2+l,\,l_1+l_2-l\}}{2}+2 \leq k+\frac{1}{2}\leq \frac{\min\{-l_1+l_2+l,\,l_1+l_2-l\}}{2}-1,&\text{$l_1+l_2+l$ odd},\end{array}\right.
\end{equation}
we have
\begin{align*}
   \cL^S_{\Pi,\pi}\left((\chi|\bdot|^k)_{p_\adic}\right)
   = &\,\frac{\bes^\dagger_{\bS,\Lambda}\left(\varphi_{\ord}\right)
   \,\whi_\bbc(f_\ord)}{\bfP(\varphi_\ord,\varphi_\ord)\,\bfP(f_\ord,f_\ord)} \cdot I_\infty(k,\Pi_\infty,\pi_\infty,\Lambda_\infty)\\[1ex]
     &\times  
   \left\{\begin{array}{ll}
   E_p\left(k,\tilde{\Pi}\times\tilde{\pi}\times\chi\right)
   \cdot L^S\left(k,\tilde{\Pi}\times\tilde{\pi}\times\chi\right),&\text{$l_1+l_2+l$ even},\\[2ex]
   E_p\left(k+\frac{1}{2},\tilde{\Pi}\times\tilde{\pi}\times\chi\right)
   \cdot L^S\left(k+\frac{1}{2},\tilde{\Pi}\times\tilde{\pi}\times\chi\right),&\text{$l_1+l_2+l$ odd}.
   \end{array}\right.
\end{align*}
Here $\tilde{\Pi}$ (resp. $\tilde{\pi}$) denotes the contragredient representation of $\Pi$ (resp. $\pi$), $E_p(s,\tilde{\Pi}\times\tilde{\pi}\times\chi)$ is the modified Euler factor at $p$ for $p$-adic interpolation defined in \cite{CoPerrin,Coates} (see \eqref{eq:Ep} for an explicit formula in our case here), and $I_\infty(k,\Pi_\infty,\pi_\infty,\Lambda_\infty)$ is an archimedean integral \eqref{eq:azI}. $\bes^\dagger_{\bS,\Lambda}(\varphi_\ord)$ is the Bessel period indexed by $(\bS,\Lambda)$ with a modification at $p$ defined in \eqref{eq:bes-dagger},  $\whi_\bbc(f_\ord)$ is  the Whittaker period indexed by $\bbc$ defined in \eqref{eq:Wcf}, and $\bfP(\varphi_\ord,\varphi_\ord)$, $\bfP(f_\ord,f_\ord)$ are defined in terms of Petersson inner product in \eqref{eq:Pet}\eqref{eq:Pet2}.

\item When $l_1=l_2=l$, for $\chi,k$ as above, we have the more precise interpolation formula:
\begin{equation*}
\begin{aligned}
   &\cL^S_{\Pi,\pi}\left((\chi|\bdot|^k)_{p_\adic}\right)
   = \,\bbc\sqrt{\det\bS}\, 2^{-6}\cdot \frac{\bes^\dagger_{\bS,\Lambda}\left(\varphi_{\ord}\right)
   \,\whi_\bbc(f_\ord)}{ \bfP(\varphi_\ord,\varphi_\ord)\,\bfP(f_\ord,f_\ord)} \\[1ex]
   &\times 
   \left\{\begin{array}{ll}
   E_\infty\left(k,\tilde{\Pi}\times\tilde{\pi}\times\chi\right)
   E_p\left(k,\tilde{\Pi}\times\tilde{\pi}\times\chi\right)
   \cdot  L^S\left(k,\tilde{\Pi}\times\tilde{\pi}\times\chi\right), &\text{$l_1+l_2+l$ even},\\[2ex] 
   i\,E_\infty\left(k+\frac{1}{2},\tilde{\Pi}\times\tilde{\pi}\times\chi\right)
   E_p\left(k+\frac{1}{2},\tilde{\Pi}\times\tilde{\pi}\times\chi\right)
   \cdot  L^S\left(k+\frac{1}{2},\tilde{\Pi}\times\tilde{\pi}\times\chi\right), &\text{$l_1+l_2+l$ odd},
   \end{array}\right.
\end{aligned}
\end{equation*}
where $E_\infty(s,\tilde{\Pi}\times\tilde{\pi}\times\chi)$) is the modified Euler factor at $\infty$ for $p$-adic interpolation defined in \cite{CoPerrin,Coates} (see \eqref{eq:Einf} for an explicit formula for our case here).

\item If $\Pi_p$ does not belong to IIa or IVa in the classification in \cite{RSLocal}, then there exists a choice of $(\bS,\Lambda)$ and $f_\ord$ such that $\bes^\dagger_{\bS,\Lambda}\left(\varphi_{\ord}\right)
\whi_\bbc(f_\ord)\neq 0$.
\end{enumerate}
\end{thm}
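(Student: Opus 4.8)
The plan is to realize $\cL^S_{\Pi,\pi}$ as the pullback of a $p$--adic family of Siegel--type Eisenstein series along Furusawa's integral representation, paired against the ordinary vectors via the Bessel and Whittaker functionals. Schematically, \cite{Furusawa} expresses $L(s,\tilde\Pi\times\tilde\pi\times\chi)$, up to archimedean and ramified local factors, through a global zeta integral
\[
 Z(s;\varphi,f,\chi)\;=\;\int E(g,s;\Schw_\chi)\,\varphi(g_1)\,f(g_2)\,dg ,
\]
where $E(g,s;\Schw_\chi)$ is a Siegel--type Eisenstein series on the unitary similitude group $\GU(2,2)$ attached to $\cK/\bQ$, built from a Schwartz--Bruhat datum $\Schw_\chi$ twisted by $\chi$, the integration runs over the relevant quotient of $\GSp(4)\times_{\GL(1)}\GU(1,1)\hookrightarrow\GU(2,2)$ (writing $g=(g_1,g_2)$), and unfolding turns $Z$ into the Bessel period $\bes_{\bS,\Lambda}(\varphi)$ times the Whittaker coefficient $\whi_\bbc(f)$ times a product of local zeta integrals. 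Before anything $p$--adic, I would fix arithmetic normalizations of all ingredients: an algebraic/integral model of $E(g,s;\Schw_\chi)$ through its Fourier--Jacobi expansion, the Petersson normalizations $\bfP(\varphi_\ord,\varphi_\ord)$, $\bfP(f_\ord,f_\ord)$ of \eqref{eq:Pet}\eqref{eq:Pet2}, and the modified periods $\bes^\dagger_{\bS,\Lambda}$ of \eqref{eq:bes-dagger} and $\whi_\bbc$ of \eqref{eq:Wcf}.

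\textbf{The Eisenstein measure.} Fixing the local sections --- spherical for $v\notin S$, a vector reflecting $(\bS,\Lambda)$ at the finite places of $S$, and a $p$--stabilized section at $p$ varying $p$--adic analytically in the cyclotomic parameter --- I would assemble a measure $\cE^S$ on $\bQ^\times\backslash\bA^\times_{\bQ,f}/U^p$ valued in $p$--adic modular forms on $\GU(2,2)$ whose value at $(\chi|\bdot|^k)_{p_\adic}$ is the classical Eisenstein series attached to the $p$--stabilized section at $s=k$ (resp. $s=k+\tfrac{1}{2}$). The two analytic inputs are: (a) the Fourier--Jacobi coefficients of these Eisenstein series are algebraic and, after the normalization dictated by the section at $p$, $p$--integral, which follows from the explicit Fourier expansion of Siegel Eisenstein series on $\GU(2,2)$ together with the $q$--expansion principle; (b) the $p$--section varies $p$--adic analytically, its finite Fourier transform being a Gauss sum --- the computation underlying the $p$--adic multiplier $E_p$ of \eqref{eq:Ep} and the conductor--dependent constants.

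\textbf{Pullback, pairing, interpolation.} I would pull $\cE^S$ back along $\GSp(4)\times_{\GL(1)}\GU(1,1)\hookrightarrow\GU(2,2)$, restrict to $\GSp(4)\times\GL(2)$, apply Hida's ordinary projector on each factor, and pair against $\varphi_\ord\otimes f_\ord$ using $\bes^\dagger_{\bS,\Lambda}$ in the $\GSp(4)$ variable and $\whi_\bbc$ in the $\GL(2)$ variable. Since the ordinary projector is $p$--integrally bounded and Hida's pairing of a $p$--adic modular form against a fixed ordinary eigenform is a bounded $\bar{\bQ}_p$--valued functional --- whose normalization is exactly where the factors $\bfP(\varphi_\ord,\varphi_\ord)\bfP(f_\ord,f_\ord)$ enter --- this produces the bounded measure $\cL^S_{\Pi,\pi}\in\pzM eas(\bQ^\times\backslash\bA^\times_{\bQ,f}/U^p,\bar{\bQ}_p)$, uniquely determined because the $(\chi|\bdot|^k)_{p_\adic}$ with $k$ as in \eqref{eq:k-crt} are dense. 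Evaluating at such a point reduces to $Z(k;\varphi_\ord,f_\ord,\chi)$ over the Petersson normalizations; Furusawa's unfolding factors it into local integrals: for $v\notin S$ the basic identity gives the local $L$--factor, so their product is $L^S(k,\tilde\Pi\times\tilde\pi\times\chi)$; for finite $v\in S$ the chosen vectors make the local integral reproduce the local Bessel and Whittaker values constituting $\bes^\dagger_{\bS,\Lambda}(\varphi_\ord)$ and $\whi_\bbc(f_\ord)$; at $p$ the local zeta integral with the $p$--stabilized section evaluates to $E_p$ (up to the $p$--part of the Petersson normalizations); and at $\infty$ one is left with $I_\infty(k,\Pi_\infty,\pi_\infty,\Lambda_\infty)$ of \eqref{eq:azI}. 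Comparison with the recipe of \cite{CoPerrin,Coates} gives (i). For (ii), when $l_1=l_2=l$ the archimedean section lies in a scalar holomorphic discrete series and $I_\infty$ can be evaluated in closed form: it equals $E_\infty$ of \eqref{eq:Einf} times an elementary $\Gamma$--quotient, and bookkeeping of the constants yields the displayed $\bbc\sqrt{\det\bS}\,2^{-6}$ (with an extra $i$ in the odd case). For (iii), the global product $\bes^\dagger_{\bS,\Lambda}(\varphi_\ord)\whi_\bbc(f_\ord)$ is nonzero once each local Bessel functional is: for $\Pi_p$ outside types IIa and IVa of \cite{RSLocal} the local Bessel model for a suitable $(\bS,\Lambda)$ is nonzero, and $f_\ord$ can then be chosen so that $\whi_\bbc(f_\ord)\neq0$.

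\textbf{Main obstacle.} The crux is the computation at $p$: carrying out (b) together with the $p$--adic local zeta integral precisely enough to recover the \emph{exact} modified Euler factor $E_p$ --- not merely an Euler--type factor --- while simultaneously controlling $p$--integrality of the Fourier--Jacobi coefficients of the Eisenstein family so that the resulting functional is genuinely bounded; interchanging ordinary projection with the pairing and tracking all normalizations, including the passage from $\tilde\Pi,\tilde\pi$ back to $\Pi,\pi$, is where the bookkeeping is most delicate. A secondary difficulty is the closed--form evaluation of $I_\infty$: for (i) it only needs to be shown nonzero and of the expected shape, but for (ii) it must be pinned down exactly, requiring an explicit archimedean Bessel/Whittaker computation in the holomorphic discrete series.
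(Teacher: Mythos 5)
Your architecture (Eisenstein measure, pullback, ordinary projection, pairing, unfolding) matches the paper's in outline, but several of the load-bearing steps are either misstated or missing, and two of them would fail as written. First, the group is wrong: there is no embedding $\GSp(4)\times_{\GL(1)}\GU(1,1)\hookrightarrow\GU(2,2)$ (the ranks do not add up), and the whole point of the paper's Theorem~\ref{thm:Furusawa} is to combine Furusawa's formula (which involves a \emph{Klingen} Eisenstein series on $\GU(2,2)$ and the Bessel period on $\GSp(4)$) with Garrett's doubling so that everything is re-expressed through a \emph{Siegel} Eisenstein series on $\GU(3,3)$ restricted along $\imath:\GSp(4)\times_{\GL(1)}\GU(1,1)\to\GU(3,3)$. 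This reduction is what makes the local integrals \eqref{eq:Zv-def} tractable; starting from $\GU(2,2)$ you are back in the situation of \cite{PiSch,Saha}, with exactly the test-vector difficulties the paper is designed to avoid. Second, the pairing that produces the measure is not via $\bes^\dagger_{\bS,\Lambda}$ and $\whi_\bbc$: applying a Bessel functional to the Eisenstein family does not isolate the $\Pi$-component and does not yield a $\bar{\bQ}_p$-rational bounded functional. The paper instead pairs $\cE$ with $\varphi'_\ord\otimes f_\ord$ via the normalized Petersson product $\sL_{\varphi_\ord,f_\ord}$ (this is where $\bfP(\varphi_\ord,\varphi_\ord)\bfP(f_\ord,f_\ord)$ enters and where algebraicity must be proved via the degree-$5$ $L$-function argument); the Bessel and Whittaker periods then appear only \emph{after} evaluation at a classical point, from the unfolding \eqref{eq:factorization}.

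The deeper gap is the computation at $p$, which you correctly flag as the crux but then describe only as ``a Gauss sum.'' The modified Euler factor $E_p$ in \eqref{eq:Ep} is a product of three inverse $\gamma$-factors, and the paper obtains it by a specific mechanism with no Gauss-sum shortcut: one takes a big-cell section $\dsec^{\bc}_{p,\Schw_p}$, replaces the ordinary vectors by their translates $\varphi_{p,m_1+n,m_2+n}$, $f_{p,m_3+n}$, divides by the $\bU_p$-eigenvalues, and computes $\lim_{n\to\infty}$ after partitioning the domain into $\cU_{1,n}\sqcup\cU_{2,n}\sqcup\cU_{3,n}$ and showing two pieces vanish (Proposition~\ref{Prop:Zbc-1}); the surviving piece is then evaluated by the Godement--Jacquet and $\GL(1)$ functional equations (Proposition~\ref{prop:ZpWals}). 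Without this (or an equivalent uniform computation valid for all ordinary $\Pi_p,\pi_p$ and all $(\bS,\Lambda_p)$), you get at best an unidentified local factor at $p$, i.e.\ the situation of \cite{Agarwal}. Relatedly, your justification of (iii) is not the right one: the obstruction is not the existence of a local Bessel model for $\Pi_p$, but the possible vanishing of the $p$-stabilized value \eqref{eq:Bm1m2}, which by the computation in Proposition~\ref{prop:Zord}(ii) is controlled by $\gamma_v\left(\frac{1}{2},\eta^{-1}_{\sPi_v,1}\Lambda_\fv\right)$ and can vanish precisely in the split-exceptional configuration realized by types IIa and IVa. Finally, on the measure itself, the interpolation of the Fourier coefficients is not a direct consequence of a $q$-expansion principle: the archimedean normalization contributes $(\det\bbeta)^{2k+\epsilon-1}$ for $k+\frac{\epsilon}{2}\geq\frac{1}{2}$, which is not interpolable as it stands; the paper must pass to the modified coefficients $A'_{(i,j)}$ and use the congruence $\frac{\beta_{12}-\bar{\beta}_{12}}{2}\det\begin{bsm}\bar{\beta}_{13}&\bar{\beta}_{23}\\ \beta_{13}&\beta_{23}\end{bsm}\det(\bbeta)^{-1}\equiv 1\bmod p^n$ on the support of the iterated $\bU_p$-operators to see that the modified and unmodified ordinary projections agree. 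These are the ideas your proposal would need to supply.
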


\begin{rmk}
The construction of $p$-adic $L$-functions for $\GSp(4)\times\GL(2)$ has also been studied in \cite{Agarwal,LPSZ,LoZe,LoRi}. The region of the weights in the above theorem is the same as those considered in \cite{Agarwal,LoRi}, and is different from those in \cite{LPSZ,LoZe}. The cyclotomic variable is not included in the constructions in \cite{Agarwal,LoRi,LoZe}.  

The construction in \cite{Agarwal} uses the same integral representation as our construction, but other than missing the cyclotomic variable, the interpolation formula also contains a factor of local zeta integrals at $p$, which is not computed or shown to be nonzero. 

\cite{LPSZ,LoZe,LoRi} uses a different automorphic integral studied \cite{Novo,Soudry}. This integral involves Eisenstein series on $\GL(2)$ with Whittaker models of representations of $\GSp(4)$, appearing in the local zeta integrals,  while the automorphic integral we use involves Klingen Eisenstein series on $\GU(2,2)$ with Bessel models of representations of $\GSp(4)$ appearing in the local zeta integrals. In some sense, the automorphic integral we use here is computationally more complicated than the one used in \cite{LPSZ,LoZe,LoRi}. On the other hand, because of the appearance of Whittaker models, the construction in {\it loc.cit} needs to use non-holomorphic Siegel modular forms and higher coherent cohomologies for $\GSp(4)$. Our construction uses holomorphic Siegel modular forms and can bring technical convenience for potential arithmetic applications (for example studying congruences and Euler systems). 
\end{rmk}

\begin{rmk}
Furusawa's integral has been used for studying the algebraicity of critical values of $L(s,\Pi\times\pi)$ in \cite{Furusawa,PiSch,Saha,Pitale,Saha-pullback} with assumptions mostly resulting from the difficulty of choosing test vectors and calculating the corresponding local zeta integrals. The main difference between our approach for handling the local zeta integrals is that by using Garrett's generalization of the doubling method \cite{GaKl}, we reduce the study of the local zeta integral in \cite{Furusawa} to another local integral \eqref{eq:Zv-def} involving degenerate principal series on $\GU(3,3)$, which turns out advantageous for studying the local zeta integral for our purpose. (Garrett's generalization of the doubling method is also used in \cite{Saha-pullback}, but instead of reducing to studying the local integral \eqref{eq:Zv-def}, the author computes the local integrals for Garrett's generalization of the doubling method and shows that the corresponding local sections for the Klingen Eisenstein series agrees with the ones used in \cite{PiSch,Saha} to compute local integrals in \cite{Furusawa}.) 
\end{rmk}

Our main focus in this paper is to study, from the perspective of $p$-adic interpolations, the local zeta integrals in Furusawa's formula at finite places with minimal assumptions on $\Pi_v,\pi_v$. (Although the methods and calculations here are ready for use to construct four-variable $p$-adic $L$-functions for Hida families on $\GSp(4)$ and $\GL(2)$, we do not purse this generality here to avoid adding extra length to this paper.) For the construction of $p$-adic $L$-functions, we address the following two aspects: 
\begin{itemize}[left= 10pt]
\item Choose nice sections at $p$ for $p$-adic interpolations and compute the corresponding local zeta integrals in a uniform way for all $\Pi_p,\pi_p$ appearing as local components of automorphic representations of $\GSp(4,\bA_\bQ)$, $\GL(2,\bA_\bQ)$ which are ordinary at $p$, without assuming conditions on $(\bS,\Lambda)$ at $p$.

\item Show that at ramified finite places, a convenient section can be chosen such that the local zeta integral is computable for all $\Pi_v,\pi_v$, without assuming conditions on $(\bS,\Lambda)$. 
\end{itemize}
When computing local zeta integrals at $p$ for $p$-adic interpolations, there are often two types of simplifications. One is to assume that the local representations are unramified, and the other is to assume that certain characters are sufficiently ramified. The factors coming out from the computations for these two types have apparently different shapes,  but both can be interpreted as products of $\gamma$-factors as explained in the work of Coates and Perrin-Riou \cite{CoPerrin,Coates}. Thus one expects that a nice strategy for the computation handles all the cases uniformly by exploiting the feature of $\bU_p$-eigenvectors with nonzero eigenvalues. The local zeta integrals in Furusawa's formula at ramified finite places have been studied in \cite{PiSch,Saha,Pitale} with assumptions on $\Pi_v,\pi_v$ as well as on $(\bS,\Lambda)$, and applied to extend the results on algebraicity of critical values of $L(s,\Pi\times\pi)$ in \cite{Furusawa}. Their assumptions on $(\bS,\Lambda)$ causes the assumption on the existence of a nonzero so-called fundamental Fourier coefficient (as defined in \cite[(0.1)]{Furusawa}) for a form in $\Pi$ of a specific level. This existence has been proved in \cite{Saha-FundFC,SaSch} for scalar-weight forms of square-free level. Our choice of test sections allows us to bypass this issue.

\vspace{1em}
\noindent{\bf Notation.} We fix an odd prime number $p$.

We use $v$ to denote a place of $\bQ$. We fix the additive character 
\[
   \psi_{\bA_\bQ}=\bigotimes_v\psi_v:\bQ\backslash\bA\ra\bC^\times,
   \qquad \psi_v(x)=\left\{\begin{array}{ll}e^{-2\pi i\{x\}_v},& v\neq\infty\\e^{2\pi ix},&v=\infty\end{array},\right.
\] 
where $\{x\}_v$ is the fractional part of $x$. 

For an imaginary quadratic field $\cK$ and a finite place $v$ of $\bQ$, we write 
\begin{align*}
   \cK_v&=\cK\otimes_\bQ\bQ_v,
   &\cO_{\cK,v}&=\cO_\cK\otimes_\bZ\bZ_v.
\end{align*}
We will use $\Lambda,\Upsilon,\Xi$ to denote Hecke characters of $\cK^\times\backslash\bA^\times_\cK$. We denote by $\Lambda^c,\Upsilon^c,\Xi^c$ their compositions with the complex conjugation of $\cK/\bQ$, and write $\Lambda_{\bQ},\Upsilon_\bQ,\Xi_\bQ$ to mean their restrictions to $\bQ^\times\backslash\bA^\times_\bQ$, and $\Lambda_v,\Upsilon_v,\Xi_v$ to mean their restrictions to $\cK\otimes\bQ_v$. For any algebra $R$ and $\fz\in \cO_\cK\otimes_\bZ R$, denote by $\bar{\fz}$ the element obtained from $\fz$ by applying the complex conjugation of $\cK/\bQ$ to the first factor.

For a positive integer $n$, define the algebraic group $\GSp(2n)$ over $\bZ$ as
\begin{align*}
   \GSp(2n,R)=\left\{g\in\GL(2n,R):\ltrans{g}\begin{bmatrix}0&\bid_n\\-\bid_n&0\end{bmatrix}g=\nu_g\begin{bmatrix}0&\bid_n\\-\bid_n&0\end{bmatrix}, \,\nu_g\in R^\times\right\}
\end{align*}
for all $\bZ$-algebra $R$. Given an imaginary quadratic field $\cK$, define the algebraic group $\GU(n,n)$ over $\bZ$ as
\begin{align*}
   \GU(n,n)(R)=\left\{g\in\GL(2n,\cO_\cK\otimes R):\ltrans{\bar{g}}\begin{bmatrix}0&\bid_n\\-\bid_n&0\end{bmatrix}g=\nu_g\begin{bmatrix}0&\bid_n\\-\bid_n&0\end{bmatrix}, \,\nu_g\in R^\times\right\}.
\end{align*}
For a $\bZ$-algebra $R$, we put
\begin{align*}
    \Her_n(\cO_\cK\otimes_\bZ R)&=\{\bbeta\in M_n(\cO_\cK\otimes_\bZ R):\ltrans{\bar{\bbeta}}=\bbeta\},
    &\Sym_n(R)&=\{\beta\in M_n(R):\ltrans{\beta}=\beta\},
\end{align*}
and if $R$ is an integral domain, we put
\[
   \Sym^*_n(R)=\left\{\begin{bsm}\beta_{11}&\frac{\beta_{12}}{2}&\cdots&\frac{\beta_{1n}}{2}\\ \frac{\beta{12}}{2}&\beta_{22}&\cdots&\frac{\beta_{2n}}{2}\\ \vdots&\vdots & \ddots&\vdots\\ \frac{\beta_{1n}}{2}&\frac{\beta_{2n}}{2}&\cdots&\beta_{nn}\end{bsm}\in \Sym_n(\mr{Frac}(R)):\beta_{ij}\in R \text{ for all $i,j$}\right\}.
\]

For an algebraic group $G$ over $\bQ$, we write $[G]$ to denote $G(\bQ)Z_G(\bQ)\backslash G(\bA)$. 

Given a character $\theta:\bQ^\times_v\ra \bC^\times$ or a character $\Theta:\cK^\times_p\ra\bC^\times$, we let 
\begin{align}
  \label{eq:char-circ} \theta^\circ&=\mathds{1}_{\bZ^\times_p}\cdot \theta,
   &\Theta^\circ&=\mathds{1}_{\cO^\times_{\cK,p}}\cdot \Theta.
\end{align}


\vspace{1em}
\noindent{\bf Acknowledgment.} I would like to thank Ming-Lun Hsieh for suggesting considering Furusawa's automorphic integral for $\GSp(4)\times\GL(2)$, and Bin Xu, Hang Xue for useful discussions. During the preparation of this paper, the author was partially supported by the NSF grant DMS-2001527, and by the NSF grant DMS-1440140 while the author was in residence at MSRI during the spring of 2023 for the program ``Algebraic Cycles, L-Values, and Euler Systems''.

\section{Furusawa's formula and the doubling method}
Let $\Pi$ be an irreducible cuspidal automorphic representation of $\GSp(4,\bA_\bQ)$ and $\pi$ be an irreducible cuspidal automorphic representation of $\GL(2,\bA_\bQ)$. Denote by $\tilde{\Pi},\tilde{\pi}$ their contragredient representations. Let $\chi$  be a finite order character of $\bQ^\times\backslash\bA^\times_\bQ$. By combining Furusawa's formula for the degree eight $L$-functions for $\GSp(4)\times\GL(2)$ \cite{Furusawa} and Garrett's generalization of the doubling method for Klingen Eisenstein series on unitary groups \cite{GaKl}, we express $L(s,\tilde{\Pi}\times\tilde{\pi}\times\chi)$ as an integral involving a form in $\Pi$, a form in $\pi$ and a Siegel Eisenstein series on $\GU(3,3)$. Then we make some preliminary calculations of the local zeta integrals, which will be used for proving the interpolation property of the later constructed $p$-adic $L$-function.

\subsection{The integral representation for $L$-functions for $\GSp(4)\times\GL(2)$}

First, we introduce some preliminaries: Bessel models for $\GSp(4)$, forms on $\GL(2)$ and $\GU(1,1)$ (mostly following Section 1 in \cite{Furusawa}), and Siegel Eisenstein series on $\GU(3,3)$. Then we prove the formula in Theorem~\ref{thm:Furusawa} which we will use for constructing $p$-adic $L$-functions.

\subsubsection{Bessel models for $\GSp(4)$}
Take 
\begin{align*}
  \bS=\begin{bmatrix}\bba&\frac{\bbb}{2}\\\frac{\bbb}{2}&\bbc\end{bmatrix}\in\Sym_2(\bQ)
\end{align*}
and put $\cK=\bQ(\sqrt{-\det \bS})$. Fix a square root $\sqrt{\bbb^2-4\bba\bbc}\in\cK$ and  put
\[
   \alphaS=\frac{\bbb+\sqrt{\bbb^2-4\bba\bbc}}{2\bbc}.
\] 
Let 
\begin{align*}
   T_\bS&=\left\{A\in\GL(2):\ltrans{A}\bS A=\det A\cdot\bS\right\}.
\end{align*}
We identify it with the subgroup 
\begin{align*}
   \left\{\begin{bmatrix}A\\&\det A\cdot \ltrans{A}^{-1}\end{bmatrix}:A\in T_\bS\right\}\subset \GSp(4),
\end{align*}
which we will also denote by $T_{\bS}$. For $\fz\in \cK\otimes_\bQ R$ with $R$ any $\bQ$-algebra, let
\begin{align*}
      \imath_\bS\left(\fz\right)
   =\begin{bmatrix}1&1\\-\alphabS&-\alphaS\end{bmatrix}\begin{bmatrix}\fz\\&\bar{\fz}\end{bmatrix}\begin{bmatrix}1&1\\-\alphabS&-\alphaS\end{bmatrix}^{-1}
   =\begin{bmatrix}\alphaS&1\\ \alphabS&1\end{bmatrix}^{-1} \begin{bmatrix}\fz\\&\bar{\fz}\end{bmatrix}\begin{bmatrix}\alphaS&1\\ \alphabS&1\end{bmatrix} \in M_2(R).
\end{align*} 
Then we have the isomorphism
\begin{align*}
   &\mr{Res}_{\cK/\bQ}\GL(1)\stackrel{\sim}{\lra} T_\bS,
   &&\fz\longmapsto \begin{bmatrix}\imath_\bS(\fz)\\&\ltrans{\imath_\bS(\bar{\fz})}\end{bmatrix}.
\end{align*}
Denote by $U^\Sieg_{\GSp(4)}$ the unipotent subgroup of $\GSp(4)$ consisting of $\begin{bmatrix}\bid_2&X\\&\bid_2\end{bmatrix}$ with $\ltrans{X}=X$. Put
\begin{align*}
  R_\bS&=T_\bS U^{\Sieg}_{\GSp(4)}.
\end{align*}
Given a Hecke character $\Lambda:\cK^\times\backslash\bA^\times_\cK\ra\bC^\times$, define the character $\Lambda_\bS:R(\bA_\bQ)\ra\bC^\times$ as
\begin{align*}
   \Lambda_\bS\left(\begin{bmatrix}\imath_\bS(\fz)\\&\ltrans{\,\ol{\imath_\bS(\bar{\fz})}}\end{bmatrix}\begin{bmatrix}\bid_2&X\\&\bid_2\end{bmatrix}\right)=\Lambda(\fz)\psi_{\bA_\bQ}(\Tr\, \bS X).
\end{align*}

Suppose that $\varphi$ be a cuspidal automorphic form on $\GSp(4)$ with central character equal to $\Lambda_\bQ$. The global {\it Bessel period} for $\varphi$ with respect to $(\bS,\Lambda)$ (and the fixed additive character $\psi_{\bA_\bQ}$) is defined to be
\begin{align*}
   \bes_{\bS,\Lambda}(\varphi)=\int_{[R_\bS]}\Lambda^{-1}_\bS(r)\cdot\varphi(r)\,dr.
\end{align*}
We also define the function $\Bes_{\bS,\Lambda}(\varphi)$ on $\GSp(4,\bA_\bQ)$ as
\begin{align*}
   \Bes_{\bS,\Lambda}(\varphi)(g)=\bes_{\bS,\Lambda}(g\cdot \varphi)=\int_{[R_\bS]}\Lambda^{-1}_\bS(r)\cdot\varphi(rg)\,dr.
\end{align*}
For an irreducible cuspidal automorphic representation $\Pi$ of $\GSp(4,\bA_\bQ)$ with central character $\omega_\Pi=\Lambda_\bQ$, the subspace of functions on $\GSp(4,\bA)$ spanned by $\{\Bes_{\bS,\Lambda}(\varphi):\varphi\in\Pi\}$ is called the space of the global {\it Bessel model} of type $(\bS,\psi,\Lambda)$ for $\Pi$.

\vspace{1em}
Fix an isomorphism $\Pi\cong \bigotimes_v\Pi_v$. By \cite{NoPS},
\[
   \dim \Hom_{R_\bS(\bQ_v)}\left(\Pi_v,\Lambda_{\bS,v}\right)\leq 1.
\]
Suppose that $\Pi$ has a nontrivial global Bessel model. Then $\Hom_{R(\bQ_v)}\left(\Pi_v,\Lambda_{\bS,v}\right)$ is one-dimensional for all $v$. From it, we pick $\bes^{\Pi_v}_{\bS,\Lambda_{v}}$ and denote by $\Bes^{\Pi_v}_{\bS,\Lambda_v}(\varphi_v)$ the function on $\GSp(4,\bQ_v)$ sending $g_v$ to $\bes^{\Pi_v}_{\bS,\Lambda_v}(g_v\cdot\varphi_v)$. Then there exists nonzero $C_{\bS,\Lambda,\Pi}\in\bC$ such that for all $\varphi\in\Pi$ with image $\bigotimes_v\varphi_v$ under our fixed isomorphism $\Pi\cong\bigotimes_v\Pi_v$, we have 
\begin{equation}\label{eq:Bvarphi}
   \Bes_{\bS,\Lambda}(\varphi)(g)=C_{\bS,\Lambda,\Pi}\cdot \prod_v \Bes^{\Pi_v}_{\bS,\Lambda_v}(\varphi_v)(g_v).
\end{equation}

\subsubsection{Automorphic forms on $\GL(2)$ and $\GU(1,1)$}
We have the isomorphism
\begin{align*}
   \left(\mr{Res}_{\cK/\bQ}\GL(1)\right)\times\GL(2)/\{(a,a^{-1}\cdot\bid_2):a\in\GL(1)\}&\stackrel{\sim}{\lra} \GU(1,1)\\
   (\fa,g)&\longmapsto \fa g.
\end{align*}
Given a modular form $f$ on $\GL(2)$ and a Hecke character $\Upsilon:\cK^\times\backslash\bA^\times_\cK\ra\bC^\times$ with $\Upsilon_\bQ$ equal to the central character of $f$, we can define the form $f^\Upsilon$ on $\GU(1,1)$ as
\begin{align*}
   \mf^\Upsilon(\fa g)&=\Upsilon(\fa)f(g), &\fa\in\bA^\times_\cK,\,g\in\GL(2,\bA_\bQ).
\end{align*}
For an irreducible cuspidal automorphic representation $\pi$ of $\GL(2,\bA_\bQ)$ with central character $\omega_\pi=\Upsilon_\bQ$,  $\pi^\Upsilon=\{f^\Upsilon:f\in\pi\}$ is an irreducible cuspidal automorphic representation of $\GU(1,1)$.

Given $\bbc\in\bQ$, we have the global {\it Whittaker period} for $f\in\pi$:
\begin{align}
   \label{eq:Wcf}\whi_{\bbc}\left(\mf\right)&=\int_{\bQ\backslash\bA_\bQ} \mf\left(\begin{bmatrix}1&x\\&1\end{bmatrix} \right)\cdot\psi_{\bA_\bQ}(-\bbc x)\,dx.
\end{align}
We define the function $\Whi_{\bbc}\left(\mf\right)$ on $\GL(2,\bA_\bQ)$ as 
\begin{align*}
   \Whi_{\bbc}\left(\mf\right)(g)&=\whi_{\bbc}\left(g\cdot \mf\right)=\int_{\bQ\backslash\bA_\bQ} \mf\left(\begin{bmatrix}1&x\\&1\end{bmatrix} g\right)\cdot\psi_{\bA_\bQ}(-\bbc x)\,dx.
\end{align*}
Fix an isomorphism $\pi\cong\bigotimes_v\pi_v$, and for each place $v$, pick a nonzero Whittaker functional
\[
   \whi^{\pi_v}_{\bbc}\in \Hom_{U_{\GL(2)}(\bQ_v)}\left(\pi_v,\psi_{\bbc,v}\right),
\]
where $U_{\GL(2)}\subset \GL(2)$ is the standard unipotent subgroup, and $\psi_{\bbc,v}$ is the character of $U_{\GL(2)}(\bQ_v)$ sending $\begin{bmatrix}1&x\\&1\end{bmatrix}$ to $\psi_v(\bbc x)$. For $f_v\in\pi_v$, denote by $\Whi^{\pi_v}_{\bbc}(f_v)$ the function on $\GL(2,\bQ_v)$ sending $g$ to $\whi^{\pi_v}_\bbc(g\cdot f_v)$. The existence and uniqueness of the Whittaker models for the $\pi_v$'s tell us that there exists a nonzero $C_{\bbc,\pi}\in\bC$ such that for all $f\in\pi$ corresponding to $\bigotimes_v f_v\in\bigotimes_v\pi_v$, 
\begin{equation}\label{eq:Wf1}
   \Whi_\bbc(\mf)(g)=C_{\bbc,\pi}\cdot \prod_{v} \Whi^{\pi_v}_\bbc(f_v)(g_v).
\end{equation}

Let
\begin{align*}
   \Whi_{\bbc}\left(\mf^{\Upsilon}\right)(g)&=\int_{\bQ\backslash\bA_\bQ} \mf^\Upsilon\left(\begin{bmatrix}1&x\\&1\end{bmatrix} g\right)\cdot\psi_{\bA_\bQ}(-\bbc x)\,dx,
   &g\in\GU(1,1)(\bA_\bQ),
\end{align*}
and 
\begin{align*}
   \Whi^{\pi_v,\Upsilon_v}_\bbc(f_v)(\fa g)&=\Upsilon_v(\fa)\cdot \Whi^{\pi_v}_\bbc(f_v)(g),
   &(\fa,g)\in\cK_v\times\GL(2,\bQ_v).
\end{align*}
Then \eqref{eq:Wf1} gives
\begin{equation}\label{eq:Wf2}
   \Whi_\bbc(\mf^\Upsilon)(g)=C_{\bbc,\pi}\cdot \prod_{v} \Whi^{\pi_v,\Upsilon_v}_\bbc(f_v)(g_v).
\end{equation}

\subsubsection{The Siegel Eisenstein series on $\GU(3,3)$}
Let $Q_{\GU(3,3)}\subset \GU(3,3))$ be the standard Siegel parabolic subgroup consisting of elements whose lower-left $3$-by-$3$ blocks are $0$. Let $\Xi:\cK^\times\backslash\bA^\times_\cK\ra\bC^\times$ be a Hecke character, and $I_v(s,\chi,\Xi)$ be the degenerate principal series on $\GU(3,3)(\bQ_v)$ consisting of smooth functions $\dsec_v(s,\chi,\Xi):\GU(3,3)(\bQ_v)\ra\bC$ such that 
\begin{align*}
   \dsec_v(s,\chi,\Xi)\left(\begin{bmatrix}\fA&\fB\\0&\fD\end{bmatrix}g\right)=\Xi_v(\det \fA)\chi_v(\det \fA\fD^{-1})|\det \fA\fD^{-1}|^{s+\frac{3}{2}}_v\,\dsec_v(s,\chi,\Xi)(g)
\end{align*}
for all $g\in\GU(3,3)(\bQ_v)$ and $\begin{bmatrix}\fA&\fB\\0&\fD\end{bmatrix}\in Q_{\GU(3,3)}(\bQ_v)$. If $v$ is a finite place where $\chi_v,\Xi_v$ are unramified, we denote by $\dsec^\sph_v(s,\chi,\Xi)$ the unique section in $I_v(s,\chi,\Xi)$ invariant under the right translation by $\GU(3,3)(\bZ_v)$ and taking value $1$ at $\bid_6$.

The {\it Siegel Eisenstein series} associated to a section $\dsec(s,\chi,\Xi)$ inside $I(s,\chi,\Xi)=\bigotimes'_v I_v(s,\chi,\Xi)$, the restricted tensor product with respect to $\dsec^\sph_v(s,\chi,\Xi)$, is defined as
\[
    E^{\Sieg}(g;\dsec(s,\chi,\Xi))=\sum_{\gamma\in Q_{\GU(3,3)}(\bQ)\backslash \GU(3,3)(\bQ)}\dsec(s,\chi,\Xi)(\gamma g).
\]
(The sum converges for $\mr{Re}(s)\gg 0$ and has a meromorphic continuation to $s\in\bC$.)

\subsubsection{The integral representation}
Let
\[
   \GSp(4)\times_{\GL(1)}\GU(1,1)=\left\{(g,h)\in \GSp(4)\times\GU(1,1):\nu_g=\nu_h\right\}.
\]
We fix an embedding
\begin{equation}\label{eq:gp-ebd}
\begin{aligned}
   \imath:\GSp(4)\times_{\GL(1)}\GU(1,1)&\lra \GU(3,3)\\
   \begin{bmatrix}A&B\\C&D\end{bmatrix}\times\begin{bmatrix}\fa&\fb\\\fc&\fd\end{bmatrix}&\longmapsto 
   \begin{bmatrix}A&&B\\&\fa&&\fb\\C&&D\\&\fc&&\fd\end{bmatrix}.
\end{aligned}
\end{equation}

For a positive integer $n$, let
\begin{equation}\label{eq:dnv}
	d_{n,v}\big(s,\Xi(\chi\circ\Nm)\big)=\prod_{j=1}^n L_v\left(2s+j,\Xi_\bQ\chi^2\eta^{n-j}_{\cK/\bQ}\right),
\end{equation}
with $\eta_{\cK/\bQ}$ the quadratic character of $\bQ^\times\backslash\bA^\times_\bQ$ associated to the extension $\cK/\bQ$.

\begin{thm}\label{thm:Furusawa} 
Assume that $\Xi\Lambda^c\Upsilon^c=\triv$. 
\begin{enumerate}[leftmargin=1.5em,label=(\roman*)]
\item 
\begin{equation}\label{eq:factorization}
\begin{aligned}
   &\int_{[\GSp(4)\times_{\GL(1)}\GU(1,1)]} E^{\Sieg}\big(\imath(g,h);\dsec(s,\chi,\Xi)\big)\cdot \varphi(g) \cdot \mf^\Upsilon(h)\,\Xi^{-1}(\det h) \,dh\,dg\\
   =&\,C_{\bS,\Lambda,\Pi}\cdot C_{\bbc,\pi}\cdot \prod_v Z_v\Big(\dsec_v(s,\chi,\Xi),\Bes^{\Pi_v}_{\bS,\Lambda_v}(\varphi_v), \Whi^{\pi_v,\Upsilon_v}_\bbc(f_v)\Big)
\end{aligned}
\end{equation}
with 
\begin{equation}\label{eq:Zv-def}
\begin{aligned}
   &Z_v\Big(\dsec_v(s,\chi,\Xi),,\Bes^{\Pi_v}_{\bS,\Lambda_v}(\varphi_v), \Whi^{\pi_v,\Upsilon_v}_\bbc(f_v)\Big)\\
   =&\,\int_{\big(R'_\bS\backslash\GSp(4)\times_{\GL(1)}\GU(1,1)\big)(\bQ_v)} \dsec_v(s,\chi,\Xi)\left(\cS^{-1}\imath(\eta_\bS\, g,h)\right)\cdot \Bes^{\Pi_v}_{\bS,\Lambda_v}(\varphi_v)(g)\\
   &\hspace{10em}\times \Whi^{\pi_v,\Upsilon_v}_\bbc(f_v)\left(\begin{bmatrix}0&1\\-1&0\end{bmatrix}h\right)\Xi^{-1}_v(\det h)\,dhdg,
\end{aligned}
\end{equation}
where $R'_\bS \subset \GSp(4)\times_{\GL(1)}\GU(1,1)$ is the subgroup
\[
   \left\{\left(\begin{bmatrix}\imath_\bS(\fz)\\&\ltrans{\imath_\bS(\bar{\fz})}\end{bmatrix}\begin{bmatrix}\bid_2&X\\&\bid_2\end{bmatrix},\,\fz\cdot\bid_2\right):\fz\in \mr{Res}_{\cK/\bQ}\GL(1),\,X\in\Sym_2\right\},
\]
and
\begin{align*}
   \eta_\bS&=\begin{bmatrix}
  1\\ \alphaS&1\\&&1&-\alphabS\\&&&1
  \end{bmatrix},
   &\cS&=\begin{bmatrix}1\\&1\\&&1\\&&&1\\&&1&&1\\&1&&&&1\end{bmatrix}.
\end{align*}

\item Suppose that
\begin{enumerate}
\item[-] $\pi,\Pi,\Upsilon,\Xi,\chi$ are all unramified  and $\varphi_v, f_v$ are spherical at $v$,
\item[-] $\bS=\begin{bmatrix}\bba&\frac{\bbb}{2}\\\frac{\bbb}{2}&\bbc\end{bmatrix}$ belongs to $M_2(\bZ_v)$ with $\bbc\in\bZ^\times_v$ and $\bbb^2-4\bba\bbc=\mr{disc}(\cK_v/\bQ_v)$.
\end{enumerate}
Then
\begin{equation}\label{eq:Z-unram}
\begin{aligned}
   &Z_v\Big(\dsec^\sph_v(s,\chi,\Xi),,\Bes^{\Pi_v}_{\bS,\Lambda_v}(\varphi_v), \Whi^{\pi_v,\Upsilon_v}_\bbc(f_v)\Big)\\
   =&\,d_{3,v}\left(s+\frac{1}{2},\Xi(\chi\circ\Nm)\right)^{-1}L_v\left(s+\frac{1}{2},\tilde{\Pi}\times\tilde{\pi}\times\chi\right)
   \cdot \bes^{\Pi_v}_{\bS,\Lambda_v}(\varphi_v) \, \whi^{\pi_v,\Upsilon_v}_{\bbc}(f_v).
\end{aligned}
\end{equation}
\end{enumerate}

\end{thm}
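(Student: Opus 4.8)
The plan is to establish (i) by combining Furusawa's integral representation of $L(s,\tilde{\Pi}\times\tilde{\pi}\times\chi)$ \cite{Furusawa} with Garrett's generalization of the doubling method \cite{GaKl}, which realizes the $\GU(2,2)$ Klingen Eisenstein series occurring in Furusawa's formula as a pullback of the Siegel Eisenstein series on $\GU(3,3)$; and to establish (ii) by an explicit unramified local computation.

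For (i), I would proceed in three steps. First, recall Furusawa's integral representation, which expresses (up to normalizing factors) $L(s,\tilde{\Pi}\times\tilde{\pi}\times\chi)$ as an integral over a quotient of $\GSp(4)(\bA_\bQ)$ of the Bessel period $\Bes_{\bS,\Lambda}(\varphi)$ against the restriction, along $\GSp(4)\hookrightarrow\GU(2,2)$, of a Klingen Eisenstein series on $\GU(2,2)$ attached to $f$ (equivalently $\pi$) and $\chi$; the local zeta integrals in this formula already involve the local Bessel functionals $\Bes^{\Pi_v}_{\bS,\Lambda_v}$. Second, by Garrett's pullback identity, this $\GU(2,2)$ Klingen Eisenstein series is obtained by restricting the Siegel Eisenstein series $E^{\Sieg}(g;\dsec(s,\chi,\Xi))$ on $\GU(3,3)$ along $\GU(2,2)\times_{\GL(1)}\GU(1,1)\hookrightarrow\GU(3,3)$ and integrating against $f^\Upsilon$ on $\GU(1,1)$ twisted by $\Xi^{-1}(\det)$ --- itself an unfolding of the $\GU(3,3)$ Siegel Eisenstein series in the ``doubling'' variable, and it is here that the hypothesis $\Xi\Lambda^c\Upsilon^c=\triv$ enters, to match the central and similitude characters. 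Since $\GSp(4)\hookrightarrow\GU(2,2)$ composed with $\GU(2,2)\times_{\GL(1)}\GU(1,1)\hookrightarrow\GU(3,3)$ is exactly the embedding $\imath$ of \eqref{eq:gp-ebd}, substituting step two into step one produces the left-hand side of \eqref{eq:factorization}. Third, I would carry out the local factorization: the resulting integral involves the global Bessel period of $\varphi$ over $[R_\bS]$ and the global Whittaker period of $f^\Upsilon$ over the unipotent, their character data rendered consistent by $\Xi\Lambda^c\Upsilon^c=\triv$; substituting the factorizations \eqref{eq:Bvarphi} and \eqref{eq:Wf2} of these periods into products of local functionals, and reorganizing the remaining adelic integral over $\big(R'_\bS\backslash\GSp(4)\times_{\GL(1)}\GU(1,1)\big)(\bA_\bQ)$, then yields $C_{\bS,\Lambda,\Pi}\,C_{\bbc,\pi}\prod_v Z_v$ with $Z_v$ as in \eqref{eq:Zv-def}. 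The explicit elements $\eta_\bS$, $\cS$ and the Weyl element $\begin{bmatrix}0&1\\-1&0\end{bmatrix}$ encode the normalization of the big-cell representative of the combined unfolding and of the domain $R'_\bS\backslash\GSp(4)\times_{\GL(1)}\GU(1,1)$.

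For (ii), at a place $v$ where everything is unramified I would compute $Z_v\big(\dsec^\sph_v(s,\chi,\Xi),\Bes^{\Pi_v}_{\bS,\Lambda_v}(\varphi_v),\Whi^{\pi_v,\Upsilon_v}_\bbc(f_v)\big)$ directly. An Iwasawa decomposition of $\big(R'_\bS\backslash\GSp(4)\times_{\GL(1)}\GU(1,1)\big)(\bQ_v)$ reduces the integral to a sum over the relevant lattice of cocharacters, on which $\dsec^\sph_v(s,\chi,\Xi)$ restricted to the big cell has the standard explicit Iwasawa-type values, $\Bes^{\Pi_v}_{\bS,\Lambda_v}(\varphi_v)$ on the torus is given by Sugano's formula for the spherical Bessel function in terms of the Satake parameters of $\Pi_v$, and $\Whi^{\pi_v}_\bbc(f_v)$ is given by the Casselman--Shalika/Shintani formula in terms of the Satake parameters of $\pi_v$. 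Assembling these gives an explicit multivariable power series in $q_v^{-s}$ and the Satake parameters; I would sum it in closed form and identify the result with $d_{3,v}\big(s+\frac{1}{2},\Xi(\chi\circ\Nm)\big)^{-1}L_v\big(s+\frac{1}{2},\tilde{\Pi}\times\tilde{\pi}\times\chi\big)\cdot \bes^{\Pi_v}_{\bS,\Lambda_v}(\varphi_v)\,\whi^{\pi_v,\Upsilon_v}_\bbc(f_v)$, where $d_{3,v}$ is the normalizing factor of the $\GU(3,3)$ Siegel Eisenstein series and the degree-eight $L$-factor arises because the generalized doubling integral computes the standard $L$-function of the representation restricted along \eqref{eq:gp-ebd}; equivalently, this matches Furusawa's unramified calculation \cite{Furusawa} re-expressed through the $\GU(3,3)$ model.

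The step I expect to be the main obstacle is the unramified computation in (ii): assembling Sugano's spherical Bessel function, the Casselman--Shalika formula and the explicit big-cell values of $\dsec^\sph_v(s,\chi,\Xi)$ into a single closed-form sum, and matching it term by term with the Euler factors of $L_v\big(s+\frac{1}{2},\tilde{\Pi}\times\tilde{\pi}\times\chi\big)$ and of $d_{3,v}$, is a delicate manipulation of symmetric functions in the Satake parameters. In (i), the genuinely nontrivial work is keeping precise track of the normalizing factors and of the big-cell data $\eta_\bS,\cS$ while composing Furusawa's formula with Garrett's pullback identity; once the combined big-cell representative and the stabilizer $R'_\bS$ are pinned down, the remaining local factorization is formal.
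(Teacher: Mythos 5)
Your outline of part~(i) is the same as the paper's: apply Garrett's doubling identity to contract the integral over $[\U(1,1)]$ into the Klingen Eisenstein series on $\GU(2,2)$ (with $\Xi\Lambda^c\Upsilon^c=\triv$ matching the central/similitude characters), then invoke Furusawa's unfolding to reduce the $[\GSp(4)]$-integral of $\Bes_{\bS,\Lambda}(\varphi)$ against that Klingen Eisenstein series, substitute the explicit formula for the Klingen section $\Phi_{\dsec,\mf^\Upsilon}$, and finally use the factorizations \eqref{eq:Bvarphi}, \eqref{eq:Wf2}. The paper carries out one more explicit step that your sketch passes over: it checks that $\cS^{-1}\bigl(\begin{bsm}1\\&1&&x\\&&1\\&&&1\end{bsm},\begin{bsm}1\\-x&1\end{bsm}\bigr)\cS\in Q_{\GU(3,3)}$, which is what lets one absorb Furusawa's auxiliary Whittaker integration into the $\GU(1,1)$-integral and arrive at $\eqref{eq:Zv-def}$; this is the concrete content of your phrase ``encode the normalization of the big-cell representative.''

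For part~(ii), however, you propose a genuinely different route: a from-scratch unramified computation, expanding the $\GU(3,3)$ big-cell section, Sugano's spherical Bessel function, and Casselman--Shalika into a multivariate power series in Satake parameters and resumming. That is in principle correct but would be quite heavy, and you rightly flag it as the main obstacle. The paper sidesteps all of it by a two-citation argument. It first quotes Furusawa's spherical theorem (\emph{op.\ cit.}\ p.~105) to write $Z_v$ as
\begin{equation*}
\Phi_{\dsec_v(s,\chi,\Xi),\mf^\Upsilon}(\bid_4)\cdot
\frac{L_v\left(s+\tfrac{1}{2},\tilde{\Pi}\times\tilde{\pi}\times\chi\right)}
{L_v(2s+1,\Xi_\bQ\chi^2)\,L_v\left(s+1,\mr{BC}(\pi)\times\Upsilon^{-1}\Xi(\chi\circ\Nm)\right)},
\end{equation*}
and then observes --- this is the key idea your proposal is missing --- that, by the very definition \eqref{eq:Kling-sec} of the Klingen section, $\Phi_{\dsec_v,\mf^\Upsilon}(\bid_4)$ \emph{is} a standard $\GU(1,1)$ doubling zeta integral, which Lapid--Rallis evaluates as $d_{2,v}(s+\tfrac12,\Xi(\chi\circ\Nm))^{-1}L_v(s+1,\mr{BC}(\pi)\times\Upsilon^{-1}\Xi(\chi\circ\Nm))$. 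Multiplying the two and using $d_{3,v}=d_{2,v}\cdot L_v(2s+1,\Xi_\bQ\chi^2)^{-1}\,$ (up to the shift in the normalization of $d_{n,v}$) gives \eqref{eq:Z-unram} with no symmetric-function manipulations at all. So your step~(ii) is feasible but needlessly hard; once you notice that the Klingen section at $\bid_4$ is a $\GU(1,1)$ doubling integral, the whole spherical computation collapses to two standard citations.
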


\begin{proof}
(i) Let $P_{\GU(2,2)}\subset\GU(2,2)$ be the Klingen parabolic subgroup consisting of elements of the form 
\begin{align*}
  &\begin{bmatrix}\fx&\ast&\ast&\ast\\&\fa&\ast&\fb\\&&\nu\bar{\fx}^{-1}\\&\fc&\ast&\fd\end{bmatrix}, 
  &\fx\in\mr{Res}_{\cK/\bQ}\GL(1),\, \begin{bmatrix}\fa&\fb\\\fc&\fd\end{bmatrix}\in\GU(1,1) \text{ with similitude $\nu$}.
\end{align*}
Its Levi subgroup is isomorphic to $(\mr{Res}_{\cK/\bQ}\GL(1))\times\GU(1,1)$. It follows from the (generalized) doubling method \cite{GaKl} that, for $t\in \GU(1,1)(\bA_\bQ)$ with $\nu(t)=\nu(g)$,
\begin{align*}
   &\int_{[\U(1,1)]} E^\Sieg\big(\imath(g,h_1t);\dsec(s,\chi,\Xi)\big)\cdot \mf^{\Upsilon}(h_1t)\,\Xi^{-1}(\det h_1t)\,dh
\end{align*}
equals 
\[
    E^\Kling\left(g;\Phi_{\dsec(s,\chi,\Xi),\mf^{\Upsilon}}\right)=\sum_{\gamma\in P_{\GSp(4)}(\bQ)\backslash \GSp(4,\bQ)} \Phi_{\dsec(s,\chi,\Xi),\mf^{\Upsilon}}(\gamma g),
\]
the Klingen Eisenstein series on $\GU(2,2)$  associated to the section 
\[
   \Phi_{\dsec(s,\chi,\Xi),\mf^{\Upsilon}}\in\mr{Ind}_{P_{\GSp(4)}}^{\GSp(4)}\left((\Xi\boxtimes \pi^\Upsilon)\otimes(\chi|\cdot|^s_{\bA_\bQ}\circ\Nm\boxtimes \chi|\cdot|^s_{\bA_\bQ}\circ\nu^{-1})\right)
\]
given as
\begin{equation}\label{eq:Kling-sec}
   \Phi_{\dsec(s,\chi,\Xi),\mf^{\Upsilon}}(g)=\int_{\U(1,1)(\bA_\bQ)}\dsec(s,\chi,\Xi)\left(\cS^{-1}\imath(g,h_1t)\right)\mf^{\Upsilon}(h_1 t)\,\Xi^{-1}(\det h_1t)\,dh_1.
\end{equation}
Therefore, we have
\begin{align*}
   \text{LHS of \eqref{eq:factorization}}=\int_{[\GSp(4)]} E^\Kling\left(g;\Phi_{\dsec(s,\chi,\Xi),\mf^{\Upsilon}}\right)\cdot \varphi(g)\,dg.
\end{align*}
Applying \cite[(2.4) Theorem]{Furusawa} to the right hand side gives
{\small
\begin{align*}
   \text{LHS of \eqref{eq:factorization}}&=\int_{\big(R_\bS\backslash \GSp(4)\big)(\bA_\bQ)} \int_{\bQ\backslash\bA_\bQ}\Phi_{\dsec(s,\chi,\Xi),\mf^{\Upsilon}}
   \left(\begin{bmatrix}1\\&1&&x\\&&1\\&&&1\end{bmatrix}\eta_\bS\, g\right)\psi(\bbc x)\,dx  \cdot \Bes_{\bS,\Lambda}(\varphi)(g)\,dg
\end{align*}}
Plugging in the formula \eqref{eq:Kling-sec} and using that $\cS^{-1}\left(\begin{bsm}1\\&1&&x\\&&1\\&&&1\end{bsm},\begin{bsm}1\\-x&1\end{bsm}\right)\cS\in Q_{\GU(3,3)}$, we obtain
\begin{align*}
   \text{LHS of \eqref{eq:factorization}}=&\int_{\left(R'_\bS\backslash \GSp(4)\times_{\GL(1)}\GU(1,1)\right)(\bA_\bQ)} \dsec(s,\chi,\Xi)\left(\cS^{-1}\imath(\eta_\bS\, g,h\right)
   \cdot \Bes_{\bS,\Lambda}(\varphi)(g)\\
   &\hspace{12em}\times\int_{\bQ\backslash\bA_\bQ} \mf^\Upsilon\left(\begin{bmatrix}1\\x&1\end{bmatrix}h\right)\psi(\bbc x)\,dx\,dhdg\\
   =&\int_{\left(R'_\bS\backslash \GSp(4)\times_{\GL(1)}\GU(1,1)\right)(\bA_\bQ)} \dsec(s,\chi,\Xi)\left(\cS^{-1}\imath(\eta_\bS \,g,h\right)\\
   &\hspace{12em}\times \Bes_{\bS,\Lambda}(\varphi)(g)
   \cdot \Whi_{\bbc}(\mf^\Upsilon)\left(\begin{bmatrix}&1\\-1 \end{bmatrix} h\right)\,dhdg
\end{align*}
Plugging \eqref{eq:Bvarphi}\eqref{eq:Wf2} into it proves the factorization \eqref{eq:factorization}.

\vspace{.5em}

(ii) When everything is spherical and $\bS$ satisfies the condition in (ii), it follows from \cite[Theorem on p.105]{Furusawa} that
\begin{equation}\label{eq:Furu-unram}
\begin{aligned}
   &Z_v\Big(\dsec^\sph_v(s,\chi,\Xi),\Bes^{\Pi_v}_{\bS,\Lambda_v}(\varphi_v), \Whi^{\pi_v,\Upsilon_v}_\bbc(f_v)\Big)\\
  =&\,\Phi_{\dsec_v(s,\chi,\Xi),\mf^\Upsilon}(\bid_4)\cdot \frac{L_v\left(s+\frac{1}{2},\tilde{\Pi}\times\tilde{\pi}\times\chi\right)}{L_v(2s+1,\Xi_\bQ\chi^2)\,L_v\left(s+1,\mr{BC}(\pi)\times\Upsilon^{-1}\Xi(\chi\circ\Nm)\right)}.
\end{aligned}
\end{equation}
From the definition of section $\Phi_{\dsec_v(s,\chi,\Xi),\mf^\Upsilon}$, it is easy to see that its valuation at $\bid_4$ is computed by the local zeta integral of the standard doubling method. The formula in \cite[Proposition 3]{LapidRallis} gives us
\begin{equation}\label{eq:Phi-at-1}
\begin{aligned}
   &\Phi_{\dsec_v(s,\chi,\Xi),\mf^\Upsilon}(\bid_4)\\
   =&\,d_{2,v}\left(s+\frac{1}{2},\Xi(\chi\circ\Nm)\right)^{-1} L_v\left(s+1,\mr{BC}(\pi)\times\Upsilon^{-1}\Xi(\chi\circ\Nm)\right)\\
   =&\,d_{3,v}\left(s+\frac{1}{2},\Xi(\chi\circ\Nm)\right)^{-1}
   \,L_v(2s+1,\Xi_\bQ\chi^2)\,L_v\left(s+1,\mr{BC}(\pi)\times\Upsilon^{-1}\Xi(\chi\circ\Nm)\right)
\end{aligned}
\end{equation}
Combining \eqref{eq:Furu-unram} and \eqref{eq:Phi-at-1} proves \eqref{eq:Z-unram}.
\end{proof}

\subsection{Classical scalar-weight sections for Siegel Eisenstein series and a special case of archimedean zeta integrals}\label{sec:sw-dsec}
We assume that $\chi_\infty$, $\Xi_\infty$ are both trivial on $\bR_{>0}$. Suppose that $\Xi$ has $\infty$-type $\left(\frac{r}{2},-\frac{r}{2}\right)$. For an integer $t\equiv r\mod 2$, inside the degenerate principal series $I_\infty(s,\chi,\Xi)$, we have the classical section of scalar weight $t$:
\begin{equation}\label{eq:sw-dsec}
\begin{aligned}
   &\dsec^t_{\infty}(s,\chi,\Xi)\left(g=\begin{bmatrix}\fA&\fB\\ \fC&\fD\end{bmatrix}\right)\\
   =&\,\chi_\infty(\nu_g)\nu^{\frac{3}{2}(r-t)}_g|\nu_g|^{\frac{3}{2}(s+\frac{3}{2}-r)}(\det g)^{\frac{r+t}{2}}\det(\fC i+\fD)^{-t} |\det(\fC i+\fD)|^{-2s-3+t}.
\end{aligned}
\end{equation}

We can explicitly compute the zeta integrals in \eqref{eq:Zv-def} for $v=\infty$ in the special case where $\dsec_\infty(s,\chi,\Xi)=\dsec^t_\infty(s,\chi,\Xi)$, $\Pi_\infty\cong \cD_{t,t}$ and $\pi_\infty\cong \cD_t$ for some integer $t\geq 3$. Here $\cD_{t,t}$ (resp. $D_t$) denotes the holomorphic discrete series of $\GSp(4,\bR)$ (resp. of $\GL(2,\bR)$) of scalar weight $t$ with trivial central character (resp. central character $\sgn^t$).

\begin{prop}\label{eq:Zlinf}
Suppose that $\varphi_\infty\in\cD_{t,t}$ (resp. $f_\infty\in\cD_t$) belongs to the $K_\infty$-type of scalar weight $-t$. Then
\begin{align*}
    &Z_\infty\left(\dsec^t_{\infty}(s,\chi,\Xi),\Bes^{\cD_{t,t}}_{\bS,\triv}(\varphi_\infty),\Whi^{\cD_t,\Xi^{-c}_\infty}_{\bbc}(f_\infty)\right)\\
    =&\,2^{-s-\frac{3t}{2}-\frac{3}{2}}i^{-t}\pi^2(\pi\bbc)^{-s-\frac{3t}{2}+\frac{3}{2}}|\alphaS-\alphabS|^{-2s-t}
    \cdot\frac{\Gamma\left(s+\frac{3t-3}{2}\right)\Gamma\left(s+\frac{t-1}{2}\right)}{\Gamma\left(s+\frac{t+3}{2}\right)}
    \cdot\bes^{\cD_{t,t}}_{\bS,\triv}(\varphi_\infty)\, \whi^{\cD_t}_{\bbc}(f_\infty).
\end{align*}
\end{prop}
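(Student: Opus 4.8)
The plan is to compute the archimedean zeta integral \eqref{eq:Zv-def} directly by unfolding the definitions and exploiting the explicit formulas for all the objects involved at $\infty$. First I would unwind the domain of integration $\big(R'_\bS\backslash\GSp(4)\times_{\GL(1)}\GU(1,1)\big)(\bR)$ using the Iwasawa-type decomposition: since $R'_\bS$ contains the full Siegel unipotent and a copy of $\mr{Res}_{\cK/\bQ}\GL(1)$ embedded diagonally, the quotient can be parametrized by a product of a torus part, a unipotent part, and a compact part, with the $\GU(1,1)$-factor essentially collapsed by the similitude condition $\nu_g=\nu_h$ together with the diagonal $\fz\cdot\bid_2$ appearing in $R'_\bS$. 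The key inputs are: (a) the explicit scalar-weight section $\dsec^t_\infty(s,\chi,\Xi)$ from \eqref{eq:sw-dsec}, evaluated on $\cS^{-1}\imath(\eta_\bS\,g,h)$, which will produce a power of $\det(\fC i+\fD)$ depending on a complex coordinate on the Siegel upper half space of genus two; (b) the explicit Bessel function $\Bes^{\cD_{t,t}}_{\bS,\triv}(\varphi_\infty)$ for the scalar-weight-$(-t)$ vector, which is known in closed form (an exponential-times-power expression in the torus variable, built from $e^{2\pi i\Tr(\bS Z)}$-type data); and (c) the explicit Whittaker function $\Whi^{\cD_t,\Xi^{-c}_\infty}_\bbc(f_\infty)$ on $\GL(2,\bR)$ for a lowest-weight vector, which is the standard $y^{t/2}e^{-2\pi\bbc y}e^{2\pi i\bbc x}$-type formula.

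\textbf{Main steps, in order.} (1) Fix representatives for $R'_\bS\backslash(\GSp(4)\times_{\GL(1)}\GU(1,1))$ at $\infty$; reduce the $\GU(1,1)$-integration against the Whittaker function to an integral over a single real variable $y>0$ (the diagonal torus parameter) and the maximal compact $\U(1)$, the latter contributing only through the $K_\infty$-type which is pinned by the scalar weight $-t$. (2) Substitute the closed-form expression for $\Bes^{\cD_{t,t}}_{\bS,\triv}(\varphi_\infty)$; this replaces the $\GSp(4)$-integration by an integral over the torus $T_\bS(\bR)\cong\bC^\times$ modulo the compact part, i.e.\ over a radial variable, against an explicit integrand. (3) Evaluate $\dsec^t_\infty$ on $\cS^{-1}\imath(\eta_\bS\,g,h)$: here one computes the lower $\fC,\fD$ blocks of the argument as a function of the torus and unipotent coordinates, getting $\det(\fC i+\fD)$ as an explicit quadratic expression; the factor $|\alphaS-\alphabS|$ enters precisely from the change of variables $\imath_\bS$, which conjugates by $\begin{bmatrix}\alphaS&1\\\alphabS&1\end{bmatrix}$ whose determinant is $\alphaS-\alphabS$. (4) Assemble the triple integral; after the dust settles it factors (by design of the unfolding) into two one-dimensional integrals — one over the $\GL(2)$-variable $y$ and one over the radial $\GSp(4)$-variable — each of which is a classical Mellin-type integral $\int_0^\infty u^{\alpha-1}e^{-u}\,du$ or a product thereof, yielding $\Gamma$-functions. (5) Match the resulting product of $\Gamma$-factors and the powers of $2$, $\pi$, $\pi\bbc$, $i$, and $|\alphaS-\alphabS|$ against the claimed formula, using $\Xi_\infty$ of $\infty$-type $(r/2,-r/2)$ with $r\equiv t$ to identify $\Xi^{-c}_\infty$ appearing in the Whittaker datum.

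\textbf{Expected main obstacle.} The hard part will be step (3), i.e.\ carrying out the matrix bookkeeping to express $\dsec^t_\infty(\cS^{-1}\imath(\eta_\bS\,g,h))$ explicitly in the chosen coordinates: one must track how the similitude normalizations, the twist $\eta_\bS$, the block-permutation $\cS$, and the embedding $\imath$ interact, and verify that the argument genuinely lands in a region where $\dsec^t_\infty$ is given by \eqref{eq:sw-dsec} (i.e.\ that $\fC i + \fD$ is invertible with the right sign of imaginary part along the whole integration domain). A secondary delicate point is confirming that the integral factorizes cleanly — that no cross-term between the $\GSp(4)$- and $\GL(2)$-variables survives in the exponent after unfolding — which is really a consequence of the shape of the Bessel and Whittaker functions but needs to be checked honestly. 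Once the integrand is in the factored exponential-times-power form, the remaining integrations are routine Gamma-integral evaluations and the only care needed is bookkeeping of constants.
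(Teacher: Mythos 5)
Your overall plan — unfold the domain, substitute explicit archimedean formulas for the section, Bessel function, and Whittaker function, and reduce to one-variable integrals — is the right general approach and is broadly what the paper does, but two of your specific expectations about how the computation plays out are wrong, and they are not cosmetic.

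First, the integral does \emph{not} factor into two independent Mellin-type integrals $\int_0^\infty u^{\alpha-1}e^{-u}\,du$ in the $\GL(2)$- and $\GSp(4)$-radial variables. After the reductions the paper carries out (Cartan decomposition of $\GL(2,\bR)$ inside $\GSp(4,\bR)$, with a conjugating matrix $A_0$ built from $\alphaS,\alphabS$), the remaining $\GSp(4)$-integral is a genuinely coupled two-variable integral of the form
\begin{equation*}
\pi\int_0^\infty\int_1^\infty \lambda^{s+\frac{3t}{2}-\frac{5}{2}}\,(a^2+a^{-2})^{-s+\frac{t}{2}-\frac{3}{2}}\,e^{-2\pi\bbc\lambda(a^2+a^{-2})}\,(a-a^{-3})\,da\,d\lambda,
\end{equation*}
in which the exponential couples $\lambda$ and $a$. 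Integrating in $\lambda$ first gives one $\Gamma$-factor times a power of $(a^2+a^{-2})$, and the residual $a$-integral is a compact Beta-type integral producing a \emph{rational} factor $(2s+t-1)^{-1}$. You should be suspicious of the claim that "no cross-term survives": the fact that the final answer contains the ratio $\Gamma(s+\frac{t-1}{2})/\Gamma(s+\frac{t+3}{2})$, i.e.\ a $\Gamma$-factor \emph{in the denominator}, is itself evidence that the result is not a bare product of Mellin–Gamma integrals; that ratio is just the rational function $\bigl[(s+\frac{t-1}{2})(s+\frac{t+1}{2})\bigr]^{-1}$, which comes from two Beta-type integrals, not from $\int u^{\alpha-1}e^{-u}\,du$.

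Second, you are missing the structural device that makes the $\U(1,1)$-part tractable. The paper does not simply integrate the Whittaker function against the restricted section. It first shows that the function $h_1\mapsto\dsec^t_\infty(\cS^{-1}\imath(\bid_4,h_1))$ on $\U(1,1)(\bR)$ is, up to scalar, a \emph{matrix coefficient} of a discrete series of $\U(1,1)(\bR)$, so that the $\U(1,1)$-integral collapses to a constant $C_t(s,\chi_\infty,\Xi_\infty)$ computed as a doubling-method local integral over $\SL(2,\bR)$ via Cartan decomposition; this integral is again a compact Beta-type integral, producing $2^{-2s-t-1}\pi(2s+1+t)^{-1}$. This identification is where the second rational factor comes from, and it is the pivot of the whole computation. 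Without this step you would be stuck trying to evaluate a four-variable unfolded integral directly with no usable factorization. I would revise your plan to: (i) isolate the $\U(1,1)$-integral as a doubling matrix-coefficient integral and compute it by Cartan decomposition; (ii) use the Cartan decomposition of $\GSp(4,\bR)$ relative to $R_\bS(\bR)$ and substitute the explicit Bessel formula; (iii) perform the coupled $(\lambda,a)$-integration in the order $\lambda$ then $a$, accepting that the $a$-integral is a Beta integral; (iv) assemble. The "routine Gamma bookkeeping" only begins after those structural steps are in place.
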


\begin{proof}
We have
\begin{align*}
   Z_\infty\left(\dsec^t_{\infty}(s,\chi,\Xi),\Bes^{\cD_{t,t}}_{\bS,\triv}(\varphi_\infty),\Whi^{\cD_t,\Xi^{-c}_\infty}_{\bbc}(f_\infty)\right)
   =\int_{(R_\bS\backslash\GSp(4))(\bR)} F^t_\infty(\eta_\bS\,g;s,\chi,\Xi)\cdot\Bes^{\cD_{t,t}}_{\bS,\triv}(\varphi_\infty)(g)\,dg
\end{align*}
with
\begin{equation}\label{eq:Fl-infty}
\begin{aligned}
   F^t_\infty(\eta_\bS\,g;s,\chi,\Xi)=\int_{\U(1,1)(\bR)} &\dsec^t_\infty(s,\chi,\Xi)\left(\cS^{-1}\imath(\eta_\bS\, g,h_1h)\right)\\
   &\times  \Whi^{\cD_t,\Xi^{-c}_\infty}_{\bbc}(f_\infty)\left(\begin{bmatrix}&1\\-1\end{bmatrix} h_1 h\right) \cdot \Xi^{-1}_\infty(\det h_1 h)\,dh_1,
\end{aligned}
\end{equation}
where $h\in \GU(1,1)(\bR)$ with $\nu_h=\nu_g$. Let
\[
   A_0=\begin{bmatrix}\frac{2}{\rule{0pt}{.8em}\sqrt{-(\alphaS-\alphabS)^2}}&0\\-\frac{\alphaS+\alphabS}{\rule{0pt}{.8em}\sqrt{-(\alphaS-\alphabS)^2}}&1\end{bmatrix}\in\GL(2,\bR)
\]
Then by the Cartan decomposition for $\GL(2,\bR)$, we have
\[
   \GSp(4,\bR)=R_\bS(\bR)\left\{\begin{bmatrix}\lambda A_0\begin{bmatrix}a\\&a^{-1}\end{bmatrix}\\ &\ltrans{A}^{-1}_0\begin{bmatrix}a^{-1}\\&a\end{bmatrix}\end{bmatrix}:\lambda\in \bR^\times, a\in\bR_{\geq 1} \right\} K_\infty
\] 
Since the right translation of $K_\infty$ on $\varphi_\infty$ (resp. $F^t_\infty(-;s,\chi,\Xi)$) is by the scalar weight $-t$ (resp. $t$),  the integral $Z_\infty$ can be rewritten as
\begin{equation}\label{eq:Z-lambdaa}
\begin{aligned}
   &Z_\infty\left(\dsec^t_{\infty}(s,\chi,\Xi),\Bes^{\cD_{t,t}}_{\bS,\triv}(\varphi_\infty),\Whi^{\cD_t,\Xi^{-c}_\infty}_{\bbc}(f_\infty)\right)\\
   =&\int_{\bR^\times} \int_1^\infty 
   F^t_\infty\left(\eta_\bS\begin{bsm}\lambda A_0\begin{bsm}a\\&a^{-1}\end{bsm}\\ &A^{-1}_0\begin{bsm}a^{-1}\\&a\end{bsm}\end{bsm};s,\chi,\Xi\right) \Bes^{\cD_{t,t}}_{\bS,\triv}(\varphi_\infty)\left(\begin{bsm}\lambda A_0\begin{bsm}a\\&a^{-1}\end{bsm}\\ &A^{-1}_0\begin{bsm}a^{-1}\\&a\end{bsm}\end{bsm};s,\chi,\Xi\right)\\
   &\hspace{20em} \times\pi(a^2-a^{-2})(\lambda\det A_0)^{-3}\,d^\times a\,d^\times\lambda.
\end{aligned}
\end{equation}
By \cite[(4.3.4)]{Furusawa},
\begin{equation}\label{eq:B-infty}
\begin{aligned}
    &\Bes^{\cD_{t,t}}_{\bS,\triv}(\varphi_\infty)\left(\begin{bsm}\lambda A_0\begin{bsm}a\\&a^{-1}\end{bsm}\\ &\ltrans{A}^{-1}_0\begin{bsm}a^{-1}\\&a\end{bsm}\end{bsm};s,\chi,\Xi\right)\\
    =&\,\bes^{\cD_{t,t}}_{\bS,\triv}(\varphi_\infty)\cdot \lambda^t\left(\frac{2}{\sqrt{-(\alphaS-\alphabS)^2}}\right)^t\psi_\infty\left(i\bbc\lambda(a^2+a^{-2})\right).
\end{aligned}
\end{equation}
We also need a formula for $F^t_\infty$. Any section $\dsec_\infty(g;s,\chi,\Xi)\in I_\infty(s,\chi,\Xi)$ satisfies the property that for all $\begin{bmatrix}\fa&\fb\\ \fc&\fd\end{bmatrix}\in\GU(1,1)(\bR)$,
\begin{align*}
   &\dsec_\infty(s,\chi,\Xi)\left(\cS^{-1}\left(\begin{bmatrix}\fx&\ast&\ast&\ast\\&\fa&\ast&\fb\\&&\nu\bar{\fx}^{-1}\\&\fc&\ast&\fd\end{bmatrix}g,\begin{bmatrix}\fd&\fc\\ \fb&\fa\end{bmatrix}h\right)\right)\\
   =&\,\Xi_\infty(\fx(\fa\fd-\fb\fc))\,\chi_\infty(\nu^{-1}\fx\bar{\fx})|\nu^{-1}\fx\bar{\fx}|^{s+\frac{3}{2}}_\infty
   \cdot \dsec_\infty(s,\chi,\Xi)\left(\cS^{-1}\imath\left(g,h\right)\right)
\end{align*}
Plugging this into \eqref{eq:Fl-infty}, we get
\begin{align*}
   F^t_\infty\left(\begin{bmatrix}\fx&\ast&\ast&\ast\\&\fa&\ast&\fb\\&&\nu\bar{\fx}^{-1}\\&\fc&\ast&\fd\end{bmatrix}\right)
   =&\,\Xi_\infty(\fx)\,\chi_\infty(\nu^{-1}\fx\bar{\fx})|\nu^{-1}\fx\bar{\fx}|^{s+\frac{3}{2}}_\infty\cdot \int_{\U(1,1)(\bR)} \dsec^t_\infty(s,\chi,\Xi)\left(\cS^{-1}\imath(\bid_4,h_1)\right) \\
   &\hspace{6em}\times\Whi^{\cD_t,\Xi^{-c}_\infty}_{\bbc}(f_\infty)\left(\begin{bmatrix}\fb&\fa\\-\fd&-\fc\end{bmatrix} h_1\right) \cdot \Xi^{-1}_\infty(\det h_1 )\,dh_1.
\end{align*}
We know that $h_1\mapsto \dsec^t_\infty(s,\chi,\Xi)\left(\cS^{-1}\imath(\bid_4,h_1)\right)$ is a multiple of the matrix coefficient $\left<h_1 w_t,w^\vee_{-t}\right>$ with $w_t$ a vector inside the lowest $K_\infty$-type of the discrete series of $\U(1,1)(\bR)$ for which $e^{ix}\begin{bsm}\cos\theta&\sin\theta\\-\cos\theta&\sin\theta\end{bsm}$ acts on the lowest $K_\infty$-type by $e^{itx}e^{it\theta}$. Thus,
\begin{equation}\label{eq:Fl-infty2}
\begin{aligned}
    F^t_\infty\left(\begin{bmatrix}\fx&\ast&\ast&\ast\\&\fa&\ast&\fb\\&&\nu\bar{\fx}^{-1}\\&\fc&\ast&\fd\end{bmatrix}\right)
   = &\,\Xi_\infty(\fx)\,\chi_\infty(\nu^{-1}\fx\bar{\fx})|\nu^{-1}\fx\bar{\fx}|^{s+\frac{3}{2}}_\infty
   \cdot C_t(s,\chi_\infty,\Xi_\infty) \cdot \Whi^{\cD_t,\Xi^{-c}_\infty}_{\bbc}(f_\infty)\begin{pmatrix}\fb&\fa\\-\fd&-\fc\end{pmatrix},
\end{aligned}
\end{equation}
with $C_t(s,\chi,\Xi)$ computed by
\begin{align*}
   C_t(s,\chi_\infty,\Xi_\infty)&=\int_{\U(1,1)(\bR)} \dsec^t_\infty(s,\chi,\Xi)\left(\cS^{-1}\imath(\bid_4,h_1)\right) 
   \left<h_1 w_{-t},w^{\vee}_{-t}\right>\cdot\Xi^{-1}_\infty(\det h_1)\,dh_1\\
   &=\int_{\SL(2,\bR)} \dsec^t_\infty(s,\chi,\Xi)\left(\cS^{-1}\imath(\bid_4,h^\circ_1)\right) 
   \left<h^\circ_1 w_{-t},w^{\vee}_{-t}\right>\,dh_1,
\end{align*}
where $h^\circ_1\in \SL(2,\bR)$ such that $h_1h^{\circ-1}_1\in \U(1)(\bR)$, and $\left<h_1 w_{-t},w^{\vee}_{-t}\right>$, $\left<h^\circ_1 w_{-t},w^{\vee}_{-t}\right>$ denote matrix coefficients for a vector $w_{-t}$ inside the $K_\infty$-type of weight $-t$ in $(\cD_t)^\vee$. Combining \eqref{eq:Fl-infty2} with
\begin{align*}
   \begin{bmatrix}1&0\\\alphaS&1\end{bmatrix} A_0\begin{bmatrix}a\\&a^{-1}\end{bmatrix}=
   \begin{bmatrix}\frac{2a}{\rule{0pt}{.8em}\sqrt{-(\alphaS-\alphabS)^2}}&\\ &a^{-1}\end{bmatrix}
   \begin{bmatrix}\frac{1}{\sqrt{1+a^4}}&\frac{-\sqrt{-1}a^2}{\sqrt{1+a^4}}\\[.5em]0&\begin{smallmatrix}\sqrt{1+a^4}\end{smallmatrix}\end{bmatrix}
   \underbrace{\begin{bmatrix}\frac{1}{\sqrt{1+a^4}}&\frac{\sqrt{-1}a^2}{\sqrt{1+a^4}}\\[.5em]\frac{\sqrt{-1}a^2}{\sqrt{1+a^4}}&\frac{1}{\sqrt{1+a^4}}\end{bmatrix}}_{\in\U(2,\bR)},
\end{align*}
we see that
\begin{align*}
   F^t_\infty\left(\begin{bsm}\lambda A_0\begin{bsm}a\\&a^{-1}\end{bsm}\\ &A^{-1}_0\begin{bsm}a^{-1}\\&a\end{bsm}\end{bsm};s,\chi,\Xi\right)&=\chi_\infty(\lambda)\left\vert\frac{4\lambda}{(a^2+a^{-2})(\alphaS-\alphabS)^2}\right\vert^{s+\frac{3}{2}}\\
   &\hspace{-2em}\times C_t(s,\chi_\infty,\Xi_\infty)\cdot \Whi^{\cD_t}_{\bbc}(f_\infty)\begin{pmatrix}&\lambda\sqrt{a^2+a^{-2}}\\-\sqrt{a^2+a^{-2}}^{-1} \end{pmatrix}.
\end{align*}
For $\Whi^{\cD_t}_{\bbc}(f_\infty)$, we have the formula
\begin{equation}\label{eq:Wfinfty}
\begin{aligned}
  &\Whi^{\cD_t}_{\bbc}(f_\infty)\left(h=\begin{bmatrix}a&b\\c&d\end{bmatrix}\right)\\
   = &\,\whi^{\cD_t}_{\bbc}(f_\infty)
   \left\{\begin{array}{ll} (\det h)^{t/2} (-ci+d)^{\,-t}\psi_\infty\left(\bbc (ai-b)(-ci+d)^{-1}\right), &\det h>0\\
   0,&\det h<0.
   \end{array}\right.
\end{aligned}
\end{equation}
It follows that
\begin{align*}
   F^t_\infty\left(\begin{bsm}\lambda A_0\begin{bsm}a\\&a^{-1}\end{bsm}\\ &A^{-1}_0\begin{bsm}a^{-1}\\&a\end{bsm}\end{bsm};s,\chi,\Xi\right)
   &= C_t(s,\chi_\infty,\Xi_\infty)\cdot \whi^{\cD_t}_{\bbc}(f_\infty)\\
   &\hspace{-7em}\times\mathds{1}_{\bR_{>0}}(\lambda)\left\vert\frac{4\lambda}{(a^2+a^{-2})(\alphaS-\alphabS)^2}\right\vert^{s+\frac{3}{2}}
   \lambda^{t/2}i^{-t}(a^2+a^{-2})^{t/2}\,\psi_\infty\left(i\bbc\lambda(a^2+a^{-2})\right)
\end{align*}
Plugging this and \eqref{eq:B-infty} into \eqref{eq:Z-lambdaa} gives us
\begin{equation}\label{eq:CZinfty}
\begin{aligned}
   &Z_\infty\left(\dsec^t_{\infty}(s,\chi,\Xi),\Bes^{\cD_{t,t}}_{\bS,\triv}(\varphi_\infty),\Whi^{\cD_t,\Xi^{-c}_\infty}_{\bbc}(f_\infty)\right)\\
   =&\,C_t(s,\chi_\infty,\Xi_\infty)\cdot \bes^{\cD_{t,t}}_{\bS,\triv}(\varphi_\infty)\, \whi^{\cD_t}_{\bbc}(f_\infty)
   \cdot i^{-t}2^{2s+t}|\alphaS-\alphabS|^{-2s-t} \\
   &\times \pi\int_0^\infty\int^\infty_1
   \lambda^{s+\frac{3t}{2}-\frac{5}{2}} (a^2+a^{-2})^{-s+\frac{t}{2}-\frac{3}{2}} \psi_\infty\left(i\bbc\lambda(a^2+a^{-2})\right) (a-a^{-3}) \,da\,d\lambda\\
   =&\,C_t(s,\chi_\infty,\Xi_\infty)\cdot \bes^{\cD_{t,t}}_{\bS,\triv}(\varphi_\infty)\, \whi^{\cD_t}_{\bbc}(f_\infty)
   \cdot i^{-t}2^{2s+t}\pi(2\pi\bbc)^{-s-\frac{3t}{2}+\frac{3}{2}}|\alphaS-\alphabS|^{-2s-t}\frac{\Gamma\left(s+\frac{3t-3}{2}\right)}{2s+t-1}.
\end{aligned}
\end{equation}
The factor $C_t(s,\chi_\infty,\Xi_\infty)$ can be computed easily as follows. For $h^\circ_1=\begin{bmatrix}a&b\\c&d\end{bmatrix}\in \SL(2,\bR)$,
\begin{align*}
    &\dsec^t_\infty(s,\chi,\Xi)\left(\cS^{-1}\imath(\bid_4,h^\circ_1)\right) 
   \left<h^\circ_1 w_{-t},w^{\vee}_{-t}\right>\,dh_1\\
   =&\,(a+d+(c-b)i)^{-t}|a+d+(c-b)i|^{-2s-3+t}\cdot (a+d-(c-b)i)^{-t}\\
   =&\,|a+d+(c-b)i|^{-2s-3-t},
\end{align*}
which are invariant under the left and right translations by $\SO(2,\bR)$. Applying the Cartan decomposition for $\SL(2,\bR)$, we have
\begin{equation}\label{eq:C-dou}
\begin{aligned}
   C_t(s,\chi_\infty,\Xi_\infty)
   &=\pi\int_1^\infty  (x+x^{-1})^{-2s-3-t} (x^2-x^{-2})x^{-1}dx\\
   &=2^{-2s-t-1}\pi(2s+1+t)^{-1}.
\end{aligned}
\end{equation}
Combining \eqref{eq:CZinfty} and \eqref{eq:C-dou} proves the proposition.
\end{proof}

\subsection{Big-cell sections for Siegel Eisenstein series}\label{sec:bigcell}

For our purpose of constructing $p$-adic $L$-functions, at the place $p$ and finite places where the conditions in (ii) of Theorem~\ref{thm:Furusawa} are not satisfied, we use the big-cell sections as the test sections for Siegel Eisenstein series.

Given a Schwartz function $\Schw_v$ on $\Her_3(\cK_v)$, we define the big-cell section $\dsec^{\bc}_{v,\Schw_v}(s,\chi,\Xi)\in I_v(s,\chi,\Xi)$ as
\begin{equation}\label{eq:bigcell-sec}
\begin{aligned}
   &\dsec^{\bc}_{v,\Schw_v}(s,\chi,\Xi)\left(g=\begin{bmatrix}\fA&\fB\\ \fC&\fD\end{bmatrix}\right)\\
   =&\, \Xi\left(\det(\nu_g\ltrans{\bar{\fC}}^{-1}\right)\,
   \chi\left(\det(\nu_g \ltrans{\bar{\fC}}^{-1}\fC^{-1})\right)\,
   \left|\det(\nu_g \ltrans{\bar{\fC}}^{-1}\fC^{-1})\right|^{s+\frac{3}{2}}_v
   \cdot \Schw_v(\fC^{-1}\fD).
\end{aligned}
\end{equation} 
From its definition, it is easy to see that the section  $\dsec^{\bc}_{v,\Schw_v}(s,\chi,\Xi)$ is supported on the big Bruhat cell $Q_{\GU(3,3)}(\bQ_v)\begin{bmatrix}&-\bid_3\\ \bid_3\end{bmatrix} Q_{\GU(3,3)}(\bQ_v)$.

\vspace{.5em}

In the next three subsections, we compute local zeta integrals for big-cell sections associated to certain  Schwartz functions $\Schw_v$ of the form
\begin{equation}\label{eq:Schw-block}
\begin{aligned}
   \Schw_v\begin{pmatrix}w_{11}&\bar{\fw}_{21}&\bar{\fw}_{31}\\ \fw_{21}&w_{22}& \bar{\fw}_{32}\\ \fw_{31}&\fw_{32}&w_{33}\end{pmatrix}
   =&\, \Schw_{v,1,\sym}\begin{pmatrix}w_{11}&\frac{\fw_{21}+\bar{\fw}_{21}}{2}\\ \frac{\fw_{21}+\bar{\fw}_{21}}{2}&w_{22}\end{pmatrix}
   \,   \Schw_{v,2}(w_{33})\\
   &\times\Schw_{v,1,\alt}\left(\frac{\fw_{21}-\bar{\fw}_{21}}{\alphaS-\alphabS}\right)
   \,\Schw_{v,0}\left(\begin{bmatrix}\alphaS&1\\ \alphabS&1\end{bmatrix}^{-1}\begin{bmatrix}\fw_{31}&\fw_{32}\\ \bar{\fw}_{31}&\bar{\fw}_{32}\end{bmatrix}\right),
\end{aligned}
\end{equation}
with $\Schw_{v,1,\sym}$ a Schwartz function on $\Sym_2(\bQ_v)$, $\Schw_{v,1,\alt}$, $\Schw_{v,2}$ Schwartz functions on $\bQ^\times_v$, and $\Schw_{v,0}$  a Schwartz function on $M_{2,2}(\bQ_p)$.

\subsection{Local zeta integrals for big-cell sections I}
Given an integer $m\geq 1$, we define the following level groups:
\begin{equation}\label{eq:Ks}
\begin{aligned}
   K_{\GSp(4),v}(\varpi^m_v)&=\left\{g\in\GSp(4,\bZ_v):g\equiv \bid_4\mod \varpi^m_v\right\},\\
    K^1_{\GSp(4),v}(\varpi^m_v)&=\left\{g\in\GSp(4,\bZ_v):g\equiv \begin{bsm}1&\ast&\ast&\ast\\ &1&\ast&\ast\\&&1\\&&\ast&1\end{bsm}\mod \varpi^m_v\right\},\\
   K'_{\GSp(4),v}(\varpi^m_v)&=\left\{g\in\GSp(4,\bZ_v):g\equiv \begin{bsm}\bid_2&\ast\\ &\bid_2\end{bsm}\mod \varpi^m_v\right\},
\end{aligned}
\end{equation}
and for $G=\GL(2)$ or $\GU(1,1)$,
\begin{equation}\label{eq:Ks2}
\begin{aligned}
   K_{G,v}(\varpi^{m}_v)&=\left\{g\in G(\bZ_v):g\equiv \bid_2\mod \varpi^m_v\right\},\\
   K^1_{G,v}(\varpi^{m}_v)&=\left\{g\in G(\bZ_v):g\equiv \begin{bsm}1&\ast\\ &1\end{bsm}\mod \varpi^m_v\right\}.
\end{aligned}
\end{equation}
We define a Schwartz function $\Phi_v$ with \begin{equation}\label{eq:vol-sec}
\begin{aligned}
   \Schw_{v,1,\sym}&=\mathds{1}_{\Sym_2(\bZ_v)},
   &\Schw_{v,1,\alt}&=\mathds{1}_{4\bbc^2\varpi^{2m}_v\bZ_v},\\
   \Schw_{v,0}&=\mathds{1}_{K_{\GL(2),v}(\varpi^{m}_v)},
   &\Schw_{v,2}&=\cF^{-1}\mathds{1}_{-\bbc(1+\varpi^{m}_v\bZ_v)}.
\end{aligned}
\end{equation}
The big-cell sections associated to this type of $\Phi_v$ will be used at finite places other than $p$ where the input for the local zeta integrals has ramification.

\begin{prop}\label{prop:Z-vol}
Suppose that $\bS\in\Sym^*_2(\bZ_v)$, and take an integer $m\geq 1$ such that $\Bes^{\Pi_v}_{\bS,\Lambda_v}(\varphi_v)$ (resp. $\Whi^{\pi_v}_\bbc(f_v)$) is fixed by $K'_{\GSp(4),v}(\varpi^{m})$ (resp. $K^1_{\GL(2),v}(\varpi^{m})$) and $\chi_v$, $\Xi_{\bQ,v}$ are trivial on $1+\varpi^m_v\bZ_v$. Let $\Schw_v$ be the Schwartz function on $\Her_3(\cK_v)$ given in \eqref{eq:vol-sec}. Then
\begin{align*}
   &Z_v\Big(\dsec^{\bc}_{v,\Schw_v}(s,\chi,\Xi),\Bes^{\Pi_v}_{\bS,\Lambda_v}(\varphi_v), \Whi^{\pi_v,\Upsilon_v}_\bbc(f_v)\Big)\\
   =&\,\frac{|4\bbc^2 \varpi^{7m}_v|_v}{ (1-|\varpi_v|^4_v)(1-|\varpi_v|^2_v)^2}\cdot \bes^{\Pi_v}_{\bS,\Lambda_v}\left(\begin{bmatrix}&-\bid_2\\ \bid_2\end{bmatrix}\cdot \varphi_v\right)
   \,\whi^{\pi_v}_\bbc(f_v).
\end{align*} 
\end{prop}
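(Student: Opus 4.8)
The plan is to unfold the local zeta integral $Z_v$ in \eqref{eq:Zv-def} using the explicit formula \eqref{eq:bigcell-sec} for the big-cell section and the product structure \eqref{eq:Schw-block} of the Schwartz function, and then exploit that the resulting integrand has all its support concentrated on the $R'_\bS$-coset of a single group element, namely (a twist of) $\begin{bmatrix}&-\bid_2\\ \bid_2\end{bmatrix}$. First I would use the identification of $(R'_\bS\backslash\GSp(4)\times_{\GL(1)}\GU(1,1))(\bQ_v)$ as a quotient and reduce the integral over the $\GU(1,1)(\bQ_v)$-factor: since $\Whi^{\pi_v,\Upsilon_v}_\bbc(f_v)$ is only nonzero on the big cell of $\GL(2)$ and, via \eqref{eq:Schw-block}, the section $\dsec^{\bc}_{v,\Schw_v}$ restricted to $\cS^{-1}\imath(\eta_\bS g, h)$ picks out $\Schw_{v,0}\in\mathds{1}_{K_{\GL(2),v}(\varpi^m)}$ on the $h$-variable, the $h$-integral collapses to a single point (up to the compact group $K_{\GL(2),v}(\varpi^m)$ on which $f_v$ is already invariant by hypothesis), producing the factor $\whi^{\pi_v}_\bbc(f_v)$ together with a volume factor.

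Next I would carry out the $\GSp(4)$-integral. Writing $g$ in coordinates adapted to the Klingen/Siegel parabolic and using $\cS^{-1}\imath(\eta_\bS g,h)$, the big-cell section \eqref{eq:bigcell-sec} contributes the characteristic function $\Schw_{v,1,\sym}=\mathds{1}_{\Sym_2(\bZ_v)}$, $\Schw_{v,1,\alt}=\mathds{1}_{4\bbc^2\varpi^{2m}_v\bZ_v}$ and $\Schw_{v,2}=\cF^{-1}\mathds{1}_{-\bbc(1+\varpi^m_v\bZ_v)}$ on the various block entries of $\fC^{-1}\fD$; combined with the $\psi_{\bA_\bQ}(\Tr\,\bS X)$-equivariance of $\Bes^{\Pi_v}_{\bS,\Lambda_v}(\varphi_v)$ under $U^\Sieg_{\GSp(4)}$ and its $K'_{\GSp(4),v}(\varpi^m)$-invariance, this forces $g$ to lie in $R'_\bS(\bQ_v)\cdot \begin{bmatrix}&-\bid_2\\ \bid_2\end{bmatrix}\cdot K'_{\GSp(4),v}(\varpi^m)$ (roughly — one has to be careful with the $\alphaS$-conjugations built into $\eta_\bS$ and into the last slot of \eqref{eq:Schw-block}, which is precisely why the Schwartz functions in \eqref{eq:Schw-block} are conjugated by $\begin{bmatrix}\alphaS&1\\ \alphabS&1\end{bmatrix}$). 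The value of $\Bes^{\Pi_v}_{\bS,\Lambda_v}(\varphi_v)$ at $\begin{bmatrix}&-\bid_2\\ \bid_2\end{bmatrix}$ is then exactly $\bes^{\Pi_v}_{\bS,\Lambda_v}\big(\begin{bmatrix}&-\bid_2\\ \bid_2\end{bmatrix}\cdot\varphi_v\big)$, and the remaining factors (the character and absolute-value factors in \eqref{eq:bigcell-sec}, which are trivial on $1+\varpi^m_v\bZ_v$ by hypothesis, together with the various Haar-measure volumes) assemble into the prefactor $\dfrac{|4\bbc^2\varpi^{7m}_v|_v}{(1-|\varpi_v|^4_v)(1-|\varpi_v|^2_v)^2}$.

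The exponent $7m$ and the denominator $(1-|\varpi_v|^4_v)(1-|\varpi_v|^2_v)^2$ should be traced as follows: the denominators come from the factor $d_{3,v}(s+\frac12,\cdot)$-type normalizations implicit in the measure on $Q_{\GU(3,3)}\backslash\GU(3,3)$ restricted to the big cell (equivalently, from summing a geometric series over the lattice where $\fC^{-1}\fD$ ranges), while the $7m$ is the sum of the level-depths coming from $\Schw_{v,1,\alt}$ (contributing $2m$, via $|4\bbc^2\varpi^{2m}_v|_v$), from $\Schw_{v,0}$ (the index $[K_{\GL(2),v}:K_{\GL(2),v}(\varpi^m)]$, roughly $3m$ worth), and from $\Schw_{v,2}$ (another $m$ or $2m$ from the Fourier transform of $\mathds{1}_{-\bbc(1+\varpi^m_v\bZ_v)}$, together with the $|\bbc|_v$-factors). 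I expect the main obstacle to be bookkeeping: correctly normalizing all the Haar measures (on $R'_\bS\backslash\GSp(4)\times_{\GL(1)}\GU(1,1)$, on $\Sym_2$, on the unipotent radicals, and on the compact level groups) so that the volume factors combine into exactly $|4\bbc^2\varpi^{7m}_v|_v$ and the geometric-series sums give exactly the stated denominator, and keeping track of the conjugations by $\begin{bmatrix}\alphaS&1\\ \alphabS&1\end{bmatrix}$ and by $\eta_\bS$ so that the support condition lands on $\begin{bmatrix}&-\bid_2\\ \bid_2\end{bmatrix}$ rather than on some other Weyl element. The analytic content is minimal — no convergence issue arises because the big-cell section is compactly supported in the relevant directions — so the proof is essentially a (somewhat lengthy) direct computation.
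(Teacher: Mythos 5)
Your overall strategy---direct computation, unfolding \eqref{eq:Zv-def} with the explicit big-cell formula \eqref{eq:bigcell-sec} and the product structure \eqref{eq:Schw-block}, then exploiting the support conditions to collapse the integral near $\begin{bsm}&-\bid_2\\\bid_2\end{bsm}$---is exactly the route the paper takes. But the sketch contains imprecisions that a careful execution would have to fix, and one of them would derail the ``first collapse the $h$-integral, then do $g$'' plan as written.

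The main issue is your claim that $\Schw_{v,0}=\mathds{1}_{K_{\GL(2),v}(\varpi^m_v)}$ ``picks out'' the $h$-variable. In fact the block of $\fC^{-1}\fD$ on which $\Schw_{v,0}$ lands is the \emph{mixed} variable $T=\imath_\bS(\fc')^{-1}\nu\,\ltrans{C}^{-1}$ with $\fc'=\fc-\fa\cdot\bbc^{-1}\Tr\bS AC^{-1}$: it involves both the lower-left block $C$ of $g$ and a shifted lower-left entry of $h$. The paper's decisive technical device is the change of coordinates $(T,t,W,w,u)$ with $t=\nu\fc^{\prime-1}\bar{\fc}^{\prime-1}$, $W=C^{-1}D$, $w=\fc^{\prime-1}\fd'$, $u=\nu\bar{\fc}'\fa$, under which the $\Schw_{v}$-factors become genuinely decoupled and the $g$- and $h$-dependence is untangled; your separate ``$h$ then $g$'' decomposition doesn't directly apply, and you would eventually be forced into some version of this substitution. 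Relatedly, $f_v$ is invariant under $K^1_{\GL(2),v}(\varpi^m_v)$, not $K_{\GL(2),v}(\varpi^m_v)$, and the residual Whittaker value one is left with is $\Whi^{\pi_v}_\bbc(f_v)\begin{psm}1\\&t\end{psm}$ for $t$ in a small shell, not the value at the identity; that this ultimately yields $\whi^{\pi_v}_\bbc(f_v)$ requires the Fourier inversion with $\Schw_{v,2}=\cF^{-1}\mathds{1}_{-\bbc(1+\varpi^m_v\bZ_v)}$ and the level-invariance.

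Your accounting of the exponent $7m$ is off as well: $\Schw_{v,1,\alt}$ contributes $|4\bbc^2\varpi^{2m}_v|_v$; $\vol\!\left(K_{\GL(2),v}(\varpi^m_v)\right)$ contributes $|\varpi^{4m}_v|_v$ (not ``roughly $3m$''), times $\left[(1-|\varpi_v|_v)(1-|\varpi_v|^2_v)\right]^{-1}$; and the Fourier-inverted $\Schw_{v,2}$-integral over $t$ contributes a further $|\varpi^m_v|_v\,(1-|\varpi_v|_v)^{-1}$, for a total of $|4\bbc^2\varpi^{7m}_v|_v$. The denominator is then produced not by a $d_{3,v}$-type normalization but by combining these with the measure-normalization constant $\frac{1-|\varpi_v|_v}{(1-|\varpi_v|^4_v)(1+|\varpi_v|_v)}$ from the change of variables; the identity $(1+|\varpi_v|_v)(1-|\varpi_v|_v)=1-|\varpi_v|^2_v$ collapses everything to $(1-|\varpi_v|^4_v)(1-|\varpi_v|^2_v)^2$. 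So the plan is sound and the arithmetic has no gaps of principle, but as written the bookkeeping would not come out to $7m$ nor to the stated denominator without the paper's specific coordinate substitution.
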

\begin{proof}
Write 
\begin{align*}
   g&=\begin{bmatrix}A&B\\C&D\end{bmatrix},
   &h&=\begin{bmatrix}\fa&\fb\\ \fc&\fd\end{bmatrix},
\end{align*}
and let 
\[
   \begin{bmatrix}\fA&\fB\\ \fC&\fD\end{bmatrix}
   =\cS^{-1}\imath(\eta_\bS\, g,h).
\] 
Then
\begin{align*}
   \fC&=\begin{bmatrix}
   1&-\alphabS\\&1\\&&1
   \end{bmatrix}
   \begin{bmatrix}
   C&-\begin{bmatrix}\alphabS\\1\end{bmatrix}\fa\\-\begin{bmatrix}\alphaS&1\end{bmatrix}A&\fc
   \end{bmatrix},
   &\fD&=\begin{bmatrix}
   1&-\alphabS\\&1\\&&1
   \end{bmatrix}
   \begin{bmatrix}
   D&-\begin{bmatrix}\alphabS\\1\end{bmatrix}\fb\\-\begin{bmatrix}\alphaS&1\end{bmatrix}B&\fd
   \end{bmatrix}.
\end{align*}
Put
\begin{align*}
   \fc'&=\fc-\fa\cdot\bbc^{-1}\Tr\bS AC^{-1},
   &\fd'&=\fd-\fb\cdot\bbc^{-1}\Tr\bS AC^{-1}.
\end{align*}
Then 
\[
    \det\fC=\fc' \det C,
\]
and
\begin{align*}
   \fC^{-1}\fD=\begin{bmatrix}
   C^{-1}D+\fc^{\prime-1}\fa\nu_g C^{-1}\begin{bmatrix}\alphaS\alphabS&\alphabS\\ \alphaS&1\end{bmatrix}\ltrans{C}^{-1} & C^{-1}\begin{bmatrix}\alphabS\\1\end{bmatrix} \bar{\fc}^{\prime-1}\nu_h\\[1em]
   \fc^{\prime-1}\nu_g\begin{bmatrix}\alphaS&1\end{bmatrix}\ltrans{C}^{-1}&\fc^{\prime-1}\fd'
   \end{bmatrix}.
\end{align*}
By the definition of the big-cell sections in \eqref{eq:bigcell-sec},
\begin{align*}
   &\dsec^{\bc}_{v,\Schw_v}(s,\chi,\Xi)\left(\cS^{-1}\imath(\eta^\bS g,h)\right)\cdot\Xi_v^{-1}(\det h)\\
   =&\,\Xi_v\left(\nu^3\bar{\fc}^{\prime-1}(\det C)^{-1}(\det h)^{-1}\right)\,\chi_v\left(\nu^3 \fc^{\prime-1}\bar{\fc}^{\prime-1}(\det C)^{-2}\right)\, \left|\nu^3 \fc^{\prime-1}\bar{\fc}^{\prime-1}(\det C)^{-2}\right|^{s+\frac{3}{2}}_v\\
   &\times \Schw_v\begin{pmatrix}
   C^{-1}D+\nu\fc^{\prime-1}\fa C^{-1}\begin{bmatrix}\alphaS\alphabS&\alphabS\\ \alphaS&1\end{bmatrix}\ltrans{C}^{-1} & C^{-1}\begin{bmatrix}\alphabS\\1\end{bmatrix} \nu\bar{\fc}^{\prime-1}\\[1em]
   \nu\fc^{\prime-1}\begin{bmatrix}\alphaS&1\end{bmatrix}\ltrans{C}^{-1}&\fc^{\prime-1}\fd'.
   \end{pmatrix}
\end{align*}
Let
\begin{align*}
   T&=\imath_\bS(\fc')^{-1} \nu\ltrans{C}^{-1},
   &t&=\nu\fc^{\prime-1}\bar{\fc}^{\prime-1},
\end{align*}
and
\begin{align*}
   W&=C^{-1}D,
   &w&=\fc^{\prime-1}\fd',
   &u=\nu\bar{\fc}^{\prime}\fa.
\end{align*}
Then
\begin{align*}
   \nu\fc^{\prime-1}\fa C^{-1}\begin{bmatrix}\alphaS\alphabS&\alphabS\\ \alphaS&1\end{bmatrix}\ltrans{C}^{-1}
   &=u\cdot \ltrans{T}\begin{bmatrix}\alphaS\alphabS&\alphabS\\ \alphaS&1\end{bmatrix}T,\\
   \nu\begin{bmatrix}\fc^{\prime-1}\\&\bar{\fc}^{\prime-1}\end{bmatrix}\begin{bmatrix}\alphaS&1\\\alphabS&1\end{bmatrix} \ltrans{C}^{-1}
   &=\begin{bmatrix}\alphaS&1\\\alphabS&1\end{bmatrix} T,
\end{align*}
and
\begin{align*}
   &\dsec^{\bc}_{v,\Schw_v}(s,\chi,\Xi)\left(\cS^{-1}\imath(\eta^\bS g,h)\right)\cdot\Xi^{-1}(\det h)\\
   =&\,\Xi_v\left( \bar{\fc}'\det T\right)\,\chi_v\left(t(\det T)^2\right)\, \left|t(\det T)^2\right|^{s+\frac{3}{2}}_v\\
   &\times\Schw_{v,1,\sym}\left(W+u\cdot \ltrans{T}\begin{bmatrix}\alphaS\alphabS&\frac{\alphaS+\alphabS}{2}\\ \frac{\alphaS+\alphabS}{2}&1\end{bmatrix}T\right)
   \cdot\Schw_{v,1,\alt}\left(u\det T\right)
   \cdot \Schw_{v,0}\left( T\right)
   \cdot \Schw_{v,2}(w).
\end{align*}
Also, we have
\begin{align*}
  g&=\begin{bmatrix}\bid_2&AC^{-1}\\&\bid_2\end{bmatrix}
  \begin{bmatrix}  &-\nu\ltrans{C}^{-1}\\C\end{bmatrix}
  \begin{bmatrix}\bid_2&C^{-1}D\\ &\bid_2\end{bmatrix}\\
  &=\begin{bmatrix}\bid_2&AC^{-1}\\&\bid_2\end{bmatrix} 
   \begin{bmatrix}\imath_\bS\left(\fc'\right)\\&\ltrans{\,\ol{\imath_\bS\left(\bar{\fc}'\right)}}\end{bmatrix}
   \begin{bmatrix}&-T\\ t\, \ltrans{T}^{-1}\end{bmatrix}
   \begin{bmatrix}\bid_2&W\\&\bid_2\end{bmatrix},
\end{align*}
\begin{align*}
  \begin{bmatrix}&1\\-1\end{bmatrix} h
  &=\begin{bmatrix}\fc&\fd\\-\fa&-\fb\end{bmatrix}
  =\begin{bmatrix}1&-\bbc^{-1}\Tr\bS AC^{-1}\\&1\end{bmatrix}\begin{bmatrix}\fc'&\fd'\\ -\fa&-\fb\end{bmatrix}\\
  &=\begin{bmatrix}1&-\bbc^{-1}\Tr\bS AC^{-1}\\&1\end{bmatrix}
  \begin{bmatrix}1&\nu^{-1}\bar{\fc}^{\prime}\fd'\\&1\end{bmatrix}
  \begin{bmatrix}\fc'&\\ &\nu\bar{\fc}^{\prime-1}\end{bmatrix}\begin{bmatrix}1+\nu^{-1}\fa\bar{\fc}'\cdot\fc^{\prime-1}\fd'&(\fc^{\prime-1}\fd')^2\cdot\nu^{-1}\fa\bar{\fc}'\\ -\nu^{-1}\bar{\fc}'\fa&1-\nu^{-1}\fa\bar{\fc}'\cdot\fc^{\prime-1}\fd'\end{bmatrix}\\
  &=\begin{bmatrix}1&-\bbc^{-1}\Tr\bS AC^{-1}\\&1\end{bmatrix}
     \fc'\begin{bmatrix}1\\&t\end{bmatrix}
     \begin{bmatrix}1+uw&w\\ -u&1-uw\end{bmatrix}\\
     &=\begin{bmatrix}1&-\bbc^{-1}\Tr\bS AC^{-1}+t^{-1}w\\&1\end{bmatrix}
     \fc'\begin{bmatrix}1\\&t\end{bmatrix}
     \begin{bmatrix}1+uw&uw^2\\ -u&1-uw\end{bmatrix}.
\end{align*}
Using $W,w,u,T,t$ as the coordinates, we have
\begin{align*}
    \int_{\big(R'_\bS\backslash\GSp(4)\times_{\GL(1)}\GU(1,1)\big)(\bQ_v)} \cdots \,dh\,dg 
   = &\,\frac{1-|\varpi_v|_v}{(1-|\varpi_v|_v^4)(1+|\varpi_v|_v)} \\
   &\hspace{-4em}\times\int_{\GL_2(\bQ_v)\times\bQ^\times_v}|\det T|^{-3}_v |t|^3_v \int_{\Sym_2(\bQ_v)\times\bQ_v\times\bQ_v} \cdots\,dWdw\,du\,\,d^\times T\,d^\times t,
\end{align*}
and
\begin{align*}
   \Bes^{\Pi_v}_{\bS,\Lambda_v}(\varphi_v)(g)\cdot \Whi^{\pi_v,\Upsilon_v}_{\bbc}(f_v)\left( \begin{bmatrix}&1\\-1\end{bmatrix} h\right)
   =&\,\Lambda_v\Upsilon_v(\fc')\cdot\psi_v(\bbc\, t^{-1}w)\\
   &\times \Bes^{\Pi_v}_{\bS,\Lambda_v}(\varphi_v)\left(\begin{bmatrix}&-T\\ t\, \ltrans{T}^{-1}\end{bmatrix}\begin{bmatrix}\bid_2&W\\&\bid_2\end{bmatrix}\right)\\
  &\times \Whi^{\pi_v}_{\bbc}(f_v)\left(\begin{bmatrix}1\\&t\end{bmatrix}
     \begin{bmatrix}1+uw&uw^2\\ -u&1-uw\end{bmatrix}\right).
\end{align*}
It follows that
\begin{align*}
    &Z_v\Big(\dsec^{\bc}_{v,\Schw_v}(s,\chi,\Xi),\Bes^{\Pi_v}_{\bS,\Lambda_v}(\varphi_v), \Whi^{\pi_v,\Upsilon_v}_\bbc(f_v)\Big)\\
    =&\,\frac{1-|\varpi_v|_v}{(1-|\varpi_v|_v^4)(1+|\varpi_v|_v)} \int_{\GL(2,\bQ_v)\times\bQ^\times_v}
    |\det T|^{-3}_v |t|^3_v \cdot \Xi_v\left(\det T\right)\,\chi_v\left(t(\det T)^2\right)\, \left|t(\det T)^2\right|^{s+\frac{3}{2}}_v\\
   &\times\int_{\Sym_2(\bQ_v)\times\bQ_v\times\bQ_v}\Schw_{v,1,\sym}\left(W+u\cdot \ltrans{T}\begin{bmatrix}\alphaS\alphabS&\frac{\alphaS+\alphabS}{2}\\ \frac{\alphaS+\alphabS}{2}&1\end{bmatrix}T\right)
   \cdot\Schw_{v,1,\alt}\left(u\det T\right)
   \cdot \Schw_{v,0}\left(T\right)
   \\
   &\times  \Schw_{v,2}(w)\, \psi_v(\bbc\, t^{-1}w)
   \cdot \Bes^{\Pi_v}_{\bS,\Lambda_v}(\varphi_v)\left(\begin{bmatrix}&-T\\ t\, \ltrans{T}^{-1}\end{bmatrix}\begin{bmatrix}\bid_2&W\\&\bid_2\end{bmatrix}\right)\\
  &\times\Whi^{\pi_v}_{\bbc}(f_v)\left(\begin{bmatrix}1\\&t\end{bmatrix}\begin{bmatrix}1+uw&uw^2\\ -u&1-uw\end{bmatrix}\right)
     \,dW dw\,du\,\,d^\times T\,d^\times t.
\end{align*}
The conditions on $\varphi_v,f_v,\Xi_{\bQ,v},\chi_v$ and choice of $\Schw_v$ in \eqref{eq:vol-sec} implies that
\begin{align*}
   &Z_v\Big(\dsec^{\bc}_{v,\Schw_v}(s,\chi,\Xi),\Bes^{\Pi_v}_{\bS,\Lambda_v}(\varphi_v), \Whi^{\pi_v,\Upsilon_v}_\bbc(f_v)\Big)\\
   =&\,  \frac{1-|\varpi_v|_v}{(1-|\varpi_v|_v^4)(1+|\varpi_v|_v)}\cdot |4\bbc^2\varpi^{2m}_v|_v \cdot \vol\left(K_{\GL(2),v}(\varpi^m_v)\right)\\
   &\times \int_{\bQ^\times} \int_{\bQ_p}\Schw_{v,2}(w)\, \psi_v(\bbc\, t^{-1}w)
   \cdot\Bes^{\Pi_v}_{\bS,\Lambda_v}(\varphi_v)\begin{pmatrix}&-\bid_2\\ t\cdot \bid_2\end{pmatrix}
   \,\Whi^{\pi_v}_\bbc(f_v)\begin{pmatrix}1\\&t\end{pmatrix}
   \,dw\,d^\times t\\
   =&\,\frac{|4\bbc^2 \varpi^{6m}_v|_v}{(1-|\varpi_v|^4_v)(1-|\varpi_v|^2_v)(1+|\varpi_v|_v)}\\
   &\times\int_{\bQ^\times_p} \cF(\cF^{-1}\mathds{1}_{-\bbc(1+\varpi^{m}_v\bZ_v)})(\bbc t^{-1})\cdot\Bes^{\Pi_v}_{\bS,\Lambda_v}(\varphi_v)\begin{pmatrix}&-\bid_2\\ t\cdot \bid_2\end{pmatrix}
   \,\Whi^{\pi_v}_\bbc(f_v)\begin{pmatrix}1\\&t\end{pmatrix}
   \,dt\\
   =&\, \frac{|4\bbc^2 \varpi^{6m}_v|_v}{ (1-|\varpi_v|^4_v)(1-|\varpi_v|^2_v)(1+|\varpi_v|_v)}\cdot \frac{|\varpi^m_v|_v}{1-|\varpi_v|_v}\,\bes^{\Pi_v}_{\bS,\Lambda_v}\left(\begin{bmatrix}&-\bid_2\\ \bid_2\end{bmatrix}\cdot \varphi_v\right)
   \,\whi^{\pi_v}_\bbc(f_v)
\end{align*}

\end{proof}

\subsection{The $\bU_v$-operators}\label{sec:Uv}

Given integers $m_1\geq m_2\geq 0$ (resp. $m_3\geq 0$), we define the $\bU_v$-operator $U^{\GSp(4)}_{v,m_1,m_2}$ (resp. $U^{\GL(2)}_{v,m_3}$) on as
\begin{equation}\label{eq:loc-Uv}
\begin{aligned}
   U^{\GSp(4)}_{v,m_1,m_2}&=\int_{U_{\GSp(4)}(\bZ_v)} \text{action by } u\begin{bmatrix}\varpi^{m_1}_v\\&\varpi^{m_2}_v\\&&\varpi^{-m_1}_v\\&&&\varpi^{-m_2}_v\end{bmatrix}\,du,\\
   U^{\GL(2)}_{v,m_3}&=\int_{U_{\GL(2)(\bZ_v)}} \text{action by } u \begin{bmatrix}\varpi^{m_3}_v\\&\varpi^{-m_3}_v\end{bmatrix} \,du.
\end{aligned}
\end{equation}
It is easy to see that 
\begin{align*}
  U^{\GSp(4)}_{v,m_1,m_2}&=\left(U^{\GSp(4)}_{v,1,0}\right)^{m_1-m_2}\left(U^{\GSp(4)}_{v,1,1}\right)^{m_2},
  &U^{\GL(2)}_{v,m_3}&=\left(U^{\GL(2)}_{v,1}\right)^{m_3}.
\end{align*}


\vspace{.5em}
In the following, given $\varphi_v\in\Pi_v$, $f_v\in\pi_v$, we consider $\varphi_{v,m_1,m_2}$, $f_{v,m_3}$ defined as
\begin{equation}\label{eq:dual-twist}
\begin{aligned}
    \varphi_{v,m_1,m_2}&=\Pi_v\left(\begin{bmatrix}&\bid_2\\ \bid_2\end{bmatrix}\begin{bmatrix}\varpi^{m_1}_v\\&\varpi^{m_2}_v\\&&\varpi^{-m_1}_v\\&&&\varpi^{-m_2}_v\end{bmatrix}\right)\varphi_v,\\
   f_{v,m_3}&=\pi_v\left(\begin{bmatrix}&1\\1\end{bmatrix}\begin{bmatrix}\varpi^{m_3}_v\\&\varpi^{-m_3}_v\end{bmatrix}\right)f_v.
\end{aligned}
\end{equation}
A simple computation gives the following proposition.
\begin{prop}\label{prop:dual-twist}
Suppose that $\varphi_v\in\Pi_v$ (resp. $f_v\in\pi_v$) is an eigenvector for $U^{\GSp(4)}_{v,1,1}$ with eigenvalue $\lambda_v(\varphi_v)$ (resp. eigenvector for  $U^{\GL(2)}_{v,1}$ with eigenvalue $\lambda_v(f_v)$). If the test section $\dsec_v(s,\chi,\Xi)\in I_v(s,\chi,\Xi)$ is invariant under the right translation by $\imath\left(K_{\GSp(4),v}(\varpi^m_v)\times K_{\GL(2),v}(\varpi^m_v)\right)$, then for all $m_1\geq m_2\geq \frac{m}{2}$, $m_3\geq \frac{m}{2}$ and $n\geq 0$,
\begin{align*}
   &Z_v\left(\dsec_v(s,\chi,\Xi),\Bes^{\Pi_v}_{\bS,\Lambda_{v}}(\varphi_{v,,m_1+n,m_2+n}),\Whi^{\pi_v,\Upsilon_v}_\bbc(f_{v,m_3+n})\right)\\
   =&\,\big(\lambda_v(\varphi_v)\lambda_v(f_v)\big)^n
   \cdot Z_v\left(\dsec_v(s,\chi,\Xi),\Bes^{\Pi_v}_{\bS,\Lambda_{v}}(\varphi_{v,m_1,m_2}),\Whi^{\pi_v,\Upsilon_v}_\bbc(f_{v,m_3})\right).
\end{align*}
\end{prop}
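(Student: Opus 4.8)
The plan is to unwind the definition of the local zeta integral $Z_v$ in \eqref{eq:Zv-def} and track how precomposing $\varphi_v$ (resp. $f_v$) with the matrices appearing in \eqref{eq:dual-twist}, for the shifted exponents $m_i+n$, compares with the unshifted case. First I would write
\[
   \varphi_{v,m_1+n,m_2+n}=\Pi_v\!\left(\begin{bmatrix}&\bid_2\\ \bid_2\end{bmatrix}\begin{bmatrix}\varpi^{m_1+n}_v\\&\varpi^{m_2+n}_v\\&&\varpi^{-m_1-n}_v\\&&&\varpi^{-m_2-n}_v\end{bmatrix}\right)\varphi_v,
\]
and analogously for $f_{v,m_3+n}$, and observe that the diagonal matrix with exponents $m_1+n,m_2+n,-m_1-n,-m_2-n$ factors as the product of the matrix with exponents $m_1,m_2,-m_1,-m_2$ and the central-up-to-similitude scalar matrix $\varpi^n_v\cdot\begin{bmatrix}\bid_2&\\&\varpi^{-2n}_v\bid_2\end{bmatrix}$; more usefully, it equals $\mathrm{diag}(\varpi^{m_1}_v,\varpi^{m_2}_v,\varpi^{-m_1}_v,\varpi^{-m_2}_v)$ times $\mathrm{diag}(\varpi^{n}_v,\varpi^{n}_v,\varpi^{-n}_v,\varpi^{-n}_v)$, and the latter is exactly the translation matrix defining $U^{\GSp(4)}_{v,1,1}$ raised to the $n$-th power, averaged over $U_{\GSp(4)}(\bZ_v)$.

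The key step is then to insert the averaging over the unipotent radical. Because $\dsec_v(s,\chi,\Xi)$ is right-invariant under $\imath\left(K_{\GSp(4),v}(\varpi^m_v)\times K_{\GL(2),v}(\varpi^m_v)\right)$, and because $m_1\ge m_2\ge m/2$, $m_3\ge m/2$, the conjugate $\begin{bmatrix}\varpi^{m_1}_v&&\\&\ddots&\\&&\varpi^{-m_2}_v\end{bmatrix}^{-1}U_{\GSp(4)}(\bZ_v)\begin{bmatrix}\varpi^{m_1}_v&&\\&\ddots&\\&&\varpi^{-m_2}_v\end{bmatrix}$ lands inside $K_{\GSp(4),v}(\varpi^m_v)$ (this is the usual reason the $\bU_v$-operator is idempotent-like on forms of the right level), so averaging $\varphi_{v,m_1,m_2}$ over $U_{\GSp(4)}(\bZ_v)$ on the appropriate side is harmless up to a fixed volume factor — and the normalization of $du$ is chosen so that this factor is $1$. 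Concretely, I would substitute the variable $g\mapsto gu$ (resp. $h\mapsto hu'$) in the integral \eqref{eq:Zv-def}, use that $R'_\bS$ is normalized appropriately and that $\dsec_v$, $\Bes^{\Pi_v}_{\bS,\Lambda_v}$, $\Whi^{\pi_v,\Upsilon_v}_\bbc$ transform in a controlled way, to rewrite $Z_v$ with $\varphi_{v,m_1+n,m_2+n}$, $f_{v,m_3+n}$ as $Z_v$ with $\varphi_{v,m_1,m_2}$, $f_{v,m_3}$ but with $\varphi_{v,m_1,m_2}$ replaced by $\left(U^{\GSp(4)}_{v,1,1}\right)^n\varphi_{v,m_1,m_2}$ and $f_{v,m_3}$ replaced by $\left(U^{\GL(2)}_{v,1}\right)^n f_{v,m_3}$. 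Since $\varphi_v$ is a $U^{\GSp(4)}_{v,1,1}$-eigenvector with eigenvalue $\lambda_v(\varphi_v)$ and the operator commutes with the translation defining $\varphi_{v,m_1,m_2}$ (both are right translations on $\Pi_v$), $\left(U^{\GSp(4)}_{v,1,1}\right)^n\varphi_{v,m_1,m_2}=\lambda_v(\varphi_v)^n\,\varphi_{v,m_1,m_2}$, and likewise for $f$. Pulling the scalar $\big(\lambda_v(\varphi_v)\lambda_v(f_v)\big)^n$ out of $Z_v$ by its linearity in the Bessel and Whittaker inputs gives the claim.

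The main obstacle I anticipate is bookkeeping: checking precisely that the conjugated unipotent $\begin{bmatrix}\varpi^{m_1}_v&&\\&\ddots&\end{bmatrix}U_{\GSp(4)}(\bZ_v)\begin{bmatrix}\varpi^{m_1}_v&&\\&\ddots&\end{bmatrix}^{-1}$ (with the correct placement of the long Weyl element $\begin{bmatrix}&\bid_2\\ \bid_2\end{bmatrix}$ from \eqref{eq:dual-twist}) is contained in $K_{\GSp(4),v}(\varpi^m_v)$ precisely under the hypotheses $m_1\ge m_2\ge m/2$, and likewise $m_3\ge m/2$ for $\GL(2)$, and that the measure normalizations in \eqref{eq:loc-Uv} make the volume factors disappear rather than contributing powers of $(1-|\varpi_v|_v)$ or similar. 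One should also verify that the change of variables is compatible with the quotient by $R'_\bS$ in \eqref{eq:Zv-def} — i.e. that right translation by $U_{\GSp(4)}(\bZ_v)\subset U^{\Sieg}_{\GSp(4)}(\bZ_v)$ descends to $R'_\bS\backslash(\GSp(4)\times_{\GL(1)}\GU(1,1))$ — but this is automatic since $R'_\bS$ acts on the left. These are all routine once the matrix identities in \eqref{eq:dual-twist} are written out, which is why the proposition is advertised as "a simple computation."
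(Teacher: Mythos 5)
Your overall strategy — relate $\varphi_{v,m_1+n,m_2+n}$ to $\varphi_{v,m_1,m_2}$ through the $\bU_p$-operator and absorb the unipotent averaging into the $K_{\GSp(4),v}(\varpi^m_v)$-invariance of $\dsec_v$ — is the right one, but the proof as written contains a genuine error at the step that actually uses the eigenvalue hypothesis. You assert that $\left(U^{\GSp(4)}_{v,1,1}\right)^n\varphi_{v,m_1,m_2}=\lambda_v(\varphi_v)^n\,\varphi_{v,m_1,m_2}$ on the grounds that $U^{\GSp(4)}_{v,1,1}$ ``commutes with the translation defining $\varphi_{v,m_1,m_2}$ (both are right translations on $\Pi_v$).'' This is false: right-translation operators $\Pi_v(A)$ and $\Pi_v(B)$ satisfy $\Pi_v(A)\Pi_v(B)=\Pi_v(AB)$ and $\Pi_v(B)\Pi_v(A)=\Pi_v(BA)$, so they commute only when $A$ and $B$ do, and here they visibly do not. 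Indeed, writing $w_0=\begin{bsm}&\bid_2\\\bid_2\end{bsm}$ and $D_{a,b}=\diag(\varpi^a_v,\varpi^b_v,\varpi^{-a}_v,\varpi^{-b}_v)$, one has $D_{n,n}w_0=w_0D_{-n,-n}$, so
\[
U^{\GSp(4)}_{v,n,n}\,\varphi_{v,m_1,m_2}=\int_{U(\bZ_v)}\Pi_v\big(u\,D_{n,n}\,w_0\,D_{m_1,m_2}\big)\varphi_v\,du
=\int_{U(\bZ_v)}\Pi_v\big(u\,w_0\,D_{m_1-n,m_2-n}\big)\varphi_v\,du,
\]
and the exponents \emph{decrease}; this is not $\lambda_v(\varphi_v)^n\varphi_{v,m_1,m_2}$, nor is it compatible with your claim that after a substitution $Z_v(\varphi_{v,m_1+n,m_2+n})$ becomes $Z_v\big((U^{\GSp(4)}_{v,1,1})^n\varphi_{v,m_1,m_2}\big)$, where the exponents would need to \emph{increase}.

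The fix keeps your main idea but reorganizes the order of operations: apply the eigenvector identity to the bare $\varphi_v$ \emph{first} and then translate. From $\lambda_v(\varphi_v)^n\,\varphi_v=\int_{U(\bZ_v)}\Pi_v(u\,D_{n,n})\,\varphi_v\,du$, apply $\Pi_v(w_0 D_{m_1,m_2})$ on the left and commute $u$ past $D_{m_1,m_2}$ and $w_0$:
\[
\lambda_v(\varphi_v)^n\,\varphi_{v,m_1,m_2}=\int_{U(\bZ_v)}\Pi_v\big(k(u)\big)\,\varphi_{v,m_1+n,m_2+n}\,du,\qquad
k(u)=w_0\,D_{m_1,m_2}\,u\,D_{m_1,m_2}^{-1}\,w_0^{-1}.
\]
For $u=\begin{bsm}\bid_2&X\\0&\bid_2\end{bsm}$ with $X\in\Sym_2(\bZ_v)$, one has $D_{m_1,m_2}\,u\,D_{m_1,m_2}^{-1}=\begin{bsm}\bid_2&A X A\\0&\bid_2\end{bsm}$ with $A=\diag(\varpi^{m_1}_v,\varpi^{m_2}_v)$, and the hypotheses $m_1\ge m_2\ge m/2$ force $AXA\in\Sym_2(\varpi^m_v\bZ_v)$; conjugating by $w_0$ puts this block in the lower-left, so $k(u)\in K_{\GSp(4),v}(\varpi^m_v)$ (and the analogous statement for $\GL(2)$ with $m_3\ge m/2$). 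Note that the relevant conjugation is $D\,U(\bZ_v)\,D^{-1}$ (followed by conjugation by $w_0$), \emph{not} $D^{-1}\,U(\bZ_v)\,D$ as you wrote in the middle of your argument — the latter would scale $X$ the wrong way and leave $\GSp(4,\bZ_v)$. Finally, plug the displayed identity into $Z_v$, use linearity together with Fubini, and for each fixed $u$ perform the change of variables $(g,h)\mapsto(g\,k(u)^{-1},h\,k'(u')^{-1})$ in \eqref{eq:Zv-def}; right-invariance of $\dsec_v$ under $\imath\left(K_{\GSp(4),v}(\varpi^m_v)\times K_{\GL(2),v}(\varpi^m_v)\right)$, bi-invariance of the quotient measure, and $\det k'(u')^{-1}=1$ make the integrand unchanged, so $\int_{U(\bZ_v)}du=1$ gives $\lambda_v(\varphi_v)^n\lambda_v(f_v)^n\,Z_v(\varphi_{v,m_1,m_2},f_{v,m_3})=Z_v(\varphi_{v,m_1+n,m_2+n},f_{v,m_3+n})$.
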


\subsection{Local zeta integrals for big-cell sections II}

In this and next subsections, we compute the local zeta integrals in the special case where the test vectors in $\Pi_v$ and $\pi_v$ are of the form in \eqref{eq:dual-twist} with $\varphi_v$ and $f_v$ eigenvectors for $\bU_v$-operators with nonzero eigenvalues. This case will be used for the place $p$ in the construction of $p$-adic $L$-functions.

\begin{prop}\label{Prop:Zbc-1}
With $\varphi_v$, $f_v$, $\lambda_v(\varphi_v),\lambda_v(f_v)$ as in Proposition~\ref{prop:dual-twist}, and the Schwartz function $\Schw_v$ of the form in \eqref{eq:Schw-block}, we have
\begin{equation}\label{eq:Zp-first reduction}
\begin{aligned}
   &Z_v\left(\dsec^\bc_{v,\Schw_v}(s,\chi,\Xi),\Bes^{\Pi_v}_{\bS,\Lambda_{v}}(\varphi_{v,,m_1,m_2}),\Whi^{\pi_v,\Upsilon_v}_\bbc(f_{v,m_3})\right)\\
   =&\,C\cdot \lim_{n\to\infty} \big(\lambda_v(\varphi_v)\lambda_v(f_v)\big)^{-n}\int_{\bQ^\times_v}\int_{\GL(2,\bQ_v)}\int_{\bQ_v}
   |\det T|^{s+\frac{1}{2}}_v |\nu|^{s-\frac{1}{2}}_v |r|^{s-\frac{5}{2}}_v
   \chi_v\left(\nu r\det T\right)
   \Xi_v\left(\det T\right)\\
   &\times\Schw_{v,1,\alt}\left(\nu\right)
   \, \Schw_{v,0}\left( T\right)\cdot\,\psi_v(-\bbc r) \cdot \Bes^{\Pi_v}_{\bS,\Lambda_v}(\varphi_{v,m_1+n,m_2+n})\begin{pmatrix}&-T\\ r^{-1}\nu^{-1}\,\ltrans{T}^{\mr{adj}} \end{pmatrix} \\
   &\times \Whi^{\pi_v}_\bbc(f_{v,m_3+n})\begin{pmatrix}&\nu^{-1}\det T\\-r^{-1}\end{pmatrix}
   \,dr\,d^\times T\,d^\times \nu,
\end{aligned}
\end{equation}
where
\begin{align*}
  C&= \frac{1-|\varpi_v|_v}{(1-|\varpi_v|_v^4)(1+|\varpi_v|_v)}\int_{\Sym_2(\bQ_v)} \Schw_{v,1,\sym}\left(W\right) \,dW \int_{\bQ_v} \Schw_{v,2}\left(w\right) \,dw,
\end{align*}
and the superscript $^{\mr{adj}}$ stands for the adjugate matrix, {\it i.e.} for $X=\begin{bmatrix}x_{11}&x_{12}\\x_{21}&x_{22}\end{bmatrix}$, $X^{\mr{adj}}= \begin{bmatrix}x_{22}&-x_{12}\\-x_{21}&x_{11}\end{bmatrix}$.
\end{prop}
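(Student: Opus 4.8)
The plan is to bootstrap the computation from Propositions~\ref{prop:dual-twist} and \ref{prop:Z-vol}. Since $\dsec^{\bc}_{v,\Schw_v}(s,\chi,\Xi)$ is a smooth section it is fixed under $\imath\big(K_{\GSp(4),v}(\varpi^m_v)\times K_{\GL(2),v}(\varpi^m_v)\big)$ for $m$ large, so Proposition~\ref{prop:dual-twist} gives (for $m_1,m_2,m_3$ at least $m/2$, which we assume, and every $n\geq 0$)
\[
   Z_v\big(\dsec^{\bc}_{v,\Schw_v}(s,\chi,\Xi),\Bes^{\Pi_v}_{\bS,\Lambda_v}(\varphi_{v,m_1,m_2}),\Whi^{\pi_v,\Upsilon_v}_\bbc(f_{v,m_3})\big)=\big(\lambda_v(\varphi_v)\lambda_v(f_v)\big)^{-n}Z_v\big(\dsec^{\bc}_{v,\Schw_v}(s,\chi,\Xi),\Bes^{\Pi_v}_{\bS,\Lambda_v}(\varphi_{v,m_1+n,m_2+n}),\Whi^{\pi_v,\Upsilon_v}_\bbc(f_{v,m_3+n})\big),
\]
so it suffices to evaluate the right side for each $n$ and let $n\to\infty$. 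For this I first repeat the change of coordinates from the proof of Proposition~\ref{prop:Z-vol}, valid verbatim for any Schwartz function of the block shape \eqref{eq:Schw-block}: writing $\cS^{-1}\imath(\eta_\bS g,h)=\begin{psm}\fA&\fB\\\fC&\fD\end{psm}$, setting $\fc'=\fc-\fa\bbc^{-1}\Tr\bS AC^{-1}$, $\fd'=\fd-\fb\bbc^{-1}\Tr\bS AC^{-1}$, and introducing $T=\imath_\bS(\fc')^{-1}\nu\ltrans{C}^{-1}$, $t=\nu\fc'^{-1}\bar{\fc}'^{-1}$, $W=C^{-1}D$, $w=\fc'^{-1}\fd'$, $u=\nu\bar{\fc}'\fa$, the integral $Z_v$ becomes $\tfrac{1-|\varpi_v|_v}{(1-|\varpi_v|_v^4)(1+|\varpi_v|_v)}$ times
\[
   \int_{\GL(2,\bQ_v)\times\bQ^\times_v}|\det T|^{-3}_v|t|^3_v\,\chi_v\big(t^{-1}(\det T)^2\big)\Xi_v(\det T)\big|t^{-1}(\det T)^2\big|^{s+\frac32}_v\int_{\Sym_2(\bQ_v)\times\bQ_v\times\bQ_v}(\cdots)\,dW\,dw\,du\,d^\times T\,d^\times t,
\]
where $(\cdots)$ is the product of $\Schw_{v,1,\sym}\!\big(W+u\,\ltrans{T}(\cdots)T\big)\,\Schw_{v,1,\alt}(u\det T)\,\Schw_{v,0}(T)\,\Schw_{v,2}(w)$, the character $\psi_v(\bbc t^{-1}w)$, and the Bessel and Whittaker values at $\begin{psm}&-T\\t\ltrans{T}^{-1}\end{psm}\begin{psm}\bid_2&W\\&\bid_2\end{psm}$ and $\begin{psm}1\\&t\end{psm}\begin{psm}1+uw&uw^2\\-u&1-uw\end{psm}$; here the factor $\Lambda_v\Upsilon_v(\fc')$ from the equivariance of the two functionals has cancelled the $\Xi_v(\bar{\fc}')$ coming from the big-cell section via the standing hypothesis $\Xi\Lambda^c\Upsilon^c=\triv$, and a $\psi_v(-\Tr\bS AC^{-1})$ has cancelled against the Bessel equivariance.

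Next, with $(\varphi_{v,m_1+n,m_2+n},f_{v,m_3+n})$ in place of $(\varphi_v,f_v)$, I would analyze the $n\to\infty$ behaviour. Using $\Bes^{\Pi_v}_{\bS,\Lambda_v}(\varphi_{v,m_1+n,m_2+n})(g)=\Bes^{\Pi_v}_{\bS,\Lambda_v}(\varphi_v)\big(g\,\begin{psm}&\bid_2\\\bid_2&\end{psm}\diag(\varpi^{m_1+n}_v,\varpi^{m_2+n}_v,\varpi^{-m_1-n}_v,\varpi^{-m_2-n}_v)\big)$ and the identity $\begin{psm}\bid_2&W\\&\bid_2\end{psm}\begin{psm}&\bid_2\\\bid_2&\end{psm}\diag(\varpi^{m_1+n}_v,\dots)=\begin{psm}&\bid_2\\\bid_2&\end{psm}\diag(\varpi^{m_1+n}_v,\dots)\begin{psm}\bid_2&0\\W'_n&\bid_2\end{psm}$, where the entries of $W'_n$ are $\varpi^{m_i+m_j+2n}_vW_{ij}$, the lower unipotent on the right tends to $\bid_4$ uniformly over the compact support of $\Schw_{v,1,\sym}$; hence for $n$ large the Bessel value no longer depends on $W$, and $\int\Schw_{v,1,\sym}(W)\,dW$ factors off. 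Likewise, from $\begin{psm}1+uw&uw^2\\-u&1-uw\end{psm}=\begin{psm}1&-w\\&1\end{psm}\begin{psm}1&\\-u&1\end{psm}\begin{psm}1&w\\&1\end{psm}$ one moves the leftmost factor past $\begin{psm}1\\&t\end{psm}$ to create $\psi_v(-\bbc t^{-1}w)$, which kills the prefactor $\psi_v(\bbc t^{-1}w)$; the rightmost factor $\begin{psm}1&w\\&1\end{psm}$ becomes absorbable into $f_{v,m_3+n}$ for $n$ large, uniformly over the compact support of $\Schw_{v,2}$, so $\int\Schw_{v,2}(w)\,dw$ also factors off. This accounts for the constant $C$ and leaves the Bessel at $\begin{psm}&-T\\t\ltrans{T}^{-1}\end{psm}$ and the Whittaker at $\begin{psm}1\\&t\end{psm}\begin{psm}1&\\-u&1\end{psm}=\begin{psm}1&0\\-tu&t\end{psm}$.

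It remains to bring these into the stated shape. Bruhat-decomposing $\begin{psm}1&0\\-tu&t\end{psm}=\begin{psm}1&-(tu)^{-1}\\&1\end{psm}\begin{psm}&u^{-1}\\-tu&\end{psm}\begin{psm}1&-u^{-1}\\&1\end{psm}$, the leftmost unipotent yields $\psi_v(-\bbc(tu)^{-1})$ and the rightmost is absorbed into $f_{v,m_3+n}$ for $n$ large, using that $|u^{-1}|_v=|\nu^{-1}\det T|_v$ stays bounded on the support of $\Schw_{v,1,\alt}(u\det T)\Schw_{v,0}(T)$. Setting $\nu=u\det T$ and $r=(tu)^{-1}$ (so $t=\det T/(r\nu)$), one reads off $\Schw_{v,1,\alt}(u\det T)=\Schw_{v,1,\alt}(\nu)$, $t\,\ltrans{T}^{-1}=r^{-1}\nu^{-1}\ltrans{T}^{\mr{adj}}$, $\begin{psm}&u^{-1}\\-tu&\end{psm}=\begin{psm}&\nu^{-1}\det T\\-r^{-1}&\end{psm}$, $\chi_v\big(t^{-1}(\det T)^2\big)=\chi_v(\nu r\det T)$, $\psi_v(-\bbc(tu)^{-1})=\psi_v(-\bbc r)$; and the Jacobian $du\,d^\times t=|\det T|^{-1}_v|\nu|_v|r|^{-1}_v\,d^\times\nu\,dr$ converts the accumulated power $|\det T|^{-3}_v|t|^3_v\big|t^{-1}(\det T)^2\big|^{s+\frac32}_v$ into $|\det T|^{s+\frac12}_v|\nu|^{s-\frac12}_v|r|^{s-\frac52}_v$. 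This is exactly the integrand of \eqref{eq:Zp-first reduction}, so for $n$ large $Z_v(\dots,\varphi_{v,m_1+n,m_2+n},f_{v,m_3+n})$ equals $C$ times the stated $n$-th integral, and feeding this into the displayed identity in the first paragraph proves the proposition.

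The main obstacle is making the three ``absorption'' steps precise, i.e. interchanging the $n\to\infty$ limit with the $\nu,T,r$ integration. Here the point is that the Schwartz supports of $\Schw_{v,1,\sym},\Schw_{v,2},\Schw_{v,1,\alt},\Schw_{v,0}$, together with the rapid decay of the local Bessel and Whittaker functions attached to the cuspidal $\Pi_v,\pi_v$, confine the integrand to a region on which $W$, $w$ and $u^{-1}$ all vary over fixed compact sets; consequently all three unipotents appearing above lie in the open compact subgroup fixing $\varphi_v$ (resp.\ $f_v$) once $n$ exceeds a threshold independent of the integration variables, so the identifications are exact for $n\geq N_0$ and $\big(\lambda_v(\varphi_v)\lambda_v(f_v)\big)^{-n}$ times the stated integral is eventually constant, equal to $Z_v(\dots,\varphi_{v,m_1,m_2},f_{v,m_3})/C$; the limit in \eqref{eq:Zp-first reduction} then merely records this eventually-constant value.
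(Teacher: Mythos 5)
Your coordinate change, the cancellation of $\Lambda_v\Upsilon_v(\fc')$ against $\Xi_v(\bar\fc')$ via $\Xi\Lambda^c\Upsilon^c=\triv$, and the algebraic identifications in the final displayed chain all match what the paper does. The gap is in the ``main obstacle'' paragraph: the claim that the Schwartz supports plus rapid decay of the Bessel/Whittaker functions confine $W$, $w$, $u^{-1}$ to fixed compact sets, so that the three unipotents are absorbed \emph{exactly} once $n\geq N_0$ and $(\lambda_v(\varphi_v)\lambda_v(f_v))^{-n}$ times the stated integral is \emph{eventually constant}. This is false, and it is precisely the part the paper has to work for.

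Concretely: the support of $\Schw_{v,1,\sym}$ constrains $W + u\,\ltrans{T}\begin{psm}\alphaS\alphabS&\frac{\alphaS+\alphabS}{2}\\\frac{\alphaS+\alphabS}{2}&1\end{psm}T$ to a fixed compact set, not $W$ itself. The support of $\Schw_{v,1,\alt}$ bounds $u\det T$ away from $0$ and $\infty$, while $\Schw_{v,0}$ bounds $T$ but not $\det T$ away from zero; so $u$ (hence $u\ltrans{T}(\cdots)T$) can be arbitrarily large, and $W$ ranges over an unbounded region. The local Bessel function does not decay as a function of the right-translation by $\begin{psm}\bid_2&W\\&\bid_2\end{psm}$ --- the decay estimates (the paper's bound \eqref{eq:Im3-bd6}) are in terms of the torus coordinates only --- so the $W$-large part of the domain genuinely contributes. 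For this reason the paper decomposes the domain into $\cU_{1,n}\sqcup\cU_{2,n}\sqcup\cU_{3,n}$ according to whether $C^{-1}D$ and $\fa^{-1}\fb$ lie in $\varpi_v^{-2n+e}$-dilated lattices, rewrites only the $\cU_{1,n}$-piece $I_{1,n}$ into the target shape, and then proves $\lim_n(\lambda_v(\varphi_v)\lambda_v(f_v))^{-n}I_{2,n}=\lim_n(\lambda_v(\varphi_v)\lambda_v(f_v))^{-n}I_{3,n}=0$ for $\mathrm{Re}(s)\gg0$. These last two limits are \emph{not} zero for finite $n$; they are controlled by bounds of the shape $|\varpi_v|_v^{(2n-\cdots)s-n(\cdots)+B_4}$ which only beat $(\lambda_v(\varphi_v)\lambda_v(f_v))^{-n}$ once $\mathrm{Re}(s)$ is large. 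Similarly, on $\cU_{1,n}$ the paper shows $|I_{1,n}-I'_{1,n}|=O(|\varpi_v|_v^{ns})$, again an asymptotic (not eventual-equality) statement. So the final identity in your proposal is a genuine limit, and establishing it requires the quantitative decay estimates for $I_{2,n}$ and $I_{3,n}$ that occupy most of the paper's proof; simply invoking compactness of supports does not supply them.
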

\begin{proof}
It suffices to show \eqref{eq:Zp-first reduction} for $\mr{Re}(s)\gg 0$. We fix a sufficiently large integer $e$ such that
\begin{enumerate}
\item $\Schw_v$ is supported on $\Her_3(\varpi^{-e}_v\cO_{\cK,v})$ and is invariant under the translation by $\Her_3(\varpi^e_v\cO_{\cK,v})$,
\item $\varphi_{v,m_1,m_2}$ is invariant under $K^1_{\GSp(4),v}(\varpi^e_v)$ and $f_{v,m_3}$ is invariant under $K^1_{\GL(2),v}(\varpi^e_v)$.
\end{enumerate}

Write
\begin{equation}\label{eq:gh}
\begin{aligned}
   g&=\begin{bmatrix}
   A&B\\C&D
   \end{bmatrix}\in\GSp(4,\bQ_v),
   &h&=\begin{bmatrix}
   \fa&\fb\\ \fc&\fd
   \end{bmatrix}\in\GU(1,1)(\bQ_v).
\end{aligned}
\end{equation}
Given $n\geq 0$, we partition $\left(\GSp(4)\times_{\GL(1)}\GU(1,1)\right)(\bQ_v)$ as
\[ 
   \left(\GSp(4)\times_{\GL(1)}\GU(1,1)\right)(\bQ_v)=\cU_{1,n}\sqcup\,\cU_{2,n}\sqcup\,\cU_{3,n}
\] 
with  $\cU_{j,n}$ defined as: Given $(g,h)\in  \left(\GSp(4)\times_{\GL(1)}\GU(1,1)\right)(\bQ_v)$,
\begin{align*}
   (g,h)\in \cU_{1,n}&\Longleftrightarrow C^{-1}D\in \Sym_2(\varpi_v^{-2n+e}\bZ_v),\,\fa^{-1}\fb\in \varpi_v^{-2n+e}\bZ_v\\
   (g,h)\in \cU_{2,n}&\Longleftrightarrow C^{-1}D\in \Sym_2(\varpi_v^{-2n+e}\bZ_v),\,\fa^{-1}\fb\notin \varpi^{-2n+e}\bZ_v\\
   (g,h)\in \cU_{3,n}&\Longleftrightarrow C^{-1}D\notin \Sym_2(\varpi_v^{-2n+e}\bZ_v).
\end{align*}
Let
\begin{equation}\label{eq:Ijm}
\begin{aligned}
   I_{j,n}&=\int_{R'_\bS(\bQ_v)\backslash \cU_{j,n}} \dsec^{\bc}_{v,\Schw_v}(s,\chi,\Xi) \left(\cS^{-1}\imath(\eta_\bS\, g,h)\right)\cdot \Bes^{\Pi_v}_{\bS,\Lambda_v}(\varphi_{m_1+n,m_2+n})(g)\\
   &\hspace{10em}\times \Whi^{\pi_v,\Upsilon_v}_{\bbc}(f_{v,m_3+n})\left(\begin{bmatrix}0&1\\-1&0\end{bmatrix}h\right)\Xi^{-1}_v(\det h)\,dhdg.
\end{aligned}
\end{equation}
Then
\[
   Z_v\left(\dsec_v(s,\chi,\Xi),\Bes^{\Pi_v}_{\bS,\Lambda_{v}}(\varphi_{v,m_1+n,m_2+n}),\Whi^{\pi_v,\Upsilon_v}_\bbc(f_{v,m_3+n})\right)
   =I_{1,n}+I_{2,n}+I_{3,n}.
\]
By Proposition~\ref{prop:dual-twist}, \eqref{eq:Zp-first reduction} will follow if we can show
\begin{align*}
   &\lim\limits_{n\to\infty}\big(\lambda_v(\varphi_v)\lambda_v(f_v)\big)^{-n}I_{1,n}=\text{RHS of }\eqref{eq:Zp-first reduction},\\
     &\lim\limits_{n\to\infty}\big(\lambda_v(\varphi_v)\lambda_v(f_v)\big)^{-n}I_{2,n}=\lim\limits_{n\to\infty}\big(\lambda_v(\varphi_v)\lambda_v(f_v)\big)^{-n}I_{3,n}=0.
\end{align*}

\vspace{.5em}

\noindent \underline{Bounding $I_{3,n}$.}\\

Without loss of generality, we assume that $\chi_v,\Xi_v$ and the central characters of $\Pi_v$ and $\pi_v$ are unitary, and $\Phi_v$ is the characteristic function of certain compact open subset of $\Her_3(\cK_v)$. 

With $g,h$ as in \eqref{eq:gh} and writing
\begin{align*}
   \cS^{-1}\imath(\eta_\bS \,g,h)=\begin{bmatrix}
   \fA&\fB\\ \fC&\fD
   \end{bmatrix},
\end{align*}
we have
\begin{align*}
   \fC&=\begin{bmatrix}
   1&-\alphabS\\&1\\&&1
   \end{bmatrix}
   \begin{bmatrix}
   C&-\begin{bmatrix}\alphabS\\1\end{bmatrix}\fa\\-\begin{bmatrix}\alphaS&1\end{bmatrix}A&\fc
   \end{bmatrix},
   &\fD&=\begin{bmatrix}
   1&-\alphabS\\&1\\&&1
   \end{bmatrix}
   \begin{bmatrix}
   D&-\begin{bmatrix}\alphabS\\1\end{bmatrix}\fb\\-\begin{bmatrix}\alphaS&1\end{bmatrix}B&\fd.
   \end{bmatrix}
\end{align*}
Let 
\begin{equation}\label{eq:r}
   r=\fc\fa^{-1}-\bbc^{-1}\Tr\bS AC^{-1}.
\end{equation}
Then
\begin{align*}
   \det\fC=\det C\cdot r\fa,
\end{align*}
and
\begin{align*}
   \fC^{-1}\fD&=\begin{bmatrix}
   C^{-1}D+r^{-1}C^{-1}\begin{bmatrix}\alphabS\\1\end{bmatrix}\begin{bmatrix}\alphaS&1\end{bmatrix}(AC^{-1}D-B)& -C^{-1}\begin{bmatrix}\alphabS\\1\end{bmatrix}r^{-1}(\fc\fa^{-1}\fb-\fd)\\
   \fa^{-1}r^{-1}\begin{bmatrix}\alphaS&1\end{bmatrix}(AC^{-1}B-D)&\fa^{-1}r^{-1}(-\begin{bmatrix}\alphaS&1\end{bmatrix}AC^{-1}\begin{bmatrix}\alphabS\\1\end{bmatrix}\fb+\fd)
   \end{bmatrix}\\
   &=\begin{bmatrix}
   C^{-1}D+r^{-1}\nu_g C^{-1}\begin{bmatrix}\alphaS\alphabS&\alphabS\\\alphaS&1\end{bmatrix}\ltrans{C}^{-1}&C^{-1}\begin{bmatrix}\alphabS\\1\end{bmatrix}r^{-1}\bar{\fa}^{-1}\nu_h\\[1em]
   \fa^{-1}r^{-1}\nu_g\begin{bmatrix}\alphaS&1\end{bmatrix}\ltrans{C}^{-1}&\fa^{-1}\fb+r^{-1}\fa^{-1}\bar{\fa}^{-1}\nu_h.
   \end{bmatrix}
\end{align*}
Plugging these into \eqref{eq:bigcell-sec}, writing
\begin{equation}\label{eq:ghII}
\begin{aligned}
     g&=\begin{bmatrix}A&B\\C&D\end{bmatrix}
   =\begin{bmatrix}\bid_2&AC^{-1}\\0&\bid_2\end{bmatrix}
    \begin{bmatrix}0&-\nu\ltrans{C}^{-1}\\C&0\end{bmatrix}
    \begin{bmatrix}\bid_2&C^{-1}D\\0&\bid_2\end{bmatrix},\\
    \begin{bmatrix}0&1\\-1&0\end{bmatrix} h&=\begin{bmatrix}1&-\fc\fa^{-1}\\0&1\end{bmatrix}\begin{bmatrix}0&\nu\bar{\fa}^{-1}\\-\fa&0\end{bmatrix}\begin{bmatrix}1&\fa^{-1}\fb\\0&1\end{bmatrix},
\end{aligned}
\end{equation} 
and putting 
\begin{equation}\label{eq:Ww}
\begin{aligned}
    W&=C^{-1}D,
    &w&=\fa^{-1}\fb,
\end{aligned}
\end{equation}
we get
{\small
\begin{equation}\label{eq:I3m-1}
\begin{aligned}
   I_{3,n}=&\,
     \frac{1-|\varpi_v|_v}{(1-|\varpi_v|_v^4)(1+|\varpi_v|_v)}
    \int_{T'_\bS(\bQ_v)\backslash (\GL(2,\bQ_v)\times\cK^\times_v)}\int_{\bQ^\times_v}\int_{\bQ_v}\int_{\bQ_v\times (\Sym_2(\bQ_v)-\Sym_2(\varpi^{-2n+e}_v\bZ_v))} \\
    &\times |\nu^3 r^{-2}\fa^{-1}\bar{\fa}^{-1}(\det C)^{-2}|^{s+\frac{3}{2}}_v\cdot \chi_v\left(\nu^3 r^{-2}\fa^{-1}\bar{\fa}^{-1}(\det C)^{-2}\right)\, \Xi_v\left(\nu^2 r^{-1}\fa^{-1}(\det C)^{-1}\right)\\
   &\times\Schw_v\begin{pmatrix}
    W+r^{-1}\nu C^{-1}\begin{bmatrix}\alphaS\alphabS&\alphabS\\ \alphaS&1\end{bmatrix}\ltrans{C}^{-1}&\nu C^{-1}\begin{bmatrix}\alphabS\\1\end{bmatrix}r^{-1}\bar{\fa}^{-1}\\[1em]
   \fa^{-1}r^{-1}\begin{bmatrix}\alphaS&1\end{bmatrix}\nu\ltrans{C}^{-1}&w+r^{-1}\fa^{-1}\bar{\fa}^{-1}\nu
   \end{pmatrix}\\
   &\times \psi_v(-\bbc r)\cdot \Bes^{\Pi_v}_{\bS,\Lambda_v}(\varphi_{v,m_1+n,m_2+n})\left(\begin{bmatrix}0&-\nu\ltrans{C}^{-1}\\C&0\end{bmatrix}
   \begin{bmatrix}\bid_2& W\\0&\bid_2\end{bmatrix}\right)\\
   &\times  \Whi^{\pi_v,\Upsilon_v}_{\bbc}(f_{v,m_3+n})\left(
   \begin{bmatrix}0&\nu \bar{\fa}^{-1}\\ -\fa&0\end{bmatrix}
   \begin{bmatrix}1&w\\0&1\end{bmatrix}
   \right)
   \cdot |\det C|^{3}_v|\fa\bar{\fa}|_v |\nu|^{-4}_v\,dw\,dW\,d r\,d^\times\nu\,d^\times C\,d^\times\fa,
\end{aligned}
\end{equation}}
\hspace{-.5em}where $T'_\bS=\left\{(\imath_\bS(\fz),\fz):\fz\in \mr{Res}_{\cK/\bQ}\GL(1)\right\}\subset \GL(2)\times_{\GL(1)} \mr{Res}_{\cK/\bQ}\GL(1)$. It follows from \cite[Lemma~2-4]{Sugano} that
\begin{equation}\label{eq:Sugano-decomp}
   \GL(2,\bQ_v)\times\cK^\times_v=\bigsqcup\limits_{l\geq -\mu_v} T_\bS(\bQ_p)\left(\begin{bmatrix}\varpi^l_v\\&1\end{bmatrix}\GL(2,\bZ_v)\times \cK^\times_v\right),
\end{equation}
where $\mu_v$ is a non-negative integer defined in equation (2-3) in {\it loc.cit}. (For our purpose of bounding $I_2,I_3$, we do not need to know the exact value of $\mu_v$.) Thus, we can write
\begin{align*}
   \nu\ltrans{C}^{-1}&=\imath_\bS(\fz)\begin{bmatrix}\varpi_v^l\\&1\end{bmatrix}h_0,
   &\fa&=\fz r^{-1}\fy^{-1},
   &&\begin{array}{l}
   \fz\in\cK^\times_v,\,l\geq -\mu_v,\, h_0\in\GL(2,\bZ_v),\\
   r\in\bQ^\times_v,\,\fy\in\cK^\times_v,\end{array}.
\end{align*}
Also, it is not difficult to see (cf. \cite[Lemma 3.5.3]{Furusawa} that there exists a constant $B_1>0$ such that, for any $U\subset \cK^\times_v$,
\[
   \vol\left(T'_\bS(\bQ_v)\backslash T'_\bS(\bQ_v)\left(\begin{bmatrix}\varpi_v^l\\&1\end{bmatrix}\GL(2,\bZ_v)\times U\right)\right)\leq B_1\cdot |\varpi_v|_v^{-l}\cdot \vol(U).
\]
Write
\begin{align*}
   \ltrans{h}^{-1}_0  W h^{-1}_0&=\begin{bmatrix}w_1&w_2\\w_2&w_4\end{bmatrix},
   &&\,w_1,w_2,w_4\in\bQ_v.
\end{align*}
Then from \eqref{eq:I3m-1}, we get
{\small
\begin{equation}\label{eq:I3m-2}
\begin{aligned}
   |I_{3,n}|\leq&\sum_{l\geq -\mu_v} B_1
   \int_{\GL(2,\bZ_v)}\int_{\cK^\times_v}\int_{\bQ^\times_v}\int_{\bQ_v}\int_{(\bQ_v)^3-(\varpi_v^{-2n+e}\bZ_v)^3}\int_{\bQ_v}  
    |\varpi|_v^{2ls}|\fy\bar{\fy}|^{s+\frac{1}{2}}_v |\nu|^{-s+\frac{1}{2}}_v |r|^{-2}_v\\
   &\times \Schw_{v,1,\sym}\left(\ltrans{h}_0\begin{bmatrix}w_1+r^{-1}\nu^{-1}\varpi_v^{2l}\cdot\alphaS\alphabS&w_2+r^{-1}\nu^{-1}\varpi_v^l\cdot\frac{\alphaS+\alphabS}{2}\\w_2+r^{-1}\nu^{-1}\varpi_v^l\cdot\frac{\alphaS+\alphabS}{2}&w_4+r^{-1}\nu^{-1}\end{bmatrix}h_0\right)\\
   &\times \Schw_{v,1,\alt}\left(r^{-1}\nu^{-1}\varpi_v^l(\det h_0)\right)
   \cdot \Schw_{v,0}\left(\begin{bmatrix}\alphaS&1\\\alphabS&1\end{bmatrix}^{-1}\begin{bmatrix}\varpi_v^l \alphaS\fy&\fy\\ \varpi^l_v\alphabS\bar{\fy}&\bar{\fy}\end{bmatrix}h_0\right)
   \cdot \Schw_{v,2}(w+r\nu\fy\bar{\fy})\\
   &\times \left| \psi_v(-\bbc r)\cdot  \Bes^{\Pi_v}_{\bS,\Lambda_v}(\varphi_{v,m_1+n,m_2+n})\left(\begin{bmatrix}&&-\varpi^{l}_v\\&&&-1\\\nu \varpi^{-l}_v\\&\nu\end{bmatrix}
   \begin{bmatrix}1&& w_1&w_2\\&1& w_2& w_4\\&&1\\&&&1\end{bmatrix}
   \begin{bmatrix} h_0&0\\0&\ltrans{h}^{-1}_0\end{bmatrix}\right)\right|\\
   &\times  \left|\Whi^{\pi_v,\Upsilon_v}_{\bbc}(f_{v,m_3+n})\left(
   \begin{bmatrix}0&r\nu \bar{\fy}\\ -r^{-1}\fy^{-1}&0\end{bmatrix}
   \begin{bmatrix}1&w\\0&1\end{bmatrix}
   \right)\right|
   \,dw\,dw_1\,dw_2\,dw_4\,d r\,d^\times\nu\,d^\times\fy\,d^\times h_0
\end{aligned}
\end{equation}}

Let 
\[
   B_2=\ord_v(\mu_v)+\max\left\{-\ord_v(\alphaS),-\ord_v(\alphabS),-\ord_v(\alphaS\alphabS),-\ord_v(\frac{\alphaS+\alphabS}{2}),0\right\}.
\]
Since $h_0\in\GL(2,\bZ_v)$, the nonvanishing of $\Schw_{v,1,\alt}$ and $\Schw_{v,0}$ in \eqref{eq:I3m-2} implies
\begin{equation}\label{eq:Im3-bd1}
\begin{aligned}
   &r^{-1}\nu^{-1}\varpi^l_v\in \varpi^{-e}_v\bZ_v,
   &\hspace{4em}\fy&\in \varpi^{-e-B_2}_v\cO^2_{\cK,v},
\end{aligned}
\end{equation}
which, together with the nonvanishing of $\Schw_{v,1,\sym}$, implies 
\begin{equation}\label{eq:Im3-bd2}
   w_1,w_2\in \varpi^{-e-B_2}_v\bZ_v.
\end{equation} 
The nonvanishing of $\Schw_{v,1,\sym}$ also implies
\begin{equation}\label{eq:Im3-bd9}
   r^{-1}\nu^{-1}\in -w_4+\varpi^{-e}_v\bZ_v.
\end{equation}
For $n\gg 0$, \eqref{eq:Im3-bd2} plus $W \notin \Sym(\varpi^{-2n+e}_v\bZ_v)$ implies that
\begin{equation}\label{eq:Im3-bd8}
   w_4\notin \varpi^{-2n+e}_v\bZ_v.
\end{equation}
It follows from \eqref{eq:Im3-bd1}\eqref{eq:Im3-bd9}\eqref{eq:Im3-bd8} that $r\nu\fy\bar{\fy}\in \varpi^{2n-e}_v \varpi^{-2e-2B_2}_v \bZ_p$. Combining this with the nonvanishing of $\Schw_{v,2}$, we get
\begin{equation}\label{eq:Im3-bd3}
    w\in \varpi^{-e}_v\bZ_p.
\end{equation}
Thus, when  $n\gg 0$ and the product of the Schwartz functions in \eqref{eq:I3m-2} does not vanish, the second condition in our choice of $e$ implies 
\begin{equation}\label{eq:Im3-bd4}
\begin{aligned}
   &\Bes^{\Pi_v}_{\bS,\Lambda_v}(\varphi_{v,m_1+n,m_2+n})\left(\begin{bmatrix}&&-\varpi^{l}_v\\&&&-1\\\nu \varpi^{-l}_v\\&\nu\end{bmatrix}
   \begin{bmatrix}1&& w_1&w_2\\&1&w_2& w_4\\&&1\\&&&1\end{bmatrix}
   \begin{bmatrix} h_0&0\\0&\ltrans{h}^{-1}_0\end{bmatrix}\right)\\
   =&\,\Bes^{\Pi_v}_{\bS,\Lambda_v}(\varphi_{v,m_1+n,m_2+n})\left(\begin{bmatrix}&&-\varpi^{l}_v\\&&&-1\\\nu \varpi^{-l}_v\\&\nu\end{bmatrix}
   \begin{bmatrix}1\\&1&&- r^{-1}\nu^{-1}\\&&1\\&&&1\end{bmatrix}
   \begin{bmatrix} h_0&0\\0&\ltrans{h}^{-1}_0\end{bmatrix}
   \right)\\
   =&\, \Bes^{\Pi_v}_{\bS,\Lambda_v}(\varphi_{v,m_1+n,m_2+n})\left(\begin{bmatrix}1\\&1&&r\\&&1\\&&&1\end{bmatrix}\begin{bmatrix}\varpi^{l}_v\\&r\nu\\&&\nu\varpi^{-l}_v\\&&& r^{-1}\end{bmatrix}
   \begin{bmatrix}&&-1\\&-1\\1\\&&&-1\end{bmatrix}\begin{bmatrix} h_0&0\\0&\ltrans{h}^{-1}_0\end{bmatrix}
   \right)\\
   =&\,\psi_v(\bbc r)\cdot \Bes^{\Pi_v}_{\bS,\Lambda_v}(\varphi_{v,m_1,m_2})\left(\begin{bmatrix}\varpi^{l+n}_v\\&r\nu\varpi^{-n}_v\\&&\nu\varpi^{-l-n}_v\\&&& r^{-1}\varpi^{n}_v\end{bmatrix}
   \begin{bmatrix}&&-1\\&-1\\1\\&&&-1\end{bmatrix}\begin{bmatrix} h_0&0\\0&\ltrans{h}^{-1}_0\end{bmatrix}
   \right)
\end{aligned}
\end{equation}
and 
\begin{equation}\label{eq:Im3-bd5}
\begin{aligned}
   \Whi^{\pi_v,\Upsilon_v}_{\bbc}(f_{v,m_3+n})\left(
   \begin{bmatrix}0&r\nu \bar{\fy}\\- r^{-1}\fy^{-1}&0\end{bmatrix}
   \begin{bmatrix}1&w\\0&1\end{bmatrix}
   \right)
   &=\Whi^{\pi_v,\Upsilon_v}_{\bbc}(f_{v,m_3})\begin{pmatrix}\varpi^{n}_v r\nu\bar{\fy}\\&-\varpi^{-n}_v r^{-1}\fy^{-1} \end{pmatrix}.
\end{aligned}
\end{equation}
We can take $B_3>0$ (depending on $\varphi_v,f_v$ and $m_1,m_2,m_3, e$) such that for all $g_0\in\GSp(4,\bZ_v)$,
{\small
\begin{equation}\label{eq:Im3-bd6} 
\begin{aligned}
  \left\vert \Bes^{\Pi_v}_{\bS,\Lambda_v}(\varphi_{v,m_1,m_2})\left(\begin{bsm}a_1\\&a_2\\&&\nu a^{-1}_1\\&&&\nu a^{-1}_2\end{bsm}g_0\right)\right\vert
   &\left\{\begin{array}{ll}  \leq B_3\,|\nu^{-1} a^2_1|^{-B_3}_v|\nu^{-1} a^2_2|^{-B_3}_v, &\nu^{-1} a^2_1, \nu^{-1} a^2_2\in \varpi^{-e-\val_v(\bbc)}_v\bZ_v, \\[1em]=0,&\text{otherwise,}\end{array}\right.\\
   \left\vert \Whi^{\pi_v,\Upsilon_v}_{\bbc}(f_{v,m_3})\begin{pmatrix}\fa\\&\nu\bar{\fa}^{-1}\end{pmatrix}\right\vert
   &\left\{\begin{array}{ll}\leq B_3\,|\nu^{-1}\fa\bar{\fa}|^{-B_3}_v, &\nu^{-1}\fa\bar{\fa}\in \varpi^{-e-\val_v(\bbc)}\bZ_v,\\[1em] =0, &\text{otherwise}.\end{array}\right.
\end{aligned}
\end{equation}
}
\hspace{-.5em}In order for \eqref{eq:Im3-bd4} to be nonvanishing, 
\begin{equation*}
    r^2\nu\in\varpi^{2n-e-\val_v(\bbc)}\bZ_v,
\end{equation*}
which, together with \eqref{eq:Im3-bd1}, gives
\begin{equation}\label{eq:Im3-bd7}
   \nu^{-1}\in \varpi_v^{2n-2l-3e-\val_v(\bbc)}.
\end{equation}

Now plug \eqref{eq:Im3-bd4} and \eqref{eq:Im3-bd5} into \eqref{eq:I3m-2}, integrate over $w,w_1,w_2,w_4$, apply the bound \eqref{eq:Im3-bd6} and the conditions \eqref{eq:Im3-bd1} \eqref{eq:Im3-bd7}. We get
\begin{align*}
   |I_{3,n}|&\leq   B_1 |\varpi_v|_v^{-4e-2B_2} \cdot 
   \sum_{l\geq -\mu_v}     
   \int_{\cK^\times_v\cap \varpi^{-e-B_2}_v\cO_{\cK,v}} 
   \int_{\bQ^\times_v}\int_{\bQ^\times_v}  
   |\varpi_v|_v^{-l+2ls} |\fy\bar{\fy}|_v^{s+\frac{1}{2}} |\nu|_v^{-s+\frac{1}{2}} |r|^{-1}_v   
   \cdot \mathds{1}_{\varpi^{-e}_v\bZ_v}(r^{-1}\nu^{-1}\varpi^l_v)
    \\
     &\times \mathds{1}_{\varpi^{2n-2l-3e-\val_v(\bbc)}_v\bZ_v}(\nu^{-1}) 
     \cdot  B_3\,|\varpi^{2l+2n}_v\nu^{-1}|^{-B_3}_v |\varpi^{-2n}_v r^2\nu|^{-B_3}_v \cdot B_3\,|\varpi^{2n}_v r^2\nu\fy\bar{\fy}|^{-B_3}_v   
     \,d^\times\fy, \\
    &=B_1B^2_3    |\varpi_v|_v^{-4e-2B_2} 
    \int_{\cK^\times_v\cap \varpi^{-e-B_2}_v\cO_{\cK,v}} |\fy\bar{\fy}|_v^{s+\frac{1}{2}-B_3} d^\times\fy\\
    &\quad\times\sum_{l\geq -\mu_v} |\varpi_v|_v^{-l+2ls-2lB_3}
    \sum_{\val_v(\nu)\leq 2l-2n+3e+\val_v(\bbc)} |\nu|_v^{-s+\frac{1}{2}-B_3}
    \sum_{\val_v(r)\leq l+e-\val_v(\nu)} |r|^{-1-4B_3}_v.
\end{align*}
When $n\gg 0$, $\mr{Re}(s)\gg 0$, a direct computation (using $\sum\limits_{n\geq a} x^n\leq 2x^a$ for all $a\in\bZ$, $0\leq x\leq \frac{1}{2}$) gives that
\begin{align*}
   |I_{n,3}|\leq 16B_1B^2_3 \cdot |\varpi_v|^{\big(2n-5e-2B_2-\val_v(\bbc)\big)s-n(3+6B_3)+B_4}_v,
\end{align*}
where $B_4$ is a constant depending on $e,\bbc,\mu_v,B_2,B_3$, independent of $n,s$. It follows that, when $\mr{Re}(s)\gg 0$, $\lim\limits_{n\to\infty}\big(\lambda_v(\varphi_v)\lambda_v(f_v)\big)^{-n}I_{3,n}=0$.

\vspace{1em}
\noindent\underline{Bounding $I_{2,n}$.}\\

Like bounding $I_{3,n}$,  we can assume that $\chi_v,\Xi_v$ and the central characters of $\Pi_v$ and $\pi_v$ are unitary, and $\Phi_v$ is the characteristic function of certain compact open subset of $\Her_3(\cK_v)$. 

Now $(g,h)\in \cU_{2,n}$. Write $g$ as in \eqref{eq:ghII}, and
\begin{align*}
   \begin{bmatrix}0&1\\-1&0\end{bmatrix} h&
     =\begin{bmatrix}1&-\fd\fb^{-1}\\0 &1\end{bmatrix}
     \begin{bmatrix}0&1\\-1&0\end{bmatrix}
     \begin{bmatrix}0&\fb\\ -\nu\bar{\fb}^{-1}&0 \end{bmatrix}
     \begin{bmatrix}1&0\\\fb^{-1}\fa&1\end{bmatrix}.
\end{align*} 
Then
\begin{align*}
   \Whi^{\pi_v,\Upsilon_v}_\bbc(f_{v,m_3+n})(h)=\psi_v(-\bbc \fd\fb^{-1})\cdot\Whi^{\pi_v,\Upsilon_v}_\bbc(f_{v,m_3})\left(\begin{bmatrix}-\varpi^{-n}_v\nu\bar{\fb}^{-1}\\&-\varpi^n_v\fb\end{bmatrix}\begin{bmatrix}1&0\\\varpi^{-2n}_v\fb^{-1}\fa&1\end{bmatrix}\right)
\end{align*}
Because $C^{-1}D\in\Sym_2(\varpi^{-2m+e}_v\bZ_v)$, by the third condition in our choice of $e$, we have
\begin{align*}
    \Bes^{\Pi_v}_{\bS,\Lambda_v}(\varphi_{v,m_1+n,m_2+n})(g)&=\psi_v(\Tr\bS AC^{-1})\cdot 
    \Bes^{\Pi_v}_{\bS,\Lambda_v}(\varphi_{v,m_1,m_2})\left(\begin{bmatrix} \varpi^n_v \nu\ltrans{C}^{-1}\\&\varpi^{-n}_v C\end{bmatrix}\begin{bmatrix}&-\bid_2\\ \bid_2\end{bmatrix}\right).
\end{align*}
Because $\fb^{-1}\fa\in \varpi_v^{2n+1-e}\bZ_v$, for sufficiently large $n$, we have
\begin{equation}\label{eq:Im2-F}
\begin{aligned}
    \dsec^\bc_{v,\Schw_v}(s,\chi,\Xi)\left(\cS^{-1}\imath(\eta_\bS\, g,h)\right)&=\dsec^\bc_{v,\Schw_v}(s,\chi,\Xi)\left(\cS^{-1}\imath(\eta_\bS \,g,\begin{bmatrix}0&\fb\\-\nu\bar{\fb}^{-1}&\fd\end{bmatrix}\right).
\end{aligned}
\end{equation}
Put 
\begin{align*}
   r&=\fd\fb^{-1}-\bbc^{-1}\Tr\bS AC^{-1}, 
   &w&=\varpi^{-2n}_v\fb^{-1}\fa, 
   &W&=C^{-1}D\in \Sym_2(\varpi^{-2n+e}_v\bZ_v).
\end{align*} 
With $\begin{bmatrix}\fA&\fB\\ \fC&\fD\end{bmatrix}=\cS^{-1}\imath\left(\eta_\bS g,\begin{bmatrix}0&\fb\\-\nu\bar{\fb}^{-1}&\fd\end{bmatrix}\right)$, we have
\begin{align*}
   \fC&=\begin{bmatrix}
   1&-\alphabS\\&1\\&&1
   \end{bmatrix}
   \begin{bmatrix}
   C&0\\-\begin{bmatrix}\alphaS&1\end{bmatrix}A&-\nu\bar{\fb}^{-1}
   \end{bmatrix},
   &\fD&=\begin{bmatrix}
   1&-\alphabS\\&1\\&&1
   \end{bmatrix}
   \begin{bmatrix}
   D&-\begin{bmatrix}\alphabS\\1\end{bmatrix}\fb\\-\begin{bmatrix}\alphaS&1\end{bmatrix}B&\fd.
   \end{bmatrix}
\end{align*}
\begin{align*}
   \det\fC&=-\bar{\fb}^{-1}\nu\det C,
   &\fC^{-1}\fD&=\begin{bmatrix}
   W&-C^{-1}\begin{bmatrix}\alphabS\\1\end{bmatrix}\fb\\[3ex]
   -\bar{\fb}\begin{bmatrix}\alphaS&1\end{bmatrix}\ltrans{C}^{-1}&\nu^{-1}\fb\bar{\fb}r
   \end{bmatrix}.
\end{align*}
Then
{\small
\begin{align*}
   I_{2,n}&= \frac{1-|\varpi_v|_v}{(1-|\varpi_v|_v^4)(1+|\varpi_v|_v)}\\
   &\,\times\int_{T'_\bS(\bQ_v)\backslash (\GL(2,\bQ_v)\times\cK^\times_v)}\int_{\bQ^\times_v}\int_{\varpi_v^{-e+1}\bZ_v\times \Sym_2(\varpi^{-2m+e}\bZ_v)}\int_{\bQ_v} |\nu\fb\bar{\fb}(\det C)^{-2}|^{s+\frac{3}{2}}_v\\
   &\times \Xi_v\left(-\nu\bar{\fb}(\det C)^{-1}\right)\chi_v\left(\nu\fb\bar{\fb}(\det C)^{-2}\right) 
   \cdot \Schw_v\begin{pmatrix}
   W&-C^{-1}\begin{bmatrix}\alphabS\\1\end{bmatrix}\fb\\[1em]
   -\bar{\fb}\begin{bmatrix}\alphaS&1\end{bmatrix}\ltrans{C}^{-1}&\nu^{-1}\fb\bar{\fb}r\end{pmatrix}\\
    &\times\psi_v(-\bbc r)\cdot  \Bes^{\Pi_v}_{\bS,\Lambda_v}(\varphi_{v,m_1,m_2})\left(\begin{bmatrix}\varpi^n_v \nu \ltrans{C}^{-1}\\&\varpi^{-n}_v C\end{bmatrix}\begin{bmatrix}&-\bid_2\\ \bid_2\end{bmatrix}\right)\\
   &\times \Whi^{\pi_v,\Upsilon_v}_{\bbc}(f_{v,m_3})\left(\begin{bmatrix}-\varpi^{-n}_v\nu\bar{\fb}^{-1}\\&-\varpi^n_v\fb\end{bmatrix}\begin{bmatrix}1&0\\w&1\end{bmatrix}\right)
   \cdot |\det C|^3_v |\fb\bar{\fb}|_v |\nu|^{-4}_v dr\,dW\,dw\,d^\times\nu\,d^\times C\,d^\times\fb.
\end{align*}
}
Writing
\begin{align*}
   \nu\ltrans{C}^{-1}&=\imath_\bS(\fz)\begin{bmatrix}\varpi^l_v\\&1\end{bmatrix}h_0,
   &   \nu\bar{\fb}^{-1}&=\fz\fy^{-1},
   &&l\geq -\mu_v,\,h_0\in\GL(2,\bZ_v),\,\fz,\fy\in\cK^\times_v,
\end{align*}
noticing that $\Schw_v$ is nonzero only if $W\in \Sym_2(\varpi^{-e}_v\bZ_p)$, and using \eqref{eq:Sugano-decomp}, we have
\begin{align*}
   |I_{2,n}|\leq &\,\sum_{l\geq -\mu_v}B_1|\varpi_v|_v^{-l}\int_{\cK^\times_v\times\bQ^\times_v}\int_{\varpi^{-e+1}_v\bZ_v\times\Sym_2(\varpi^{-e}_v\bZ_v)}\int_{\bQ_v} 
   |\varpi|^{2ls}_v |\nu|^{-s+\frac{5}{2}}_v|\fy\bar{\fy}|^{s+\frac{5}{2}}_v\\
   &\times  \Xi_v(-\varpi^l_v\fy)\,\chi_v(\varpi^{2l}_v\nu^{-1}\fy\bar{\fy})
   \cdot \Schw_{v,1,\sym}(W)\cdot\Schw_{v,1,\alt}(0) \\
   &\times \Schw_{v,0}\left(\begin{bmatrix}\alphaS&1\\\alphabS&1\end{bmatrix}^{-1}\begin{bmatrix} \varpi^l_v\alphaS\fy&\fy\\ \varpi^l_v\alphabS\bar{\fy}&\bar{\fy}\end{bmatrix}h_0\right)\cdot \psi_v(-\bbc r) \cdot \Schw_{v,2}(\nu\fy\bar{\fy}r)\\
   &\times \Bes^{\Pi_v}_{\bS,\Lambda_v}(\varphi_{v,m_1,m_2})
   \left(
   \begin{bmatrix}\varpi^{l+n}_v\\&\varpi^n_v\\&&\nu \varpi^{-l-n}_v\\&&&\nu\varpi^{-n}_v\end{bmatrix} \begin{bmatrix}&-h_0\\\ltrans{h}^{-1}_0\end{bmatrix}
   \right)\\
   &\times \Whi^{\pi_v,\Upsilon_v}_{\bbc}(f_{v,m_3})\left(\begin{bmatrix}-\varpi^{-n}_v\fy^{-1}\\&-\varpi^n_v\nu\bar{\fy}\end{bmatrix}\begin{bmatrix}0&1\\1&w\end{bmatrix}\right) \,dr\,dW\,dw\,d^\times\nu\,d^\times \fy.
\end{align*}
By our assumption that $\Phi_v$ is a characteristic function and the second condition in our choice of $e$,
\[
    \left\vert\int_{\bQ_v}\psi_v(-\bbc r) \cdot \Schw_{v,2}(\nu\fy\bar{\fy}r)\,dr\right\vert\leq \mathds{1}_{\varpi^{-e}\bZ_v}(\bbc \nu^{-1}\fy^{-1}\bar{\fy}^{-1})\,|\bbc\varpi^{2e}_v|^{-1}_v.
\]
The nonvanishing of $\Schw_{v,0}$ in the above integral implies that
\begin{align*}
   \fy\in \varpi^{-e-B_2}_v\cO_{\cK,v},
\end{align*}
and the nonvanishing of $\Whi^{\pi_v,\Upsilon_v}_{\bbc}$-term plus $w=\varpi^{-2n}_v\fb^{-1}\fa\in\varpi^{1-e}_v\bZ_v$  and the second condition in our choice of $e$ implies that
\begin{align*}
   \varpi^{-2n}_v\nu^{-1}\fy^{-1}\bar{\fy}^{-1}\in \varpi^{-3e}_v\bZ_v.
\end{align*}
Hence,
\begin{align*}
   |I_{2,n}|&\leq B_1|\bbc|^{-1}_v \sum_{l\geq 0-\mu_v}|\varpi_v|^{2ls-l-5e}_v \int_{\cK^\times_v\times\bQ^\times_v} \int_{\varpi^{-e+1}_v\bZ_v}
   \mathds{1}_{\varpi^{-e-B_2}_v\bZ_v}(\fy)\cdot\mathds{1}_{\varpi^{2n-3e}_v\bZ_v}(\nu^{-1}\fy^{-1}\bar{\fy}^{-1})\\
   &\times |\nu|^{-s+\frac{5}{2}}|\fy\bar{\fy}|^{s+\frac{5}{2}}_v
   \cdot \left\vert \Bes^{\Pi_v}_{\bS,\Lambda_v}(\varphi_{v,m_1,m_2})
   \left(
   \begin{bmatrix}\varpi^{l+n}_v\\&\varpi^n_v\\&&\nu \varpi^{-l-n}_v\\&&&\nu\varpi^{-n}_v\end{bmatrix} \begin{bmatrix}&-h_0\\\ltrans{h}^{-1}_0\end{bmatrix}
   \right)\right\vert\\
   &\times \left\vert \Whi^{\pi_v,\Upsilon_v}_{\bbc}(f_{v,m_3})\left(\begin{bmatrix}-\varpi^{-n}_v\fy^{-1}\\&-\varpi^n_v\nu\bar{\fy}\end{bmatrix}\begin{bmatrix}1&0\\w&1\end{bmatrix}\right)\right\vert\,dw\,d^\times\fy\,d^\times\nu
\end{align*}
There exists $B_3>0$ such that the bound for $\Bes^{\Pi_v}_{\bS,\Lambda_v}$ in \eqref{eq:Im3-bd6} holds and for all $g_0\in\GSp(4,\bZ_v)$ and $w\in \varpi^{-e+1}_v\bZ_v$,
\[
\begin{aligned}
  \left\vert \Bes^{\Pi_v}_{\bS,\Lambda_v}(\varphi_{v,m_1,m_2})\left(\begin{bsm}a_1\\&a_2\\&&\nu a^{-1}_1\\&&&\nu a^{-1}_2\end{bsm}g_0\right)\right\vert
   &\left\{\begin{array}{ll}  \leq B_3\,|\nu^{-1} a^2_1|^{-B_3}_v|\nu^{-1} a^2_2|^{-B_3}_v, &\nu^{-1} a^2_1, \nu^{-1} a^2_2\in \varpi^{-e-\val_v(\bbc)}_v\bZ_v, \\[1em]=0,&\text{otherwise,}\end{array}\right.\\
    \left\vert \Whi^{\pi_v,\Upsilon_v}_{\bbc}(f_{v,m_3})\left(\begin{bmatrix}\fa\\&\nu\bar{\fa}^{-1}\end{bmatrix}\begin{bmatrix}0&1\\1&w\end{bmatrix}\right)\right\vert
   &\left\{\begin{array}{ll}\leq B_3\,|\nu^{-1}\fa\bar{\fa}|^{-B_3}_v, &\nu^{-1}\fa\bar{\fa}\in \varpi^{-3e}\bZ_v\\[1em] =0, &\text{otherwise}.\end{array}\right.
\end{aligned}
\]
Then, with $n\gg 0$, $\mr{Re}(s)\gg 0$,
\begin{align*}
   |I_{2,n}|\leq &\,B_1|\bbc|^{-1}_v|\varpi_v|^{-e+1}_v 
   \sum_{l\geq -\mu_v}
   \int_{\bQ^\times_v}
   \int_{\cK^\times_v\cap \varpi^{-e-B_2}_v\bZ_v}
    |\varpi_v|^{2ls-l-5e}_v \cdot |\nu|^{-s+\frac{5}{2}}|\fy\bar{\fy}|^{s+\frac{5}{2}}_v 
   \cdot \mathds{1}_{\varpi^{2n-3e}_v\bZ_v}(\nu^{-1}\fy^{-1}\bar{\fy}^{-1})\\[1ex]
   &\hspace{10em}\times B_3\,|\varpi^{2l+2n}_v\nu^{-1}|^{-B_3}_v |\varpi^{2n}_v\nu^{-1}|^{-B_3}_v\cdot B_3\,|\varpi^{-2n}_v\nu^{-1}\fy\bar{\fy}^{-1}|^{-B_3}_v\,d^\times\fy\,d^\times\nu\\[1ex]
   =&\,B_1B^2_3|\bbc|^{-1}_v |\varpi_v|^{-2nB_3-6e+1}_v 
   \sum_{l\geq -\mu_v} |\varpi_v|^{2ls-l-2lB_3}_v \\
   &\times \int_{\bQ^\times_v}\int_{\cK^\times_v\cap \varpi^{-e-B_2}_v\bZ_v} |\fy\bar{\fy}|^{s+\frac{5}{2}+B}
   |\nu|^{-s+\frac{5}{2}+3B_3}
   \cdot \mathds{1}_{\varpi^{2n-3e}_v\bZ_v}(\nu^{-1}\fy^{-1}\bar{\fy}^{-1})\,d^\times\fy\,d^\times\nu.
\end{align*}
A direct computation (using $\sum\limits_{n\geq a} x^n\leq 2x^a$ for all $a\in\bZ$, $0\leq x\leq \frac{1}{2}$) shows that
\[
   |I_{2,n}|\leq 16|\bbc|^{-1}_vB_1B^2_3 |\varpi_v|^{(2n-7e-2\mu_v-4B_2)s-5n-8nB_3+B_4}_v,
\]
with $B_4$ a constant depending on $e,\bbc,\mu_v,B_2,B_3$, independent of $n,s$. It follows that, when $\mr{Re}(s)\gg 0$, $\lim\limits_{n\to\infty}\big(\lambda_v(\varphi_v)\lambda_v(f_v)\big)^{-n}I_{2,n}=0$.

\vspace{1em}
\noindent{\underline{Rewriting $I_{1,n}$.}}
\vspace{.5em}

With $g,h$ as in \eqref{eq:ghII} and $r,W,w$ as in \eqref{eq:r}\eqref{eq:Ww}, by our choice of $e$, when $(g,h)\in\cU_{1,n}$, $\varphi_{v,m_1+n,m_2+n}$ (resp. $f_{v,m_3+n}$) is invariant under the action of $\begin{bmatrix}\bid_2&W\\0&\bid_2\end{bmatrix}$ (resp. $\begin{bmatrix}1&w\\0&1\end{bmatrix}$). The computation leading to \eqref{eq:I3m-1} gives
\begin{align*}
   I_{1,n}=
   & \frac{1-|\varpi_v|_v}{(1-|\varpi_v|_v^4)(1+|\varpi_v|_v)}\int_{T'_\bS(\bQ_v)\backslash (\GL(2,\bQ_v)\times\cK^\times_v)}\int_{\bQ^\times_v}\int_{\bQ_v}
   |\det C|^{-2s}_v|\fa\bar{\fa}|^{-s-\frac{1}{2}}_v |\nu|^{3s+\frac{1}{2}}_v|r|^{-2s-3}_v\\
   &\times \chi_v\left(\nu^3 r^{-2}\fa^{-1}\bar{\fa}^{-1}(\det C)^{-2}\right)\, \Xi_v\left(\nu^2 r^{-1}\fa^{-1}(\det C)^{-1}\right)\\
   &\times \Schw_{v,1,\alt}\left(r^{-1}\nu(\det C)^{-1}\right)
   \Schw_{v,0}\left(r^{-1}\imath_\bS(\fa)^{-1}\nu\ltrans{C}^{-1}\right)\\
   &\times \psi_v(-\bbc r)\cdot \Bes^{\Pi_v}_{\bS,\Lambda_v}(\varphi_{v,m_1+n,m_2+n})\left(\begin{bmatrix}0&-\nu\ltrans{C}^{-1}\\C&0\end{bmatrix}\right)\cdot  \Whi^{\pi_v,\Upsilon_v}_{\bbc}(f_{v,m_3+n})\left(
   \begin{bmatrix}0&\nu \bar{\fa}^{-1}\\ -\fa&0\end{bmatrix}\right)\\  
   &\times\int_{ \Sym_2(\varpi^{-2n+e}_v\bZ_v)} \Schw_{v,1,\sym}\left(W+r^{-1}\nu C^{-1}\begin{bmatrix}\alphaS\alphabS&\frac{\alphaS+\alphabS}{2}\\ \frac{\alphaS+\alphabS}{2}&1\end{bmatrix}\ltrans{C}^{-1}\right) \,dW\\
   &\times \int_{\varpi^{-2n+e}_v\bZ_v} \Schw_{v,2}\left(w+r^{-1}\fa^{-1}\bar{\fa}^{-1}\nu\right) \,dw
   \,\, d r\,d^\times\nu\,d^\times C\,d^\times\fa.
\end{align*}
When $n\gg 0$,
\begin{align*}
   &\int_{ \Sym_2(\varpi^{-2n+e}_v\bZ_v)} \Schw_{v,1,\sym}\left(W+r^{-1}\nu C^{-1}\begin{bmatrix}\alphaS\alphabS&\frac{\alphaS+\alphabS}{2}\\ \frac{\alphaS+\alphabS}{2}&1\end{bmatrix}\ltrans{C}^{-1}\right) \,dW\\
   =&\, \mathds{1}_{\Sym_2(\varpi^{-2n+e}_v\bZ_v)}\left(r^{-1}\nu C^{-1}\begin{bmatrix}\alphaS\alphabS&\frac{\alphaS+\alphabS}{2}\\ \frac{\alphaS+\alphabS}{2}&1\end{bmatrix}\ltrans{C}^{-1}\right)
   \int_{\Sym_2(\bQ_v)} \Schw_{v,1,\sym}\left(W\right) \,dW,
\end{align*}
and
\begin{align*}
   \int_{\varpi^{-2n+e}_v\bZ_v} \Schw_{v,2}\left(w+r^{-1}\fa^{-1}\bar{\fa}^{-1}\nu\right) \,dw
   &=\mathds{1}_{\varpi^{-2n+e}_v\bZ_v} \left(r^{-1}\fa^{-1}\bar{\fa}^{-1}\nu\right)
   \int_{\bQ_v} \Schw_{v,2}\left(w\right) \,dw,
\end{align*}   
so we have
{\small
\begin{equation}\label{eq:I1m-1}
\begin{aligned}
   I_{1,n}=&\,C\cdot
   \int_{T'_\bS(\bQ_v)\backslash (\GL(2,\bQ_v)\times\cK^\times_v)}\int_{\bQ^\times_v}\int_{\bQ_v} |\det C|^{-2s}_v|\fa\bar{\fa}|^{-s-\frac{1}{2}}_v |\nu|^{3s+\frac{1}{2}}_v|r|^{-2s-3}_v\\
   &\times \chi_v\left(\nu^3 r^{-2}\fa^{-1}\bar{\fa}^{-1}(\det C)^{-2}\right)\cdot \Xi_v\left(\nu^2 r^{-1}\fa^{-1}(\det C)^{-1}\right)\\
   &\times \Schw_{v,1,\alt}\left(r^{-1}\nu(\det C)^{-1}\right)
   \Schw_{v,0}\left(r^{-1}\imath_\bS(\fa)^{-1}\nu\ltrans{C}^{-1}\right)\\
   &\times \psi_v(-\bbc r)\cdot \Bes^{\Pi_v}_{\bS,\Lambda_v}(\varphi_{v,m_1+n,m_2+n})\begin{pmatrix}&-\nu\ltrans{C}^{-1}\\C&\end{pmatrix}
   \cdot \Whi^{\pi_v,\Upsilon_v}_{\bbc}(f_{v,m_3+n})\begin{pmatrix}&\nu \bar{\fa}^{-1}\\ -\fa&\end{pmatrix}\\
   &\times\mathds{1}_{\Sym_2(\varpi^{-2n+e}_v\bZ_v)}\left(r^{-1}\nu C^{-1}\begin{bmatrix}\alphaS\alphabS&\frac{\alphaS+\alphabS}{2}\\ \frac{\alphaS+\alphabS}{2}&1\end{bmatrix}\ltrans{C}^{-1}\right)
   \mathds{1}_{\varpi^{-2n+e}_v\bZ_v} \left(r^{-1}\fa^{-1}\bar{\fa}^{-1}\nu\right) \, d r\,d^\times\nu\,d^\times C\,d^\times\fa,
\end{aligned}
\end{equation}
}
\hspace{-.5em}with 
\begin{align*}
   C=&\frac{1-|\varpi_v|_v}{(1-|\varpi_v|_v^4)(1+|\varpi_v|_v)}\int_{\Sym_2(\bQ_v)} \Schw_{v,1,\sym}\left(W\right) \,dW \int_{\bQ_v} \Schw_{v,2}\left(w\right) \,dw.
\end{align*}
Let
\begin{align*}
   T&=\imath_\bS\left(\fa\right)^{-1}\nu\ltrans{C}^{-1},
   &\nu'&=-\nu\fa^{-1}\bar{\fa}^{-1}
\end{align*}
Then
\begin{align*}
   \begin{bmatrix}0&-\nu\ltrans{C}^{-1}\\C&0\end{bmatrix}
   &=\begin{bmatrix}\imath_\bS(\fa)\\&\ltrans{\imath}_\bS(\bar{\fa})\end{bmatrix}
   \begin{bmatrix}T\\&\nu'\,\ltrans{T}^{-1}\end{bmatrix}
   \begin{bmatrix}&-\bid_2\\ \bid_2\end{bmatrix},\\
   \begin{bmatrix}0&\nu \bar{\fa}^{-1}\\ -\fa&0\end{bmatrix}
   &=-\fa
   \begin{bmatrix}&\nu'\\-1&\end{bmatrix}
   \begin{bmatrix}&1\\1\end{bmatrix}.
\end{align*}
We can replace $\int_{T'_\bS(\bQ_v)\backslash (\GL(2,\bQ_v)\times\cK^\times_v)}\int_{\bQ^\times_v}\cdots \,d^\times\nu \,d^\times C\,d^\times\fa$ by $\int_{\bQ^\times_v}\int_{\GL(2,\bQ_v)}\cdots\,d^\times  T\,d^\times\nu'$, and \eqref{eq:I1m-1} becomes
\begin{align*}
   I_{1,n}=&\,C\cdot\int_{\bQ^\times_v}\int_{\GL(2,\bQ_v)}\int_{\bQ_v}
   |\det T|^{2s}_v |\nu'|^{-s+\frac{1}{2}}_v |r|^{-2s-3}_v
   \chi_v\left(\nu^{\prime-1}(\det T)^2 r^{-2}\right)
   \Xi_v\left(r^{-1}\det T\right)\\
   &\times\Schw_{v,1,\alt}\left(\nu^{\prime-1}r^{-1}\det T\right)
   \cdot \Schw_{v,0}\left(r^{-1} T\right)\\
   &\times \psi_v(-\bbc r) \cdot \Bes^{\Pi_v}_{\bS,\Lambda_v}(\varphi_{m_1+n,m_2+n})\begin{pmatrix}&-T\\ \nu^{\prime}\,\ltrans{T}^{-1} \end{pmatrix} \cdot \Whi^{\pi_v}_\bbc(f_{v,m_3+n})\begin{pmatrix}&\nu'\\-1\end{pmatrix}\\
   &\times \mathds{1}_{\Sym_2(\varpi^{-2n+e}_v\bZ_v)}\left(r^{-1}\nu^{\prime-1} \ltrans{T}\begin{bmatrix}\alphaS\alphabS&\frac{\alphaS+\alphabS}{2}\\\frac{\alphaS+\alphabS}{2}&1\end{bmatrix} T\right)
   \cdot \mathds{1}_{\varpi^{-2n+e}_v\bZ_v} \left(r^{-1}\nu'\right)\,dr\,d^\times T\,d^\times\nu'.
\end{align*} 
Apply change of variables $T'=r^{-1}T$, $\nu''=\nu^{\prime-1}r^{-1}\det T$. We get
\begin{align*}
   I_{1,n}=&\,C\cdot \int_{\bQ^\times_v}\int_{\GL(2,\bQ_v)}\int_{\bQ_v}
   |\det T'|^{s+\frac{1}{2}}_v |\nu''|^{s-\frac{1}{2}}_v |r|^{s-\frac{5}{2}}_v
   \chi_v\left(\nu''r\det T'\right)
   \Xi_v\left(\det T'\right)\,\\
   &\times \Schw_{v,1,\alt}\left(\nu''\right)
   \, \Schw_{v,0}\left( T'\right)\cdot \psi_v(-\bbc r)\cdot \Bes^{\Pi_v}_{\bS,\Lambda_v}(\varphi_{v,m_1+n,m_2+n})\begin{pmatrix}&-T'\\ r^{-1}\nu^{\prime\prime-1}\,\ltrans{T}^{\prime,\mr{adj}} \end{pmatrix} \\
   &\times   
    \Whi^{\pi_v}_\bbc(f_{v,m_3+n})\begin{pmatrix}&\nu^{\prime\prime-1}\det T'\\-r^{-1}\end{pmatrix}\\
   &\times \mathds{1}_{\Sym_2(\varpi^{-2n+e}_v\bZ_v)}\left(\nu^{\prime\prime}(\det T')^{-1} \ltrans{T}^{\prime}\begin{bmatrix}\alphaS\alphabS&\frac{\alphaS+\alphabS}{2}\\\frac{\alphaS+\alphabS}{2}&1\end{bmatrix} T'\right)\cdot \mathds{1}_{\varpi^{-2n+e}_v\bZ_v} \left(\nu^{\prime\prime-1}\det T'\right)\,dr\,dT'\,d^\times\nu^{\prime\prime}.
\end{align*} 
Let 
\begin{align*}
   I'_{1,n}=&\,C\cdot \int_{\bQ^\times_v}\int_{\GL(2,\bQ_v)}\int_{\bQ_v}
   |\det T'|^{s+\frac{1}{2}}_v |\nu''|^{s-\frac{1}{2}}_v |r|^{s-\frac{5}{2}}_v
   \chi_v\left(\nu''r\det T'\right)
   \Xi_v\left(\det T'\right)\\
   &\times\Schw_{v,1,\alt}\left(\nu''\right)
   \, \Schw_{v,0}\left( T'\right)\cdot \psi_v(-\bbc r)\cdot\Bes^{\Pi_v}_{\bS,\Lambda_v}(\varphi_{v,m_1+n,m_2+n})\begin{pmatrix}&-T'\\ r^{-1}\nu^{\prime\prime-1}\,\ltrans{T}^{\prime,\mr{adj}} \end{pmatrix}  \\
   &\times  \Whi^{\pi_v}_\bbc(f_{v,m_3+n})\begin{pmatrix}&\nu^{\prime\prime-1}\det T'\\-r^{-1}\end{pmatrix} \,dr\,dT'\,d^\times\nu^{\prime\prime}
\end{align*} 
When $\Schw_{v,1,\alt}\left(\nu''\right)\, \Schw_{v,0}\left( T'\right)\neq 0$, the product of the two characteristic functions in the last row of the above integral for $I_{1,n}$ is nonzero only if
\begin{align*}
   &\nu^{\prime\prime}(\det T')^{-1}\notin \varpi^{-2n+3e+B_2}\bZ_v
   &&\text{or}
   &&  \nu^{\prime\prime-1}\det T'\notin \varpi_v^{-2n+e}\bZ_v,
\end{align*}
which implies 
\begin{align*}
     &\det T'\in\varpi_v^{2n-4e-B_2+1}\bZ_v
     &&\text{or}
     &&\nu^{\prime\prime}\in \varpi_v^{2n-3e+1}.
\end{align*}
Hence, when $\mr{Re}(s)$ is sufficiently large, $|I_{1,n}-I'_{1,n}|=O\left(|\varpi_v|^{ns}_v\right)$ as $n\ra\infty$. We obtain
\begin{align*}
    &\lim_{n\to\infty} \big(\lambda_v(\varphi_v)\lambda_v( f_v)\big)^{-n} I_{1,n}=\lim_{n\to\infty} \big(\lambda(\varphi_v)\lambda_v( f_v)\big)^{-n} I'_{1,n}
   =\text{RHS of }\eqref{eq:Zp-first reduction}.
\end{align*} 
\end{proof}

\subsection{Local zeta integrals for big-cell sections III}
Denote by $Q_{\GSp(4)}$ the standard Siegel parabolic subgroup of $\GSp(4)$ consisting of $\begin{bmatrix}\delta\, \ltrans{D}^{\mr{adj}}&B\\0&D \end{bmatrix}$ with $D\in\GL(2)$, $\delta\in\GL(1)$ and $BD^{-1}\in\Sym_2$, and denote by $B_{\GL(2)}$ the standard Borel subgroup of $\GL(2)$ consisting of upper triangular matrices. Next, we apply Proposition~\ref{Prop:Zbc-1} to the special case:
\begin{align*}
   \Pi_v &=\text{a subquotient of } \Ind_{Q_{\GSp(4)}}^{\GSp(4)}\sigma_{\sPi_v}\rtimes \eta_{\sPi_v,3},\\
   \pi_v &= \text{a subquotient of }\Ind^{\GL(2)}_{B_{\GL(2)}}\eta_{\pi_v,1}\boxtimes\eta_{\pi_v,2},
\end{align*} 
and obtain a formula which will be used to compute the local zeta integrals at $p$ for our $p$-adic interpolation. Here $\sigma_{\sPi_v}$ is an irreducible admissible representation of $\GL(2,\bQ_v)$, $\eta_{\sPi_v,3},\eta_{\pi_v,1},\eta_{\pi_v,2}$ are continuous characters of $\bQ^\times_v$, and $\Ind_{Q_{\GSp(4)}}^{\GSp(4)}\sigma_{\sPi_v}\rtimes \eta_{\sPi_v,3}$ (resp. $\Ind^{\GL(2)}_{B_{\GL(2)}}\eta_{\pi_v,1}\boxtimes\eta_{\pi_v,2}$)  denotes the (normalized) induced representation of $\GSp(4,\bQ_v)$ (resp. $\GL(2,\bQ_v)$) associated to
\begin{equation}\label{eq:induction}
   \begin{bmatrix}\delta\, \ltrans{D}^{\mr{adj}}&B\\0&D \end{bmatrix}\longmapsto\eta_{\sPi_v,3}(\delta)\,\sigma_{\sPi_v}(D),
   \qquad\qquad \begin{bmatrix}a&b\\0&d\end{bmatrix}\longmapsto \eta_{\pi_v,1}(a)\,\eta_{\pi_v,2}(d).
\end{equation}

Given $w_{v}\in\sigma_{\sPi_v}$, we define $\phi_{\GSp(4),v}(-;w_v)\in \Ind_{Q_{\GSp(4)}}^{\GSp(4)}\sigma_{\sPi_v}\rtimes\eta_{\sPi_v,3}$ as
\begin{equation}\label{eq:bc-phi}
   \phi_{\GSp(4),v}\left(g=\begin{bmatrix}A&B\\C&D\end{bmatrix};w_{\sigma_v}\right)
   =\mathds{1}_{\Sym_2(\bZ_v)}(C^{-1}D) 
\cdot \eta_{\sPi_v,3}\left(\nu_g(\det C)^{-1}\right)|\nu_g(\det C)^{-1}|^{3/2}_v\cdot \sigma_{\sPi_v}(C)w_{v},
\end{equation}
and let $V(w_v)\subset \Ind_{Q_{\GSp(4)}}^{\GSp(4)}\sigma_{\sPi_v}\rtimes \eta_{\sPi_v,3}$ be the subrepresentation generated by $\phi_{\GSp(4),v}(-;w_v)$. When $\sigma_{\sPi_v}$ is isomorphic to a quotient of $\Ind^{\GL(2)}_{B_{\GL(2)}}\eta_{\sPi_v,2}\boxtimes\eta_{\sPi_v,1}$, we define $\phi_{\GL(2),v}\in\Ind^{\GL(2)}_{B_{\GL(2)}}\eta_{\sPi_v,2}\boxtimes\eta_{\sPi_v,1}$ as
\begin{equation}\label{eq:phiGL2}
    \phi_{\GL(2),v}\left(h=\begin{bmatrix}a&b\\c&d\end{bmatrix}\right)= \mathds{1}_{\bZ_v}(c^{-1}d)\cdot \eta_{\sPi_v,1}(c)\,\eta_{\sPi_v,2}((\det h) c^{-1})\,|(\det h) c^{-2}|^{1/2}_v.
\end{equation}
It is easy to see that the above defined $\phi_{\GSp(4),v}(-;w_v)$  is an eigenvector for $U^{\GSp(4)}_{v,m,m}$  for all $m\geq 0$ (defined in \eqref{eq:loc-Uv}) with eigenvalue
\begin{equation}\label{eq:bc-lambda}
		\left(\omega_{\sPi_v}(\varpi_v)\eta^{-2}_{\sPi_v,3}(\varpi_v)|\varpi_v|^3_v\right)^m,
\end{equation}
and when $w_v$ is the projection of $\phi_{\GL(2),v}$, the corresponding $\phi_{\GSp(4),v}(-;w_v)$  is an eigenvector for $U^{\GSp(4)}_{v,m_1,m_2}$ for all $m_\geq m_2\geq 0$ with eigenvalue
\[
    \left(\eta_{\sPi_v,1}(\varpi_v)\eta^{-1}_{\sPi_v,3} |\varpi_v|^2_v\right)^{m_1} 
     \left(\eta_{\sPi_v,2}(\varpi_v)\eta^{-1}_{\sPi_v,3} |\varpi_v|_v\right)^{m_2}.
\]

\begin{prop}\label{prop:Zord}
Suppose that $\Pi_v$ is isomorphic to a quotient of $V(w_v)\subset \Ind_{Q_{\GSp(4)}}^{\GSp(4)}\sigma_{\sPi_v}\rtimes \eta_{\sPi_v,3}$ with $\sigma_v$ infinite dimensional, and is the only subquotient of $\Ind_{Q_{\GSp(4)}}^{\GSp(4)}\sigma_{\sPi_v}\rtimes \eta_{\sPi_v,3}$  having a nonzero $(\bS,\Lambda_v)$-Bessel functional.
\begin{enumerate}[leftmargin=2em,label=(\roman*)]
\item If $\varphi_v\in\Pi_v$ is the projection of $\phi_{\GSp(4),v}(-;w_v)\in V(w_v)$, then there exists a linear functional $\wal^{\sigma_{\sPi_v}}_{\bS,\Lambda_v}\in \Hom_{T_\bS(\bQ_v)}(\sigma_{\sPi_v},\Lambda_v)$ such that for all $A\in\GL(2,\bQ_v)$,
\[
\Bes^{\Pi_v}_{\bS,\Lambda_v}\left(\varphi_v\right)\begin{pmatrix}A\\&\delta\,\ltrans{A}^{\mr{adj}}\end{pmatrix}
=\eta_{\sPi_v,3}\left(\delta\right)|\delta|^{-3/2}_v\cdot \mathds{1}_{\Sym_2(\bZ_v)}\left((\delta\det A)^{-1}\cdot \ltrans{A}\bS A\right)\cdot\wal^{\sigma_{\sPi_v}}_{\bS,\Lambda_v}\big(\sigma_v(A)w_v).
\]

\item In addition to the assumptions in (i), if we further assume that $\Lambda_v$ is unitary, $\sigma_{\sPi_v}$ is unitarizable and isomorphic to a quotient of $\Ind^{\GL(2)}_{B_{\GL(2)}}\eta_{\sPi_v,2}\boxtimes\eta_{\sPi_v,1}$ and $w_v$ is the projection of $\phi_{\GL(2),v}$, then when $m_1\gg m_2\gg 0$,
\begin{equation}\label{eq:Bm1m2}
	\left(\eta_{\sPi_v,1}(\varpi_v)\eta^{-1}_{\sPi_v,3} |\varpi_v|^2_v\right)^{-m_1} 
	\left(\eta_{\sPi_v,2}(\varpi_v)\eta^{-1}_{\sPi_v,3} |\varpi_v|_v\right)^{-m_2}\Bes^{\Pi_v}_{\bS,\Lambda_v}\left(\varphi_v\right)\begin{psm}\varpi^{m_1}_v\\&\varpi^{m_2}_2\\&&\varpi^{-m_1}_v\\&&&\varpi^{-m_2}_v\end{psm}
\end{equation}
is independent of $m_1,m_2$, and this number is nonzero unless $v=\fv\bar{\fv}$ splits in $\cK=\bQ(\sqrt{-\det\bS})$, $\Lambda_\fv=\Lambda_{\bar{\fv}}$ and $\eta_{\sPi_v,1}=\Lambda_\fv|\cdot|^{1/2}_v, \eta_{\sPi_v,2}=\Lambda_{\bar{\fv}}|\cdot|^{-1/2}_v$.
\end{enumerate}
\end{prop}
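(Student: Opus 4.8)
The plan is to pull the Bessel functional back to the Siegel‑parabolically induced representation $\Ind_{Q_{\GSp(4)}}^{\GSp(4)}\sigma_{\sPi_v}\rtimes\eta_{\sPi_v,3}$ and reduce everything to a toric (Waldspurger‑type) functional on $\sigma_{\sPi_v}$. For (i): since $\Pi_v$ is a quotient of $V(w_v)\subseteq\Ind_{Q_{\GSp(4)}}^{\GSp(4)}\sigma_{\sPi_v}\rtimes\eta_{\sPi_v,3}$, the functional $\bes^{\Pi_v}_{\bS,\Lambda_v}$ pulls back to an element of $\Hom_{R_\bS(\bQ_v)}\!\big(\Ind_{Q_{\GSp(4)}}^{\GSp(4)}\sigma_{\sPi_v}\rtimes\eta_{\sPi_v,3},\Lambda_{\bS,v}\big)$, and the hypothesis that $\Pi_v$ is the only subquotient with a nonzero $(\bS,\Lambda_v)$‑Bessel functional forces this Hom‑space to be at most one‑dimensional with every element factoring through $\Pi_v$, so one may work on the full induced representation. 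Because $R_\bS=T_\bS U^{\Sieg}_{\GSp(4)}$ lies in $Q_{\GSp(4)}$ with $U^{\Sieg}_{\GSp(4)}$ its unipotent radical and $\psi_v(\Tr\bS\,\cdot)$ nontrivial there, the standard geometric‑lemma analysis of $R_\bS\backslash\GSp(4)/Q_{\GSp(4)}$ shows the closed and intermediate double cosets contribute nothing and the functional is computed on sections by $\phi\mapsto\int_{\Sym_2(\bQ_v)}\wal^{\sigma_{\sPi_v}}_{\bS,\Lambda_v}\!\big(\phi(wu_X)\big)\psi_v(-\Tr\bS X)\,dX$, where $w=\begin{psm}0&-\bid_2\\\bid_2&0\end{psm}$, $u_X=\begin{psm}\bid_2&X\\0&\bid_2\end{psm}$ and $\wal^{\sigma_{\sPi_v}}_{\bS,\Lambda_v}\in\Hom_{T_\bS(\bQ_v)}(\sigma_{\sPi_v},\Lambda_v)$; the $T_\bS$‑equivariance of $\wal^{\sigma_{\sPi_v}}_{\bS,\Lambda_v}$ comes out of $w\begin{psm}A&0\\0&\ltrans{A}^{\mr{adj}}\end{psm}w^{-1}=\begin{psm}\ltrans{A}^{\mr{adj}}&0\\0&A\end{psm}$ together with $\ltrans{A}\bS A=\det A\cdot\bS$, which makes $\psi_v(\Tr\bS X)\,dX$ invariant under the substitution induced by conjugating $u_X$.

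\textbf{Conclusion of (i).} Evaluate on $\varphi_v=$ the image of $\phi_{\GSp(4),v}(-;w_v)$, for which $\phi_{\GSp(4),v}(wu_X;w_v)=\mathds{1}_{\Sym_2(\bZ_v)}(X)\,w_v$ by \eqref{eq:bc-phi} (so the relevant integral is over a compact set and converges), and right‑translate by $\begin{psm}A&0\\0&\delta\ltrans{A}^{\mr{adj}}\end{psm}\in Q_{\GSp(4)}$. A routine block computation of the lower blocks of $g\begin{psm}A&0\\0&\delta\ltrans{A}^{\mr{adj}}\end{psm}$, giving the change of variables on $\Sym_2$ with Jacobian $|\delta|_v^{-3}$, the factor $|\delta|_v^{3/2}\eta_{\sPi_v,3}(\delta)$ from the induced‑representation cocycle, and the Fourier‑integral constraint $(\delta\det A)^{-1}\ltrans{A}\bS A\in\Sym_2(\bZ_v)$, yields (after normalizing $\vol(\Sym_2(\bZ_v))=1$) exactly the stated formula.

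\textbf{Proof of (ii).} Apply (i) with $A=\diag(\varpi^{m_1}_v,\varpi^{m_2}_v)$ and $\delta=\varpi^{-m_1-m_2}_v$; then $(\delta\det A)^{-1}=1$ and $\ltrans{A}\bS A\in\Sym_2(\bZ_v)$ once $m_1\geq m_2\gg0$, so the Bessel value equals $\eta_{\sPi_v,3}(\varpi_v)^{-m_1-m_2}|\varpi_v|_v^{\frac{3}{2}(m_1+m_2)}\,\wal^{\sigma_{\sPi_v}}_{\bS,\Lambda_v}\!\big(\sigma_{\sPi_v}(\diag(\varpi^{m_1}_v,\varpi^{m_2}_v))w_v\big)$. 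Multiplying by the normalizing factor, the powers of $\eta_{\sPi_v,3}$ cancel and, stripping $\diag(\varpi^{m_2}_v,\varpi^{m_2}_v)$ via the central character $\eta_{\sPi_v,1}\eta_{\sPi_v,2}$ of $\sigma_{\sPi_v}$, the normalized quantity becomes $|\varpi_v|_v^{-k/2}\eta_{\sPi_v,1}(\varpi_v)^{-k}\,\wal^{\sigma_{\sPi_v}}_{\bS,\Lambda_v}\!\big(\sigma_{\sPi_v}(\diag(\varpi^k_v,1))w_v\big)$ with $k=m_1-m_2$. Now pull $\wal^{\sigma_{\sPi_v}}_{\bS,\Lambda_v}$ back to $\Ind^{\GL(2)}_{B_{\GL(2)}}\eta_{\sPi_v,2}\boxtimes\eta_{\sPi_v,1}$ through $w_v=$ image of $\phi_{\GL(2),v}$, present it (Mackey for $T_\bS\backslash\GL(2)/B_{\GL(2)}$) as a Waldspurger‑type toric integral $\int_{Z(\bQ_v)\backslash T_\bS(\bQ_v)}\phi(\imath_\bS(\fz))\Lambda^{-1}_v(\fz)\,d\fz$ (regularized when $v$ splits), and evaluate it on $\diag(\varpi^k_v,1)\cdot\phi_{\GL(2),v}$ using the explicit big‑cell support from \eqref{eq:phiGL2}. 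The key point is that, because $\phi_{\GL(2),v}$ lives on the open Bruhat cell, for $k\gg0$ the translated section meets the torus orbits in a single clean geometric family whose only exponent is $\eta_{\sPi_v,1}(\varpi_v)|\varpi_v|_v^{1/2}$ (the $\eta_{\sPi_v,2}$‑exponent never occurs); hence $|\varpi_v|_v^{-k/2}\eta_{\sPi_v,1}(\varpi_v)^{-k}$ cancels it precisely and the normalized quantity stabilizes for $k\gg0$ to a constant $C$ depending only on $\Pi_v,\pi_v,\bS,\Lambda_v$ and not on $(m_1,m_2)$ in the asserted range. Finally $C$ is, up to nonzero factors, the value of this local toric integral, i.e.\ a ratio of local $L$‑factors of Tate/Tunnell type; running through the Saito--Tunnell dichotomy one finds $C=0$ precisely when the normalizing exponent $\eta_{\sPi_v,1}|\cdot|^{1/2}_v$ degenerates to a character occurring in the toric integral and forces the leading coefficient to vanish, which happens exactly when $v=\fv\bar{\fv}$ splits in $\cK$, $\Lambda_\fv=\Lambda_{\bar{\fv}}$, $\eta_{\sPi_v,1}=\Lambda_\fv|\cdot|^{1/2}_v$ and $\eta_{\sPi_v,2}=\Lambda_{\bar{\fv}}|\cdot|^{-1/2}_v$ (the case where $\sigma_{\sPi_v}$ is the Steinberg‑type quotient and $\eta_{\sPi_v,1}|\cdot|^{1/2}_v=\Lambda_\fv$).

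\textbf{Main obstacle.} The genuine difficulty is the second computation: controlling the $k\to\infty$ behaviour of the toric integral of $\diag(\varpi^k_v,1)\cdot\phi_{\GL(2),v}$ uniformly over all admissible $\sigma_{\sPi_v}$ — ordinary principal series, their reducible degenerations, and complementary‑series cases — proving that the "wrong" exponent never appears (which is what forces stabilization, since for $\sigma_{\sPi_v}$ unitarizable the competing term would otherwise blow up like $|\varpi_v|_v^{-k}$), and pinning down exactly which degeneration kills the leading coefficient. One must treat the inert and split cases of $\cK_v/\bQ_v$ separately (compact versus regularized toric integral, with the split case needing care that stabilization survives non‑compactness). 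The block‑matrix bookkeeping in (i) — identifying the open double coset, the Weyl element, and the precise $\mathds{1}_{\Sym_2(\bZ_v)}$ condition — is long but routine by comparison.
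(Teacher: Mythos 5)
Your approach is essentially the one the paper takes: realize the Bessel functional on the parabolically induced representation as a stable integral over the open $Q_{\GSp(4)}$-orbit composed with a $T_\bS$-equivariant functional on $\sigma_{\sPi_v}$, then in (ii) present that functional as a toric (Waldspurger-type) integral and analyze the $m_1-m_2\to\infty$ asymptotics to isolate a single dominant exponent. So the strategy is right, and your block-matrix computation in (i) and your reduction via the central character of $\sigma_{\sPi_v}$ in (ii) match what the paper does.

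There is, however, one genuine gap in your treatment of (i). You take for granted the existence of a \emph{nonzero} element $\wal^{\sigma_{\sPi_v}}_{\bS,\Lambda_v}\in\Hom_{T_\bS(\bQ_v)}(\sigma_{\sPi_v},\Lambda_v)$. This is not automatic: for $\sigma_{\sPi_v}$ Steinberg-twisted with $\Lambda_v$ the compatible base change character, or for $\sigma_{\sPi_v}$ supercuspidal, this Hom-space can vanish (this is exactly the Tunnell--Saito phenomenon you invoke only later). The paper devotes a paragraph to proving $\Hom_{T_\bS}(\sigma_{\sPi_v},\Lambda_v)\neq 0$, using Waldspurger's Lemma~8 and Tunnell's Propositions~1.6--1.7 to reduce to the Steinberg/supercuspidal cases, and then Roberts--Schmidt's table (Theorem~6.2.2 of \cite{RSBessel}) to show that in those cases no subquotient of the induced representation carries an $(\bS,\Lambda_v)$-Bessel functional — contradicting the hypothesis that $\Pi_v$ does. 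Without this step, the statement of (i) could be vacuous or false. I'd also note that the direction of your argument is reversed from the paper's cleaner route: rather than trying to show that the closed and intermediate $R_\bS\backslash\GSp(4)/Q_{\GSp(4)}$-orbits ``contribute nothing'' (which would require a separate geometric-lemma analysis and is where a reducible induction could bite you), the paper simply \emph{constructs} a nonzero functional on $V(w_v)$ via the stable integral (using \cite[Lemma~3.2]{LYF-RGGP}), and then observes that by the assumed uniqueness of the Bessel-admitting subquotient this functional must kill the kernel of $V(w_v)\twoheadrightarrow\Pi_v$ and hence descends and can be rescaled to $\bes^{\Pi_v}_{\bS,\Lambda_v}$. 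Your phrasing also claims the pullback lives on the full induced representation, but a priori one only gets a functional on $V(w_v)$, which is what the argument actually uses.

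For (ii) your outline is sound and close to the paper's: the paper splits into $\cK_v/\bQ_v$ non-split (compact toric integral, explicit evaluation showing a single exponent survives and a manifestly nonzero leading coefficient) and split (Whittaker-realized toric integral, whose stable value is a Tate $\gamma$-factor $\gamma_v(\tfrac12,\eta^{-1}_{\sPi_v,1}\Lambda_\fv)$, nonzero unless $\eta_{\sPi_v,1}=\Lambda_\fv|\cdot|_v^{1/2}$, and then a symmetry in $\fv\leftrightarrow\bar\fv$ pins down the full vanishing locus). You should be more careful with the stated vanishing conditions: you write both $\eta_{\sPi_v,1}=\Lambda_\fv|\cdot|^{1/2}_v$ and $\eta_{\sPi_v,1}|\cdot|^{1/2}_v=\Lambda_\fv$, which are inconsistent; the first is what the paper's $\gamma$-factor computation produces.
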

\begin{proof}
(i) First, we show that our assumptions imply
\begin{equation}\label{eq:Wa-neq-0}
	 \Hom_{T_\bS}\left(\sigma_{\sPi_v},\Lambda_v\right)\neq 0.
\end{equation}
By \cite[Lemma 8]{Walds85} and \cite[Propositions~1.6,1.7]{Tunnell-ep}, $\Hom_{T_\bS}\left(\sigma_{\sPi_v},\Lambda_v\right)= 0$ implies that $v$ does not split in $\cK$, $\sigma_v\cong \mr{St}\otimes\eta_v\circ\det$ and $\Lambda_v=\eta_v\circ \Nm$ for some character $\eta_v$ of $\bQ^\times_v$ or $\sigma_v$ is supercuspidal. By checking the table in \cite[Theorem~6.2.2]{RSBessel}, when this happens, (under our assumptions, $\Pi_v$ can only be IIa,IVa,Va,VIa,X,XIa), $\Pi_v$ does not have nonzero $(\bS,\Lambda)$-Bessel functional. 

For every $\phi_v\in \Ind_{Q_{\GSp(4)}}^{\GSp(4)}\sigma_{\sPi_v}\rtimes \eta_{\sPi_v,3}$, there exists an open compact subgroup $\Omega\subset \Sym_2(\bQ_v)$ such that the function
\[
g\longmapsto \int_{\Omega} \phi_v\left(g\begin{bmatrix}\bid_2&X\\&\bid_2\end{bmatrix}\right)\cdot \psi_v(\Tr\bS X)\,dX
\]
is supported on $Q_{\GSp(4)}(\bQ_v)\begin{bmatrix}&-\bid_2\\\bid_2\end{bmatrix}Q_{\GSp(4)}(\bQ_v)$ \cite[Lemma 3.2]{LYF-RGGP}. It follows that there exists an open compact subgroup $\Omega'\subset \Sym_2(\bQ_v)$ such that for all open compact subgroups $\Omega^{\prime\prime}$ containing $\Omega'$, 
\[
\int_{\Omega'}\phi_v\begin{pmatrix}&-\bid_2\\ \bid_2&X\end{pmatrix}\cdot \psi_v(\Tr\bS X)\,dX
=\int_{\Omega^{\prime\prime}}\phi_v\begin{pmatrix}&-\bid_2\\ \bid_2&X\end{pmatrix}\cdot \psi_v(\Tr\bS X)\,dX.
\]
We denote the stable value by
\[
\int^{\mr{st}}_{\Sym_2(\bQ_v)} \phi_v\begin{pmatrix}&-\bid_2\\ \bid_2&X\end{pmatrix}\cdot \psi_v(\Tr\bS X)\,dX.
\]
Take nonzero $\wal^{\sigma_{\sPi_v}}_{\bS,\Lambda_v}\in \Hom_{T_\bS}\left(\sigma_{\sPi_v},\Lambda_v\right)$. Then
\begin{equation}\label{eq:int-st}
	\phi_v\longmapsto \wal^{\sigma_{\sPi_v}}_{\bS,\Lambda_v} \left(\int^{\mr{st}}_{\Sym_2(\bQ_v)} \phi_v\begin{pmatrix}&-\bid_2\\ \bid_2&X\end{pmatrix}\cdot \psi_v(\Tr\bS X)\,dX\right)
\end{equation}
gives a nonzero element in $\Hom_{R_\bS(\bQ_v)}\left(V(w_{\sigma_v}),\Lambda_v\right)$. Since we assume that any subquotient of $V(w_{\sigma_v})$ other than $\Pi_v$ has no nonzero $(\bS,\Lambda_v)$-Bessel functional, \eqref{eq:int-st} factors through $\Pi_v$ and gives a nonzero Bessel functional in $\Hom_{R_\bS(\bQ_v)}\left(\Pi_v,\Lambda_v\right)$. Thanks to the uniqueness of Bessel models, we can rescale $\wal^{\sigma_{\sPi_v}}_{\bS,\Lambda_v}$ such that \eqref{eq:int-st} gives rise to our fixed $\bes^{\Pi_v}_{\bS,\Lambda_v}$. Then
\begin{align*}
	&\Bes^{\Pi_v}_{\bS,\Lambda_v}(\varphi_v)\begin{pmatrix}A\\&\delta\,\ltrans{A}^{\mr{adj}}\end{pmatrix}\\
	=&\,\wal^{\sigma_{\sPi_v}}_{\bS,\Lambda_v} \left(\int^{\mr{st}}_{\Sym_2(\bQ_v)} \phi\left(\begin{bmatrix}&-\bid_2\\ \bid_2&X\end{bmatrix}\begin{bmatrix}A\\&\delta\,\ltrans{A}^{\mr{adj}}\end{bmatrix};w_v\right)\cdot \psi_v(\Tr\bS X)\,dX\right)\\
	=&\, \eta_{\sPi_v,3}\left(\delta\right)|\delta|^{-3/2}_v\cdot \mathds{1}_{\Sym_2(\bZ_v)}\left((\delta\det A)^{-1}\cdot \ltrans{A}\bS A\right)\cdot \wal^{\sigma_{\sPi_v}}_{\bS,\Lambda_v}\left(\sigma_v\left(A\right)w_v\right).
\end{align*}

(ii) We only need to show that 
\begin{equation}\label{eq:wal-ns} 
   \left(\eta_{\sPi_v,1}(\varpi_v) |\varpi_v|^{1/2}_v\right)^{-m_1} 
\left(\eta_{\sPi_v,2}(\varpi_v) |\varpi_v|^{-1/2}_v\right)^{-m_2}
  \wal^{\sigma_v}_{\bS,\Lambda_v}\left(\sigma_v\begin{pmatrix}\varpi^{m_1}_v\\&\varpi^{m_2}_v\end{pmatrix} w_v\right)
\end{equation}
is independent of $m_1,m_2$ as long as $m_1\gg m_2\gg 0$, and is nonzero except the situation described in the proposition. To do the computation, we separate into two cases.\\

\underline{$v$ does not split in $\cK=\bQ(\sqrt{-\det\bS})$.} In this case, we have $\cK^\times_v=\bQ^\times_v U_v$ with compact $U_v=\left\{\begin{array}{ll}\cO^\times_{\cK,v},&\text{$v$ inert,}\\ \cO^\times_{\cK,v}\cup\varpi_v\cO^\times_{\cK,v},&\text{$v$ ramified.}\end{array}\right.$  We can define an element $\wal_0\in \Hom_{T_{\bS}}\left(\Ind^{\GL(2)}_{B_{\GL(2)}}\eta_{\sPi_v,2}\boxtimes\eta_{\sPi_v,1},\Lambda_v\right)$ by 
\[
\wal_0(\phi) = \int_{U_v}\phi(\imath_{\bS}(\fz))\Lambda^{-1}_v(\fz)\,d\fz.
\]
If the induction $\Ind^{\GL(2)}_{B_{\GL(2)}}\eta_{\sPi_v,2}\boxtimes\eta_{\sPi_v,1}$ is reducible, then $\sigma_v$ is isomorphic to the Steinberg representation twisted by a character $\eta_v$. \eqref{eq:Wa-neq-0} implies $\Lambda_v\neq \eta_v\circ\Nm$ (\cite[Proposition~1.7]{Tunnell-ep}), so $I_0$ factors through $\sigma_v$.

It suffices to show that  $\wal_0\left(\begin{bmatrix}\varpi^{m_1}_v\\&\varpi^{m_2}_v\end{bmatrix}\phi_{\GL(2),v}\right)$ equals $\eta_{\sPi_v,2}(\varpi_v)^{m_1}\eta_{\sPi_v,1}(\varpi_v)^{m_2}|\varpi_v|^{\frac{m_1-m_2}{2}}_v $ multiplied by a nonzero number that does not depend on $m_1,m_2$ when $m_1\gg m_2\gg 0$. With $\fz\in\cO_{\cK,v}$ written as $\fz=x+y\alphaS$, $x,y\in\bQ_v$, we have
\[
\imath_{\bS}(\fz)=\begin{bmatrix}x+y(\alphaS+\alphabS)&y\\-y\alphaS\alphabS&x\end{bmatrix},
\] 
and when $m_1\gg m_2$, by the definition of $\phi_{\GL(2),v}$ in \eqref{eq:phiGL2}, we have
{\small
\begin{equation}\label{eq:phiz}
\begin{aligned}
	\left(\begin{bmatrix}\varpi^{m_1}_v\\&\varpi^{m_2}_v\end{bmatrix}\right)\phi_{\GL(2),v})(\imath_{\bS}(\fz))
	=&\,\eta_{\sPi_v,1}(\varpi_v)^{m_1}\eta_{\sPi_v,2}(\varpi_v)^{m_2}|\varpi_v|^{-\frac{m_1-m_2}{2}}_v
	\cdot\mathds{1}_{\varpi^{m_1-m_2}_v\alphaS\alphabS\bZ_p}(y^{-1}x)\\
	&\times \eta_{\sPi_v,2}\left(\frac{(x+y\alphaS)(x+y\alphabS)}{y\alphaS\alphabS}\right)\,\eta_{\sPi_v,1}(y\alphaS\alphabS)
	\cdot \left|\frac{(x+y\alphaS)(x+y\alphabS)}{(y\alphaS\alphabS)^2}\right|^{1/2}_v.
\end{aligned}
\end{equation}
}
With $m_1\gg m_2$, when the right hand side does not vanish, $y^{-1}x$ is very close to $0$, so
\begin{align*}
	\left(\begin{bmatrix}\varpi^{m_1}_v\\&\varpi^{m_2}_v\end{bmatrix}\right)\phi_{\GL(2),v})(\imath_{\bS}(\fz))
	&=\eta_{\sPi_v,2}(\varpi_v)^{m_1}\eta_{\sPi_v,1}(\varpi_v)^{m_2}|\varpi|^{-\frac{m_1-m_2}{2}}_v
	\eta_{\sPi_v,1}(\alphaS\alphabS)\cdot \eta_{\sPi_v,1}\eta_{\sPi_v,2}(y)\\
	\Lambda_v(\fz)&=\Lambda_v(y\alphaS)=\Lambda_v(\alphaS)\cdot\eta_{\sPi_v,1}\eta_{\sPi_v,2}(y).
\end{align*}
Plugging this into the integral, we get
\begin{align*}
    \wal_0\left(\begin{bmatrix}\varpi^{m_1}_v\\&\varpi^{m_2}_v\end{bmatrix}\phi_{\GL(2),v}\right)=&\,\eta_{\sPi_v,2}(\varpi_v)^{m_1}\eta_{\sPi_v,1}(\varpi_v)^{m_2}|\varpi_v|^{\frac{m_1-m_2}{2}}_v \\
    &\times \eta_{\sPi_v,1}(\alphaS\alphabS)\Lambda^{-1}_v(\alphaS) \frac{\vol((1+\varpi^{m_1-m_2}_v\alphabS\bZ_v)\cap U_v)}{|\varpi_v|^{\frac{m_1-m_2}{2}}}.
\end{align*}
The factor in the second row is nonzero and does not depend on $m_1,m_2$ as long as $m_1\gg m_2\gg 0$.

\vspace{.5em}
\underline{$v=\fv\bar{\fv}$ splits in $\cK=\bQ(\sqrt{-\det\bS})$.} Denote by $\varrho_\fv$ the projection $\cK_v\ra \cK_\fv\cong\bQ_v$.  Let $\Whi^{\sigma_v}$ be a  Whittaker functional of $\sigma_v$ with respect to $\psi_v$. Then up to rescale,
\begin{align*}
	&\cI^{\sigma_v}_{\bS,\Lambda_v}\left(\begin{bmatrix}\varpi^{m_1}_v\\&\varpi^{m_2}_v\end{bmatrix}w_v\right)\\
	=&\,\int_{\bQ^\times_v} \Whi^{\sigma_v} \left(\begin{bmatrix}t\\&1\end{bmatrix}\begin{bmatrix}1&1\\ -\varrho_\fv(\alphabS)&-\varrho_\fv(\alphaS)\end{bmatrix}^{-1}\begin{bmatrix}\varpi^{m_1}_v\\&\varpi^{m_2}_v\end{bmatrix}w_v\right)
	\cdot \Lambda^{-1}_{\fv}(t)\,d^\times t.
\end{align*}
(Under our assumption that $\Lambda_v$ is unitary and $\sigma_v$ is unitarizable, the integral converges absolutely.) With $m_1\gg m_2$, 
\begin{align*}
	&\Whi^{\sigma_v} \left(\begin{bmatrix}t\\&1\end{bmatrix}\begin{bmatrix}1&1\\ -\varrho_\fv(\alphabS)&-\varrho_\fv(\alphaS)\end{bmatrix}^{-1}\begin{bmatrix}\varpi^{m_1}_v\\&\varpi^{m_2}_v\end{bmatrix}w_v\right)\\
	=&\, \Whi^{\sigma_v} \left(\begin{bmatrix}t\\&1\end{bmatrix}\varpi^{m_2}_v\varrho_\fv(\alphabS-\alphaS)^{-1}\begin{bmatrix}\varrho_\fv(\alphabS-\alphaS)&1\\ 0&1\end{bmatrix}\begin{bmatrix}\varpi^{m_1-m_2}_v\\&1\end{bmatrix}w_v\right)\\
	=&\,\eta_{\sPi_v,1}\eta_{\sPi_v,2}\left(\varpi^{m_2}_v\varrho_\fv(\alphabS-\alphaS)^{-1}\right)\cdot \psi_v(t)\cdot \Whi^{\sigma_v} \left(\begin{bmatrix}\varrho_\fv(\alphabS-\alphaS)t\varpi^{m_1-m_2}_v&\\ &1\end{bmatrix}w_v\right).
\end{align*}
Since when $\Ind^{\GL(2,\bQ_v)}_{B_{\GL(2)}(\bQ_v)}(\eta_{\sPi_v,2},\eta_{\sPi_v,1})$ is reducible, its one dimensional subrepresentation has no Whittaker model, we can compute $\Whi^{\sigma_v}\left(\begin{bmatrix}\varpi^a_v\\&1\end{bmatrix}w_{\sigma_p}\right)$ by 
\begin{align*}
	\int_{\bQ_v}\phi_0\left(\begin{bmatrix}&-1\\1&x\end{bmatrix}\begin{bmatrix}a\\&1\end{bmatrix}\right) \psi_v(-x)\,dx
	&=\int_{\bQ_v} \mathds{1}_{\bZ_v}(a^{-1}x)\cdot \eta_{\sPi_v,1}(a)|a|^{-1/2}_v\,\psi_v(-x)\,dx\\
	&=\mathds{1}_{\bZ_v}(a)\cdot \eta_{\sPi_v,1}(a)|a|^{1/2}_v.
\end{align*}
Plug it into the above equation. We get 
\begin{align*}
	&\cI^{\sigma_v}_{\bS,\Lambda_v}\left(\begin{bmatrix}\varpi^{m_1}_v\\&\varpi^{m_2}_v\end{bmatrix}w_{\sigma_v}\right)\\
	= &\,\eta_{\sPi_v,1}\eta_{\sPi_v,2}\left(\varpi^{m_2}_v\varrho_\fv(\alphabS-\alphaS)^{-1}\right)
	\cdot \eta_{\sPi_v,1}(\varrho_\fv(\alphabS-\alphaS)) \left(\eta_{\sPi_v,1}(\varpi_v)|\varpi_v|^{1/2}_v\right)^{m_1-m_2}\\
	&\times\int_{\bQ^\times_v}\mathds{1}_{\bZ_v}\left(\varrho_\fv(\alphaS-\alphabS)\varpi^{m_1-m_2}_vt\right)  \cdot \psi_v(t) \eta_{\sPi_v,1}(t)|t|^{1/2}_v \Lambda^{-1}_{\fv}(t)\,d^\times t
\end{align*}
Since $m_1\gg m_2$, the integral over $\bQ^\times_v$ equals
\begin{align*}
	\int_{\bQ^\times_v}  \psi_v(t) \eta_{\sPi_v,1}(t)|t|^{1/2}_v \Lambda^{-1}_{\fv}(t)\,d^\times t
	=\gamma_v\left(\frac{1}{2},\eta^{-1}_{\sPi_v,1}\Lambda_\fv\right),
\end{align*}
and
\begin{equation}\label{eq:dagger1}
	\begin{aligned}
		&\left(\eta_{\sPi_v,2}(\varpi_v)^{m_1}\eta_{\sPi_v,1}(\varpi_v)^{m_2}|\varpi_v|^{\frac{m_1-m_2}{2}}_v\right)^{-1} \cI^{\sigma_v}_{\bS,\Lambda_v}\left(\begin{bmatrix}\varpi^{m_1}_v\\&\varpi^{m_2}_v\end{bmatrix}w_v\right)\\
		=&\, \eta^{-1}_{\sPi_v,2}(\varrho_\fv(\alphabS-\alphaS))\cdot \gamma_v\left(\frac{1}{2},\eta^{-1}_{\sPi_v,1}\Lambda_\fv\right),
	\end{aligned}
\end{equation}
independent of $m_1,m_2$. If \eqref{eq:Bm1m2} is zero, the computation shows that $\gamma_v\left(\frac{1}{2},\eta^{-1}_{\sPi_v,1}\Lambda_\fv\right)=0$, so  $\eta_{\sPi_v,1}=\Lambda_\fv|\cdot|^{1/2}_v$. By interchanging $\fv$ and $\bar{\fv}$, we also know that $\eta_{\sPi_v,1}=\Lambda_{\bar{\fv}}|\cdot|^{1/2}_v$. Hence, we have $\Lambda_\fv=\Lambda_{\bar{\fv}}$, and also $\eta_{\sPi_v,2}=\Lambda_\fv\Lambda_{\bar{\fv}}\eta^{-1}_{\sPi_v,2}=\Lambda_{\fv}|\cdot|^{-1/2}_v$.
\end{proof}

\begin{prop}\label{prop:ZpWals}
Assume the assumptions in (i) of Proposition~\ref{prop:Zord} and also suppose that $\pi_v$ is infinite dimensional and is isomorphic to a quotient of $\Ind^{\GL(2)}_{B_{\GL(2)}}\eta_{\pi_v,1}\boxtimes\eta_{\pi_v,2}$ with $f_v\in\pi_v$ the projection of 
\begin{equation}
	h=\begin{bmatrix}a&b\\c&d\end{bmatrix}\longmapsto \mathds{1}_{\bZ_v}(c^{-1}d)\cdot \eta_{\pi_v,1}(\nu_h c^{-1})\,\eta_{\pi_v,2}(c)\,|\nu_h c^{-2}|^{1/2}_v.
\end{equation}
Then
\begin{align*}
   &Z_v\left(\dsec^\bc_{v,\Schw_v}(s,\chi,\Xi),\Bes^{\Pi_v}_{\bS,\Lambda_{v}}(\varphi_{v,,m_1,m_2}),\Whi^{\pi_v,\Upsilon_v}_\bbc(f_{v,m_3})\right)\\
   =&\, C\cdot\chi_v(-1)\,\chi^{-1}_v\eta_{\pi_v,1}\eta_{\sPi_v,3}(\bbc)\,|\bbc|^{-s}_v \cdot \left(\eta^{-1}_{\sPi_v,3}(\varpi_v)|\varpi_v|^{3/2}_v\right)^{m_1+m_2} \lambda(f_v)^{m_3} \cdot\whi^{\pi_v}_\bbc(f_v)\\
   &\times \gamma_v\left(s+\frac{1}{2},\sigma^\vee_{\sPi_v}\times \chi_v\eta^{-1}_{\pi_v,1}\right)^{-1}
   \gamma_v\left(s+\frac{1}{2},\pi^\vee_v\times \chi_v\eta^{-1}_{\sPi_v,3}\right)^{-1}\\
   &\times\int_{\bQ^\times_v} |\nu|^{-s+\frac{1}{2}}_v\cdot \chi^{-1}_v\eta_{\pi_v,2}\eta_{\sPi_v,3}(\nu)\cdot \cF \Schw_{v,1,\alt}\left(\nu \right) \,d^\times\nu\\
   &\times \int_{\GL(2,\bQ_v)}\hspace{-1em} (\cF\Schw_{v,0})\left(T\right)\cdot |\det T|^{-s+1}_v\, \chi^{-1}_v\eta_{\pi_v,1}\Lambda_{\bQ,v}(\det T)
   \cdot  \wal^{\sigma_{\sPi_v}}_{\bS,\Lambda_v}\left(\sigma_{\sPi_v}\left(T^{-1}\begin{bmatrix}\varpi^{m_1}_v\\&\varpi^{m_2}_v\end{bmatrix} \right)w_v\right) \,dT
\end{align*}
with the constant $C$ as in Proposition~\ref{Prop:Zbc-1}.

\end{prop}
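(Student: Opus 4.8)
The plan is to start from the reduction formula \eqref{eq:Zp-first reduction} in Proposition~\ref{Prop:Zbc-1}, which already expresses the local zeta integral as a limit over $n$ of an integral over $\bQ^\times_v \times \GL(2,\bQ_v) \times \bQ_v$ against $\Bes^{\Pi_v}_{\bS,\Lambda_v}(\varphi_{v,m_1+n,m_2+n})$ and $\Whi^{\pi_v}_\bbc(f_{v,m_3+n})$ evaluated at explicit Weyl-element-twisted diagonal points. The first step is to substitute the explicit formulas for these two functions: for the Bessel term, use part (i) of Proposition~\ref{prop:Zord}, which gives $\Bes^{\Pi_v}_{\bS,\Lambda_v}(\varphi_v)\bigl(\begin{psm}A\\&\delta\,\ltrans{A}^{\mr{adj}}\end{psm}\bigr)$ as $\eta_{\sPi_v,3}(\delta)|\delta|^{-3/2}_v\,\mathds{1}_{\Sym_2(\bZ_v)}\bigl((\delta\det A)^{-1}\ltrans{A}\bS A\bigr)\,\wal^{\sigma_{\sPi_v}}_{\bS,\Lambda_v}(\sigma_v(A)w_v)$; for the Whittaker term, use the standard formula for $\Whi^{\pi_v}_\bbc(f_v)$ on the Borel, which for the given section $f_v$ (projection of the induced-from-Borel flat section) reduces to a Tate-type integral over $\bQ_v$. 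One must be careful to track the argument $\begin{psm}&-T\\ r^{-1}\nu^{-1}\ltrans{T}^{\mr{adj}}\end{psm}$: here $\delta = r^{-1}\nu^{-1}$ and $A = -T$, so $\ltrans{A}^{\mr{adj}} = \ltrans{T}^{\mr{adj}}$ up to sign bookkeeping, and the support condition $\mathds{1}_{\Sym_2(\bZ_v)}$ will interact with $\Schw_{v,0}$.

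The second step is to handle the $r$-integral: once the Whittaker function $\Whi^{\pi_v}_\bbc(f_{v,m_3+n})\bigl(\begin{psm}&\nu^{-1}\det T\\-r^{-1}\end{psm}\bigr)$ is written out explicitly using the flat section, the $\psi_v(-\bbc r)$ factor combines with the character and absolute-value dependence on $r$ to produce a one-dimensional local zeta integral that evaluates to a $\gamma$-factor — namely a factor of the form $\gamma_v(s+\tfrac12,\pi^\vee_v\times\chi_v\eta^{-1}_{\sPi_v,3})^{-1}$ via the local functional equation for $\GL(1)$ (Tate) applied to $\pi_v$ through its Borel induction. Similarly the $\nu$-integral, after writing $\Whi^{\pi_v}_\bbc$ explicitly, produces the factor $\int_{\bQ^\times_v}|\nu|^{-s+1/2}_v\chi^{-1}_v\eta_{\pi_v,2}\eta_{\sPi_v,3}(\nu)\,\cF\Schw_{v,1,\alt}(\nu)\,d^\times\nu$ together with a contribution to a second $\gamma$-factor; the remaining $T$-integral over $\GL(2,\bQ_v)$ is recognized (after Fourier-transforming $\Schw_{v,0}$) as a Godement--Jacquet-type local integral for $\sigma_{\sPi_v}$, producing $\gamma_v(s+\tfrac12,\sigma^\vee_{\sPi_v}\times\chi_v\eta^{-1}_{\pi_v,1})^{-1}$ and the stated $T$-integral against $\wal^{\sigma_{\sPi_v}}_{\bS,\Lambda_v}$. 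The role of the $\bU_v$-eigenvalue factors $(\lambda_v(\varphi_v)\lambda_v(f_v))^{-n}$ is to cancel against the $n$-dependence produced by shifting $\varphi_{v,m_1+n,m_2+n}$ and $f_{v,m_3+n}$: since $\varphi_v = \phi_{\GSp(4),v}(-;w_v)$ is a $\bU_v$-eigenvector with eigenvalue \eqref{eq:bc-lambda}, and $f_v$ likewise, the shift by $n$ introduces exactly $(\lambda_v(\varphi_v)\lambda_v(f_v))^n$, so the limit stabilizes and in fact each term is independent of $n$ for $n$ large, leaving the factor $(\eta^{-1}_{\sPi_v,3}(\varpi_v)|\varpi_v|^{3/2}_v)^{m_1+m_2}\lambda(f_v)^{m_3}$ from the residual $m_1,m_2,m_3$-dependence.

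Concretely I would carry out the computation in this order: (1) plug in the explicit Bessel and Whittaker formulas; (2) perform the change of variables separating the $r$-, $\nu$-, and $T$-variables cleanly (using the Iwasawa-type coordinates already set up in the proof of Proposition~\ref{Prop:Zbc-1}); (3) evaluate the $r$-integral as a Tate integral, recognizing the $\psi_v(-\bbc r)$ twist and reading off both the constant $\chi_v(-1)\chi^{-1}_v\eta_{\pi_v,1}\eta_{\sPi_v,3}(\bbc)|\bbc|^{-s}_v$ and the $\gamma$-factor $\gamma_v(s+\tfrac12,\pi^\vee_v\times\chi_v\eta^{-1}_{\sPi_v,3})^{-1}$; (4) evaluate the $\nu$-integral, extracting $\cF\Schw_{v,1,\alt}$ and the remaining character/absolute-value factor; (5) rewrite the $T$-integral after Fourier transform of $\Schw_{v,0}$, obtaining the Godement--Jacquet $\gamma$-factor $\gamma_v(s+\tfrac12,\sigma^\vee_{\sPi_v}\times\chi_v\eta^{-1}_{\pi_v,1})^{-1}$ and the final $T$-integral against $\wal^{\sigma_{\sPi_v}}_{\bS,\Lambda_v}$; (6) collect the $n$-independent $\bU_v$-eigenvalue bookkeeping and take the limit. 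The main obstacle I expect is step (2)--(3): correctly matching the twisted arguments $\begin{psm}&-T\\ r^{-1}\nu^{-1}\ltrans{T}^{\mr{adj}}\end{psm}$ and $\begin{psm}&\nu^{-1}\det T\\-r^{-1}\end{psm}$ to the Iwasawa decomposition used in the explicit Bessel and Whittaker formulas, keeping track of all the $\pm$ signs, the modular characters $|\cdot|^{\pm 1/2}$ and $|\cdot|^{\pm 3/2}$, and the interplay between the support condition $\mathds{1}_{\Sym_2(\bZ_v)}$ coming from the Bessel functional formula and the Schwartz functions $\Schw_{v,0}$, $\Schw_{v,1,\alt}$ appearing in the big-cell section; getting the three separate one-dimensional/matrix integrals to decouple and to each be manifestly a local zeta integral (so that the local functional equations apply) is where the bulk of the bookkeeping lies, while the recognition of each piece as a $\gamma$-factor is then standard.
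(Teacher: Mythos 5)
Your strategy is essentially the paper's: start from the reduction formula \eqref{eq:Zp-first reduction} of Proposition~\ref{Prop:Zbc-1}, substitute the explicit Bessel functional from Proposition~\ref{prop:Zord}(i) and the explicit Whittaker formula for the Borel-induced $f_v$, separate the $r$-, $\nu$-, and $T$-variables, and convert each one-dimensional or $\GL(2)$ integral into a $\gamma$-factor via the local functional equation (Tate for $r,\nu$, Godement--Jacquet for $T$). That is precisely the route taken.

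There are two real inaccuracies worth flagging, though, because they are substantive steps rather than mere bookkeeping. The first is the attribution of $\gamma$-factors. You claim in step (3) that the $r$-integral alone produces $\gamma_v\bigl(s+\tfrac12,\pi^\vee_v\times\chi_v\eta^{-1}_{\sPi_v,3}\bigr)^{-1}$ and in step (4) that the $\nu$-integral merely extracts $\cF\Schw_{v,1,\alt}$ with a character/absolute-value factor. But the $r$-integral is a one-variable Tate integral and yields only the single $\GL(1)$ factor $\gamma_v\bigl(s+\tfrac12,\chi_v\eta^{-1}_{\pi_v,1}\eta^{-1}_{\sPi_v,3}\bigr)^{-1}$ together with the constant $\chi^{-1}_v\eta_{\pi_v,1}\eta_{\sPi_v,3}(\bbc)|\bbc|^{-s}_v$; the second $\GL(1)$ factor $\gamma_v\bigl(s+\tfrac12,\chi_v\eta^{-1}_{\pi_v,2}\eta^{-1}_{\sPi_v,3}\bigr)^{-1}$ arises from the $\nu$-integral, precisely because the stated formula has $\cF\Schw_{v,1,\alt}$ on the dual side, and only the \emph{product} of these two $\GL(1)$ $\gamma$-factors gives $\gamma_v\bigl(s+\tfrac12,\pi^\vee_v\times\chi_v\eta^{-1}_{\sPi_v,3}\bigr)^{-1}$. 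The second issue is the treatment of the $n\to\infty$ limit. You assert that, because the $\bU_v$-eigenvalue factor $(\lambda_v(\varphi_v)\lambda_v(f_v))^{-n}$ cancels the shift by $n$, each term becomes $n$-independent for $n$ large. But after substituting Proposition~\ref{prop:Zord}(i), the integrand also carries the indicator $\mathds{1}_{\Sym_2(\bZ_v)}\bigl(\varpi^{2n}_v r\nu(\det T)^{-1}\cdots\bigr)$ from the Bessel functional and an analogous one from the Whittaker function, both genuinely $n$-dependent. The eigenvalue cancellation only removes the scalar $n$-dependence; you still need to justify dropping the indicators. The paper does this by defining an auxiliary $I'_n$ with those indicators removed and proving $|I_n-I'_n|=O\bigl(|\varpi_v|^{ns}_v\bigr)$ for $\mr{Re}(s)\gg0$, so that the limit is unaffected and the integral then factors cleanly into the three pieces. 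You acknowledge the indicators as a ``main obstacle'' but leave them unaddressed; this is the one place where the proposal as stated would not go through without an additional argument.
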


\begin{proof}
Under our assumptions on $f_v$, a straightforward computation shows that
\begin{align*}
   \Whi^{\pi_v}_{\bbc}(f_v)\begin{pmatrix}a\\&d\end{pmatrix}=\mathds{1}_{\bZ_v}(\bbc ad^{-1})\cdot \eta_{\pi_v,1}(d)\,\eta_{\pi_v,2}(a)\, |ad^{-1}|^{1/2}_v \cdot \whi^{\pi_v}_{\bbc}(f_v).
\end{align*}
Plug this and the formula in (i) of Proposition~\ref{prop:Zord} for $\Bes^{\Pi_v}_{\bS,\Lambda_v}(\varphi_v)$  into \eqref{eq:Zp-first reduction}. We obtain
\begin{align*}
   &Z_v\left(\dsec^\bc_{v,\Schw_v}(s,\chi,\Xi),\Bes^{\Pi_v}_{\bS,\Lambda_{v}}(\varphi_{v,,m_1+n,m_2+n}),\Whi^{\pi_v,\Upsilon_v}_\bbc(f_{v,m_3+n})\right)\\   
   =&\,C\cdot \lim_{n\to\infty}\big(\lambda(\varphi_v)\lambda_v(f_{v})\big)^{-n}
   \int_{\bQ^\times_v}\int_{\GL(2,\bQ_v)}\int_{\bQ_v}
   |\det T|^{s+\frac{1}{2}}_v |\nu|^{s-\frac{1}{2}}_v |r|^{s-\frac{5}{2}}_v
   \chi_v\left(\nu r\det T\right)
   \Xi_v\left(\det T\right)\\
   &\times\Schw_{v,1,\alt}\left(\nu\right)
   \, \Schw_{v,0}\left( T\right)\cdot \psi_v(-\bbc r)
   \cdot \eta^{-1}_{\sPi_v,3}\left(-\varpi^{2n+m_1+m_2}_v r\nu\right)|\varpi^{2n+m_1+m_2}_v r\nu|^{3/2}_v\\
   &\times  \mathds{1}_{\Sym_2(\bZ_v)}\left(\varpi^{2n}_v r\nu(\det T)^{-1} \begin{bmatrix}\varpi^{m_1}_v\\&\varpi^{m_2}_v\end{bmatrix} \ltrans{T}\bS T\begin{bmatrix}\varpi^{m_1}_v\\&\varpi^{m_2}_v\end{bmatrix}\right)\\
    &\times  \wal^{\sigma_{\sPi_v}}_{\bS,\Lambda_v}\left(\sigma_{\sPi_v}\left(-T\begin{bmatrix}\varpi^{m_1+n}_v\\&\varpi_v^{m_2+n}\end{bmatrix}\right)w_v\right) \\
   &\times \whi^{\pi_v}_\bbc(f_v)\cdot \mathds{1}_{\bZ_v}\left(\varpi^{2n+2m_3}_v\bbc \nu^{-1}r\det T\right)\cdot |\varpi^{2n+2m_3}_v\bbc \nu^{-1}r\det T|^{1/2}_v\\
   &\times \eta^{-1}_{\pi_v,1}\left(-r\varpi^{n+m_3}_v\right)\eta_{\pi_v,2}\left(\varpi^{n+m_3}_v \nu^{-1}\det T\right)
   \,dr\,d^\times T\,d^\times\nu,
\end{align*}   
and by plugging in \eqref{eq:bc-lambda}, it becomes
\begin{equation}\label{eq:ZvII-In}
\begin{aligned}
    &Z_v\left(\dsec^\bc_{v,\Schw_v}(s,\chi,\Xi),\Bes^{\Pi_v}_{\bS,\Lambda_{v}}(\varphi_{v,m_1+n,m_2+n}),\Whi^{\pi_v,\Upsilon_v}_\bbc(f_{v,m_3+n})\right)\\   
   =&\,C\cdot \left(\eta^{-1}_{\sPi_v,3}(\varpi_v)|\varpi_v|^{3/2}_v\right)^{m_1+m_2}\lambda_v(f_v)^{m_3}
   \cdot \eta_{\sPi_v,3}\Lambda_{\bQ,v}\eta_{\pi_v,1}(-1)
   \cdot |\bbc|^{1/2}\,  \whi^{\pi_v}_\bbc(f_v)\cdot \lim_{n\to\infty} I_n
\end{aligned}
\end{equation}
with
\begin{align*}
   I_n=&\,\int_{\bQ^\times_v}\int_{\GL(2,\bQ_v)}\int_{\bQ_v}
   |\det T|^{s+1}_v |\nu|^{s+\frac{1}{2}}_v |r|^{s-\frac{1}{2}}_v\chi_v\left(\nu r\det T\right)
   \Xi_v\left(\det T\right)\\
   &\times \eta^{-1}_{\sPi_v,3}\left( r\nu\right)\,\eta^{-1}_{\pi_v,1}\left(r\right)\eta_{\pi_v,2}\left(\nu^{-1}\det T\right)
   \cdot \Schw_{v,1,\alt}\left(\nu\right)
   \, \Schw_{v,0}\left(T\right)\\
   &\times \psi_v(-\bbc r)
   \cdot \wal^{\sigma_{\sPi_v}}_{\bS,\Lambda_v}\left(\sigma_{\sPi_v}\left(T\begin{bmatrix}\varpi_v^{m_1}\\&\varpi_v^{m_2}\end{bmatrix}\right)w_v\right)\\
   &\times  
   \mathds{1}_{\Sym_2(\bZ_v)}\left(\varpi^{2n}_v r\nu(\det T)^{-1} \begin{bmatrix}\varpi^{m_1}_v\\&\varpi^{m_2}_v\end{bmatrix} \ltrans{T}\bS T\begin{bmatrix}\varpi^{m_1}_v\\&\varpi^{m_2}_v\end{bmatrix}\right)\\
   &\times  \mathds{1}_{\bZ_v}\left(\varpi^{2n+2m_3}_v\bbc \nu^{-1}r\det T\right)\,dr\,d^\times T\,d^\times\nu.
\end{align*}
Let
\begin{align*}
   I'_n=&\,\int_{\bQ^\times_v}\int_{\GL(2,\bQ_v)}\int_{\bQ_v}
   |\det T|^{s+1}_v |\nu|^{s+\frac{1}{2}}_v |r|^{s-\frac{1}{2}}_v \chi_v\left(\nu r\det T\right)
   \Xi_v\left(\det T\right)\\
   &\times \eta^{-1}_{\sPi_v,3}\left( r\nu\right)\,\eta^{-1}_{\pi_v,1}\left(r\right)\eta_{\pi_v,2}\left(\nu^{-1}\det T\right)
   \cdot \Schw_{v,1,\alt}\left(\nu\right)
   \, \Schw_{v,0}\left(T\right)\\
   &\times \psi_v(-\bbc r)
   \cdot \wal^{\sigma_{\sPi_v}}_{\bS,\Lambda_v}\left(\sigma_{\sPi_v}\left(T\begin{bmatrix}\varpi_v^{m_1}\\&\varpi_v^{m_2}\end{bmatrix}\right)w_v\right)
   \,dr\,d^\times T\,d^\times\nu.
\end{align*}
When $\mr{Re}(s)$ is sufficiently large, $|I_n-I'_n|=O\left(|\varpi_v|^{ns}_v\right)$ as $n\ra\infty$. Hence,
\begin{align*}
   &\lim_{n\to\infty}I_n=\lim_{n\to\infty}I'_n
   \\
   =&\,\int_{\GL(2,\bQ_v)}|\det T|^{s+1}_v\cdot \chi_v\eta^{-1}_{\pi_v,1}\Lambda^{-1}_{\bQ,v}(\det T)\cdot \Schw_{v,0}\left( T\right) 
   \cdot \wal^{\sigma_{\sPi_v}}_{\bS,\Lambda_v} \left(\sigma_{\sPi_v}\left(T\begin{bmatrix}\varpi^{m_1}_v\\&\varpi^{m_2}_v\end{bmatrix}\right)w_v\right)\,d^\times T\\
   &\times \int_{\bQ_v}\psi_v(-\bbc r)\cdot |r|^{s-\frac{1}{2}}_v\cdot \chi_v\eta^{-1}_{\pi_v,1}\eta^{-1}_{\sPi_v,3}(r)\,d r
   \int_{\bQ^\times_v}|\nu|^{s+\frac{1}{2}}_v \cdot \chi_v\eta^{-1}_{\sPi_v,3}\eta^{-1}_{\pi_v,2}(\nu)\cdot  \Schw_{v,1,\alt}\left(\nu\right)\,d^\times\nu
\end{align*}
For the integral involving $T$, by applying the functional equation by Godement--Jacquet \cite[Theorem 13.8]{GJDR} for $\GL(2)$, we get
\begin{align*}
   \text{the integral involving $T$}
   =&\,   \gamma_v\left(s+\frac{1}{2},\sigma_{\sPi_v}\times \chi_v\eta^{-1}_{\pi_v,1}\Lambda^{-1}_{\bQ,v}\right)^{-1}\\
   &\times\int_{\GL(2,\bQ_v)} (\cF\Schw_0)\left(T\right)\cdot |\det T|^{-s+1}_v\cdot \chi^{-1}_v\eta_{\pi_v,1}\Lambda_{\bQ,v}(\det T)\\
   &\times \wal^{\sigma_{\sPi_v}}_{\bS,\Lambda_v}\left(\sigma_{\sPi_v}\left(T^{-1}\begin{bmatrix}\varpi^{m_1}_v\\&\varpi^{m_2}_v\end{bmatrix} \right)w_v\right) \,d^\times T.
\end{align*} 

For the integral involving $r$, we can rewrite it as
\begin{align*}
   &(1-|\varpi_v|_v)\sum_{m\in\bZ} |\varpi^m_v|_v^{s+\frac{1}{2}} \chi_v\eta^{-1}_{\pi_v,1}\eta^{-1}_{\sPi_v,3}(\varpi^m_v)\int_{\bZ^\times_v} \psi_v(-\bbc \varpi^m_v r_0)\cdot \chi_v\eta^{-1}_{\pi_v,1}\eta^{-1}_{\sPi_v,3}(r_0)\,d^\times r_0\\
   =&\sum_{m\in\bZ} |\varpi^m_v|_v^{s+\frac{1}{2}} \chi_v\eta^{-1}_{\pi_v,1}\eta^{-1}_{\sPi_v,3}(\varpi^m_v) \cdot 
   \cF(\chi_v\eta^{-1}_{\pi_v,1}\eta^{-1}_{\sPi_v,3})^\circ (\bbc \varpi^m_v)\\
   =&\,|\bbc|^{-s-\frac{1}{2}}_v \chi^{-1}_v\eta_{\pi_v,1}\eta_{\sPi_v,3}(\bbc)\int_{\bQ^\times_v}  |r|^{s+\frac{1}{2}} \chi_v\eta^{-1}_{\pi_v,1}\eta^{-1}_{\sPi_v,3}(r)\cdot\cF(\chi_v\eta^{-1}_{\pi_v,1}\eta^{-1}_{\sPi_v,3})^\circ (r) \,d^\times r.
\end{align*}
(See \eqref{eq:char-circ} for notation $(\chi_v\eta^{-1}_{\pi_v,1}\eta^{-1}_{\sPi_v,3})^\circ$.) Then, applying the functional equation for $\GL(1)$, we have
\begin{align*}
   \text{the integral involving $r$}
   =&\,|\bbc|^{-s-\frac{1}{2}}_v \chi^{-1}_v\eta_{\pi_v,1}\eta_{\sPi_v,3}(\bbc)\cdot \gamma_v\left(s+\frac{1}{2},\chi_v\eta^{-1}_{\pi_v,1}\eta^{-1}_{\sPi_v,3}\right)^{-1}\\
   &\times\int_{\bQ^\times_v} |r|^{-s+\frac{1}{2}}_v\cdot \chi^{-1}_v\eta_{\pi_v,1}\eta_{\sPi_v,3}(r)\cdot  (\chi_v\eta^{-1}_{\pi_v,1}\eta^{-1}_{\sPi_v,3})^\circ (- r)\,d^\times r\\
   =&\,|\bbc|^{-s-\frac{1}{2}}_v \chi^{-1}_v\eta_{\pi_v,1}\eta_{\sPi_v,3}(-\bbc)\cdot \gamma_v\left(s+\frac{1}{2},\chi_v\eta^{-1}_{\pi_v,1}\eta^{-1}_{\sPi_v,3}\right)^{-1}.
\end{align*}
Also, we have
\begin{align*}
   \text{the integral involving $\nu$}
   =&\, \gamma_v\left(s+\frac{1}{2},\chi_v\eta^{-1}_{\pi_v,2}\eta^{-1}_{\sPi_v,3}\right)^{-1}
   \int_{\bQ^\times_v} |\nu|^{-s+\frac{1}{2}}_v\cdot \chi^{-1}_v\eta_{\pi_v,2}\eta_{\sPi_v,3}(\nu)\cdot \cF \Schw_{v,1,\alt}\left(\nu \right) \,d^\times\nu.
\end{align*}
Combining these formulas, we get a formula for $\lim\limits_{n\to\infty}I_n$. Then the proposition follows by plugging the formula for $\lim\limits_{n\to\infty}I_n$ into \eqref{eq:ZvII-In} and using that
\begin{align*}
    \gamma_v\left(s+\frac{1}{2},\sigma_v\times \chi_v\eta^{-1}_{1,v}\Lambda^{-1}_{\bQ,v}\right)^{-1}&=\gamma_v\left(s+\frac{1}{2},\sigma^\vee_v\times \chi_v\eta^{-1}_{1,v}\right)^{-1}\\
    \gamma_v\left(s+\frac{1}{2},\chi_v\eta^{-1}_{1,v}\eta^{-1}_{\sPi_v,3}\right)^{-1}\gamma_v\left(s+\frac{1}{2},\chi_v\eta^{-1}_{2,v}\eta^{-1}_{\sPi_v,3}\right)^{-1}&=\gamma_v\left(s+\frac{1}{2},\pi^\vee_v\times\chi_v\eta^{-1}_{\sPi_v,3}\right)^{-1}.
\end{align*}
\end{proof}

The following proposition shows that we can apply Proposition~\ref{prop:ZpWals} for the local zeta integrals at $p$ for $p$-ordinary $\Pi$ with $\Pi_p$ isomorphic to a holomorphic discrete series.

\begin{prop}
Let $\Pi$ be a $p$-ordinary irreducible cuspidal automorphic representation of $\GSp(4,\bA_\bQ)$ (see \ref{sec:ord-at-p} for the definition of being $p$-ordinary) with $\Pi_\infty\cong \cD_{l_1,l_2}$, $l_1\geq l_2\geq 3$. Let $\Lambda$ be a Hecke character of $\cK=\bQ(\sqrt{-\det\bS})$. If $\Pi_\infty$ has a nonzero $(\bS,\Lambda_\infty)$-Bessel functional, then $\Pi_p$ and $(\bS,\Lambda_p)$ satisfies the conditions in Proposition~\ref{prop:ZpWals}. 
\end{prop}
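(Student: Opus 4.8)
The plan is to identify, for a $p$-ordinary $\Pi$ with $\Pi_\infty\cong\cD_{l_1,l_2}$, the precise shape of the local component $\Pi_p$ and to match it against the hypotheses of Proposition~\ref{prop:ZpWals} (equivalently, against the assumptions of (i)--(ii) of Proposition~\ref{prop:Zord}). Recall that being $p$-ordinary (see \S\ref{sec:ord-at-p}) means that $\Pi_p$ admits a nonzero vector on which the $\bU_p$-operators act with $p$-adic unit eigenvalues after suitable normalization; by the theory of Jacquet modules and the classification in \cite{RSLocal}, such a representation is necessarily a subquotient of a representation parabolically induced from the Siegel parabolic $Q_{\GSp(4)}$, of the form $\Ind_{Q_{\GSp(4)}}^{\GSp(4)}\sigma_{\sPi_p}\rtimes\eta_{\sPi_p,3}$, where moreover the relevant Jacquet module has a one-dimensional constituent detecting the ordinary line. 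First I would make this precise: the ordinarity forces $\sigma_{\sPi_p}$ itself to be (a quotient of) a principal series $\Ind^{\GL(2)}_{B_{\GL(2)}}\eta_{\sPi_p,2}\boxtimes\eta_{\sPi_p,1}$, so that the full induced representation is a quotient of a principal series on $\GSp(4)$ and the vector $w_p$ from \eqref{eq:phiGL2} is available. Since $\cD_{l_1,l_2}$ with $l_1\geq l_2\geq 3$ is a holomorphic (large) discrete series, $\Pi$ is in particular cuspidal and globally generic-or-not but locally $\Pi_p$ is a full induced representation's Langlands quotient, which, combined with ordinarity, pins it to one of the types in \cite{RSLocal} that are Siegel-induced.

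Next I would verify the representation-theoretic hypotheses one by one. The condition ``$\Pi_p$ is isomorphic to a quotient of $V(w_p)\subset \Ind_{Q_{\GSp(4)}}^{\GSp(4)}\sigma_{\sPi_p}\rtimes\eta_{\sPi_p,3}$'' follows from the Langlands quotient description together with the identification of the ordinary vector with $\phi_{\GSp(4),p}(-;w_p)$; one checks directly from \eqref{eq:bc-phi} and \eqref{eq:bc-lambda} that this vector is a $\bU_p$-eigenvector with the expected eigenvalue, and that its eigenvalue is a $p$-adic unit precisely under the ordinarity hypothesis, so it spans (up to the finite part of the level) the ordinary line. The condition ``$\Pi_p$ is the only subquotient of $\Ind_{Q_{\GSp(4)}}^{\GSp(4)}\sigma_{\sPi_p}\rtimes\eta_{\sPi_p,3}$ with a nonzero $(\bS,\Lambda_p)$-Bessel functional'' is where I expect the real work: here I would invoke the uniqueness of Bessel models (\cite{NoPS}), the classification of which constituents of a reducible Siegel-induced representation carry a $(\bS,\Lambda_p)$-Bessel functional from the table in \cite[Theorem~6.2.2]{RSBessel}, and the hypothesis that $\Pi_\infty$ \emph{does} have a nonzero $(\bS,\Lambda_\infty)$-Bessel functional, which (by archimedean--nonarchimedean compatibility of the global Bessel period's nonvanishing, or rather by the constraint it places on the global central character and hence on $\Lambda$) rules out the degenerate possibilities. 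Concretely, if some other constituent also had a Bessel functional, the induced representation would be of a type (like IIa or IVa in \cite{RSLocal}) that is incompatible with $\Pi_\infty$ being the holomorphic discrete series $\cD_{l_1,l_2}$ with $l_1\geq l_2\geq 3$ while still supporting a global Bessel period; I would extract this incompatibility from the infinitesimal character / Hodge--Tate weight bookkeeping together with the list in \cite{RSLocal}.

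Then I would dispatch the remaining hypotheses of Proposition~\ref{prop:ZpWals}, which come from (ii) of Proposition~\ref{prop:Zord} and from the assumptions on $\pi_v$: namely that $\Lambda_p$ is unitary (automatic, as $\Lambda$ is a unitary Hecke character in the setup), that $\sigma_{\sPi_p}$ is unitarizable and a quotient of $\Ind^{\GL(2)}_{B_{\GL(2)}}\eta_{\sPi_p,2}\boxtimes\eta_{\sPi_p,1}$ (this follows from temperedness of $\Pi$ together with the explicit determination of $\sigma_{\sPi_p}$ above; temperedness forces $\sigma_{\sPi_p}$ to be tempered, hence unitarizable, and either an irreducible principal series or a twist of Steinberg, both of which are quotients of the stated induced representation), and that $\sigma_{\sPi_p}$ is infinite-dimensional (immediate, since a one-dimensional $\sigma_{\sPi_p}$ would make $\Pi_p$ non-generic in a way excluded by $\Pi$ tempered cuspidal with $\Pi_\infty$ a large discrete series, and in any case would be handled by the IIa/IVa exclusion). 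Finally, the conditions on $\pi_v$ in Proposition~\ref{prop:ZpWals} ($\pi_v$ infinite-dimensional, a quotient of a principal series, with $f_v$ the indicated vector) are not about $\Pi$ at all and are arranged by the choice of $f_\ord$ in the main construction, so they are not part of what this proposition must prove; I would phrase the conclusion accordingly, asserting only that $\Pi_p$ and $(\bS,\Lambda_p)$ satisfy the $\Pi$-side and $(\bS,\Lambda)$-side hypotheses. The main obstacle, to reiterate, is the ``only subquotient with a Bessel functional'' clause, and the key tool there is the cross-referencing of \cite{RSLocal} and \cite[Theorem~6.2.2]{RSBessel} with the archimedean constraint.
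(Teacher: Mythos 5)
Your overall framework is right — identify $\Pi_p$ as a Siegel-parabolic induction via ordinarity, and use \cite[Theorem~6.2.2]{RSBessel} plus the archimedean Bessel constraint to control which constituents can carry a Bessel functional — but the central step is wrong in a way that would derail the argument. You assert that if a second constituent of $\Ind_{Q_{\GSp(4)}}^{\GSp(4)}\sigma_{\sPi_p}\rtimes\eta_{\sPi_p,3}$ carried a Bessel functional, the induction would have to be of type IIa or IVa, and that such types are \emph{incompatible} with $\Pi_\infty\cong\cD_{l_1,l_2}$, $l_1\geq l_2\geq 3$, together with a nonzero global Bessel period. That incompatibility does not hold: in the paper's analysis the valuation constraints \eqref{eq:chi-vp} and $l_2\geq 3$ only exclude types V and VI from the reducible cases, leaving types III (when $l_2=3$, $l_1>3$, $\Pi_p$ is IIIa) and IV (when $l_1=l_2=3$, $\Pi_p$ is IVa). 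Both of these \emph{do} occur, and the proposition is not proved by ruling them out.

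The genuine content you are missing is the dichotomy on whether $p$ splits in $\cK=\bQ(\sqrt{-\det\bS})$, and the explicit $p$-adic valuation comparison in the split case. If $p$ is non-split, the uniqueness of the Bessel-carrying constituent is immediate from \cite[Theorem~6.2.2]{RSBessel}. If $p=\fp\bar{\fp}$ splits, the table in \cite[Theorem~6.2.2]{RSBessel} shows that two constituents could share a Bessel functional only when $\Lambda_\fp\Lambda^{-1}_{\bar{\fp}}$ agrees with $(\eta_{\sPi_p,1}\eta^{-1}_{\sPi_p,3})^{\pm1}$ (type IIIa case) or with $|\cdot|_p^{\pm 2}$ (type IVa case). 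One then brings in the hypothesis that $\cD_{l_1,l_2}$ has a nonzero $(\bS,\Lambda_\infty)$-Bessel functional: by \cite[Theorem~13.4]{Prasad-Takloo} the $\infty$-type $(k_1,k_2)$ of $\Lambda$ satisfies $-l_1+l_2\leq k_1-k_2\leq l_1-l_2$, which translates into the bound $|\val_p(\Lambda_\fp\Lambda^{-1}_{\bar\fp}(p))|\leq l_1-l_2$. Comparing this bound against $\val_p(\eta_{\sPi_p,1}\eta^{-1}_{\sPi_p,3}(p))=-l_1+1$ (in the $l_2=3$, type IIIa case) and against $\pm 2$ (in the $l_1=l_2=3$, type IVa case, where the bound forces $\val_p(\Lambda_\fp\Lambda^{-1}_{\bar\fp}(p))=0$) shows the forbidden coincidences cannot occur. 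Your ``archimedean--nonarchimedean compatibility'' phrasing gestures at this, but without the split/non-split split and the explicit valuation inequalities the argument does not close.
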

\begin{proof}
The $p$-ordinarity of $\Pi$ implies that $\Pi_p$ is isomorphic to a subquotient of $V(w_p)\subset\Ind_{Q_{\GSp(4)}}^{\GSp(4)}(\sigma_p, \eta_{\sPi_p,3})$ for some  $w_p\in\sigma_p$ and $\sigma_p$ isomorphic to a quotient of $\Ind^{\GL(2)}_{B_{\GL(2)}}(\eta_{\sPi_p,2},\eta_{\sPi_p,1})$ with characters $\eta_{\sPi_p,1},\eta_{\sPi_p,2}$ satisfying
\begin{align}
   \label{eq:chi-vp} \val_p\big(\eta_{\sPi_p,1}(p)\eta^{-1}_{\sPi_p,3}(p)\big)&=-l_1+1,
   &\val_p\big(\eta_{\sPi_p,2}(p)\eta^{-1}_{\sPi_p,3}(p)\big)&=-l_2+2.
\end{align}
In the notation of \cite{RSLocal}, $\Pi_p$ is isomorphic to a subquotient of $\eta_{\sPi_p,1}\eta^{-1}_{\sPi_p,3}\times\eta_{\sPi_p,2}\eta^{-1}_{\sPi_p,3}\rtimes\eta_{\sPi_p,3}$. 

Now suppose that $\Ind_{Q_{\GSp(4)}}^{\GSp(4)}(\sigma_p, \eta_{\sPi_p,3})$ is reducible. Then by the classification in \cite{RSLocal}, the possibilities are III,IV,V,VI. \eqref{eq:chi-vp} plus $l_2\geq 3$ excludes V,VI, so  we are left with the following two possibilities:
\begin{enumerate}[label=(\roman*)]
\item $l_2=3$, $\eta_{\sPi_p,2}\eta^{-1}_{\sPi_p,3}=|\cdot|_p$, $\eta_{\sPi_p,1}\eta^{-1}_{\sPi_p,3}\neq |\cdot|^2_p$, with IIIa and IIIb  appearing as subquotients of the induction, and $\Pi_p$ belongs to IIIa
\item $l_1=l_2=3$, $\eta_{\sPi_p,2}\eta^{-1}_{\sPi_p,3}=|\cdot|_p$ $\eta_{\sPi_p,1}\eta^{-1}_{\sPi_p,3}=|\cdot|^2_p$,  with IVa and IVc appearing as subquotients of the induction, and $\Pi_p$ belongs to IVa.
\end{enumerate}
It suffices to show that if $\cD_{l_1,l_2}$ has a nonzero $(\bS,\Lambda_\infty)$-Bessel functional, then at most one irreducible subquotient of $\Ind_{Q_{\GSp(4)}}^{\GSp(4)}(\sigma_p, \eta_{\sPi_p,3})$ has a nonzero $(\bS,\Lambda_p)$-Bessel functional. By \cite[Theorem~6.2.2]{RSBessel}, this is automatically true if $\cK$ does not split at $p$,  and if $p$ splits in $\cK$ as $\fp\bar{\fp}$, we only need to show that $\Lambda_{\fp}\Lambda^{-1}_{\bar{\fp}}\neq (\eta_{\sPi_p,1}\eta^{-1}_{\sPi_p,3})^{\pm 1}$ in case (i), and $\Lambda_{\fp}\Lambda^{-1}_{\bar{\fp}}\neq |\cdot|_p^{\pm 2}$ in case (ii). Suppose that $\cD_{l_1,l_2}$ has a nonzero $(\bS,\Lambda_\infty)$-Bessel functional with $\Lambda_\infty$ having type $(k_1,k_2)$. Then  we know that  $-l_1+l_2\leq k_1-k_2\leq l_1-l_2$ \cite[Theorem~13.4]{Prasad-Takloo}. Hence, in case (i), $l_2=3$, we have $ \val_p\left(\Lambda_\fp\Lambda^{-1}_{\bar{\fp}}(p)^{\pm 1}\right)  \geq -l_1+3$.  Combining this with \eqref{eq:chi-vp}, we see that $\Lambda_{\fp}\Lambda^{-1}_{\bar{\fp}}\neq (\eta_{\sPi_p,1}\eta^{-1}_{\sPi_p,3})^{\pm 1}$.  In case (ii), $l_1=l_2=3$, we have  $\val_p\left(\Lambda_\fp\Lambda^{-1}_{\bar{\fp}}(p)\right)=0$, which implies $\Lambda_{\fp}\Lambda^{-1}_{\bar{\fp}}\neq |\bdot|_p^{\pm 2}$.

\end{proof}

\section{The Eisenstein measure}

\subsection{The setup}

\subsubsection{The assumptions on $\Pi$ and $\pi$.} \label{sec:ord-at-p}


From now on, we assume that we are given a tempered cuspidal automorphic representation $\Pi$ of $\GSp(4,\bA_\bQ)$ with a unitary central character $\omega_{\sPi}$ and a cuspidal automorphic representation $\pi$ of $\GL(2,\bA_\bQ)$ with a unitary central character $\omega_\pi$ satisfying that
\begin{enumerate}
\item[--] $\Pi_\infty\cong \cD_{l_1,l_2}$, the holomorphic discrete series representation of $\GSp(4,\bR)$ with trivial central character and Harish-Chandra parameter $(l_1-1,l_2-2)$, ($l_1\geq l_2\geq 3$)
\item[--] $\pi_\infty\cong \cD_l$, the holomorphic discrete series representation of $\GL(2,\bR)$ with trivial central character and Harish-Chandra parameter $l-\frac{1}{2}$, ($l\geq 2$),
\item[--] $\min\{-l_1+l_2+l,\,l_1+l_2-l\}\geq 3$, 
\item[--] $\Pi$ and $\pi$ are both \emph{$p$-ordinary}, {\it i.e.} $\Pi$ (resp. $\pi$) contains an $\bU_p$-eigenvector $\varphi$ (resp. $f$) such that (for all $m_1\geq m_2\geq 0$, $m\geq 0$)
\begin{align*}
   U^{\GSp(4)}_{p,m_1,m_2}\varphi&=\lambda^{m_1}_1\lambda^{m_2}_2\varphi, 
   &\val_p(\lambda_1)&=-l_1-1,
   \quad\val_p(\lambda_2)=-l_2+1,  \\
   U^{\GL(2)}_{p,m}f&=\lambda^m_p f,
   &\val_p(\lambda)&=-l,
\end{align*}
where the $\bU_p$-operators are defined as in \S\ref{sec:Uv}. (We call such a $\varphi$ (resp. $f$) a $p$-ordinary Siegel modular form (resp. modular form).)
\end{enumerate}
Let $\epsilon=\left\{\begin{array}{ll}0,&l_1+l_2+l\text{ even},\\1,&l_1+l_2+l\text{ odd}.\end{array}\right.$

\subsubsection{The auxiliary data}\label{sec:aux}
In order to construct the $p$-adic $L$-function for $\Pi\times\pi$, we pick the following auxiliary data:
\begin{enumerate}
\item[--] an imaginary quadratic field $\cK$ and a positive definite symmetric form 
\[\bS=\begin{bmatrix}\bba&\frac{\bbb}{2}\\ \frac{\bbb}{2}&\bbc\end{bmatrix}\in\Sym_2(\bQ)_{>0}
\] 
such that $\cK=\bQ(\sqrt{-\det\bS})$, 
\item[--] a (unitary) Hecke character $\Lambda:\cK^\times\backslash\bA^\times_{\cK}\ra\bC^\times$ with $\Lambda|_{\bA^\times_{\bQ}}=\omega_{\sPi}$ and, denoting its $\infty$-type of $\Lambda$ by $(\frac{r_{\sLambda}}{2},-\frac{r_{\sLambda}}{2})$, $-l_1+l_2\leq r_{\sLambda}\leq l_1-l_2$,
\item[--] a (unitary) Hecke character $\Upsilon:\cK^\times\backslash\bA^\times_{\cK}\ra\bC^\times$ with $\Upsilon|_{\bA^\times_{\bQ}}=\omega_\pi$.
\end{enumerate}
(Note that by \cite[Theorem~13.4]{Prasad-Takloo}, the conditions on $\Lambda$ are necessary for $\Pi$ to have a nontrivial $(\bS,\Lambda)$-Bessel model.) We put $\Xi=(\Lambda\Upsilon)^{-c}$.

\vspace{.5em}

We also fix a positive integer $N$ coprime to $p$ and a positive integer $m_0$  such that 
\begin{align*}
   \Pi^{K_{\GSp(4),f}}&\neq\{0\}, 
   &\pi^{K_{\GL(2),f}}&\neq \{0\},
\end{align*}
with
\begin{equation}\label{eq:KK}
\begin{aligned}  
   K_{\GSp(4),f}&=\prod_{v\nmid Np}\GSp(4,\bZ_v)\times\prod_{v\,|\,N} K'_{\GSp(4),v}\left(\varpi^{\val_v(N)}_v\right)\times K^1_{\GSp(4),p}(p^{m_0}),\\
   K_{\GL(2),f}&=\prod_{v\nmid Np}\GL(2,\bZ_v)\times\prod_{v\,|\,N} K^1_{\GL(2),v}\left(\varpi^{\val_v(N)}_v\right)\times  K^1_{\GU(1,1),p}(p^{m_0}),
\end{aligned}
\end{equation}
(where the level groups $K'_{\GSp(4)}$, $K^1_{\GSp(4)}$, $K^1_{\GL(2)}$ are defined in \eqref{eq:Ks}), and moreover
\begin{align*}
   &\bS\in \Sym_2(\bZ_v), \,\bbc\in \bZ^\times_v, \, 4\det\bS=\mr{disc}(\cK_v/\bQ_v),
   &&\text{for all $v\nmid Np\infty$},\\
   &\Lambda_v|_{1+N\cO_{K,v}}=\Upsilon_v|_{1+N\cO_{K,v}}=\triv,
   &&\text{for all $v\,|\,N$},
\end{align*}
Put
\[
    U^p_N=\prod_{v\nmid Np}\bZ^\times_v\times\prod_{v\,|\,N}1+N\bZ_v.
\]

\subsection{Nearly holomorphic automorphic forms and the ordinary projection}

Let $G=\GSp(4)$ or $\GU(1,1)$ and 
\[
   \bH_G=\left\{\begin{array}{ll}
   \{z\in \Sym_2(\bC):\im(z)>0\}, &G=\GSp(4),\\[1ex]
   \{z\in\bC:\im{z}>0\},&G=\GU(1,1)
   \end{array}\right.
\]
be the symmetric domain of $G$. A smooth function on $\vec{F}(z):\bH_G\ra V$, with $V$ a finite dimensional $\bC$-vector space, is called {\it nearly holomorphic of degree $\leq e$} if there exist holomrphic functions $\vec{F}_1(z),\dots,\vec{F}_r(z)$ on $\bH_G$ and polynomials $P_1,\dots,P_r$ in $\bC[\Sym_2]$ (resp. $\bC[\Sym_1]$) of degree $\leq e$ such that
\[
   \vec{F}(z)=\sum_{j=1}^r \vec{F}_j(z)\cdot P_j\left(\im(z)^{-1}\right).
\] 

Let $K_\infty\subset G(\bR)$ be a maximal connected compact subgroup identified with $\U(2,\bR)$ (resp. $\U(1,\bR)$) via $ai+b\mapsto \begin{bmatrix}a&b\\-b&a\end{bmatrix}$. Given a finite dimensional algebraic representation $\rho:\GL(2)\ra V_\rho$ (resp. $\GL(1)\ra V_\rho$), we fix a linear functional $\ell_{\mr{hw}}$  projecting $V_\rho$ to its highest weight space. $\rho$ induces a representation $\rho:K_\infty\ra V_\rho(\bC)$. Let $K_{G,f}\subset G(\bA_f)$ be an open compact subgroup. For $z\in \bH_G$, we write 
\[
   g(z)=\begin{bmatrix}\im(z)^{\frac{1}{2}}&\mr{Re}(z)\im(z)^{-\frac{1}{2}}\\0&\im(z)^{-\frac{1}{2}}\end{bmatrix}.
\] 
The space of \emph{nearly holomorphic automorphic forms on $G$ of weight $\rho$, level $K_{G,f}$, and degree $\leq e$} is defined to be
\begin{align*}
   \cN^e_G(K_{G,f},\rho)=\left\{\varphi\in \cA(G)^{K_{G,f}}\middle|\,\begin{aligned}&\text{for all $g_f\in G(\bA_{\bQ,f})$ there exists $\vec{F}:\bH_G\ra V_\rho(\bC)$}\\
   &\text{nearly holomorphic of degree $\leq e$ such that}\\
   &\text{$\varphi\big(g_fg(z)k_\infty\big)=\ell_{\mr{hw}}\left(\rho(k_\infty)\vec{F}(z)\right)$ for all $k_\infty\in K_\infty,z\in\bH_G$} \end{aligned}\right\}.
\end{align*}
We let $\cN^e_G(K_{G,f},\rho,\bar{\bQ})\subset \cN^e_G(K_{G,f},\rho)$ denote the subspace consisting of the forms $\varphi$ such that for all $g_f\in G(\bA_{\bQ,f})$, the Fourier coefficients of $\varphi\big(g_fg(z)\big)$ belong to $\bar{\bQ}[\im(z)^{-1}]$.

\begin{thm}\label{thm:NHF}
\begin{enumerate}
\item $\dim_{\bC}\cN^e_G(K_{G,f},\rho)<\infty$.
\item $\cN^e_G(K_{G,f},\rho)=\cN^e_G(K_{G,f},\rho,\bar{\bQ})\otimes_{\bar{\bQ}}\bC$.
\end{enumerate}
\end{thm}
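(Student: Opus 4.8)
Theorem~\ref{thm:NHF} asserts two things about the space $\cN^e_G(K_{G,f},\rho)$ of nearly holomorphic automorphic forms: finite-dimensionality, and that it has a basis defined over $\bar{\bQ}$ (with Fourier coefficients in $\bar{\bQ}[\im(z)^{-1}]$). Here $G$ is $\GSp(4)$ or $\GU(1,1)$, so essentially the genus-two Siegel case and the elliptic modular case (the latter being classical).

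The plan for part (1) is to translate the defining condition into a statement about $(\mathfrak{g},K_\infty)$-modules. A nearly holomorphic form of degree $\le e$ and weight $\rho$ generates, under the right $G(\bA)$-action, an automorphic representation whose archimedean component is a quotient of a generalized Verma module / holomorphic-type $(\mathfrak{g},K_\infty)$-module built from $\rho$ by applying at most $e$ raising operators $\mathfrak{p}^+$; equivalently $\cN^e_G(K_{G,f},\rho)$ sits inside $\bigoplus_{\rho'} \cA_{\mathrm{cusp+...}}(G)^{K_{G,f}}[\rho']$ where $\rho'$ ranges over the finitely many $K_\infty$-types appearing in $\mathrm{Sym}^{\le e}(\mathfrak{p}^+)\otimes\rho$. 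More concretely, $E = \im(z)$ transforms in a known way, the polynomials $P_j(\im(z)^{-1})$ of degree $\le e$ span a finite-dimensional space, and differentiating $\varphi$ by elements of $\mathfrak{p}^-$ (the Maass–Shimura lowering operators) strictly decreases the degree; after $\le e+1$ steps one lands in holomorphic forms. So $\cN^e_G(K_{G,f},\rho)$ is a finite iterated extension, controlled by raising operators, of spaces of holomorphic automorphic forms of various (bounded) scalar-or-vector weights and the fixed level $K_{G,f}$. Finite-dimensionality of spaces of holomorphic Siegel modular forms of a fixed weight and level (a classical fact, e.g. via compactness of the relevant quotient or Godement's bound on Fourier coefficients / Koecher principle) then gives part (1). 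I would cite the standard references (Shimura's book on nearly holomorphic forms and arithmeticity, or Pitale–Saha–Schmidt) rather than reprove this.

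For part (2), the rationality statement, the plan is: first establish it for holomorphic forms, then propagate it to nearly holomorphic forms via the differential operators. For holomorphic Siegel modular forms of weight $\rho$ and level $K_{G,f}$ as in \eqref{eq:KK} (an arithmetic level), the $q$-expansion principle gives a $\bar{\bQ}$-structure: the space has a basis with Fourier coefficients in $\bar{\bQ}$, because these forms are global sections of an automorphic line/vector bundle on a Siegel modular variety that has a model over a number field, and the $q$-expansion map is injective and defined over $\bar\bQ$ (Faltings–Chai; for $\GU(1,1)$ this is classical). Then one observes that the Maass–Shimura raising operators $\delta_\rho$ (or $\partial$, the holomorphic-projection-twisted operators) act on $q$-expansions by an explicit algebraic recipe: they send $\sum a(T) q^T$ to $\sum a(T)\cdot(\text{polynomial in }T\text{ and }\im(z)^{-1}\text{ with }\bar\bQ\text{ coefficients})\,q^T$. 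Hence applying them to a $\bar\bQ$-rational holomorphic form yields a $\bar\bQ$-rational nearly holomorphic form. Finally, a structure theorem — every nearly holomorphic form of degree $\le e$ is, up to lower-degree corrections, in the image of these raising operators applied to holomorphic forms (the "Maass–Shimura decomposition", Shimura) — lets one induct on $e$: the image of the raising operators spans $\cN^e/\cN^{e-1}$ over $\bC$ and is spanned by $\bar\bQ$-rational elements, and by induction $\cN^{e-1}$ has a $\bar\bQ$-basis, so $\cN^e$ does too; this gives $\cN^e_G(K_{G,f},\rho) = \cN^e_G(K_{G,f},\rho,\bar\bQ)\otimes_{\bar\bQ}\bC$.

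The main obstacle is making the "structure theorem" — that $\cN^e$ is generated over the holomorphic forms by Maass–Shimura operators, compatibly with $\bar\bQ$-structures — fully precise and correctly attributed in the vector-valued $\GSp(4)$ setting, including keeping track of which $K_\infty$-types $\rho'$ occur and checking that the holomorphic projection step does not introduce transcendental constants. For scalar weight and genus one this is entirely classical (Shimura), and for genus two it is in the literature (Shimura, Pitale–Saha–Schmidt, Liu's own earlier work on nearly holomorphic Siegel forms), so in practice I would state Theorem~\ref{thm:NHF} as a citation-backed proposition, giving the sketch above — reduction to holomorphic forms via lowering operators for (1), and $q$-expansion principle plus rationality of Maass–Shimura operators plus induction on degree for (2) — and refer to those sources for the technical heart. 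The one genuinely paper-specific check is that the levels $K_{G,f}$ in \eqref{eq:KK}, which involve $K^1$-type congruence conditions at $p$ and $N$, are of the arithmetic type to which the $q$-expansion principle applies — which they are, since they are contained in standard principal congruence levels.
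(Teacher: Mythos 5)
Your approach matches the first of the two routes the paper cites: the paper's proof of Theorem~\ref{thm:NHF} consists entirely of a reference to Shimura \cite[Lemma~14.3, Proposition~14.10]{Sh00}, and alternatively to the geometric interpretation of nearly holomorphic forms as $C^\infty$-sections of automorphic vector bundles obtained by splitting $\cH^1_{\mathrm{dR}}$ (as in \cite{EisDiff,NHF}). Your sketch — lowering operators $\mathfrak{p}^-$ to reduce to holomorphic forms for finiteness, $q$-expansion principle and rationality of Maass--Shimura operators plus induction on degree for the $\bar\bQ$-structure — is essentially the Shimura route, so in spirit you are following the same argument the paper delegates to \cite{Sh00}.

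One genuine soft spot: the ``structure theorem'' you lean on for part (2), namely that $\cN^e/\cN^{e-1}$ is spanned by raising operators applied to holomorphic forms, is \emph{not} true without a weight regularity hypothesis. The standard counterexample already in genus one is the non-holomorphic weight-$2$ Eisenstein series, which is nearly holomorphic of degree $1$ but is not $\delta_0$ of any weight-$0$ holomorphic form; in the vector-valued $\GSp(4)$ setting there are similar Eisenstein-type obstructions when $\rho$ is small relative to $e$. Since Theorem~\ref{thm:NHF} is stated for all $\rho$ and $e$, your induction step as written does not close. This is precisely why Shimura's actual argument does not go through the Maass--Shimura decomposition: \cite[Prop.~14.10]{Sh00} establishes rationality directly, using that the lowering operators (not the raising ones) are defined over $\bar\bQ$ on Fourier expansions and that holomorphic projection preserves rationality — an argument that is agnostic to whether the form lies in the image of raising operators. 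If you want to keep the raising-operator picture, you must either impose the weight regularity hypothesis under which the decomposition holds (which would suffice for the weights $l_1\ge l_2\ge 3$, $l\ge 2$ actually used later in the paper), or separately handle the Eisenstein-type complement. Alternatively, the geometric route the paper mentions — global sections of an automorphic vector bundle over a $\bar\bQ$-model of the Shimura variety, with the $C^\infty$-splitting of the Hodge filtration inducing the $\bar\bQ$-structure — sidesteps this issue entirely, since both finiteness and rationality come from coherent cohomology over a number field, with no case analysis on weights.
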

\begin{proof}
These can be deduced from \cite[Lemma 14.3, Proposition 14.10]{Sh00} or by interpreting nearly holomorphic forms as the images of global sections of automorphic vector bundles under a $C^\infty$-splitting of the vector bundle $\cH^1_\dR$ as in \cite[Section 6]{EisDiff} and \cite[Section 2]{NHF}. 
\end{proof}

In the following, we assume that $K_{G,f}$ contains $K^1_{G,p}(p^\infty)$. Then it is easy to see that the $\bU_p$-operators preserve $\cN^e_G(K_{G,f},\rho,\bar{\bQ})$.  Also, if we normalize them as 
\begin{align*}
   \tilde{U}^{\GSp(4)}_{p,m_1,m_2}&=p^{m_1(l_1+1)+m_2(l_2-1)}\,U^{\GSp(4)}_{p,m_1,m_2},
   &\tilde{U}^{\GU(1,1)}_{p,m_3}&=p^{m_3 l}\,U^{\GU(1,1)}_{p,m_3},
\end{align*}
then the $p$-adic norm of $\tilde{U}^{\GSp(4)}_{p,m_1,m_2}$ (resp. $\tilde{U}^{\GU(1,1)}_{p,m_3})$ on 
\begin{align*}
   \cN^e_{\GSp(4)}(K_{G,f},\rho_{l_1,l_2},\bar{\bQ}_p)&= \cN^e_{\GSp(4)}(K_{G,f},\rho_{l_1,l_2},\bar{\bQ})\otimes_{\bar{\bQ}}\bar{\bQ}_p\\
   (\text{resp. } \cN^e_{\GU(1,1)}(K_{G,f},\rho_l,\bar{\bQ}_p)&=\cN^e_{\GU(1,1)}(K_{G,f},\rho_l,\bar{\bQ})\otimes_{\bar{\bQ}}\bar{\bQ}_p)
\end{align*} 
is bounded uniformly for all $m_1\geq m_2\geq 0,m_3\geq 0$, where $\rho_{l_1,l_2}$ (resp. $\rho_l$) is an irreducible representation of $\GL(2)$ (resp. $\GL(1)$) of highest weight $(l_1,l_2)$ (resp. $l$). This boundedness plus the finiteness of the dimension in Theorem~\ref{thm:NHF} implies the convergence of $\lim\limits_{n\to\infty} \left(\tilde{U}^{\GSp(4)}_{p,2,1}\right)^{n!}$ on  $\cN^e_{\GSp(4)}(K_{\GSp(4),f},\rho_{l_1,l_2},\bar{\bQ}_p)$, and $\lim\limits_{n\to\infty}\left(\tilde{U}^{\GU(1,1)}_{p,1}\right)^{n!}$ on $\cN^e_{\GU(1,1)}(K_{\GU(1,1),f},\rho_l,\bar{\bQ}_p)$. We define the \emph{ordinary projectors} $e^{\GSp}_{\ord}$, $e^{\GU(1,1)}_\ord$ as these limits.

Put
\begin{align*}
   \cM_{l_1,l_2}(K_{\GSp(4),f},\bar{\bQ}_p)&=\cN^0_{\GSp(4)}(K_{\GSp(4),f},\rho_{l_1,l_2},\bar{\bQ}_p),\\
   \cM_l(K_{\GU(1,1),f},\bar{\bQ}_p)&=\cN^0_{\GL(2)}(K_{\GU(1,1),f},\rho_{l_1,l_2},\bar{\bQ}_p).
\end{align*}

\begin{prop}\label{prop:e-conv}
The ordinary projector $e^{\GSp(4)}_{\ord}$ (resp. $e^{\GU(1,1)}_\ord$) maps the space of nearly holomorphic forms $\bigcup\limits_{e\geq 0} \cN^e_{\GSp(4)}\left(K_{\GSp(4),f},\rho_{l_1,l_2},\bar{\bQ}_p\right)$ (resp. $\bigcup\limits_{e\geq 0} \cN^e_{\GU(1,1)}\left(K_{\GU(1,1),f},\rho_l,\bar{\bQ}_p\right)$) into $\cM_{l_1,l_2}\left(K_{\GSp(4),f},\bar{\bQ}_p\right)$ (resp. $\cM_l\left(K_{\GU(1,1),f},\bar{\bQ}_p\right)$).
\end{prop}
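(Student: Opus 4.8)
The plan is to filter the nearly holomorphic forms by their degree of non‑holomorphy and to show that the ordinary projector annihilates every positive‑degree graded piece, so that its image sits in degree $0$, which by definition is $\cM_{l_1,l_2}$ (resp. $\cM_l$). Concretely, recall that $\cM_{l_1,l_2}(K_{\GSp(4),f},\bar{\bQ}_p)=\cN^0_{\GSp(4)}(K_{\GSp(4),f},\rho_{l_1,l_2},\bar{\bQ}_p)$ and similarly for $\GU(1,1)$, and that the subspaces $\cN^0_G\subseteq\cN^1_G\subseteq\cN^2_G\subseteq\cdots$ form an exhaustive increasing filtration of $\bigcup_{e\geq 0}\cN^e_G$ by finite‑dimensional $\bar{\bQ}_p$‑vector spaces (Theorem~\ref{thm:NHF}), each stable under the $\bU_p$‑operators, hence under the normalized operator $\tilde U$ (namely $\tilde U^{\GSp(4)}_{p,2,1}$ for $G=\GSp(4)$ and $\tilde U^{\GU(1,1)}_{p,1}$ for $G=\GU(1,1)$) and under the idempotent $e_\ord=\lim_n\tilde U^{n!}$. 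I would reduce the statement to showing that, for every $e\geq 1$, $e_\ord$ induces the zero endomorphism of $\cN^e_G/\cN^{e-1}_G$; granting this, $e_\ord(\cN^e_G)\subseteq\cN^{e-1}_G$, and iterating (using that $e_\ord$ is idempotent) yields $e_\ord(\cN^e_G)\subseteq\cN^0_G$ for all $e$. Finally $e_\ord$ maps $\cN^0_G$ into itself, since the $\bU_p$‑operators preserve holomorphy — they act by a right translation at $p$ followed by an average over a compact unipotent subgroup at $p$, neither of which disturbs the archimedean holomorphy condition — and $e_\ord|_{\cN^0_G}$ is a limit of operators of bounded $p$‑adic norm on a finite‑dimensional space. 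This gives $e_\ord\bigl(\bigcup_e\cN^e_G\bigr)\subseteq\cN^0_G=\cM_G$.

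The crux is the claim that $\tilde U$ acts on $\cN^e_G/\cN^{e-1}_G$ with $p$‑adic operator norm strictly less than $1$ for $e\geq 1$; then $\tilde U^{n!}\to 0$ on the quotient, so $e_\ord=0$ there. To prove the claim I would proceed as follows. A form of degree $\leq e$ is locally of the shape $\vec F(z)=\sum_j\vec F_j(z)\,P_j(\im(z)^{-1})$ with $\vec F_j$ holomorphic and $\deg P_j\leq e$, and its class in $\cN^e_G/\cN^{e-1}_G$ is governed by the degree‑exactly‑$e$ parts of the $P_j$, which — for a form invariant under $K^1_{G,p}(p^\infty)$ — are visible in the Fourier expansion coefficients lying in $\bar{\bQ}[\im(z)^{-1}]$. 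Unwinding the definition \eqref{eq:loc-Uv} of the $\bU_p$‑operator (an average of right translations by $u\cdot\diag(p^{m_1},p^{m_2},p^{-m_1},p^{-m_2})$, respectively $u\cdot\diag(p^{m_3},p^{-m_3})$) and transferring the diagonal $p$‑part to the archimedean place via the diagonal embedding $\bQ^\times\hookrightarrow\bA^\times_\bQ$, one checks that the effect on the archimedean variable is to contract $\im(z)$ by a strictly positive power of $p$; hence every factor $\im(z)^{-1}$ occurring in the monomials of $P_j$ acquires a positive power of $p$, whereas the normalizing exponents built into $\tilde U$ (namely $p^{m_1(l_1+1)+m_2(l_2-1)}$, resp. $p^{m_3 l}$) were chosen precisely to offset the automorphy factor attached to the holomorphic part. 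Consequently $\tilde U$ restricted to $\cN^e_G/\cN^{e-1}_G$ is a positive power of $p$ (growing with $e$) times an operator of norm $\leq 1$, which is topologically nilpotent once $e\geq 1$. This is the degree‑refined version of the boundedness computation already invoked in the discussion preceding the proposition; in the automorphic‑vector‑bundle language of \cite{EisDiff,NHF} it reflects the fact that the $U_p$‑correspondence, through Frobenius, shifts the Hodge filtration of $\cH^1_\dR$, making the normalized Hecke action on the degree‑$e$ graded piece of $\Sym^{\leq e}\cH^1_\dR$ divisible by $p^e$. The argument for $\GU(1,1)$ with $\tilde U^{\GU(1,1)}_{p,1}$ is identical.

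The step I expect to be the main obstacle is this last computation: correctly bookkeeping the three sources of powers of $p$ — the automorphy/weight factor, the contraction of $\im(z)^{-1}$, and the normalization built into $\tilde U$ — and verifying that the residual operator on $\cN^e_G/\cN^{e-1}_G$ genuinely has norm $\leq 1$, an integrality statement of Hida type. This is, however, essentially the same computation that already secures the uniform boundedness of $\tilde U$ and the convergence of $e_\ord$ taken as given before the proposition, now carried out compatibly with the filtration by degree, so I expect it to go through by the methods of \cite{Sh00,EisDiff,NHF}.
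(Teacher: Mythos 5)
Your proposal is correct and takes essentially the same approach as the paper, whose proof consists of the single sentence that the normalized $\bU_p$-actions on the non-holomorphic part are divisible by $p$. You have simply expanded that remark into a full filtration-by-degree argument, with the iteration via idempotence of $e_\ord$ making the final descent to $\cN^0_G$ explicit.
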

\begin{proof}
This follows from the observation that the normalized $\bU_p$-actions on the non-holomorphic part are divisible by $p$. 
\end{proof}

\subsection{The choice of sections for Siegel Eisenstein series}\label{sec:dsec-choice}

For each finite order Dirichlet character $\chi:\bQ^\times\backslash\bA^\times_{\bQ}/U^p_N\bR^\times_{+}\ra\bC^\times$, and each integer $k$ satisfying the condition \eqref{eq:k-crt}, we choose a suitable section in $\dsec_k(s,\chi,\Xi)\in  I(s,\chi,\Xi)$, place by place, such that when $k,\chi$ varies,
\[
    e^{\GSp(4)}_\ord e^{\GU(1,1)}_\ord \left(E^\Sieg(\bdot\,;\dsec_k(s,\chi,\Xi)|_{s=\frac{2k+\epsilon-1}{2},\GSp(4)\times\GU(1,1)}\right)
\]
is interpolated by a $p$-adic measure valued in $p$-ordinary holomorphic automorphic forms on $\GSp(4)\times\GU(1,1)$ of weight $(l_1,l_2;l)$. Here for an automorphic form $F$ on $\GU(3,3)$, we write $F|_{\GSp(4)\times\GU(1,1)}$ to mean 
 the extension by zero of its restriction (via \eqref{eq:gp-ebd}) to $\GSp(4)\times_{\GL(1)}\GU(1,1)$.

\subsubsection{The archimedean place.}\label{sec:sec-inf}

Let $\fp^+_{\GU(3,3)}\subset (\Lie\GU(3,3)(\bR))\otimes_{\bR}\bC$ be the subalgebra of consisting 
\begin{align*}
  &\begin{aligned}
    \mu^+_{\GU(3,3),X}&=\frac{1}{2}\begin{bmatrix}\bid_3&\bid_3\\ i\cdot \bid_3&-i\cdot \bid_3\end{bmatrix}
    \begin{bmatrix}\bid_3&X\\&\bid_3\end{bmatrix}
    \begin{bmatrix}\bid_3&\bid_3\\ i\cdot \bid_3&-i\cdot \bid_3\end{bmatrix}^{-1}\\
    &=\begin{bmatrix}X\\&-X\end{bmatrix}\otimes 1+\begin{bmatrix}&X\\X\end{bmatrix}\otimes i,
\end{aligned}
   &&X\in\Her_3(\cK\otimes_{\bQ}\bR).
\end{align*}
For $1\leq i,j\leq 3$, put
\begin{equation*}
   \mu^+_{\GU(3,3),ij}=\mu^+_{\GU(3,3),\frac{E_{ij}+E_{ji}}{2}\otimes 1+\frac{E_{ij}-E_{ji}}{2\sqrt{\bbd}}\otimes \sqrt{\bbd}},
\end{equation*}
where $\sqrt{\bbd}$ is a purely imaginary number in $\cK$.  The sections we will use  are obtained by applying such differential operators to the  classical scalar-weight sections in \S\ref{sec:sw-dsec}. 

For an integer $k$ such that
\begin{equation}\label{eq:krange}
    -\frac{\min\{-l_1+l_2+l,\,l_1+l_2-l\}}{2}+2 \leq k+\frac{\epsilon}{2}\leq \frac{\min\{-l_1+l_2+l,\,l_1+l_2-l\}}{2}-1,
\end{equation}
$s=k+\frac{\epsilon}{2}$ is a critical point for $L(s,\tilde{\Pi}\times\tilde{\pi}\times\chi)$. In order to get this $L$-value,  we will use 
\begin{equation*}
    D_{t_k,r_{\sLambda},l_1,l_2,l}\cdot \dsec^{t_k}_\infty(s,\chi,\Xi),
\end{equation*}
where $\dsec^{t_k}_\infty(s,\chi,\Xi)$ is defined as in \eqref{eq:sw-dsec} with
\begin{equation}\label{eq:tk}
    t_k=\left\{\begin{array}{ll}2k+\epsilon+2,&k+\frac{\epsilon}{2}\geq \frac{1}{2},\\ -2k-\epsilon+4,&k+\frac{\epsilon}{2}\leq \frac{1}{2},\end{array}\right.
\end{equation} 
and the differential operator  $D_{t_k,r_{\sLambda},l_1,l_2,l}\in \bC[\fp^+_{\GU(3,3)}]$ is 
\begin{align*}
   D_{t_k,r_{\sLambda},l_1,l_2,l}
   =&\,(\alphaS-\alphabS)^{l_1-l} (2\pi i)^{\frac{3t_k-l_1-l_2-l}{2}}
   \cdot \left(\mu^+_{\GU(3,3),31}\right)^{\frac{l_1-l_2+r_{\scriptscriptstyle \Lambda}}{2}}\left(\mu^+_{\GU(3,3),13}\right)^{\frac{l_1-l_2-r_{\scriptscriptstyle \Lambda}}{2}}\\
   &\times\det\begin{bmatrix}\mu^+_{\GU(3,3),13}&\mu^+_{\GU(3,3),23}\\ \mu^+_{\GU(3,3),31}&\mu^+_{\GU(3,3),32}\end{bmatrix}^{\frac{-l_1+l_2+l-t_k}{2}}
   \left(\frac{\mu^+_{\GU(3,3),21}-\mu^+_{\GU(3,3),12}}{2}\right)^{\frac{l_1+l_2-l-t_k}{2}}.
\end{align*}
(Note that the condition \eqref{eq:krange} and the condition on the $\infty$-type of $\Lambda$ guarantees that the exponents above are all non-negative, and also that the definition of $t_k$ and $\Lambda(-1)=\omega_\pi(-1)$ guarantee that all the exponents are integers.)

\subsubsection{The place $p$.}\label{sec:secp}
It is well-known (by the theory of Jacquet modules) that the ordinarity of $\Pi$ (resp. $\pi$) at $p$ implies that there exists continuous characters $\eta_{\sPi_p,1},\eta_{\sPi_p,2},\eta_{\sPi_p,3}:\bQ^\times_p\ra \bC^\times$  (resp. $\eta_{\pi_p,1},\eta_{\pi_p,2}:\bQ^\times_p\ra\bC^\times$) such that 
\begin{equation}\label{eq:Ind-eta}
\begin{aligned}
   \Pi_p&\cong \text{a subqutient of  $\Ind_{B}^{\GSp(4)}(\eta_{\sPi_p,1},\eta_{\sPi_p,2},\eta_{\sPi_p,3})$},\\
   \pi_p&\cong \text{a subqutient of $\Ind^{\GL(2)}_{B}(\eta_{\pi_p,1},\eta_{\pi_p,2})$},
\end{aligned}
\end{equation}
where the inductions induce the characters
\begin{align*} 
   \begin{bmatrix}\delta a_1&\ast&\ast&\ast\\&\delta a_2&\ast&\ast\\ &&a_2\\&&\ast&a_1\end{bmatrix}&\longmapsto \eta_{\sPi_p,1}(a_1)\eta_{\sPi_p,2}(a_2)\eta_{\sPi_p,3}(\delta)
   &\begin{bmatrix}a&\ast\\&d\end{bmatrix}&\longmapsto \eta_{\pi_p,1}(a)\,\eta_{\pi_p,2}(d),
\end{align*}
and
\begin{equation}\label{eq:xi-val}
\begin{aligned}
    \val_p\left(\eta_{\sPi_p,1}\,\eta^{-1}_{\sPi_p,3}(p)\right)&=-l_1+1,
    \quad&\val_p\left(\eta_{\sPi_p,2}\,\eta^{-1}_{\sPi_p,3}(p)\right)&=-l_2+2,\\
    \val_p\left(\eta^{-1}_{\pi_p,1}\,\eta_{\pi_p,2}(p)\right)&=-l+1.
\end{aligned}
\end{equation}
If $\varphi\in\Pi$ (resp. $f\in\pi$) is a $p$-ordinary form with $U^{\GSp(4)}_{p,m_1,m_2}$-eigenvalue $\lambda_{p,1}(\varphi)^{m_1}\lambda_{p,2}(\varphi)^{m_2}$ (resp. $U^{\GL(2)}_{p,m_3}$-eigenvalue $\lambda_p(f)^{m_3}$), then we have
\begin{align*}
  \lambda_{p,1}(\varphi)&=p^{-2}\cdot \eta_{\sPi_p,1}\,\eta^{-1}_{\sPi_p,3}(p),
   &\lambda_{p,2}(\varphi)&=p^{-1}\cdot \eta_{\sPi_p,2}\,\eta^{-1}_{\sPi_p,3}(p),\\
   \lambda_p(f)&= p^{-1}\cdot  \eta^{-1}_{\pi_p,1}\,\eta_{\pi_p,2}(p).
\end{align*}

\vspace{.5em}
Our choice of $\dsec_p(s,\chi,\Xi)$ will depend on the nebentypus of the ordinary vectors in $\Pi_p$, $\pi_p$. Inside the induction, the nonzero eigenvalues for $U^{\GSp(4)}_{p,m_1,m_2}$, $m_1\geq m_2>0$  (resp. $U^{\GL(2)}_{p,m}$, $m>0$) are $\left(\eta_{\sPi_p,1}\,\eta^{-1}_{\sPi_p,3}(p)\,p^2\right)^{\pm m_1} \left(\eta_{\sPi_p,2}\,\eta^{-1}_{\sPi_p,3}(p)\,p\right)^{\pm m_2}$, $\left(\eta_{\sPi_p,1}\,\eta^{-1}_{\sPi_p,3}(p)\,p^2\right)^{\pm m_2} \left(\eta_{\sPi_p,2}\,\eta^{-1}_{\sPi_p,3}(p)\,p\right)^{\pm m_1}$ (resp. $\left(\eta^{-1}_{\pi_p,1}\,\eta_{\pi_p,2}(p)\,p\right)^{\pm m}$). The condition \eqref{eq:xi-val} implies that these eigenvalues are pairwise distinct. The (generalized) eigenspace for each nonzero eigenvalue is one dimensional. In particular, the ordinary vector inside $\Ind_{B}^{\GSp(4)}(\eta_{\sPi_p,1},\eta_{\sPi_p,2},\eta_{\sPi_p,3})$ (resp. $\Ind^{\GL(2)}_{B}(\eta_{\pi_p,1},\eta_{\pi_p,2})$) is unique up to scalar. One can easily write down the ordinary vectors as 
\begin{align*}
   g=\begin{bmatrix}a_1&a_2&b_1&b_2\\a_3&a_4&b_3&b_4\\c_1&c_2&d_1&d_2\\c_3&c_4&d_3&d_4\end{bmatrix}
   &\longmapsto\mathds{1}_{\bZ_p}(c^{-1}_3c_4)\mathds{1}_{\Sym_2(\bZ_p)}\left(\begin{bmatrix}c_1&c_2\\c_3&c_4\end{bmatrix}^{-1}\begin{bmatrix}d_1&d_2\\d_3&d_4\end{bmatrix}\right)
   \cdot\left\vert\nu^3_g(c_1c_2-c_3c_4)^{-4}c^2_3\right\vert^{1/2}_p\\
    &\quad\quad\times \eta_{\sPi_p,1}(c_3)\,\eta_{\sPi_p,2}(c_2-c^{-1}_3c_1c_4)\,\eta_{\sPi_p,3}(\nu_g(c_1c_4-c_2c_3)^{-1}),\\
  h=\begin{bmatrix}a&b\\c&d\end{bmatrix} 
  &\longmapsto \left\vert\nu_h c^{-2}\right\vert^{1/2}_p\, \eta^{-1}_{\pi_p,1}(\nu_h c^{-1})\,\eta_{\pi_p,2}(c).
\end{align*}
We also easily see that the ordinary vectors in $\Pi_p$, $\pi_p$ have
\begin{align*}
   \text{$U^{\GSp(4)}_{p,m_1,m_2}$-eigenvalue} &= \left(\eta_{\sPi_p,1}\,\eta^{-1}_{\sPi_p,3}(p)\,p^{2}\right)^{ m_1} \left(\eta_{\sPi_p,2}\,\eta^{-1}_{\sPi_p,3}(p)\,p\right)^{ m_2},\\
   \text{$U^{\GL(2)}_{p,m}$-eigenvalue} & = \left(\eta^{-1}_{\pi_p,1}\,\eta_{\pi_p,2}(p)\,p\right)^{m},
\end{align*} 
and they have the nebentypus such that the right translation of $B(\bZ_p)$ is via the character
\begin{align*}
   \begin{bmatrix} a_1&\ast&\ast&\ast\\& a_2&\ast&\ast\\ &&\delta a_2\\&&\ast&\delta a_1\end{bmatrix}&\longmapsto \eta_{\sPi_p,1}(a_1)\,\eta_{\sPi_p,2}(a_2)\,\eta_{\sPi_p,3}(\delta) \\
   \begin{bmatrix}a&\ast\\&d\end{bmatrix}&\longmapsto \eta_{\pi_p,1}(d)\,\eta_{\pi_p,2}(a).
\end{align*}
\\


We define $\Schw_{\chi^\circ_p,\Lambda^\circ_p,\,\eta^\circ_{\Pi_p},\eta^\circ_{\pi_p}}$ as the Schwartz function on $\Her_3(\cK_p)$ with Fourier transform
\begin{align*}
   &\cF\Schw_{\chi^\circ_p,\Lambda^\circ_p,\,\eta^\circ_{\Pi_p},\eta^\circ_{\pi_p}}\begin{pmatrix}w_{11}&\fw_{12}&\fw_{13}\\ \bar{\fw}_{12}&w_{22}&\fw_{23}\\ \bar{\fw}_{13}&\bar{\fw}_{23}&w_{33}\end{pmatrix}\\
   =&\, \mathds{1}_{\Sym_3(\bZ_p)}\begin{pmatrix}w_{11}&\frac{\fw_{12}+\bar{\fw}_{12}}{2}\\ \frac{\fw_{12}+\bar{\fw}_{12}}{2}&w_{22}\end{pmatrix}
   \cdot  \mathds{1}_{\bZ_p}(w_{33})
   \cdot \chi^\circ_p\eta^{\circ-1}_{\pi_p,2}\eta^{\circ-1}_{\Pi_p,3}\left(\frac{(\alphaS-\alphabS)(\fw_{12}-\bar{\fw}_{12})}{2}\right)\\
   &\times \mathds{1}_{(\alphaS-\alphabS)\cO_{\cK,p}}(\fw_{23})\cdot \Lambda^{\circ}_p(\fw_{13})  \,\,\eta^{\circ-1}_{\Pi_p,2}(\fw_{13}\bar{\fw}_{13})   \cdot  \chi^\circ_p\eta^{\circ-1}_{\pi_p,1}\eta^{\circ-1}_{\Pi_p,1}\left(\frac{\bar{\fw}_{13}\fw_{23}-\fw_{13}\bar{\fw}_{23}}{\alphaS-\alphabS}\right).
\end{align*}
(See \eqref{eq:char-circ} for the notation of superscript $^\circ$ attached to a character.) For our construction of $p$-adic $L$-functions, we will use
\[
    \dsec^\bc_{p,\Schw_{\chi^\circ_p,\Lambda^\circ_p,\,\eta^\circ_{\sPi_p},\eta^\circ_{\pi_p}}}(s,\chi,\Xi).
\]
(This choice of $\Schw_{\chi^\circ_p,\Lambda^\circ_p,\,\eta^\circ_{\Pi_p},\eta^\circ_{\pi_p}}$ makes the nebentypus at p of the restriction of the corresponding Siegel Eisenstein series match those of the $p$-ordinary forms in $\Pi$ and $\pi$.)

\subsubsection{The places dividing $N$}\label{sec:fN}
For a place $v\mid N$, we will use 
\[
    \dsec^\bc_{v,\Schw_{v,N}}(s,\chi,\Xi)
\]
with
\begin{align*}
   \Schw_{v,N}\begin{pmatrix}w_{11}&\bar{\fw}_{21}&\bar{\fw}_{31}\\ \fw_{21}&w_{22}& \bar{\fw}_{32}\\ \fw_{31}&\fw_{32}&w_{33}\end{pmatrix}
   =&\, \mathds{1}_{\sym_3(N\bZ_v)}\begin{pmatrix}w_{11}&\frac{\fw_{21}+\bar{\fw}_{21}}{2}\\ \frac{\fw_{21}+\bar{\fw}_{21}}{2}&w_{22}\end{pmatrix}
   \left(\cF^{-1}\mathds{1}_{-\bbc(1+N\bZ_v)}\right)(w_{33})\\
   &\hspace{-5em}\times\mathds{1}_{\bbc^2 N^3\bZ_v}\left(\frac{\fw_{21}-\bar{\fw}_{21}}{\alphaS-\alphabS}\right)
   \mathds{1}_{K^1_{\GL(2),v}(\varpi^{\val_v(N)}_v)}\left(\begin{bmatrix}\alphaS&1\\ \alphabS&1\end{bmatrix}^{-1}\begin{bmatrix}\fw_{31}&\fw_{32}\\ \bar{\fw}_{31}&\bar{\fw}_{32}\end{bmatrix}\right)
\end{align*}

\subsection{The Fourier coefficients of Siegel Eisenstein series}\label{sec:FC-SE}
Given $\bbeta\in\Her_3(\cK)$, the $\bbeta$-Fourier coefficient of the Siegel Eisenstein series is computed as 
\begin{align*}
	E^\Sieg_{\bbeta}(g;\dsec(s,\chi,\Xi))
	=\int_{\Her_3(\cK)\backslash\Her_3(\bA_\cK)} E^\Sieg\left(\begin{bmatrix}\bid_3&\fx\\0&\bid_3\end{bmatrix} g;\dsec(s,\chi,\Xi)\right) \psi_{\bA_\bQ}\left(-\Tr\bbeta\fx\right)\,d\fx.
\end{align*}
When $\bbeta$ is non-degenerate,
\begin{equation}\label{eq:EtoW}
E^\Sieg_{\bbeta}(g;\dsec(s,\chi,\Xi))=\prod_v W_{\bbeta,v}\left(g_v;\dsec_v(s,\chi,\Xi)\right)
\end{equation}
with
\begin{equation}\label{eq:FCv}
	W_{\bbeta,v}\left(g;\dsec_v(s,\chi,\Xi)\right)
	=\int_{\Her_3(\cK_v)} \dsec_v(s,\chi,\Xi)\left(\begin{bmatrix}&-\bid_3\\ \bid_3\end{bmatrix}\begin{bmatrix}\bid_3&\fx\\0&\bid_3\end{bmatrix}g\right)\psi_v \left(-\Tr\bbeta\fx\right)\,d\fx.
\end{equation}
When $\bbeta$ is degenerate and $g=\begin{bmatrix}\fA\\&\nu\ltrans{\bar{\fA}}^{-1}\end{bmatrix}$, \eqref{eq:EtoW} also holds if there exists $v$ such that $\dsec(s,\chi,\Xi)$ is supported on the big Bruhat cell.

For a finite place $v$ where $\chi_v$, $\Xi_v$ are unramified, we have
\begin{equation}\label{eq:FC-unr}
	\begin{aligned}
		&W_{\bbeta,v}\left(\begin{bmatrix}\fA\\&\nu\ltrans{\bar{\fA}}^{-1}\end{bmatrix};\dsec^\sph_v(s,\chi,\Xi)\right)\\
		=&\,\Xi_v\left(\det(\nu\bar{\fA}^{-1})\right) \chi_v\left(\det(\nu\fA^{-1}\bar{\fA}^{-1})\right) |\det(\nu\fA^{-1}\bar{\fA}^{-1})|^{s-\frac{3}{2}}_v
		\cdot d_{3,v}\big(s,\Xi(\chi\circ\Nm)\big) \\
		&\times \mathds{1}_{\Her_c(\cO_{\cK,v})^*}\left(\nu^{-1}\ltrans{\bar{\fA}}\bbeta\fA\right)\cdot h_{v,\nu^{-1}\ltrans{\bar{\fA}}\bbeta\fA}\left(\Xi_{\bQ,v}\chi^2_v(\varpi_v)|\varpi_v|_v^{2s+3} \right),
	\end{aligned}
\end{equation}
where $h_{v,\nu^{-1}\ltrans{\bar{\fA}}\bbeta\fA}$ is a monic polynomial in $\bZ[X]$, and is the constant $1$ if $\nu^{-1}\ltrans{\bar{\fA}}\bbeta\fA$ belongs to $\GL(3,\cO_{\cK,v})$ \cite[Theorem~13.6, Proposition~14.9]{ShEuler}, and the factor $d_{3,v}\big(s,\Xi(\chi\circ\Nm)\big)$ is defined in \eqref{eq:dnv}.

For a big-cell section associated to a Schwartz function $\Phi_v$ on $\Her_3(\cK_v)$ defined in \S\ref{sec:bigcell}, an easy computation gives
\begin{align*}
	W_{\bbeta,v}\left(\begin{bmatrix}\fA\\&\nu\ltrans{\bar{\fA}}^{-1}\end{bmatrix}; \dsec^{\bc}_{v,\Schw_v}(s,\chi,\Xi)\right)
	=&\,\Xi_v\left(\det(\nu\bar{\fA}^{-1})\right) \chi_v\left(\det(\nu\fA^{-1}\bar{\fA}^{-1})\right) |\det(\nu\fA^{-1}\bar{\fA}^{-1})|^{s-\frac{3}{2}}_v\\
	&\times \cF\Schw_v\left(\nu^{-1}\ltrans{\bar{\fA}}\bbeta\fA\right).
\end{align*}

For the classical scalar-weight archimedean sections, by \cite[Theorem!4.2]{ShHyper}, we have
\begin{align*}
   &\left.W_{\bbeta,\infty}\left(\begin{bsm}\left(\frac{z-\ltrans{\bar{z}}}{2i}\right)^{1/2}&\frac{z+\ltrans{\bar{z}}}{2}\left(\frac{z-\ltrans{\bar{z}}}{2i}\right)^{-1/2}\\0&\left(\frac{z-\ltrans{\bar{z}}}{2i}\right)^{-1/2}\end{bsm}_\infty; \dsec^{t_k}_\infty(s,\chi,\Xi)\right)\right|_{s=\frac{2k+\epsilon-1}{2}}\\
   =&\,\left.\frac{2^{3(s+\frac{t_k-1}{2})}\,i^{t_k}\,\pi^{3(s+\frac{t_k+1}{2})}}{\prod_{j=0}^2\Gamma\left(s+\frac{t_k+3}{2}-j\right)}\right|_{s=\frac{2k+\epsilon-1}{2}}
   \cdot  e^{2\pi i\Tr\bbeta z}
   \left\{\begin{array}{ll}
   	(\det\bbeta)^{2k+\epsilon-1}, &k+\frac{\epsilon}{2}\geq \frac{1}{2}\\[2ex]
   	1, &k+\frac{\epsilon}{2}\leq \frac{1}{2}.
   \end{array}\right.
\end{align*}

\subsection{The Siegel Eisenstein measure}
Let $U^p_N=\prod\limits_{v\nmid N}\bZ^\times_v\times\prod\limits_{v\mid N} (1+N\bZ_v)\subset \hat{\bZ}^\times$. For each finite order Dirichlet character $\chi:\bQ^\times\backslash\bA^\times_{\bQ}/U^p_N\bR^\times_{+}\ra\bC^\times$, and each integer $k$ lying in the range \eqref{eq:krange}, let 
\begin{equation}\label{eq:Esieg}
   E^\Sieg_{k,\chi}=
   \frac{\prod_{j=0}^2\Gamma\left(s+\frac{t_k+3}{2}-j\right)}{2^{3(s+\frac{t_k-1}{2})}\,i^{t_k}\,\pi^{3(s+\frac{t_k+1}{2})}}
    \cdot d^{Np\infty}_3\big(s,\Xi\,(\chi\circ\Nm)\big)\cdot E^\Sieg\left(\,\bdot\,;\dsec_k(s,\chi,\Xi)\right)_{\bbeta}\bigg\vert_{s=\frac{2k+\epsilon-1}{2}}
\end{equation}
with $\dsec_k(s,\chi,\Xi)$ given as
\begin{align*}
   \dsec_k(s,\chi,\Xi)=&\motimes\limits_{v\nmid Np\infty} \dsec^\sph_v(s,\chi,\Xi) \motimes\limits_{v\mid N} \dsec^\bc_{v,\Schw_{v,N}}(s,\chi,\Xi)\\
    &\motimes \dsec^\bc_{p,\Schw_{\chi^\circ_p,\Lambda^\circ_p,\,\eta^\circ_{\sPi_p},
    		\eta^\circ_{\pi_p}}}(s,\chi,\Xi)\motimes D_{t_k,r_{\sLambda},l_1,l_2,l}\cdot \dsec^{t_k}_\infty(s,\chi,\Xi),
\end{align*}
where the local sections are chosen in \S\ref{sec:dsec-choice}.

Write $Y=\left(Y_{ij}\right)_{1\leq,i,j\leq 3}$, and denote by $\bC[Y]$ the ring of $\bC$-valued polynomials in entries of $Y$.

\begin{prop}\label{prop:EFC}
For $\fA\in\GL(2,\bA^p_{\cK,f})$, $\nu\in\bA^{\times,p}_f$ and $\bbeta\in\Her_3(\cK)$, there exists 
\begin{equation}\label{eq:EbetaY}
  \left(E^\Sieg_{k,\chi}\right)_{\bbeta}\left(\begin{bmatrix}\fA\\&\nu\ltrans{\bar{\fA}}^{-1}\end{bmatrix}^p_f\right)\in \bC[Y]
\end{equation}
such that 
\begin{equation}\label{eq:Ebeta}
\begin{aligned}
   &\left(E^\Sieg_{k,\chi}\right)_{\bbeta}\left(\begin{bmatrix}\fA\\&\nu\ltrans{\bar{\fA}}^{-1}\end{bmatrix}^p_f\right)\left(Y=\left(\frac{\ltrans{z}-\bar{z}}{2i}\right)^{-1}\right)\cdot e^{2\pi i\Tr\bbeta z}\\
   =&\,\int_{\Her_3(\cK)\backslash\Her_3(\bA_{\cK})}E^\Sieg_{k,\chi}
    \left(\begin{bmatrix}\bid_3&\varsigma\\0&\bid_3\end{bmatrix}\begin{bmatrix}\fA\\&\nu\ltrans{\bar{\fA}}^{-1}\end{bmatrix}^p_f\begin{bsm}\left(\frac{z-\ltrans{\bar{z}}}{2i}\right)^{1/2}&\frac{z+\ltrans{\bar{z}}}{2}\left(\frac{z-\ltrans{\bar{z}}}{2i}\right)^{-1/2}\\0&\left(\frac{z-\ltrans{\bar{z}}}{2i}\right)^{-1/2}\end{bsm}_\infty\right)\cdot \psi_{\bA_\bQ}\left(-\Tr\bbeta\varsigma\right)\,d\varsigma.
\end{aligned}
\end{equation}
We know that \eqref{eq:EbetaY} is $0$ unless $\bbeta>0$, and for $\bbeta>0$, we have the formula
\begin{align*}   
    &\left(E^\Sieg_{k,\chi}\right)_{\bbeta}\left(\begin{bmatrix}\fA\\&\nu\ltrans{\bar{\fA}}^{-1}\end{bmatrix}^p_f\right)(Y=0) \\
   =&\,  \Xi(\det(\nu\ltrans{\bar{\fA}}^{-1}))  \chi(\det(\nu\fA^{-1}\ltrans{\bar{\fA}}^{-1}))|\det(\nu\fA^{-1}\ltrans{\bar{\fA}}^{-1})|^{k+\frac{\epsilon}{2}+1}  \\
   &\times\prod_{v\nmid Np\infty} h_{v,\nu^{-1}\ltrans{\bar{\fA}}\bbeta\fA}\left(\Xi_{\bQ,v}\chi^2_v(\varpi_v)|\varpi_v|^{2k+\epsilon+2}\right)
    \prod_{v\mid N}\cF\Schw^\vol_{v,\val_v(N)} \left(\nu^{-1}_v\ltrans{\bar{\fA}}_v\bbeta\fA_v\right)\\
    &\times \Lambda^\circ_p\Lambda_\infty(\beta_{13})\cdot \eta^{\circ-1}_{\sPi_p,2}(\beta_{13}\bar{\beta}_{13}) (\beta_{13}\bar{\beta}_{13})^{\frac{l_1-l_2}{2}}
    \cdot\eta^{\circ-1}_{\sPi_p,1}\eta^{\circ-1}_{\pi_p,1}\left(\frac{\bar{\beta}_{13}\beta_{23}-\beta_{13}\bar{\beta}_{23}}{\alphaS-\alphabS}\right)\left(\frac{\bar{\beta}_{13}\beta_{23}-\beta_{13}\bar{\beta}_{23}}{\alphaS-\alphabS}\right)^{\frac{-l_1+l_2+l+\epsilon}{2}}\\
    &\times \eta^{\circ-1}_{\sPi_p,3}\eta^{\circ-1}_{\pi_p,2}\left( \frac{(\alphaS-\alphabS)(\beta_{12}-\bar{\beta}_{12})}{2}\right)\left( \frac{(\alphaS-\alphabS)(\beta_{12}-\bar{\beta}_{12})}{2}\right)^{\frac{l_1+l_2-l+\epsilon}{2}}\\
   &\times \chi^\circ_p \left(\frac{\beta_{12}-\bar{\beta}_{12}}{2}\det\begin{bmatrix}\bar{\beta}_{13}&\bar{\beta}_{23}\\ \beta_{13}&\beta_{23}\end{bmatrix}\right) 
   \left\{\begin{array}{ll}
    \left(\frac{\beta_{12}-\bar{\beta}_{12}}{2}\det\begin{bmatrix}\bar{\beta}_{13}&\bar{\beta}_{23}\\ \beta_{13}&\beta_{23}\end{bmatrix}\right)^{-k-\epsilon-1} (\det\bbeta)^{2k+\epsilon-1}, &k+\frac{\epsilon}{2}\geq \frac{1}{2}\\[2ex]
  \left(\frac{\beta_{12}-\bar{\beta}_{12}}{2}\det\begin{bmatrix}\bar{\beta}_{13}&\bar{\beta}_{23}\\ \beta_{13}&\beta_{23}\end{bmatrix}\right)^{k-2} , &k+\frac{\epsilon}{2}\leq \frac{1}{2}.
   \end{array} \right.
\end{align*}
\end{prop}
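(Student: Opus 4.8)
The strategy is to decompose the $\bbeta$-Fourier coefficient of $E^\Sieg_{k,\chi}$ into a product of local Fourier coefficients $W_{\bbeta,v}$ via \eqref{eq:EtoW}, evaluate each local factor using the formulas already assembled in \S\ref{sec:FC-SE}, and then collect the contributions into the displayed product formula. The first step is to justify the factorization \eqref{eq:EtoW} itself: for $\bbeta$ non-degenerate this is the standard statement, but we will only need the (generalized) version for $g$ of the form $\mathrm{diag}(\fA,\nu\ltrans{\bar{\fA}}^{-1})$ including possibly degenerate $\bbeta$. As noted after \eqref{eq:FCv}, this holds because our chosen section $\dsec_k(s,\chi,\Xi)$ has a local component (namely $\dsec^\bc_{p,\Schw_{\cdots}}$, and also the components at $v\mid N$) supported on the big Bruhat cell. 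From the vanishing of $\cF\Schw_p$ and $\cF\Schw_{v,N}$ off the positive-definite locus one reads off that \eqref{eq:EbetaY} vanishes unless $\bbeta>0$; the polynomial dependence on $Y=\left(\frac{\ltrans{z}-\bar z}{2i}\right)^{-1}$ and the archimedean identity \eqref{eq:Ebeta} come from the nearly-holomorphic nature of the differential-operator-applied archimedean section and the explicit scalar-weight Fourier coefficient formula in \S\ref{sec:FC-SE}.

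Next I would go place by place. At $v\nmid Np\infty$ the section is spherical, so \eqref{eq:FC-unr} applies and contributes the factor $\Xi_v(\det(\nu\bar{\fA}^{-1}))\,\chi_v(\det(\nu\fA^{-1}\bar{\fA}^{-1}))\,|\det(\nu\fA^{-1}\bar{\fA}^{-1})|^{s-3/2}_v$, the local factor $d_{3,v}$, and the monic polynomial $h_{v,\nu^{-1}\ltrans{\bar{\fA}}\bbeta\fA}$; the normalizing factor $d^{Np\infty}_3(s,\Xi(\chi\circ\Nm))$ in \eqref{eq:Esieg} is precisely designed to cancel the product of the $d_{3,v}$'s over $v\nmid Np\infty$. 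At $v\mid N$ and at $p$, the section is a big-cell section, so the generic big-cell Fourier-coefficient formula in \S\ref{sec:FC-SE} gives $\Xi_v(\cdots)\chi_v(\cdots)|\cdots|^{s-3/2}_v\cdot\cF\Schw_v(\nu^{-1}\ltrans{\bar{\fA}}\bbeta\fA)$. For the places $v\mid N$ one substitutes $\Schw_{v,N}$ from \S\ref{sec:fN}; at $p$ one substitutes $\Schw_{\chi^\circ_p,\Lambda^\circ_p,\eta^\circ_{\sPi_p},\eta^\circ_{\pi_p}}$ from \S\ref{sec:secp}, whose Fourier transform is written out explicitly there. At $\infty$ one uses the scalar-weight formula from \S\ref{sec:FC-SE} together with the effect of the differential operator $D_{t_k,r_{\sLambda},l_1,l_2,l}$; the Gamma-factor and power-of-$2$, power-of-$\pi$, and $i^{t_k}$ prefactors in \eqref{eq:Esieg} are chosen exactly to cancel the archimedean prefactor, leaving the $e^{2\pi i\Tr\bbeta z}$ and the monomial in the entries of $\bbeta$.

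The remaining work is bookkeeping: multiply the local factors, observe that $\prod_v \Xi_v\chi_v$-and-absolute-value terms combine (using $\Xi\Lambda^c\Upsilon^c=\triv$ and the global product formula) into the clean archimedean-free expression $\Xi(\det(\nu\ltrans{\bar{\fA}}^{-1}))\,\chi(\det(\nu\fA^{-1}\ltrans{\bar{\fA}}^{-1}))\,|\det(\nu\fA^{-1}\ltrans{\bar{\fA}}^{-1})|^{k+\epsilon/2+1}$, and then bundle the $p$-adic pieces coming from $\cF\Schw_p$ together with the archimedean monomial from the differential operator into the displayed $\Lambda,\eta_{\sPi_p},\eta_{\pi_p},\chi^\circ_p$-monomials in the entries $\beta_{12},\beta_{13},\beta_{23}$ of $\bbeta$; evaluating at $Y=0$ kills all the non-leading (non-holomorphic) terms produced by the differential operator, leaving only the holomorphic leading monomial. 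The main obstacle I anticipate is the bookkeeping at $p$: one must carefully match the variables in the explicit $\cF\Schw_{\chi^\circ_p,\Lambda^\circ_p,\eta^\circ_{\sPi_p},\eta^\circ_{\pi_p}}$ of \S\ref{sec:secp} against the block entries $(w_{ij})$ of $\nu^{-1}\ltrans{\bar{\fA}}\bbeta\fA$, keep track of the conjugation conventions $\fw_{ij}$ vs.\ $\bar{\fw}_{ij}$, and verify that the $p$-part $\eta^\circ$-characters and the archimedean $\infty$-type exponents glue to give exactly the characters $\Lambda^\circ_p\Lambda_\infty$, $\eta^{\circ-1}_{\sPi_p,2}$, etc., with the stated exponents $\frac{l_1-l_2}{2}$, $\frac{-l_1+l_2+l+\epsilon}{2}$, $\frac{l_1+l_2-l+\epsilon}{2}$; the hypotheses \eqref{eq:krange} and the bound $-l_1+l_2\le r_{\sLambda}\le l_1-l_2$ guarantee these exponents are non-negative integers, which is needed for the $p$-adic interpolation to make sense.
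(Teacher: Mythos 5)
Your proposal follows essentially the same route as the paper: factor the $\bbeta$-Fourier coefficient into local Whittaker functions (justified by the big-cell support of the section at $p$), evaluate each place using the formulas assembled in \S\ref{sec:FC-SE}, cancel the $d^{Np\infty}_3$ normalization against the unramified local factors and the Gamma/$\pi$/$i^{t_k}$ normalization against the archimedean one, and then recognize that the differential operator produces a polynomial in $Y$ whose constant term is the monomial appearing in the stated formula. One small inaccuracy: the vanishing for $\bbeta$ not positive definite is not entirely forced by $\cF\Schw_p$ and $\cF\Schw_{v,N}$ (those only enforce non-degeneracy of certain blocks); positive semi-definiteness comes from the archimedean factor, and the two combine to give $\bbeta>0$ — but this does not affect the validity of your outline.
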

\begin{proof}
This is a straightforward computation. Because the section at $p$ is supported on the big cell, we have
\begin{align*}
   \text{RHS of \eqref{eq:Ebeta}}
   =&\, d^{Np\infty}_3\big(s,\Xi\,(\chi\circ\Nm)\big)\prod_{v\nmid Np\infty} 
    W_{\bbeta,v}\left(\begin{bmatrix}\fA\\&\nu\ltrans{\bar{\fA}}^{-1}\end{bmatrix}_v,\dsec^\sph_v(s,\chi,\Xi)\right)\\
    &\hspace{-6em}\times  \prod_{v\mid N} W_{\bbeta,v}\left(\begin{bmatrix}\fA\\&\nu\ltrans{\bar{\fA}}^{-1}\end{bmatrix}_v,\dsec^\bc_{v,\Schw_{v,N}}(s,\chi,\Xi)\right) 
   \cdot W_{\bbeta,p}\left(\bid_3,\dsec^\bc_{p,\Schw_{\chi^\circ_p,\Lambda^\circ_p,\,\eta^\circ_{\sPi_p},\eta^\circ_{\pi_p}}}(s,\chi,\Xi)\right) \\
   &\hspace{-6em}\times \left.\frac{\prod_{j=0}^2\Gamma\left(s+\frac{t_k+3}{2}-j\right)}{2^{3(s+\frac{t_k-1}{2})}\,i^{t_k}\,\pi^{3(s+\frac{t_k+1}{2})}}\, W_{\beta,\infty}\left(\begin{bsm}\left(\frac{z-\ltrans{\bar{z}}}{2i}\right)^{1/2}&\frac{z+\ltrans{\bar{z}}}{2}\left(\frac{z-\ltrans{\bar{z}}}{2i}\right)^{-1/2}\\0&\left(\frac{z-\ltrans{\bar{z}}}{2i}\right)^{-1/2}\end{bsm},D_{t_k,r_{\sLambda},l_1,l_2,l}\cdot \dsec^{t_k}_\infty(s,\chi,\Xi)\right)\right|_{s=\frac{2k+\epsilon-1}{2}}.
\end{align*}
From the formulas in \S\ref{sec:FC-SE} and the choice of $\Schw_{\chi^\circ_p,\Lambda^\circ_p,\,\eta^\circ_{\Pi_p},\eta^\circ_{\pi_p}}$ in \S\ref{sec:secp}, we obtain
\begin{equation}\label{eq:FC-hol}
\begin{aligned}
   &\text{RHS of \eqref{eq:Ebeta} for $l_1=l_2=l=t_k$}\\
   =&\,\Xi(\det(\nu\ltrans{\bar{\fA}}^{-1}))  \chi(\det(\nu\fA^{-1}\ltrans{\bar{\fA}}^{-1}))|\det(\nu\fA^{-1}\ltrans{\bar{\fA}}^{-1})|^{k+\frac{\epsilon}{2}-2}\\
    &\times \prod_{v\nmid Np\infty} g_{\nu^{-1}_v\ltrans{\bar{\fA}_v}\bbeta\fA_v,v}\left(\Xi_{\bQ,v}\chi^2_v(\varpi_v)|\varpi_v|^{2k+\epsilon-1}\right)
    \prod_{v\mid N}\cF\Schw^\vol_{v,\val_v(N)} \left(\nu^{-1}_v\ltrans{\bar{\fA}}_v\bbeta\fA_v\right)\\
    &\times \Lambda^\circ_p(\beta_{13})\,\eta^{\circ-1}_{\sPi_p,2}(\beta_{13}\bar{\beta}_{13}) 
    \,\eta^{\circ-1}_{\sPi_p,1}\eta^{\circ-1}_{\pi_p,1}\left(\frac{\bar{\beta}_{13}\beta_{23}-\beta_{13}\bar{\beta}_{23}}{\alphaS-\alphabS}\right)
     \eta^{\circ-1}_{\sPi_p,3}\eta^{\circ-1}_{\pi_p,2}\left( \frac{(\alphaS-\alphabS)(\beta_{12}-\bar{\beta}_{12})}{2}\right)\\
   &\times \chi^\circ_p \left(\frac{\beta_{12}-\bar{\beta}_{12}}{2}\det\begin{bmatrix}\bar{\beta}_{13}&\bar{\beta}_{23}\\ \beta_{13}&\beta_{23}\end{bmatrix}\right) 
    \cdot e^{2\pi i\Tr\bbeta z}
   \left\{\begin{array}{ll}
    (\det\bbeta)^{2k+\epsilon-1}, &k+\frac{\epsilon}{2}\geq \frac{1}{2}\\[2ex]
  1, &k+\frac{\epsilon}{2}\leq \frac{1}{2}.
   \end{array} \right.
\end{aligned}
\end{equation}
For general $l_1,l_2,l$, it is easy to see that $D_{t_k,r_{\sLambda},l_1,l_2,l}\cdot  e^{2\pi i\Tr\bbeta z}$ is the product of $e^{2\pi i\Tr\bbeta z}$ with a polynomial in the entries of $\left(\frac{\ltrans{z}-\bar{z}}{2i}\right)^{-1}$, therefore there is a polynomial $\left(E^\Sieg_{k,\chi}\right)_{\bbeta}\left(\begin{bmatrix}\fA\\&\nu\ltrans{\bar{\fA}}^{-1}\end{bmatrix}^p_f\right)\in \bC[Y]$ such that \eqref{eq:Ebeta} holds. Although the precise formula $D_{t_k,r_{\sLambda},l_1,l_2,l}\cdot  e^{2\pi i\Tr\bbeta z}$ is hard to compute in general, it is easy to see that
\begin{equation}\label{eq:D-lead}
\begin{aligned}
   &D_{t_k,r_{\sLambda},l_1,l_2,l}\cdot  e^{2\pi i\Tr\bbeta z}\\
   = &\, (\alphaS-\alphabS)^{l_1-l}  \left(\begin{aligned}&
      \beta_{13}^{\frac{l_1-l_2+r_{\scriptscriptstyle \Lambda}}{2}}\bar{\beta}_{13}^{\frac{l_1-l_2-r_{\scriptscriptstyle \Lambda}}{2}}
   (\bar{\beta}_{13}\beta_{23}-\beta_{13}\bar{\beta}_{23})^{\frac{-l_1+l_2+l-t_k}{2}}
   (\beta_{12}-\bar{\beta}_{12})^{\frac{l_1+l_2-l-t_k}{2}}\\[1ex]
   &+\text{terms with degree $\geq 1$ in entries of $(\ltrans{z}-\bar{z})^{-1}$}\end{aligned}\right) e^{2\pi i\Tr\bbeta z}.
\end{aligned}
\end{equation}
Combing the equations \eqref{eq:FC-hol} and \eqref{eq:D-lead} and regrouping some terms prove the formula for $\left(E^\Sieg_{k,\chi}\right)_{\bbeta}\left(\begin{bmatrix}\fA\\&\nu\ltrans{\bar{\fA}}^{-1}\end{bmatrix}^p_f\right)(Y=0)$.
\end{proof}

\begin{prop}\label{prop:measE}
There exists a $p$-adic measure 
\[
   \cE\in\pzM eas\left(\bQ^\times\backslash \bA^\times_{\bQ,f} /U^p_N,\cM_{l_1,l_2}\left(K_{\GSp(4),f},\bar{\bQ}_p\right)\otimes \cM_l\left(K_{\GU(1,1),f},\bar{\bQ}_p\right)\right)
\] 
such that for all finite order Dirichlet characters $\chi:\bQ^\times\backslash\bA^\times_{\bQ}/U^p_N\bR^\times_{+}\ra\bC^\times$, and integers $k$ lying in the range \eqref{eq:krange}
\begin{align*}
   \cE\left((\chi|\bdot|^k)_{p_\adic}\right)=e^{\GSp(4)}_\ord e^{\GL(2)}_\ord\left(E^\Sieg_{k,\chi}\big|_{\GSp(4)\times\GU(1,1)}\right).
\end{align*}
Here the level group $K_{\GSp(4),f}, K_{\GL(2),f}$ are defined in \eqref{eq:KK} and the Eisenstein series $E^\Sieg_{k,\chi}$ is given in \eqref{eq:Esieg}.
\end{prop}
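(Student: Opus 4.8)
The plan is to construct the measure $\cE$ by $q$-expansion, using the abstract principle that a bounded family of $p$-integral Fourier expansions indexed by Dirichlet characters and critical integers assembles into a measure valued in $p$-adic automorphic forms, then applying the ordinary projectors of \S\ref{prop:e-conv}. First I would fix the target: by Theorem~\ref{thm:NHF} and Proposition~\ref{prop:e-conv}, the space $\cM_{l_1,l_2}\left(K_{\GSp(4),f},\bar{\bQ}_p\right)\otimes\cM_l\left(K_{\GU(1,1),f},\bar{\bQ}_p\right)$ is finite-dimensional over $\bar{\bQ}_p$, so it suffices to produce, for each continuous $\bar{\bQ}_p$-valued character $\phi$ of $\bQ^\times\backslash\bA^\times_{\bQ,f}/U^p_N$, a value $\cE(\phi)$ in a fixed $\cO$-lattice of this space, depending continuously (indeed, boundedly) on $\phi$, and to check that on the arithmetic points $\phi=(\chi|\bdot|^k)_{p_\adic}$ it equals $e^{\GSp(4)}_\ord e^{\GL(2)}_\ord\bigl(E^\Sieg_{k,\chi}\big|_{\GSp(4)\times\GU(1,1)}\bigr)$.

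The heart of the argument is the Fourier-coefficient computation already carried out in Proposition~\ref{prop:EFC}. For $\bbeta>0$ the $Y=0$ value of $\left(E^\Sieg_{k,\chi}\right)_\bbeta$ — i.e. the holomorphic-projection Fourier coefficient — is, up to the normalizing $\Gamma$- and $d_3$-factors, a product of: an unramified part $\prod_{v\nmid Np\infty} h_{v,\dots}$ that lies in $\bZ$ and whose dependence on $\chi,k$ is through $\Xi_{\bQ,v}\chi^2_v(\varpi_v)|\varpi_v|^{2k+\epsilon+2}$, a finite part at $v\mid N$ given by $\cF\Schw_{v,N}$ which is independent of $\chi,k$, and a part at $p$ which is an explicit monomial in $\beta_{ij},\bar\beta_{ij}$ times the values $\chi^\circ_p(\cdots)$, $\Lambda^\circ_p(\beta_{13})$, $\eta^{\circ-1}_{\sPi_p,i}(\cdots)$, $\eta^{\circ-1}_{\pi_p,i}(\cdots)$ — all of which are $p$-adic unit-valued (the $\circ$-truncation kills the archimedean and integral ambiguities) and whose dependence on $k$ is through $|\cdot|^{k}_p$-type factors and a power $(\det\bbeta)^{2k+\epsilon-1}$. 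Each of these is the evaluation at $(\chi|\bdot|^k)_{p_\adic}$ of an explicit $\bar\bQ_p$-valued continuous function of the character argument; this is exactly the datum needed to build an $\cM eas$-valued object. I would package this by: (a) defining, for each $\bbeta$ and each $\begin{psm}\fA\\&\nu\ltrans{\bar\fA}^{-1}\end{psm}^p_f$, a measure on $\bQ^\times\backslash\bA^\times_{\bQ,f}/U^p_N$ whose moments against $(\chi|\bdot|^k)_{p_\adic}$ give the $Y=0$ Fourier coefficient above; (b) checking, via the $q$-expansion principle for nearly holomorphic forms on $\GU(3,3)$ (Theorem~\ref{thm:NHF}(2)) restricted and holomorphically projected to $\GSp(4)\times\GU(1,1)$, that these moments are the Fourier coefficients of an honest element of the finite-dimensional space above — this uses that the non-holomorphic ($Y$-degree $\geq1$) part is killed by $e_\ord$ (Proposition~\ref{prop:e-conv}); and (c) invoking boundedness of the normalized $\bU_p$-operators (stated in \S before Proposition~\ref{prop:e-conv}) together with $p$-integrality of all the Fourier coefficients just listed to conclude the family is uniformly bounded, hence extends to a measure.

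The main obstacle, as I see it, is step (b): making precise that ``holomorphic projection of the restriction commutes with $p$-adic interpolation.'' Concretely, one must know that $e^{\GSp(4)}_\ord e^{\GL(2)}_\ord\bigl(E^\Sieg_{k,\chi}\big|_{\GSp(4)\times\GU(1,1)}\bigr)$ is determined by — and determines — its Fourier coefficients at $Y=0$ in a way compatible with the abstract $q$-expansion principle, and that the relevant $q$-expansions are those computed in Proposition~\ref{prop:EFC}. This requires that the Siegel Eisenstein series $E^\Sieg\left(\bdot;\dsec_k(s,\chi,\Xi)\right)$, which a priori is only nearly holomorphic of some bounded degree $e$ independent of $k$ (because the differential operator $D_{t_k,r_{\sLambda},l_1,l_2,l}$ has bounded order once $l_1,l_2,l$ are fixed), lies in $\bigcup_e\cN^e_{\GU(3,3)}(\cdots,\bar\bQ)\otimes\bC$, so that its restriction lands in $\bigcup_e\cN^e$ for $\GSp(4)\times\GU(1,1)$ and Proposition~\ref{prop:e-conv} applies; the algebraicity half of this is Theorem~\ref{thm:NHF}(2) combined with the rationality of $D_{t_k,\dots}$ and of the chosen sections, and the $p$-integrality half follows from the explicit unit-valued formulas in Proposition~\ref{prop:EFC}. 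Once these compatibilities are in place, the interpolation identity $\cE\left((\chi|\bdot|^k)_{p_\adic}\right)=e^{\GSp(4)}_\ord e^{\GL(2)}_\ord\bigl(E^\Sieg_{k,\chi}\big|_{\GSp(4)\times\GU(1,1)}\bigr)$ is immediate by comparing Fourier coefficients, and uniqueness of $\cE$ follows since the arithmetic points are Zariski-dense in the character space and the target is finite-dimensional.
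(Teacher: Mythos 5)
Your overall framework — targeting the finite-dimensional space of $p$-ordinary holomorphic forms, working through $q$-expansions, and invoking the ordinary projectors of Proposition~\ref{prop:e-conv} to kill the $Y$-degree $\geq 1$ part — is indeed the paper's, and your steps (b) and (c) are in its spirit. But your step (a) has a genuine gap: you claim that the $Y=0$ Fourier coefficient computed in Proposition~\ref{prop:EFC} is, for each fixed $\bbeta$, the moment of a $p$-adic measure in the character variable. This is false as stated. The archimedean section $\dsec^{t_k}_\infty$ changes formula at $k+\frac{\epsilon}{2}=\frac{1}{2}$ (see \eqref{eq:tk}), and in the branch $k+\frac{\epsilon}{2}\geq\frac{1}{2}$ the Fourier coefficient carries the extra factor $(\det\bbeta)^{2k+\epsilon-1}$, while in the other branch it does not. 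Since $\det\bbeta$ need not be a $p$-adic unit — unlike the arguments receiving the $\chi^\circ_p$- and $\eta^\circ$-truncated characters, which vanish outside $\bZ^\times_p$ — this factor is not $p$-adically continuous in $k$; and even where it is, the two branches give distinct functions of $(\chi,k)$. So no measure has the raw coefficients $A_{(i,j)}(\bbeta;\chi,k)$ as its moments, and your step (a) cannot be carried out as described.

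The paper closes this gap with two steps missing from your sketch. First, the raw coefficient $A_{(i,j)}$ is replaced by a modified $A'_{(i,j)}$ obtained by multiplying by $\left(\frac{\beta_{12}-\bar{\beta}_{12}}{2}\det\begin{bsm}\bar{\beta}_{13}&\bar{\beta}_{23}\\\beta_{13}&\beta_{23}\end{bsm}\det(\bbeta)^{-1}\right)^{-2k-\epsilon+1}$ in the $k\geq 1$ branch; after this both branches collapse to a single formula in which the only $k$-dependent monomial has the same argument as $\chi^\circ_p$, hence is a $p$-adic unit whenever the term is nonzero, so the modified $A'_{(i,j)}$ genuinely is interpolated by a measure. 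Second — and this is the precise content your ``obstacle in step (b)'' hints at but does not resolve — one checks the congruence $\frac{\beta_{12}-\bar{\beta}_{12}}{2}\det\begin{bsm}\bar{\beta}_{13}&\bar{\beta}_{23}\\\beta_{13}&\beta_{23}\end{bsm}\det(\bbeta)^{-1}\equiv 1\bmod p^n$ for $\bbeta$ congruent mod $p^n$ to an anti-diagonal Hermitian matrix, which is exactly the shape forced by hitting $\bbeta$ with high powers of the $\bU_p$-operators defining $e^{\GSp(4)}_\ord e^{\GL(2)}_\ord$. This shows the ordinary limits of $A_{(i,j)}$ and $A'_{(i,j)}$ coincide, so the measure built from the modified coefficients also interpolates the Fourier coefficients of $e^{\GSp(4)}_\ord e^{\GL(2)}_\ord\bigl(E^\Sieg_{k,\chi}\big|_{\GSp(4)\times\GU(1,1)}\bigr)$. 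Only then do the density of arithmetic points and the finite-dimensionality of the image of the $q$-expansion map — your step (c) — finish the argument.
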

\begin{proof}

Thanks to the strong approximation for $\Sp(4)$ and $\SU(1,1)$,  we can pick $\nu_i\in\bA^{\times,p}_{\bQ,f}$, $i=1,\dots,c_1$, and $\nu'_j\in\bA^{\times,p}_{\bQ,f}$, $\fa_j\in \bA^{\times,p}_{\cK,f}$, $j=1,\dots,c_2$, such that each connected component of the Shimura variety for $\GSp(4)\times\GL(2)$ of principal level $N$ contains a point corresponding to $\left(\begin{bsm}\vphantom{\fa_j}\bid_2\\&\vphantom{\fa^{-1}_j}\nu_i\cdot\bid_2\end{bsm},\begin{bsm}\fa_j\\&\nu'_j\bar{\fa}^{-1}_j\end{bsm}\right)$ for some $i,j$. Taking the Fourier coefficients at these points gives an injection
\begin{equation}\label{eq:q-exp}
\begin{aligned}
   \varepsilon_{\qexp}: &\,\cM_{l_1,l_2}\left(K_{\GSp(4),f},\bar{\bQ}_p\right)
   \otimes\cM_l\left(K_{\GU(1,1),f},\bar{\bQ}_p\right)\\
   &\hspace{8em}\lhra \bigoplus_{\substack{1\leq i\leq c_1\\1\leq j\leq c_2}}\bar{\bQ}_p\llb\Sym_2(\bQ)_{\geq 0}\times \bQ_{\geq 0}\rrb.
\end{aligned}
\end{equation}

Denote by $\fX_{\mr{cl}}$ the set of pairs $(\chi,k)$ as in the statement in the proposition. Let 
\[
   A_{(i,j)}(\bbeta;\chi,k)=\left\{\begin{array}{ll}
   \left(E^\Sieg_{k,\chi}\right)_{\bbeta}\left(\begin{bsm}\bid_2\\&\fa_j\\&&\nu_i\cdot\bid_2\\&&&\nu'_j\bar{\fa}^{-1}\end{bsm}\right)(Y=0), &\nu_i=\nu'_j,\\
   0,&\nu_i\neq\nu'_j,
   \end{array}\right.
\]
and define the modified $A'_{(i,j)}(\bbeta;\chi,k)$ as
\[
   \left\{\begin{array}{ll} 
   A_{(i,j)}(\bbeta;\chi,k)\left(\frac{\beta_{12}-\bar{\beta}_{12}}{2}\det\begin{bmatrix}\bar{\beta}_{13}&\bar{\beta}_{23}\\ \beta_{13}&\beta_{23}\end{bmatrix}\det(\bbeta)^{-1}\right)^{-2k-\epsilon+1}, &k\geq 1,\\[2ex]
   A_{(i,j)}(\bbeta;\chi,k),&k\leq 0.
   \end{array}\right.
\]
(Note that with this modification, the formula for  $A'_{(i,j)}(\bbeta;\chi,k)$ is the one in Proposition~\ref{prop:EFC} with the last term replaced by simply $\left(\frac{\beta_{12}-\bar{\beta}_{12}}{2}\det\begin{bmatrix}\bar{\beta}_{13}&\bar{\beta}_{23}\\ \beta_{13}&\beta_{23}\end{bmatrix}\right)^{k+\frac{\epsilon}{2}-2}$, and has a uniform shape for both $k+\frac{\epsilon}{2}\geq \frac{1}{2}$ and $k+\frac{\epsilon}{2}\leq \frac{1}{2}$.) For $\beta_1\in\Sym_2(\bQ)$, $\beta_2\in\Sym_1(\bQ)$, define
\begin{align*}
   a_{(i,j)}(\beta_1,\beta_2;\chi,k)&=\sum_{\substack{\bbeta\in\Her_3(\cK)_{> 0}\\ \begin{bsm}\beta_{11}&\frac{\beta_{12}+\bar{\beta}_{12}}{2}\\ \frac{\beta_{12}+\bar{\beta}_{12}}{2}&\beta_{22}\end{bsm}=\beta_1,\,\beta_{33}=\beta_2}}  A_{(i,j)}(\bbeta;\chi,k),\\
   a'_{(i,j)}(\beta_1,\beta_2;\chi,k)&=\sum_{\substack{\bbeta\in\Her_3(\cK)_{> 0}\\ \begin{bsm}\beta_{11}&\frac{\beta_{12}+\bar{\beta}_{12}}{2}\\ \frac{\beta_{12}+\bar{\beta}_{12}}{2}&\beta_{22}\end{bsm}=\beta_1,\,\beta_{33}=\beta_2}} A'_{(i,j)}(\bbeta;\chi,k).
\end{align*}
It is not difficult to see that$a'_{(i,j)}(\beta_1,\beta_2;\chi,k)$ is interpolated by a $p$-adic measure
\[
   a'_{(i,j)}(\beta_1,\beta_2)\in \pzM eas\left((\bQ^\times\backslash\bA^\times_{\bQ,f}/U^p_N,\bar{\bQ}_p\right),
\]
({\it i.e.} for all $(\chi,k)\in \fX_{\mr{cl}}$, the value of $a'_{(i,j)}(\beta_1,\beta_2)$ at $(\chi|\bdot|^k)_{p\adic}$ equals $a'_{(i,j)}(\beta_1,\beta_2;\chi,k)$.)

The form $E^\Sieg_{k,\chi}$ is is obtained from applying differential operators to a holomorphic form on $\GU(3,3)$, and by our choice of the differential operator $D_{t_k,r_{\sLambda},l_1,l_2,l}$, it has weight $(l_1,l_2)$ for $\GSp(4)$ and weight $l$ for $\GU(1,1)$. Hence, we know that 
\begin{equation}\label{eq:EinN} 
 E^\Sieg_{k,\chi}\big|_{\GSp(4)\times\GU(1,1)}\in \bigcup_{e\geq 0} \begin{array}{l}\cN^e_{\GSp(4)}\left(K^p_{\GSp(4),f}K^1_{\GSp(4),p}(p^\infty),\rho_{l_1,l_2},\bar{\bQ}_p\right)\\
 \,\otimes \,\cN^e_{\GU(1,1)}\left(K^p_{\GU(1,1),f}K^1_{\GU(1,1),p}(p^\infty),\rho_{l},\bar{\bQ}_p\right)\end{array}.
\end{equation} 
At $\left(\begin{bsm}\vphantom{\fa_j}\bid_2\\&\vphantom{\fa^{-1}_j}\nu_i\cdot\bid_2\end{bsm}g(z_1),\begin{bsm}\fa_j\\&\nu_j\bar{\fa}^{-1}_j\end{bsm}g(z_2)\right)$, $z_1\in\bH_{\GSp(4)},z_2\in\bH_{\GU(1,1)}$, the $(\beta_1,\beta_2)$-th Fourier coefficient of $U^{\GSp(4)}_{p,m_1,m_2}U^{\GU(1,1)}_{p,m_3} \left(E^\Sieg_{k,\chi}\big|_{\GSp(4)\times\GU(1,1)}\right)$ equals
\begin{align*}
   &\sum_{x\in\bZ/p^{m_1-m_2}\bZ}
   a_{(i,j)}\left(\begin{bsm}p^{m_1-m_2}&\\ x&1\end{bsm}p^{m_2}\beta_1\begin{bsm}p^{m_1-m_2}&x\\&1\end{bsm},p^{m_3}\beta_2;\chi,k\right)\\
   &+\,\text{terms with degree $\geq 1$ in entries of $p^{m_2}\im(z_1)^{-1},p^{m_3}\im(z_2)^{-1}$ with $p$-integral coefficients}. 
\end{align*}
It follows from \eqref{eq:EinN} and Proposition~\ref{prop:e-conv} that the $p$-adic limit
\[
    \lim_{n\to\infty} \sum_{x\in\bZ/p^{n}\bZ}
   a_{(i,j)}\left(\begin{bsm}p^{n}&\\ x&1\end{bsm}p^{n}\beta_1\begin{bsm}p^{n}&x\\&1\end{bsm},p^{n}\beta_2;\chi,k\right)
\]
exists. Denoting this limit by $a_{\ord,(i,j)}(\beta_1,\beta_2;\chi,k)$, we have
\[
   \varepsilon_{\qexp}\left(e^{\GSp(4)}_\ord e^{\GL(2)}_\ord\left(E^\Sieg_{k,\chi}\big|_{\GSp(4)\times\GU(1,1)}\right)\right)_{\beta_1,\beta_2}=a_{\ord,(i,j)}(\beta_1,\beta_2;\chi,k).
\]
By noting that 
\[
   \frac{\beta_{12}-\bar{\beta}_{12}}{2}\det\begin{bmatrix}\bar{\beta}_{13}&\bar{\beta}_{23}\\ \beta_{13}&\beta_{23}\end{bmatrix}\det(\bbeta)^{-1}\equiv 1\mod p^n, 
   \quad \text{if } \bbeta\equiv\begin{bsm}0&\frac{\beta_{12}-\bar{\beta}_{12}}{2}&\beta_{13}\\-\frac{\beta_{12}-\bar{\beta}_{12}}{2}&0&\beta_{23}\\ \bar{\beta}_{13}&\bar{\beta}_{23}&0\end{bsm} \mod p^n,
\]
we have 
\begin{equation}\label{eq:a'-cong}
\begin{aligned}
   &\lim_{n\to\infty} \sum_{x\in\bZ/p^{n}\bZ}
   a'_{(i,j)}\left(\begin{bsm}p^{n}&\\ x&1\end{bsm}p^{n}\beta_1\begin{bsm}p^{n}&x\\&1\end{bsm},p^{n}\beta_2;\chi,k\right)\\
   =&\,\lim_{n\to\infty} \sum_{x\in\bZ/p^{n}\bZ}
   a_{(i,j)}\left(\begin{bsm}p^{n}&\\ x&1\end{bsm}p^{n}\beta_1\begin{bsm}p^{n}&x\\&1\end{bsm},p^{n}\beta_2;\chi,k\right)=a_{\ord,(i,j)}(\beta_1,\beta_2;\chi,k).
\end{aligned}
\end{equation}
Since $\fX_{\mr{cl}}$ is dense in $\Hom_\cont\left(\bQ^\times\backslash \bA^\times_{\bQ,f} /U^p_N,\bar{\bQ}^\times_p\right)$, from \eqref{eq:a'-cong}, we deduce that
\[
    \lim_{n\to\infty} \sum_{x\in\bZ/p^{n}\bZ}
   a'_{(i,j)}\left(\begin{bsm}p^{n}&\\ x&1\end{bsm}p^{n}\beta_1\begin{bsm}p^{n}&x\\&1\end{bsm},p^{n}\beta_2\right)
\]
exists in $\pzM eas\left(\bQ^\times\backslash \bA^\times_{\bQ,f}/U^p_N,\bar{\bQ}_p\right)$, and denoting the limit by $a_{\ord,(i,j)}(\beta_1,\beta_2)$, we have
\[
   \varepsilon_{\qexp}\left(e^{\GSp(4)}_\ord e^{\GL(2)}_\ord\left(E^\Sieg_{k,\chi}\big|_{\GSp(4)\times\GU(1,1)}\right)\right)_{\beta_1,\beta_2}=a_{\ord,(i,j)}(\beta_1,\beta_2)\left((\chi|\bdot|^k)_{p\adic}\right).
\]
(Note that the level of $E^\Sieg_{k,\chi}\big|_{\GSp(4)\times\GU(1,1)}$ depends on $\chi$, but its ordinary projection is invariant under the level group in \eqref{eq:KK} and belongs to the domain of $\varepsilon_{\qexp}$.)
\\

Putting all the $a_{\ord,(i,j)}(\beta_1,\beta_2)$ together gives rise to an element 
\begin{equation*}
   \cE_{\qexp}\in \pzM eas\left(\bQ^\times\backslash \bA^\times_{\bQ,f}/U^p_N,\bar{\bQ}_p\llb\Sym_2(\bQ)_{>0}\times\bQ_{>0}\rrb\right)
\end{equation*}
satisfying that for all $(\chi,k)\in \fX_{\mr{cl}}$,
\[
    \cE_{\qexp}\left((\chi|\bdot|^k)_{p\adic}\right)=\varepsilon_{\qexp}\left(e^{\GSp(4)}_\ord e^{\GL(2)}_\ord\left(E^\Sieg_{k,\chi}\big|_{\GSp(4)\times\GU(1,1)}\right)\right).
\]
Because the image of $\varepsilon_{\qexp}$ in \eqref{eq:q-exp} is finite dimensional, it is closed in $\bigoplus_{\substack{1\leq i\leq c_1\\1\leq j\leq c_2}}\bar{\bQ}_p\llb\Sym_2(\bQ)_{\geq 0}\times \bQ_{\geq 0}\rrb$. The density of $\fX_{\mr{cl}}$ in $\Hom_\cont\left(\bQ^\times\backslash \bA^\times_{\bQ,f} /U^p_N,\bar{\bQ}^\times_p\right)$ imply that $\cE_{\qexp}$ takes values inside the image of $\varepsilon_{\qexp}$. Thus, there exists 
\[
   \cE\in\pzM eas\left(\bQ^\times\backslash \bA^\times_{\bQ,f} /U^p_N,\cM_{l_1,l_2}\left(K_{\GSp(4),f},\bar{\bQ}_p\right)\otimes \cM_l\left(K_{\GU(1,1),f},\bar{\bQ}_p\right)\right)
\] 
such that $\varepsilon_{\qexp}(\cE)=\cE_{\qexp}$. This $\cE$ is the desired $p$-adic measure.

\end{proof}

\section{The $p$-adic $L$-function}

\subsection{The modified Euler factors for $p$-adic interpolations}
With $\Pi,\pi$ as in \S\ref{sec:ord-at-p} and characters $\eta_{\sPi_p,1},\eta_{\sPi_p,2},\eta_{\sPi_p,3},\eta_{\pi_p,1},\eta_{\pi_p,2}:\bQ^\times_p\ra\bC^\times$ as in \eqref{eq:Ind-eta}\eqref{eq:xi-val}, unfolding the definitions in \cite{CoPerrin,Coates}, we have the the modified Euler factor at $p$ and $\infty$:
\begin{align}
   \label{eq:Ep} E_p(s,\tilde{\Pi}\times\tilde{\pi}\times\chi)=&\,\gamma_p\left(s,\chi_p\eta^{-1}_{\pi_p,1}\eta^{-1}_{\sPi_p,1}\right)^{-1} \gamma_p\left(s,\chi_p\eta^{-1}_{\pi_p,1}\eta^{-1}_{\sPi_p,2}\right)^{-1}  
    \gamma_p\left(s,\pi^\vee_p\times\chi_p\eta^{-1}_{\sPi_p,3}\right)^{-1},\\
   \label{eq:Einf} E_\infty(s,\tilde{\Pi}\times\tilde{\pi}\times\chi)=&\,e^{-(4s+l_1+l_2+l)\cdot \frac{\pi i}{2}}\Gamma_\bC\left(s+\frac{l_1+l_2+l}{2}-2\right)\Gamma_\bC\left(s+\frac{l_1+1_2-l}{2}-1\right)\\
   &\times\Gamma_\bC\left(s+\frac{-l_1+l_2+l}{2}-1\right)\Gamma_\bC\left(s+\frac{l_1-1_2+l}{2}\right),
\end{align}
where $\Gamma_\bC(s)=2(2\pi)^{-s}\Gamma(s)$.

\subsection{The main theorem}
Let $\Pi,\pi$ be as in \S\ref{sec:ord-at-p}. For a $p$-ordinary Siegel modular form $\varphi$ with $U^{\GSp(4)}_{p,m_1,m_2}$-eigenvalue $\lambda_{p,1}(\varphi)^{m_1}\lambda_{p,2}(\varphi)^{m_2}$, and a $p$-ordinary modular form $f\in\pi$ with $U^{\GL(2)}_{p,1}$-eigenvalue $\lambda_p(f)$, we define the modified Bessel period $\bes^\dagger_{\bS,\Lambda}(\varphi)$ and modified Petersson norms $\bfP(\varphi,\varphi)$, $\bfP(f,f)$ as follows.
\begin{align}
     \bes^\dagger_{\bS,\Lambda}(\varphi)=&\,
     \lambda_{p,1}(\varphi)^{-m_1}\lambda_{p,2}(\varphi)^{-m_2}
     \cdot \bes_{\bS,\Lambda}\left(\begin{bsm}p^{m_1}\\&p^{m_2}\\&&p^{-m_1}\\&&&p^{-m_2}\end{bsm}_p\varphi\right),\label{eq:bes-dagger}\\    
   \bfP(\varphi,\varphi)=&\,\lambda_{p,1}(\varphi)^{-m_1}\lambda_{p,2}(\varphi)^{-m_2} \label{eq:Pet} \\
    &\times\int_{[\GSp(4)]} \varphi(g)\,\varphi\left(g\begin{bmatrix}&\bid_2\\\bid_2\end{bmatrix}_{p\infty}\begin{bsm}p^{m_1}\\&p^{m_2}\\&&p^{-m_1}\\&&&p^{-m_2}\end{bsm}_p\right)\cdot \omega_\Pi(\nu_g)^{-1} \,dg,\nonumber
\end{align}
with $m_1\gg m_2\gg 0$,  and 
\begin{equation}\label{eq:Pet2}
   \bfP(f,f)=\lambda_p(f)^{-m_3}\int_{[\GL(2)]} f(g)\,f\left(g\begin{bmatrix}&1\\1\end{bmatrix}_{p\infty} \begin{bmatrix}p^{m_3}\\&p^{-m_3}\end{bmatrix}_p\right)\cdot\omega_\pi(\det g)^{-1}\,dg,
\end{equation}
with $m_3\gg 0$. (By (ii) of Proposition~\ref{prop:Zord}, we know that the right hand side of \eqref{eq:bes-dagger} does not depend on the choice of $m_1,m_2$ as long as $m_1-m_2,m_2$ are sufficiently large. The independence of the right hand side of \eqref{eq:Pet} and \eqref{eq:Pet2} on $m_1,m_2,m_3$ for $m_1\gg m_2\gg 0$, $m_3\gg0$ is easy to check. Also, by \cite[Th{\`e}or{\'e}m on page 91]{MVW} and \cite{Arthur-GSp4,Gee-Taibi}, we know that $\bar{\Pi}=\Pi\otimes\omega^{-1}_\Pi\circ\nu$.)

\begin{thm}\label{thm:main}
Let $\Pi,\pi$ and and $\epsilon\in\{0,1\}$ be as in \S\ref{sec:ord-at-p}. Fix the auxiliary data $\bS,\Lambda,N$ as in \S\ref{sec:aux}, and  take holomorphic $p$-ordinary forms 
$\varphi_\ord\in \Pi^{K_{\GSp(4),f}}$, $f_\ord\in \pi^{K_{\GL(2),f}}$ with algebraic Fourier coefficients (where the level group is defined in \eqref{eq:KK}).
There exists a $p$-adic measure 
\[
   \cL^{N}_{\Pi,\pi}\in\pzM eas\left(\bQ^\times\backslash \bA^\times_{\bQ,f} /U^p_N,\bar{\bQ}_p\right)
\]
satisfying the interpolation property: For all finite order characters $\chi:\bQ^\times\backslash \bA^\times_{\bQ}/U^p_N\ra\bC^\times$ and integers $k$ such that $s=k+\frac{\epsilon}{2}$ is critical for $L(s,\tilde{\Pi}\times\tilde{\pi}\times\chi)$ (or equivalently  satisfying \eqref{eq:krange}),
\begin{equation}\label{eq:interp}
\begin{aligned}
   \cL^N_{\Pi,\pi}\left((\chi|\bdot|^k)_{p_\adic}\right)
   = &\,\frac{\bes^\dagger_{\bS,\Lambda}\left(\varphi_{\ord}\right)
   \,\whi_\bbc(f_\ord)}{\bfP(\varphi_\ord,\varphi_\ord)\,\bfP(f_\ord,f_\ord)} \\
     &\times  I_\infty(k,\Pi_\infty,\pi_\infty,\Lambda_\infty)\cdot
   E_p\left(k+\frac{\epsilon}{2},\tilde{\Pi}\times\tilde{\pi}\times\chi\right)
   \cdot L^{Np\infty}\left(k+\frac{\epsilon}{2},\tilde{\Pi}\times\tilde{\pi}\times\chi\right),
\end{aligned}
\end{equation}
with the $E_p$-factor defined in \eqref{eq:Ep} and 
\begin{equation}\label{eq:azI}
\begin{aligned}
   &I_\infty(k,\Pi_\infty,\pi_\infty,\Lambda_\infty)\\
   =&\,(-1)^k\, \bbc^{\frac{2k-\epsilon+l+l_1+l_2}{2}} \left(-(\alphaS-\alphabS)^2\right)^{\frac{2k-\epsilon+l-r_{\sLambda}}{2}}
   \cdot \frac{\prod_{j=0}^2\Gamma\left(s+\frac{t_k+3}{2}-j\right)}{2^{3(s+\frac{t_k-1}{2})}\pi^{3(s+\frac{t_k+1}{2})}i^{t_k}}\\
    &\times \left.  Z_\infty\left(D_{t_k,r_{\sLambda},l_1,l_2,l}\cdot \dsec^{t_k}_\infty(s,\chi,\Xi),\Bes^{\cD_{l_1,l_2}}_{\bS,\Lambda_{\infty}}\left(\begin{bsm}&\bid_2\\ \bid_2\end{bsm}\cdot \varphi_{\ord,\infty}\right),\Whi^{\cD_l,\Xi^{-c}_\infty}_{\bbc}\left(\begin{bsm}&1\\1\end{bsm}\cdot f_{\ord,\infty}\right)\right)\right|_{s=\frac{2k+\epsilon-1}{2}}.
\end{aligned}
\end{equation}
(See \S\ref{sec:sec-inf} for some notations. Since $\Whi^{\cD_l}_{\bbc}\left(f_\infty\right)$ is supported on $\GL(2,\bR)_+$ \eqref{eq:Wfinfty}, we know that the archimedean integral above is independent of $\chi$.)
\end{thm}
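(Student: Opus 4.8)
The plan is to obtain $\cL^N_{\Pi,\pi}$ by pairing the Eisenstein measure $\cE$ of Proposition~\ref{prop:measE} against the fixed ordinary forms $\varphi_\ord$ and $f_\ord$, and then to evaluate at the classical points $(\chi,k)$ via Furusawa's factorization (Theorem~\ref{thm:Furusawa}) together with the local zeta integral computations of Propositions~\ref{prop:Z-vol}, \ref{prop:ZpWals}, \ref{prop:Zord} and \eqref{eq:azI}. Concretely, the measure $\cE$ takes values in the finite-dimensional $\bar\bQ_p$-vector space $\cM_{l_1,l_2}(K_{\GSp(4),f},\bar\bQ_p)\otimes\cM_l(K_{\GU(1,1),f},\bar\bQ_p)$, so it suffices to construct one $\bar\bQ_p$-linear functional $\ell$ on this space and set $\cL^N_{\Pi,\pi}=\ell\circ\cE$. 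I would take $\ell$ to send a holomorphic form $F$, regarded as a form on $\GSp(4)\times_{\GL(1)}\GU(1,1)$ via the embedding \eqref{eq:gp-ebd}, to
\[
\frac{1}{\bfP(\varphi_\ord,\varphi_\ord)\,\bfP(f_\ord,f_\ord)}\int_{[\GSp(4)\times_{\GL(1)}\GU(1,1)]} F(g,h)\,\varphi_\ord(g)\,f_\ord^{\Upsilon}(h)\,\Xi^{-1}(\det h)\,dh\,dg .
\]
Because $\varphi_\ord$ and $f_\ord$ have algebraic Fourier coefficients and the Bessel and Whittaker periods, as well as the Petersson pairings entering $\bfP$, are computed by finite sums of products of Fourier coefficients, $\ell$ is defined over $\bar\bQ$ on the $\bar\bQ$-structures of Theorem~\ref{thm:NHF}(2) and hence extends $\bar\bQ_p$-linearly; the condition $\Xi\Lambda^c\Upsilon^c=\triv$ and the central-character normalizations of \S\ref{sec:aux} make the integrand trivial on the center, so $\ell$ is well defined.

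Next I would compute $\ell(\cE)$ at a classical point. By Proposition~\ref{prop:measE}, $\cE\big((\chi|\bdot|^k)_{p_\adic}\big)=e^{\GSp(4)}_\ord e^{\GL(2)}_\ord\big(E^\Sieg_{k,\chi}|_{\GSp(4)\times\GU(1,1)}\big)$. Since $\varphi_\ord$ and $f_\ord$ are ordinary $\bU_p$-eigenvectors, and the ordinary projectors are, up to the explicit $\lambda_{p,\bullet}^{-m_\bullet}$, $p^{\pm m_\bullet}$ and long-Weyl-element twists built into the definitions \eqref{eq:bes-dagger}, \eqref{eq:Pet}, \eqref{eq:Pet2}, adjoint to themselves with respect to the Petersson pairing, I would move the projectors off the Eisenstein series and onto $\varphi_\ord\otimes f_\ord$. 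The twist by $\begin{bsm}&\bid_2\\ \bid_2\end{bsm}$ appearing in $\bfP$ and in the archimedean integral of \eqref{eq:azI} is exactly the one produced by this adjunction, and it uses $\bar\Pi=\Pi\otimes\omega_\Pi^{-1}\circ\nu$. The upshot is that $\ell(\cE((\chi|\bdot|^k)_{p_\adic}))$ equals the Furusawa integral of the normalized Eisenstein series $E^\Sieg_{k,\chi}$ of \eqref{eq:Esieg} against $\varphi_\ord, f_\ord$, divided by $\bfP(\varphi_\ord,\varphi_\ord)\bfP(f_\ord,f_\ord)$; then Theorem~\ref{thm:Furusawa}(i), applied linearly in the section, factors this as the $\Gamma$-and-$d_3^{Np\infty}$ normalizing factor of \eqref{eq:Esieg} times $C_{\bS,\Lambda,\Pi}\,C_{\bbc,\pi}\,\prod_v Z_v\big(\dsec_{k,v}(s,\chi,\Xi),\Bes^{\Pi_v}_{\bS,\Lambda_v}(\varphi_{\ord,v}),\Whi^{\pi_v,\Upsilon_v}_\bbc(f_{\ord,v})\big)\big|_{s=\frac{2k+\epsilon-1}{2}}$.

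Then I would substitute the local evaluations. At $v\nmid Np\infty$ everything is unramified, and Theorem~\ref{thm:Furusawa}(ii) gives $d_{3,v}^{-1}L_v(k+\tfrac{\epsilon}{2},\tilde\Pi\times\tilde\pi\times\chi)\,\bes^{\Pi_v}_{\bS,\Lambda_v}(\varphi_{\ord,v})\,\whi^{\pi_v,\Upsilon_v}_\bbc(f_{\ord,v})$, and the factor $d_3^{Np\infty}$ in \eqref{eq:Esieg} cancels the $d_{3,v}^{-1}$'s, leaving $L^{Np\infty}$. At $v\mid N$, Proposition~\ref{prop:Z-vol} applies to the sections of \S\ref{sec:fN} and contributes explicit nonzero constants together with the local Bessel and Whittaker factors. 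At $v=p$, the final Proposition of the preceding section shows that $\Pi_p$ and $(\bS,\Lambda_p)$ satisfy the hypotheses of Proposition~\ref{prop:ZpWals}, whose formula produces precisely the $\gamma$-factors that assemble into $E_p(k+\tfrac{\epsilon}{2},\tilde\Pi\times\tilde\pi\times\chi)$, together with the integral of $\wal^{\sigma_{\sPi_p}}_{\bS,\Lambda_p}$ against $\cF\Schw_{v,0}$; with the Schwartz function of \S\ref{sec:secp} this integral is evaluated by Proposition~\ref{prop:Zord}(ii) and yields (a fixed twist of) $\bes^\dagger_{\bS,\Lambda}(\varphi_\ord)$ and $\whi^{\pi_p}_\bbc(f_p)$. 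At $v=\infty$, the differential-operator section $D_{t_k,r_{\sLambda},l_1,l_2,l}\cdot\dsec^{t_k}_\infty$ is chosen so that, once the $\Gamma$-and-power normalization is absorbed, $Z_\infty$ is $I_\infty(k,\Pi_\infty,\pi_\infty,\Lambda_\infty)$ by definition \eqref{eq:azI}. Finally, $C_{\bS,\Lambda,\Pi}\prod_v\bes^{\Pi_v}_{\bS,\Lambda_v}(\varphi_{\ord,v})=\bes^\dagger_{\bS,\Lambda}(\varphi_\ord)$ and $C_{\bbc,\pi}\prod_v\whi^{\pi_v,\Upsilon_v}_\bbc(f_{\ord,v})=\whi_\bbc(f_\ord)$ by \eqref{eq:Bvarphi} and \eqref{eq:Wf2}, and collecting the remaining explicit constants gives \eqref{eq:interp}.

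The main obstacle will be the bookkeeping in the two middle steps: verifying that the $\lambda_{p,\bullet}^{-m_\bullet}$, $p^{\pm m_\bullet}$ and Weyl-element twists in \eqref{eq:bes-dagger}--\eqref{eq:Pet2} are exactly those required both to transpose the ordinary projectors across the Petersson pairing and to match the output of Proposition~\ref{prop:ZpWals} (stated with $\varphi_{v,m_1,m_2}$, $f_{v,m_3}$ and a limit $m_\bullet\to\infty$, whereas $\varphi_\ord$, $f_\ord$ are at the fixed level $p^{m_0}$), and then checking that the product of all the explicit archimedean, $N$-adic and $p$-adic constants emitted by the local computations equals the constant displayed in \eqref{eq:interp}. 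Conceptually this is routine, but it is where the normalization choices of \S\ref{sec:dsec-choice} and \S\ref{sec:ord-at-p} must all be reconciled at once.
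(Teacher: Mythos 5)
Your overall strategy is the same as the paper's: pair the Eisenstein measure $\cE$ of Proposition~\ref{prop:measE} against the fixed ordinary data to get a $\bar\bQ_p$-valued measure, evaluate at classical points via Theorem~\ref{thm:Furusawa} combined with Propositions~\ref{prop:Z-vol} and~\ref{prop:ZpWals}, and read off the interpolation formula. The paper, however, does not define $\ell$ as the raw Furusawa pairing against $\varphi_\ord\otimes f_\ord^\Upsilon$ and then ``move the ordinary projectors across by adjunction''; it short-circuits that adjunction entirely by defining $\sL_{\varphi_\ord,f_\ord}$ directly as a Petersson pairing against the Weyl- and $p^{\pm m_\bullet}$-twisted forms (the same twists appearing in $\varphi_{v,m_1,m_2},f_{v,m_3}$ in \eqref{eq:dual-twist}), with the $\lambda$-normalization built in, and then observes stability in $m_\bullet$. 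This matters because Proposition~\ref{prop:ZpWals} is stated for exactly those twisted local test vectors, so your $\ell$ (untwisted) does not directly plug in; your adjunction step is precisely the unverified bridge. I see two concrete gaps.

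First, the algebraicity of the functional. You assert that $\ell$ is $\bar\bQ$-rational because ``the Petersson pairings entering $\bfP$ are computed by finite sums of products of Fourier coefficients.'' That is false: Petersson inner products of cusp forms are not finite sums of products of Fourier coefficients, and there is no such elementary identity. The paper instead proves the algebraicity of $\tfrac{\langle\,\cdot\,,\varphi'_\ord\rangle}{\bfP(\varphi_\ord,\varphi_\ord)}$ by transferring $\Pi$ to $\GL(5)$, invoking nonvanishing of $L$-functions at $s=1$ for cuspidal representations of $\GL(n)$ (using that $\Pi$ is tempered), and citing the argument on pp.~478--480 of \cite{GaSH}. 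Without some such input you have no $\bar\bQ$-structure for $\ell$, and hence no way to extend it $\bar\bQ_p$-linearly to the $\bar\bQ_p$-valued space where $\cE$ lives.

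Second, the $\chi$-dependent constants. After substituting the local evaluations, the product of local factors produces a quantity $C_{k,\chi,\Pi,\pi}$ which genuinely depends on $\chi$ (it contains $\chi_p(-1)$, $\chi_p^{-1}(\bbc)$, and $\chi_p^{-1}\!\left(-(\alphaS-\alphabS)^2\right)$, among other things). This is not a scalar one can ``collect'': the raw measure $\mu^N_{\Pi,\pi}=\sL_{\varphi_\ord,f_\ord}(\cE)$ interpolates $C_{k,\chi,\Pi,\pi}$ times the right-hand side of \eqref{eq:interp}, not the right-hand side itself. The paper finishes by observing that $C_{k,\chi,\Pi,\pi}(\tfrac{2k+\epsilon-1}{2})$ is itself interpolated by a measure $\mu$ whose Mellin transform is a unit in $\cO_{\bar\bQ_p}\llb\bQ^\times\backslash\bA^\times_{\bQ,f}/U^p_N\rrb\otimes\bQ$, and then defines $\cL^N_{\Pi,\pi}$ by deconvolution $\mu^N_{\Pi,\pi}=\mu\ast\cL^N_{\Pi,\pi}$. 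Your final sentence (``collecting the remaining explicit constants gives \eqref{eq:interp}'') elides this invertibility/deconvolution step, which is needed to land on a measure satisfying \eqref{eq:interp} exactly.
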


\begin{proof}
Let $\varphi'_\ord = \Pi\left(\begin{bmatrix}&-\bid_2\\ \bid_2\end{bmatrix}_N\right)\varphi_\ord$. By considering the transfer of $\Pi$ to $\GL(5)$ \cite[Theorem~8.1.2]{Gee-Taibi} and using the nonvanishing of $L$-functions at $s=1$ for cuspidal automorphic representations of $\GL(n)$ \cite{JS-nonvan} and $\Pi$ being tempered, we can deduce that the degree $5$ (partial) $L$-function of $\Pi$ twisted by a finite-order character is nonvanishing at $s=1$. Then the argument in \cite[page 478-480]{GaSH} shows the algebraicity of $\frac{\left<\,\cdot\,,\varphi'_\ord\right>}{\bfP(\varphi_\ord,\varphi_\ord)}$. Also, we know the algebraicity of $\frac{\left<\,\cdot\,,f_\ord\right>}{\bfP(f_\ord,f_\ord)}$. Hence, we can define the linear functional 
\[
   \sL_{\varphi_\ord,f_\ord}:\cM_{l_1,l_2}\left(K_{\GSp(4),f}(N),\bar{\bQ}\right)
   \otimes_{\bar{\bQ}_p}\cM_l\left(K_{\GL(2),f}(N),\bar{\bQ}\right)\lra\bar{\bQ}
\]
as
{\small
\begin{align*}
   \sL_{\varphi_\ord,f_\ord}(\varphi\otimes f)=&\,\bfP(\varphi_\ord,\varphi_\ord)^{-1}\bfP(f_\ord,f_\ord)^{-1}\cdot \lambda_{p,1}(\varphi_{\ord})^{-m_1}\lambda_{p,2}(\varphi_{\ord})^{-m_2} \lambda_p(f_\ord)^{-m_3}\\
   &\hspace{-1em}\times\left<\varphi\otimes f, \Pi\left(\begin{bsm}&\bid_2\\ \bid_2\end{bsm}_{p\infty}\begin{bsm}p^{m_1}\\&p^{m_2}\\&&p^{-m_1}\\&&&p^{-m_2}\end{bsm}_p\right)\varphi'_\ord \otimes \pi\left(\begin{bsm}&1\\1\end{bsm}_{p\infty}\begin{bsm}p^{m_3}\\&p^{-m_3}\end{bsm}\right)f_\ord\right>.
\end{align*}
}
\hspace{-.5em}with $m_1\gg m_2\gg 0$, $m_3\gg 0$.  (One can check that the right hand side is independent of $m_1,m_2,m_3$ once $m_1-m_2,m_2,m_3$ are sufficiently large.) We denote its $\bar{\bQ}_p$-linear extension to $\cM_{l_1,l_2}\left(K_{\GSp(4),f},\bar{\bQ}_p\right) \otimes_{\bar{\bQ}_p}\cM_l\left(K_{\GL(2),f},\bar{\bQ}_p\right)$ also by $\sL_{\varphi_\ord,f_\ord}$. Since $\sL_{\varphi_\ord,f_\ord}$ is a $\bar{\bQ}_p$-linear functional on a finite dimensional $\bar{\bQ}_p$-vector space, it is $p$-adically continuous, so we can apply it to the $p$-adic measure $\cE$ in Proposition~\ref{prop:measE} to get
\[
   \mu^N_{\Pi,\pi}=\sL_{\varphi_\ord,f_\ord}(\cE)\in \pzM eas \left(\bQ^\times\backslash \bA^\times_{\bQ,f} /U^p_N,\bar{\bQ}_p\right).
\]

Next, we compute the interpolations of $\mu^N_{\Pi,\pi}$. By the integral representation stated in Theorem~\ref{thm:Furusawa}, and the computation in Proposition~\ref{prop:Z-vol} of local zeta integrals for the types of sections chosen in \S\ref{sec:fN} for $v\,|\,N$, we have
{\small
\begin{equation}\label{eq:mu-eval}
\begin{aligned}
   \mu^N_{\Pi,\pi}\left((\chi|\bdot|^k)_{p_\adic}\right)
   =&\,\frac{\bes_{\bS,\Lambda}\left(\varphi_\ord\right) 
   \whi_\bbc(f_\ord)}{\bfP(\varphi_\ord,\varphi_\ord)\bfP(f_\ord,f_\ord)}
   \cdot \prod_{v\,|\,N} \frac{|\bbc^2 N^7|_v}{ (1-|\varpi_v|_v)^2(1-|\varpi_v|^2_v)} \\
   &\times I_p\left(\frac{2k+\epsilon-1}{2}\right)
   I_\infty\left(\frac{2k+\epsilon-1}{2}\right)
   \cdot L^{Np\infty}\left(k+\frac{\epsilon}{2},\tilde{\Pi}\times\tilde{\pi}\times\chi\right)
\end{aligned}
\end{equation}
}
\hspace{-.5em}with
{\small
\begin{align*}
   I_p(s)&=\frac{Z_p\left(\dsec^\bc_{p,\Schw_{\chi^\circ_p,\Lambda^\circ_p,\eta^\circ_{\sPi_p},\eta^\circ_{\pi_p}}}(s,\chi,\Xi),\Bes^{\Pi_p}_{\bS,\Lambda_p}(\varphi_{\ord,p,m_1,m_2}),\Whi^{\pi_p,\Upsilon_p}_\bbc(f_{\ord,p,m_3})\right)}{\lambda_1(\varphi_{\ord,p})^{m_1}\lambda_2(\varphi_{\ord,p})^{m_2}\lambda(f_{\ord,p})^{m_3}\cdot\bes^{\Pi_p}_{\bS,\Lambda_p}\left(\varphi_{\ord,p}\right)}\\
   I_\infty(s)&=2^{-3(s+\frac{t_k-1}{2})}i^{-t_k}\pi^{-3(s+\frac{t_k+1}{2})}\prod_{j=0}^2\Gamma\left(s+\frac{t_k+3}{2}-j\right)\\
  &\quad\times Z_\infty\left(D_{t_k,r_{\sLambda},l_1,l_2,l}\cdot \dsec^{t_k}_\infty(s,\chi,\Xi),\Bes^{\cD_{l_1,l_2}}_{\bS,\triv}\left(\begin{bsm}&\bid_2\\ \bid_2\end{bsm}\cdot \varphi_{\ord,\infty}\right),\Whi^{\cD_l,\Xi^{-c}_\infty}_{\bbc}\left(\begin{bsm}&1\\1\end{bsm} \cdot f_{\ord,\infty}\right)\right).
\end{align*} 
}

We need to compute $I_p(s)$. It follows from Proposition~\ref{prop:ZpWals} and our choice of $\Schw_{\chi^\circ_p,\Lambda^\circ_p,\eta^\circ_{\sPi_p},\eta^\circ_{\pi_p}}$ in \S\ref{sec:secp} that
{\small
\begin{equation}\label{eq:Ips}
\begin{aligned}
   I_p(s)=&\,\chi_p(-1)\,\chi^{-1}_p\eta_{\pi_p,1}\eta_{\sPi_p,3}(\bbc)|\bbc|^{-s}_p\cdot \whi^{\pi_p}_\bbc(f_{\ord,p}) 
   \cdot E_p\left(s+\frac{1}{2},\tilde{\Pi}\times\tilde{\pi}\times\chi\right)\\
   &\times \lambda_1(\varphi_{\ord,p})^{-m_1}\lambda_2(\varphi_{\ord,p})^{-m_2}\left(\eta_{\sPi_p,3}(p)p^{3/2}\right)^{-m_1-m_2}\bes^{\Pi_p}_{\bS,\Lambda_p}\left(\varphi_{\ord,p}\right)^{-1}
   \cdot I_{p,1}(s)\,I_{p,2}(s),
\end{aligned}
\end{equation}
}
\hspace{-.5em}with
{\small
\begin{align*}
   I_{p,1}(s)&=\int_{\bQ^\times_p} |\nu|^{-s+\frac{1}{2}}_p\cdot \chi^{-1}_p\eta_{\pi_p,2}\eta_{\sPi_p,3}(\nu)\cdot \cF \Schw_{p,1,\alt}\left(\nu \right) \,d^\times\nu,\\
   I_{p,2}(s)&=\int_{\GL(2,\bQ_p)} (\cF\Schw_{p,0})\left(T\right)\cdot |\det T|^{-s+1}_p \chi^{-1}_p\eta_{\pi_p,1}\Lambda_{\bQ,p}(\det T)\cdot \wal^{\sigma_p}_{\bS,\Lambda_p}\left(\sigma_p\left(T^{-1}\begin{bmatrix}p^{m_1}\\&p^{m_2}\end{bmatrix} \right)w_{\sigma_p}\right) \,dT
\end{align*}
}
\hspace{-.5em}where
{\small
\begin{align}
   \label{eq:altSchwp} \cF \Schw_{p,1,\alt}(\nu)=&\,\chi^\circ_p\eta^{\circ-1}_{\pi_p,2}\eta^{\circ-1}_{\sPi_p,3}(\nu),\\
   \label{eq:0Schwp}  \cF\Schw_{p,0}\left(\begin{bmatrix}\fw_{13}&\bar{\fw}_{13}\\ \fw_{23}&\bar{\fw}_{23}\end{bmatrix}\begin{bmatrix}\alphaS&1\\ \alphabS&1\end{bmatrix}\right)
   = &\,\mathds{1}_{(\alphaS-\alphabS)\cO_{\cK,p}}(\fw_{23})
    \cdot \Lambda^{\circ}_p(\fw_{13})  \,\,\eta^{\circ-1}_{\sPi_p,2}(\fw_{13}\bar{\fw}_{13})     \\
    \nonumber&\times  \chi^\circ_p\eta^{\circ-1}_{\pi_p,1}\eta^{\circ-1}_{\sPi_p,1}\left(\frac{\bar{\fw}_{13}\fw_{23}-\fw_{13}\bar{\fw}_{23}}{\alphaS-\alphabS}\right).
\end{align}
}
\hspace{-.5em}It is easy to see that
{\small
\begin{equation}\label{eq:Ip1s}
   I_{p,1}(s)=1.
\end{equation}
}
\hspace{-.5em}Writing $T=\begin{bmatrix}\fy_1&\fy_2\\ \bar{\fy}_1&\bar{\fy}_2\end{bmatrix}^{-1}\begin{bmatrix}\alphaS&1\\ \alphabS&1\end{bmatrix}$ with $\fy_1,\fy_2\in\cK_p$, 
{\small
\begin{equation}\label{eq:F-Schw-p}
\begin{aligned}
    &(\cF\Schw_{p,0})\left(T\right)\cdot |\det T|^{-s+1}_p \chi^{-1}_p\eta_{\pi_p,1}\Lambda_{\bQ,p}(\det T)\\
     =&\,\chi^{-1}_p\eta_{\pi_p,1}\Lambda_{\bQ,p}\left(-(\alphaS-\alphabS)^2\right)\left|-(\alphaS-\alphabS)^2\right|^{-s+1}_p \\
    &\times \mathds{1}_{\cO_{\cK,p}}(\fy_1)
    \cdot \Lambda^{\circ-1}_p\left((\alphaS-\alphabS)\fy_2\right)
    \,\eta^{\circ-1}_{\sPi_p,1}\left((\alphaS-\alphabS)^{-1}(\frac{\fy_1}{\fy_2}-\frac{\bar{\fy}_1}{\bar{\fy}_2})\right).
\end{aligned}
\end{equation}
}
\hspace{-.5em}We can also write 
{\small
\begin{equation}\label{eq:ISy}
\begin{aligned}
   \wal^{\sigma_p}_{\bS,\Lambda_p}\left(\sigma_p\left(T^{-1}\begin{bmatrix}p^{m_1}\\&p^{m_2}\end{bmatrix} \right)w_{\sigma_p}\right)
   &= \wal^{\sigma_p}_{\bS,\Lambda_p}\left(\sigma_p\left(\begin{bmatrix}\alphaS&1\\ \alphabS&1\end{bmatrix}^{-1}\begin{bmatrix}\fy_1&\fy_2\\ \bar{\fy}_1&\bar{\fy}_2\end{bmatrix}\begin{bmatrix}p^{m_1}\\&p^{m_2}\end{bmatrix} \right)w_{\sigma_p}\right)\\
   &=\Lambda_p(\fy_2)\cdot \wal^{\sigma_p}_{\bS,\Lambda_p}\left(\sigma_p\left(\begin{bmatrix}\alphaS&1\\ \alphabS&1\end{bmatrix}^{-1}\begin{bmatrix}\fy_1/\fy_2&1\\ \bar{\fy}_1/\bar{\fy}_2&1\end{bmatrix}\begin{bmatrix}p^{m_1}\\&p^{m_2}\end{bmatrix} \right)w_{\sigma_p}\right)\\
   &=\Lambda_p(\fy_2)\cdot \wal^{\sigma_p}_{\bS,\Lambda_p}\left(\sigma_p\left(\begin{bmatrix}\frac{\fy_1/\fy_2-\bar{\fy}_1/\bar{\fy}_2}{\alphaS-\alphabS}&0\\ \frac{\alphaS(\bar{\fy}_1/\bar{\fy}_2)-\alphabS(\fy_1/\fy_2)}{\alphaS-\alphabS}&1\end{bmatrix}\begin{bmatrix}p^{m_1}\\&p^{m_2}\end{bmatrix} \right)w_{\sigma_p}\right)\\
\end{aligned}
\end{equation}
}
\hspace{-.5em}In our setting here, we can use
\begin{align}
   \label{eq:sigma-Ind}\sigma_p&=\text{a quotient of }\Ind^{\GL(2)}_{B} (\eta_{\sPi_p,2},\eta_{\sPi_p,1}),
\end{align}
and $w_{\sigma_p}\in \sigma_p$ is a multiple of the projection of the section
\begin{equation}\label{eq:w-Ind}
   h=\begin{bmatrix}a&b\\c&d\end{bmatrix}\longmapsto \mathds{1}_{\bZ_p}(c^{-1}d)\cdot |(\det h)c^{-2}|^{1/2}_p \, \eta_{\sPi_p,1}(c)\,\eta_{\sPi,2}((\det h)c^{-1})
\end{equation}
Then with $m_1\gg m_2$, from \eqref{eq:F-Schw-p}\eqref{eq:ISy}, we see that when $\cF\Schw_{p,0}(T)\neq 0$,
{\small
\begin{equation}\label{eq:plgu2}
\begin{aligned}
  &\wal^{\sigma_p}_{\bS,\Lambda_p}\left(\sigma_p\left(T^{-1}\begin{bmatrix}p^{m_1}\\&p^{m_2}\end{bmatrix} \right)w_{\sigma_p}\right)\\
   =&\, \Lambda_p(\fy_2) \,\eta_{\sPi_p,1}\left((\alphaS-\alphabS)^{-1}(\frac{\fy_1}{\fy_2}-\frac{\bar{\fy}_1}{\bar{\fy}_2})\right)\cdot \wal^{\sigma_p}_{\bS,\Lambda_p}\left(\sigma_p\left(\begin{bmatrix}p^{m_1}\\&p^{m_2}\end{bmatrix} \right)w_{\sigma_p}\right)\\
   =&\,\Lambda_p(\fy_2) \,\eta_{\sPi_p,1}\left((\alphaS-\alphabS)^{-1}(\frac{\fy_1}{\fy_2}-\frac{\bar{\fy}_1}{\bar{\fy}_2})\right)\cdot \left(\eta_{\sPi_p,3}(p)p^{3/2}\right)^{m_1+m_2}\bes^{\Pi_p}_{\bS,\Lambda_p}\left(\Pi_p\begin{bsm}p^{m_1}\\&p^{m_2}\\&&p^{-m_1}\\&&&p^{-m_2}\end{bsm}\varphi_{\ord,p}\right).
\end{aligned}
\end{equation}
}
\hspace{-.5em}Then with \eqref{eq:F-Schw-p}\eqref{eq:plgu2}, we have
{\small
\begin{equation}\label{eq:Ip2s-1}
\begin{aligned}
   I_{p,2}(s)=&\,\frac{1-\left(\frac{\cK}{p}\right)p^{-1}}{1+p^{-1}}\,\left|\frac{\alphaS-\alphabS}{\sqrt{\disc(\cK/\bQ)}}\right|_p
   \cdot \chi^{-1}_p\eta_{\pi_p,1}\left(-(\alphaS-\alphabS)^2\right)\Lambda_p(-\alphaS+\alphabS) \left|-(\alphaS-\alphabS)^2\right|^{-s+1}_p\\
   &\times
      \left(\eta_{\sPi_p,3}(p)p^{3/2}\right)^{m_1+m_2}\bes^{\Pi_p}_{\bS,\Lambda_p}\left(\Pi_p\begin{bsm}p^{m_1}\\&p^{m_2}\\&&p^{-m_1}\\&&&p^{-m_2}\end{bsm}\varphi_{\ord,p}\right).
\end{aligned}
\end{equation}
}
\hspace{-.5em}Combining \eqref{eq:Ips}\eqref{eq:Ip1s}\eqref{eq:Ip2s-1} gives
{\small
\begin{equation}\label{eq:Ips2}
\begin{aligned}
   I_p(s)=&\,\frac{1-\left(\frac{\cK}{p}\right)p^{-1}}{1+p^{-1}}\,\left|\frac{\alphaS-\alphabS}{\sqrt{\disc(\cK/\bQ)}}\right|_p
   \cdot  \chi_p(-1)\,\chi^{-1}_p\eta_{\pi_p,1}\eta_{\sPi_p,3}(\bbc) |\bbc|^{-s}_p \\
   &\times \chi^{-1}_p\eta_{\pi_p,1}\left(-(\alphaS-\alphabS)^2\right)\Lambda_p(-\alphaS+\alphabS)\left|-(\alphaS-\alphabS)^2\right|^{-s+1}_p \cdot  E_p\left(s+\frac{1}{2},\tilde{\Pi}\times\tilde{\pi}\times\chi\right)\\
   &\times  \frac{\bes^{\Pi_p}_{\bS,\Lambda_p}\left(\Pi_p\begin{bsm}p^{m_1}\\&p^{m_2}\\&&p^{-m_1}\\&&&p^{-m_2}\end{bsm}\varphi_{\ord,p}\right)
   \,\whi^{\pi_p}_\bbc\left(f_{\ord,p}\right)}{\lambda_1(\varphi_{\ord,p})^{m_1}\lambda_2(\varphi_{\ord,p})^{m_2} 
   \,\bes^{\Pi_p}_{\bS,\Lambda_p}\left(\varphi_{\ord,p}\right)^{\phantom{M^M}}}.
\end{aligned}
\end{equation}
}
\hspace{-.5em}We can write \eqref{eq:Ips2}  as
{\small
\begin{align*}
   I_p\left(s\right)=&\,C_{k,\chi,\Pi,\pi}\cdot (-1)^k\, \bbc^{\frac{2k+\epsilon+l+l_1+l_2}{2}} \left(-(\alphaS-\alphabS)^2\right)^{\frac{2k+\epsilon+l-r_{\sLambda}}{2}}\\
   &\times E_p\left(s+\frac{1}{2},\tilde{\Pi}\times\tilde{\pi}\times\chi\right)
  \cdot \frac{\bes^{\Pi_p}_{\bS,\Lambda_p}\left(\Pi_p\begin{bsm}p^{m_1}\\&p^{m_2}\\&&p^{-m_1}\\&&&p^{-m_2}\end{bsm}\varphi_{\ord,p}\right)
   \,\whi^{\pi_p}_\bbc\left(f_{\ord,p}\right)}{\lambda_1(\varphi_{\ord,p})^{m_1}\lambda_2(\varphi_{\ord,p})^{m_2} 
   \,\bes^{\Pi_p}_{\bS,\Lambda_p}\left(\varphi_{\ord,p}\right)^{\phantom{M^M}}}.
\end{align*}
}
\hspace{-.5em}with 
{\small
\begin{align*}
   C_{k,\chi,\Pi,\pi}(s)=&\,|\bbc|^{k-s}_p|(\alphaS-\alphabS)^2|^{k-s+1}_p\cdot \prod_{v\,|\,N} \frac{|\bbc^2 N^7|_v}{ (1-|\varpi_v|_v)^2(1-|\varpi_v|^2_v)}\cdot \frac{1-\left(\frac{\cK}{p}\right)p^{-1}}{1+p^{-1}}\,\left|\frac{\alphaS-\alphabS}{\sqrt{\disc(\cK/\bQ)}}\right|_p \\
   &\times\chi_p(-1)(-1)^k 
   \cdot \left(\chi^{-1}_p|\bdot|^{-k}_p\eta_{\pi_p,1}\eta_{\sPi_p,3}\right)(\bbc)\,\bbc^{-\frac{2k+\epsilon+l+l_1+l_2}{2}} \\
   &\times \left(\chi^{-1}_p|\bdot|^{-k}_p\eta_{\pi_p,1}\right)\left(-(\alphaS-\alphabS)^2\right)\Lambda_p(-\alphaS+\alphabS) 
   \left(-(\alphaS-\alphabS)^2\right)^{-\frac{2k+\epsilon+l-r_{\sLambda}}{2}}.
\end{align*}
}
\hspace{-.5em}Plugging this formula into \eqref{eq:mu-eval}, we get
{\small
\begin{align*}
   &\mu^N_{\Pi,\pi}\left((\chi|\bdot|^k)_{p_\adic}\right)\\
   =&\, C_{k,\chi,\Pi,\pi}\left(\frac{2k+\epsilon-1}{2}\right)
   \cdot \frac{\bes^\dagger_{\bS,\Lambda}\left(\varphi_{\ord}\right)
   \,\whi^{\pi}_\bbc\left(f_{\ord}\right)}{\bfP(\varphi_\ord,\varphi_\ord)\bfP(f_\ord,f_\ord)} \\
   &\times I_\infty(k,\Pi_\infty,\pi_\infty,\Lambda_\infty)
   \cdot E_p\left(\frac{2k+\epsilon-1}{2},\tilde{\Pi}\times\tilde{\pi}\times\chi\right)
   \cdot L^{Np\infty}\left(\frac{2k+\epsilon-1}{2},\tilde{\Pi}\times\tilde{\pi}\times\chi\right)
\end{align*}
}
\hspace{-.5em}with 
\begin{align*}
   I_\infty(k,\Pi_\infty,\pi_\infty,\Lambda_\infty)= (-1)^k\, \bbc^{\frac{2k+\epsilon+l+l_1+l_2}{2}} \left(-(\alphaS-\alphabS)^2\right)^{\frac{2k+\epsilon+l-r_{\sLambda}}{2}}\cdot I_\infty\left(\frac{2k+\epsilon-1}{2}\right).
\end{align*}
One can see that $C_{k,\chi,\Pi,\pi}\left(\frac{2k+\epsilon-1}{2}\right)$ is interpolated by a $p$-adic measure $\mu\in \pzM eas\left(\bQ^\times\backslash\bA^\times_{\bQ,f}/U^p_N,\bar{\bQ}_p\right)$ and the convolution with $\mu$ gives an invertible map from $\pzM eas\left(\bQ^\times\backslash\bA^\times_{\bQ,f}/U^p_N,\bar{\bQ}_p\right)$ to itself ({\it i.e.} the $p$-adic Mellin transform of $\mu$ belongs to $\left(\cO_{\bar{\bQ}_p}\llb \bQ^\times\backslash\bA^\times_{\bQ,f}/U^p_N\rrb\otimes \bQ\right)^\times$). Therefore, we can take $\cL^N_{\Pi,\pi}\in \pzM eas\left(\bQ^\times\backslash\bA^\times_{\bQ,f}/U^p_N,\bar{\bQ}_p\right)$ such that $\mu^N_{\Pi,\pi}=\mu\ast \cL^N_{\Pi,\pi}$.
\end{proof}

This proves part (i) of Theorem~\ref{thm:intro}. Next, we prove parts (ii) and (iii).

\begin{prop}
In the setting of Theorem~\ref{thm:main} and  further assuming that $l_1=l_2=l$, we have the more precise interpolation formula:
\begin{equation}\label{eq:int-sc}
\begin{aligned}
   &\cL^N_{\Pi,\pi}\left((\chi|\bdot|^k)_{p_\adic}\right)\\
   = &\,|\bbc^2(\alphaS-\alphabS)|\, 2^{-6}i^{\epsilon}\cdot \frac{\bes^\dagger_{\bS,\Lambda}\left(\varphi_{\ord}\right)
   \,\whi_\bbc(f_\ord)}{\bfP(\varphi_\ord,\varphi_\ord)\,\bfP(f_\ord,f_\ord)} \\
   &\times 
   E_p\left(k+\frac{\epsilon}{2},\tilde{\Pi}\times\tilde{\pi}\times\chi\right)
   E_\infty\left(k+\frac{\epsilon}{2},\tilde{\Pi}\times\tilde{\pi}\times\chi\right)
   \cdot  L^{Np\infty}\left(k+\frac{\epsilon}{2},\tilde{\Pi}\times\tilde{\pi}\times\chi\right).
\end{aligned}
\end{equation}
with the $E_p$, $E_\infty$ factors defined in \eqref{eq:Ep}\eqref{eq:Einf}.
\end{prop}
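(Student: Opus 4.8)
By comparing \eqref{eq:int-sc} with the interpolation formula \eqref{eq:interp} of Theorem~\ref{thm:main}, the proposition is equivalent to the purely archimedean identity
\begin{equation*}
   I_\infty(k,\Pi_\infty,\pi_\infty,\Lambda_\infty)=\bigl|\bbc^2(\alphaS-\alphabS)\bigr|\,2^{-6}\,i^{\epsilon}\cdot E_\infty\!\left(k+\tfrac{\epsilon}{2},\tilde{\Pi}\times\tilde{\pi}\times\chi\right)\qquad(l_1=l_2=l),
\end{equation*}
so the plan is to evaluate $I_\infty$ explicitly when $l_1=l_2=l$. The first observation is that this equality of weights together with the bound on the $\infty$-type of $\Lambda$ in \S\ref{sec:aux} forces $r_{\sLambda}=0$, so that the differential operator of \S\ref{sec:sec-inf} collapses to $(2\pi i)^{\frac{3(t_k-l)}{2}}$ times the $\tfrac{l-t_k}{2}$-th powers of the two operators $\det\bigl[\begin{smallmatrix}\mu^+_{\GU(3,3),13}&\mu^+_{\GU(3,3),23}\\ \mu^+_{\GU(3,3),31}&\mu^+_{\GU(3,3),32}\end{smallmatrix}\bigr]$ and $\tfrac12\bigl(\mu^+_{\GU(3,3),21}-\mu^+_{\GU(3,3),12}\bigr)$; moreover \eqref{eq:krange} with $l_1=l_2=l$ reads $-\tfrac l2+2\le k+\tfrac{\epsilon}{2}\le\tfrac l2-1$, and in both cases of \eqref{eq:tk} this gives $t_k\le l$, so every exponent above is a non-negative integer.

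The core of the argument is the archimedean zeta integral $Z_\infty\bigl(D_{t_k,0,l,l,l}\cdot\dsec^{t_k}_\infty(s,\chi,\Xi),\Bes^{\cD_{l,l}}_{\bS,\Lambda_\infty}(\cdots),\Whi^{\cD_l,\Xi^{-c}_\infty}_{\bbc}(\cdots)\bigr)$ evaluated at $s=\tfrac{2k+\epsilon-1}{2}$. For the (at most) two extreme critical points where $t_k=l$, the operator $D_{t_k,0,l,l,l}$ is the identity and this integral is given directly by Proposition~\ref{eq:Zlinf} with $t=l$; substituting its output $\tfrac{\Gamma(s+\frac{3l-3}{2})\Gamma(s+\frac{l-1}{2})}{\Gamma(s+\frac{l+3}{2})}$ into \eqref{eq:azI} and combining with the prefactor $\prod_{j=0}^{2}\Gamma\!\bigl(s+\tfrac{t_k+3}{2}-j\bigr)$ there already produces the four $\Gamma_\bC$-factors of $E_\infty$ in \eqref{eq:Einf}. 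For a general $k$ in the range (so $t_k<l$) I would recompute $Z_\infty$ by extending the proof of Proposition~\ref{eq:Zlinf}: after the Cartan-type decomposition of $\GSp(4,\bR)$ used there and the formula \eqref{eq:Fl-infty2} expressing the $\U(1,1)$-integrated section as a multiple of a lowest-weight discrete-series matrix coefficient, the new ingredient is the action of the operators $\mu^+_{\GU(3,3),ij}$ on $\dsec^{t_k}_\infty\!\bigl(\cS^{-1}\imath(\eta_\bS g,h)\bigr)$ at the group elements occurring in the integral, computed in the same spirit as the leading term \eqref{eq:D-lead} is extracted in the proof of Proposition~\ref{prop:EFC}. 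The non-holomorphic terms produced by $D$ are expected to drop out — either they pair to zero against the holomorphic vectors $\varphi_{\ord,\infty},f_{\ord,\infty}$, or, more safely, they are annihilated by $e^{\GSp(4)}_\ord e^{\GU(1,1)}_\ord$ at the level of the global construction — so that $Z_\infty$ with the operated section equals the scalar-weight-$l$ integral of Proposition~\ref{eq:Zlinf} times an explicit constant assembled from powers of $2\pi i$, $\alphaS-\alphabS$ and a ratio of $\Gamma$-values; the $t_k$-dependence of this constant, of the prefactor in \eqref{eq:azI}, and of Proposition~\ref{eq:Zlinf} then cancels, leaving the $t_k$-independent right-hand side.

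It then remains to plug the value of $Z_\infty$ into \eqref{eq:azI} and do the bookkeeping: collect all powers of $2$, $\pi$, $i$, $\bbc$ and $\alphaS-\alphabS$, match $\Gamma_\bC(s)=2(2\pi)^{-s}\Gamma(s)$ and the phase $e^{-(4s+l_1+l_2+l)\pi i/2}$ of \eqref{eq:Einf} — which at $s=k+\tfrac{\epsilon}{2}$ with $l_1=l_2=l$ yields the $i^{\epsilon}$ after cancellation against the $i^{-t_k}$ in \eqref{eq:azI} and the $i^{t_k-l}$ hidden in the differential-operator constant — and verify that the result is exactly \eqref{eq:int-sc}; one also checks that replacing $\varphi_{\ord,\infty}$ and $f_{\ord,\infty}$ by their Weyl translates $\begin{bsm}&\bid_2\\ \bid_2\end{bsm}\cdot\varphi_{\ord,\infty}$ and $\begin{bsm}&1\\ 1\end{bsm}\cdot f_{\ord,\infty}$ only changes the $K_\infty$-type bookkeeping in Proposition~\ref{eq:Zlinf}. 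The main obstacle is the evaluation of $Z_\infty$ for $t_k<l$: carrying the Maass--Shimura operators $\mu^+_{\GU(3,3),ij}$ through the embedding $\imath$, the matrix $\cS$ and the Cartan decomposition, confirming that the non-holomorphic contributions vanish, and keeping every power of $2\pi i$, $\bbc$ and $\alphaS-\alphabS$ aligned so that $I_\infty$ comes out as precisely the Coates--Perrin-Riou factor $E_\infty$ up to the stated elementary constant.
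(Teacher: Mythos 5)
Your overall plan is correct (reduce to the archimedean identity for $I_\infty$, note that $l_1=l_2=l$ forces $r_\sLambda=0$ and in fact $\Lambda_\infty=\triv$, and treat the case $t_k=l$ by a direct appeal to Proposition~\ref{eq:Zlinf}), and you correctly identify the obstruction: evaluating $Z_\infty$ with the differential-operated section for $t_k<l$. But at precisely that point the proposal leaves a genuine gap. ``Carrying the $\mu^+_{\GU(3,3),ij}$ through the embedding and the Cartan decomposition'' and then arguing that non-holomorphic contributions ``pair to zero against the holomorphic vectors, or are annihilated by $e^{\GSp(4)}_\ord e^{\GU(1,1)}_\ord$'' is not a computation that can be done as stated: the ordinary-projector argument is a statement about the global $p$-adic object, not the local archimedean integral $I_\infty$ you are trying to evaluate (which is, by design, the ``period'' constant that has to be \emph{defined} by the archimedean integral and then recognized), and the ``pairing to zero'' claim requires an adjointness argument you have not set up.

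The paper's technical move, which your proposal does not identify, is the following algebraic observation. Introduce the auxiliary operator
\[
   D'_{t_k,l}=(2\pi i)^{\frac{3(t_k-l)}{2}}\cdot
   \det\begin{bmatrix}
   \mu^+_{\GU(3,3),11}& \mu^+_{\GU(3,3),12}&\mu^+_{\GU(3,3),13}\\
   \mu^+_{\GU(3,3),21}& \mu^+_{\GU(3,3),22}&\mu^+_{\GU(3,3),23}\\
   \mu^+_{\GU(3,3),31}& \mu^+_{\GU(3,3),32}&\mu^+_{\GU(3,3),33}\\
   \end{bmatrix}^{\frac{l-t_k}{2}},
\]
i.e. replace the product of the $2\times 2$-minor determinant and $\tfrac12(\mu^+_{21}-\mu^+_{12})$ in $D_{t_k,0,l,l,l}$ by the full $3\times 3$ determinant. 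The key point is that $D'_{t_k,l}-D_{t_k,0,l,l,l}$ lies in the ideal $(\fp^+_{\Sp(4)},\fp^+_{\U(1,1)})\bC[\fp^+_{\GU(3,3)}]$, and since $\Bes^{\cD_{l,l}}_{\bS,\triv}(\cdots)$ and $\Whi^{\cD_l,\Xi^{-c}_\infty}_{\bbc}(\cdots)$ are annihilated by $\fp^-_{\GSp(4)}$ and $\fp^-_{\GU(1,1)}$ respectively, the local integral $Z_\infty$ is insensitive to replacing $D_{t_k,0,l,l,l}$ by $D'_{t_k,l}$ --- this makes your ``non-holomorphic terms drop out'' precise and purely local. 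One then invokes Shimura's explicit formula for the action of the full-determinant operator on a scalar-weight section (\cite[Theorem~12.13, Lemma~13.9]{Sh00}), namely
\[
   D'_{t_k,l}\cdot \dsec^{t_k}_\infty(s,\chi,\Xi)
   =(2\pi i)^{\frac{3(t_k-l)}{2}}\,i^{\frac{t_k+3l}{2}} \prod_{j=0}^2 \frac{\Gamma\!\left(s+\frac{l+3}{2}-j\right)}{\Gamma\!\left(s+\frac{t_k+3}{2}-j\right)}\,\dsec^{l}_\infty(s,\chi,\Xi),
\]
which reduces the problem exactly to the $t=l$ case of Proposition~\ref{eq:Zlinf}; the remaining bookkeeping then goes through as you describe. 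Without this replacement $D\rightsquigarrow D'$ and Shimura's identity, your proposed direct computation of the operated section at the $\cS^{-1}\imath(\eta_\bS g,h)$ argument would be substantially more involved and has not been carried out.
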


\begin{proof}
When $l_1=l_2=l$, we can compute $I_\infty(s)$ as follows. In this case, the $(\bS,\Lambda)$-Bessel period vanishes on $\Pi$  unless $\Lambda_\infty=\triv$, so we can assume that $\Lambda_\infty=\triv$. Let
\[
   D'_{t_k,l}=(2\pi i)^{\frac{3t_k-3l}{2}}\cdot 
   \det\begin{bmatrix}
   \mu^+_{\GU(3,3),11}& \mu^+_{\GU(3,3),12}&\mu^+_{\GU(3,3),13}\\
   \mu^+_{\GU(3,3),21}& \mu^+_{\GU(3,3),22}&\mu^+_{\GU(3,3),23}\\
   \mu^+_{\GU(3,3),31}& \mu^+_{\GU(3,3),32}&\mu^+_{\GU(3,3),33}\\
   \end{bmatrix}^{\frac{l-t_k}{2}}.
\]
Then 
\[
    D'_{t_k,l}-D_{t_k,0,l,l,l}\in \left(\fp^+_{\Sp(4)},\fp^+_{\U(1,1)}\right)\bC[\fp^+_{\GU(3,3)}],
\]
where we view $\fp^+_{\Sp(4)}\times\fp^+_{\U(1,1)}$ as a subalgebra of $\fp^+_{\GU(3,3)}$ via the embedding \eqref{eq:gp-ebd}. Because $\Bes^{\cD_{l,l}}_{\bS,\triv}\left(\begin{bsm}&\bid_2\\ \bid_2\end{bsm}\cdot\varphi_{\ord,\infty}\right)$ (resp. $\Whi^{\cD_l,\Xi^{-c}_\infty}_{\bbc}(\begin{bsm}&1\\1\end{bsm}\cdot f_{\ord,\infty})$) is annihilated by $\fp^-_{\GSp(4)}$ (resp. $\fp^-_{\GU(1,1)}$), we have
\begin{align*}
   &Z_\infty\left(D_{t_k,0,l,l,l}\cdot \dsec^{t_k}_\infty(s,\chi,\Xi),\,\Bes^{\cD_{l_1,l_2}}_{\bS,\triv}\left(\begin{bsm}&\bid_2\\ \bid_2\end{bsm}\cdot \varphi_{\ord,\infty}\right),\Whi^{\cD_l,\Xi^{-c}_\infty}_{\bbc}\left(\begin{bsm}&1\\1\end{bsm} \cdot f_{\ord,\infty}\right)\right)\\
   =&\,Z_\infty\left(D'_{t_k,l}\cdot \dsec^{t_k}_\infty(s,\chi,\Xi),\,\Bes^{\cD_{l_1,l_2}}_{\bS,\triv}\left(\begin{bsm}&\bid_2\\ \bid_2\end{bsm}\cdot \varphi_{\ord,\infty}\right),\Whi^{\cD_l,\Xi^{-c}_\infty}_{\bbc}\left(\begin{bsm}&1\\1\end{bsm} \cdot f_{\ord,\infty}\right)\right).
\end{align*}
On the other hand, by \cite[Theorem~12.13,Lemma~13.9]{Sh00}, we have
{\small
\begin{align*}
   D'_{t_k,l}\cdot \dsec^{t_k}_\infty(s,\chi,\Xi)
   =(2\pi i)^{\frac{3t_k-3l}{2}}\cdot
   i^{\frac{t_k+3l}{2}} \prod_{j=0}^2 \frac{\Gamma\left(s+\frac{l+3}{2}-j\right)}{\Gamma\left(s+\frac{t_k+3}{2}-j\right)} 
   \cdot \dsec^{l}_\infty(s,\chi,\Xi).
\end{align*}
}
Therefore,
{\small
\begin{align*}
   I_\infty(s)
   =&\,2^{-3(s+\frac{l-1}{2})} \pi^{-3(s+\frac{l+1}{2})}  i^{t_k}
   \prod_{j=0}^2 \Gamma\left(s+\frac{l+3}{2}-j\right) \\
   &\times Z_\infty \left(\dsec^{l}_\infty(s,\chi,\Xi),\,\Bes^{\cD_{l_1,l_2}}_{\bS,\triv}\left(\begin{bsm}&\bid_2\\ \bid_2\end{bsm}\cdot \varphi_{\ord,\infty}\right),\Whi^{\cD_l,\Xi^{-c}_\infty}_{\bbc}\left(\begin{bsm}&1\\1\end{bsm} \cdot f_{\ord,\infty}\right)\right).
\end{align*}
}
Plugging in the formula in Proposition~\ref{eq:Zlinf} for $t=l$, we get
{\small
\begin{align*}
   I_\infty(s)
   &= |\bbc|^{-s-\frac{3l}{2}+\frac{3}{2}}|\alphaS-\alphabS|^{-2s-l}\cdot 2^{-4s-3l} \cdot i^{t_k-l} \cdot \pi^{-4s-3l+2}\\
   &\quad\times \Gamma\left(s+\frac{3l-3}{2}\right)\Gamma\left(s+\frac{l+1}{2}\right)\Gamma\left(s+\frac{l-1}{2}\right)^2\\
   &= |\bbc|^{-s-\frac{3l}{2}+\frac{3}{2}}|\alphaS-\alphabS|^{-2s-l}\cdot 2^{-6} e^{\frac{\pi i}{2}(4s+t_k+2l+2)} \cdot E_\infty\left(s+\frac{\epsilon}{2},\tilde{\Pi}\times\tilde{\pi}\times\chi\right).
\end{align*}
}
Then
{\small
\begin{equation}\label{eq:Iinftys}
\begin{aligned}
   I_\infty(k,\Pi_\infty,\pi_\infty,\Lambda_\infty)
   &= (-1)^k\bbc^{\frac{2k+\epsilon+3l}{2}}\left(-(\alphaS-\alphabS)^2\right)^{\frac{2k+\epsilon+l}{2}}\cdot I_\infty\left(\frac{2k+\epsilon-1}{2}\right)\\
   &=|\bbc^2(\alphaS-\alphabS)|\cdot 2^{-6} i^{\epsilon}
   \cdot E_\infty\left(k
   +\frac{\epsilon}{2}+1,\Pi\times\pi\right).
\end{aligned}
\end{equation}
}
\eqref{eq:int-sc} follows from plugging \eqref{eq:Iinftys} into \eqref{eq:interp}.
\end{proof}

To see (iii) of Theorem~\ref{thm:intro}, since $\varphi_\ord$ is cuspidal holomorphic invariant under $\begin{bmatrix}\bid_2&\Sym_2(\bZ_v)\\0&\bid_2\end{bmatrix}$ for all finite places $v$, there exists $\bS=\begin{bmatrix}\bba&\frac{\bbb}{2}\\\frac{\bbb}{2}&\bbc\end{bmatrix}\in \Sym^*_2(\bZ)_{>0}$ such that its Fourier coefficient indexed by $\bS$ is nonzero. Then one can choose $\Lambda$ such that $\bes_{\bS,\Lambda}(\varphi_\ord)\neq 0$. If $\Pi_p$ does not belong to IIa or IVa in the classification in \cite{RSLocal}, (ii) of Proposition~\ref{prop:Zord} implies $\bes^\dagger_{\bS,\Lambda}(\varphi_\ord)\neq 0$. Let $f_0\in\pi$ be the $p$-ordinary modular form new at every finite place $v\neq p$. Then $\whi_1(f_0)\neq 0$. Writing  $\bbc=mp^t$ with $p\nmid m$, let $f_\ord=\begin{bmatrix}m\\&1\end{bmatrix}^{-1}_f \cdot f_0$, then $f_\ord$ is $p$-ordinary and 
\[
    \whi_\bbc(f_\ord)=\lambda_p(f_\ord)^t\cdot  \whi_m(f_\ord)=\lambda_p(f_\ord)^t\cdot \whi_1(f_0)\neq 0.
\]

\bibliographystyle{E:/Dropbox/Zheng/LatexFiles/00Bibfiles/halpha}
\bibliography{E:/Dropbox/Zheng/LatexFiles/00Bibfiles/BiStd}
\end{document}